\documentclass{amsart}

\usepackage{amsthm}
\usepackage{newlfont}
\usepackage{amsmath}
\usepackage{amssymb}
\usepackage{amsfonts}
\usepackage[mathscr]{euscript}

\usepackage{mathrsfs}

\usepackage{fancybox}
\usepackage{nicefrac}

\usepackage[arrow,curve,matrix,tips]{xy}
\CompileMatrices 

\usepackage{stmaryrd}

\usepackage{ifthen}

\usepackage{hyperref}

\usepackage{enumitem}
\setenumerate[1]{label=(\alph*)}

\hypersetup{
 pdfauthor = {Olaf~M.~Schn{\"u}rer},
 pdftitle = {Equivariant Sheaves on Flag Varieties},
 pdfsubject = {Mathematics},
 pdfkeywords = {Equivariant Derived Category, Flag Variety, Formality, Differential Graded Module, DG Module, t-Structure},
}

\newtheorem{theorem}{Theorem}
\newtheorem{lemma}[theorem]{Lemma}
\newtheorem{corollary}[theorem]{Corollary}

\newtheorem{proposition}[theorem]{Proposition}

\theoremstyle{definition}

\theoremstyle{remark}
\newtheorem{remark}[theorem]{Remark}

\setcounter{tocdepth}{1}

\newcommand{\op}{\operatorname}

\newcommand{\ra}{\rightarrow}

\newcommand{\la}{\leftarrow}

\newcommand{\sra}{\twoheadrightarrow}
\newcommand{\hra}{\hookrightarrow}

\newcommand{\hla}{\hookleftarrow}

\newcommand{\xra}[1]{\xrightarrow{#1}}
\newcommand{\xla}[1]{\xleftarrow{#1}}

\newcommand{\sira}{\xra{\sim}}

\newcommand{\sila}{\overset{\sim}{\leftarrow}}

\newcommand{\silongra}{\overset{\sim}{\longrightarrow}}

\newcommand{\ol}[1]{{\overline{#1}}}
\newcommand{\ul}[1]{{\underline{#1}}}

\newcommand{\bl}{\backslash}

\newcommand{\sar}{\ar@{->>}}
\newcommand{\iar}{\ar@{^{(}->}}
\newcommand{\gar}{\ar@{=}}

\newcommand{\decal}{\mu}

\newcommand{\Bl}[1]{{\mathbb{#1}}}
\newcommand{\DZ}{\Bl{Z}}
\newcommand{\DN}{\Bl{N}}

\newcommand{\DNinfty}{{\DN\cup \{\infty\}}}

\newcommand{\DR}{\Bl{R}}
\newcommand{\DC}{\Bl{C}}

\newcommand{\DP}{{\Bl{P}}}

\newcommand{\hyper}{{\Bl{H}}}

\newcommand{\DD}{{\Bl{D}}}
\newcommand{\HH}{{\Bl{H}}}

 \newcommand{\Hom}{\op{Hom}}
 \newcommand{\complexHom}{{\mkern3mu\mathcal{H}{\op{om}}\mkern3mu}}
 \newcommand{\complexhom}{{\complexHom}}
\newcommand{\complexEnd}{{\mkern3mu\mathcal{E}{\op{nd}}\mkern3mu}}
\newcommand{\cHom}{\complexHom}
\newcommand{\cEnd}{\complexEnd}
\newcommand{\Ext}{\op{Ext}}

\newcommand{\sHom}{{\mkern-1mu\mathscr{H}\mkern-3mu{\op{om}}\mkern3mu}}

\newcommand{\Lotimes}{\overset{L}{\otimes}}
\newcommand{\Lotimesover}[1]{\underset{#1}{\Lotimes}}
\newcommand{\otimesover}[1]{\underset{#1}{\otimes}}

\newcommand{\modover}[1]{{{#1}\text{-}\op{mod}}}

\newcommand{\gmodover}[1]{{#1}\text{-}\op{gmod}}

\newcommand{\gMod}{\op{gMod}}

\newcommand{\End}{\op{End}}

\newcommand{\Der}{\op{Der}}
\newcommand{\Derb}{{\op{Der}^{\bounded}}}

\newcommand{\id}{\op{id}}

\newcommand{\pr}{\op{pr}}

\newcommand{\length}{\lambda}

\newcommand{\ind}{\op{ind}}

\newcommand{\leftadjointtores}{\op{prod}}
\newcommand{\pro}{\leftadjointtores}
\newcommand{\Lpro}{{\pro}}

\newcommand{\GL}{\op{GL}}

\newcommand{\kernel}{\op{ker}}
\newcommand{\Ker}{\op{ker}}
\newcommand{\Kern}{\op{ker}}
\newcommand{\Cok}{\op{cok}}

\newcommand{\Bild}{\op{im}}

\newcommand{\bild}{\op{im}}

\newcommand{\inv}{^{-1}}
\newcommand{\can}{\op{can}}

\newcommand{\Sh}{{\op{Sh}}}

\newcommand{\mar}{{\ar@{|->}}}

\newcommand{\derivcat}{\mathcal{D}}

\newcommand{\dc}{\derivcat}
\newcommand{\dcb}{\derivcat^{\bounded}}
\newcommand{\dbc}{\derivcat^{\bounded}}
\newcommand{\dbcc}{\derivcat^{\bounded}_{\op{c}}}

\newcommand{\Ket}{\op{Ket}}
\newcommand{\Ketb}{\op{Ket}^{\bounded}}
\newcommand{\Hot}{\op{Hot}}

\newcommand{\Perv}{\op{Perv}}

\newcommand{\p}[1]{{{}^{{p}}{#1}}}

\newcommand{\Verd}{\op{thick}}
\newcommand{\PraeVerd}{\op{tria}}

\newcommand{\dgDer}{\op{dgDer}}
\newcommand{\dgHot}{\op{dgHot}}
\newcommand{\dgHotproj}{{\op{dgHotp}}}
\newcommand{\dgMod}{\op{dgMod}}
\newcommand{\dggMod}{\op{dggMod}}

\newcommand{\dgPerDer}{{\op{dgPer}}}

\newcommand{\dgPraeDer}{\op{dgPrae}}

\newcommand{\dgFilMod}{\op{dgFlag}}

\newcommand{\dgFiltDer}{\op{dgFilt}}

\newcommand{\Nat}{\op{Nat}}

\newcommand{\bounded}{\op{b}}
\newcommand{\bd}{{\bounded}}

\newcommand{\point}{\text{pt}}

\newcommand{\Cy}{\op{Z}}

\newcommand{\Ho}{\op{H}}

\newcommand{\Sub}{\op{Sub}}

\newcommand{\Formy}{\op{Form}}
\newcommand{\Forget}{\op{For}}

\renewcommand{\tilde}[1]{\widetilde{#1}}

\renewcommand{\hat}[1]{\widehat{#1}}

\newcommand{\IC}{{\mathcal{I}\mathcal{C}}}

\newcommand{\comp}{\circ}

\newcommand{\invlim}{\varprojlim}

\newcommand{\opp}{{\op{op}}}
\newcommand{\opposite}{\opp}

\newcommand{\gr}{\op{gr}}

\newcommand{\MHM}{\op{MHM}}
\newcommand{\MHS}{\op{MHS}}

\newcommand{\dgMHS}{{\op{dgMHS}}}

\newcommand{\mathovalbox}[1]{{\text{\ovalbox{${#1}$}}}}

\newcommand{\acyclic}[1]{{$#1$}-acyc\-lic}

\newcommand{\asystem}{ap\-proxi\-ma\-tion}

\newcommand{\rat}{\op{rat}}
\newcommand{\real}{\op{real}}
\newcommand{\rera}{{v}}

\newcommand{\define}[1]{{\textbf{#1}}}
\newcommand{\stress}[1]{{\textit{#1}}}

\newcommand{\os}[2]{\overset{#1}{#2}}

\newcommand{\neuerabschnitt}{\medskip}

\hyphenation{sub-section po-la-ri-za-ble iso-mor-phism iso-mor-phisms appro-xi-mation appro-xi-mations 
  ho-mo-mor-phism ho-mo-mor-phisms li-ne-ar 
  cover-ing ex-pli-cit-ly con-nec-tion Fuchsian equi-va-lences
  equi-va-lence Grass-mannian stra-ti-fi-ca-ti-on stra-ti-fi-ca-ti-ons
  re-pre-sen-ta-tives re-pre-sen-ta-tive corre-sponds re-so-lu-tion Za-ris-ki stra-ti-fied
  ABCD-ap-proxi-ma-tion ABC-ap-proxi-ma-tion AB-ap-proxi-ma-tion A-ap-proxi-ma-tion ap-proxi-ma-tion
  Soergel dga-quasi-iso-mor-phism dga-quasi-iso-mor-phisms
  quasi-iso-mor-phisms cell-strat-i-fi-ca-tion}

\begin{document}

\title{Equivariant Sheaves on Flag Varieties}
\author{Olaf M.\ Schn{\"u}rer}
\address{Mathematisches Institut, Universit{\"a}t Bonn, 
Beringstra\ss{}e 1, D-53115 Bonn, Germany}
\email{olaf.schnuerer@math.uni-bonn.de}
\thanks{Supported by the state of Baden-W{\"u}rttemberg}

\date{September 2008.}

\keywords{Equivariant Derived Category, Flag Variety, Formality,
  Perfect Derived Category,
  Differential Graded Module, DG Module, t-Structure.}

\subjclass[2000]{14M15, 18E30}

\begin{abstract}
We show that the Borel-equivariant derived category of sheaves on the
flag variety of a complex reductive group is equivalent to the
perfect derived category of dg modules over the extension algebra of the
direct sum of the simple equivariant perverse sheaves. This proves a
conjecture of Soergel and Lunts in the case of flag varieties.
\end{abstract}

\maketitle

\tableofcontents

\section{Introduction}
\label{cha:introduction}

Let $G$ be a complex connected reductive affine algebraic group and 
$B \subset P \subset G$ a Borel and a parabolic subgroup. 
The main result of this article is an
algebraic description of the $B$-equivariant (bounded, constructible)
derived category $\dcb_{B,c}(X)$  (see \cite{BL}) of sheaves of real vector 
spaces on the partial flag variety $X:=G/P$.

Let $\mathcal{S}$ be the stratification of $X$ into $B$-orbits and
$\IC_B(S) \in \dcb_{B,c}(X)$ the equivariant intersection
cohomology complex of the closure of the stratum $S \in \mathcal{S}$.
The $(\IC_B(S))_{S \in \mathcal{S}}$ are the simple equivariant
perverse sheaves on $X$. Let $\IC_B(\mathcal{S})$ be their direct sum 
and 
$\mathcal{E}=\Ext(\IC_B(\mathcal{S}))$ its graded algebra of
self-extensions 
in ${\dcb_{B,c}(X)}$.
We consider $\mathcal{E}$ as a differential graded (dg) algebra with
differential $d=0$. 
Let $\dgDer(\mathcal{E})$ be the derived category of (right) dg
$\mathcal{E}$-modules (see \cite{Keller-deriving-dg-cat}) and
$\dgPerDer(\mathcal{E})$ the perfect derived category, i.\,e.\ the
smallest strict full triangulated subcategory of $\dgDer(\mathcal{E})$
containing $\mathcal{E}$ and closed under forming direct summands.
We give alternative descriptions of $\dgPerDer(\mathcal{E})$ below.
\begin{theorem}[{cf.\ Theorem~\ref{t:formality-equivariant-partial-flag-variety}}]
  \label{t:einleitung-equivariant-partial}
  There is an equivalence of triangulated categories
  \begin{equation*}
    \dcb_{B,c}(X)
    \cong \dgPerDer(\Ext(\IC_B(\mathcal{S}))).
  \end{equation*}
\end{theorem}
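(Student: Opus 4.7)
The plan is to realize $\IC_B(\mathcal{S})$ as a compact generator (in the thick sense) of a dg enhancement of $\dcb_{B,c}(X)$ whose dg endomorphism algebra turns out to be \emph{formal}, and then to invoke a Morita-type equivalence in the style of Keller. Concretely, I would fix a dg enhancement $\mathcal{T}$ of $\dcb_{B,c}(X)$, for instance via the equivariant dg model of Bernstein--Lunts built from resolutions of the universal $B$-bundle, or via Lunts' dg model tailored to the Soergel--Lunts conjecture. Choosing a suitable homotopically projective replacement $\widetilde{\IC}_B(\mathcal{S})$ in $\mathcal{T}$, let $\mathcal{A}$ denote its dg endomorphism algebra; by construction $\Ho^*(\mathcal{A}) = \mathcal{E}$.

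Next I would establish generation: the thick triangulated closure of $\IC_B(\mathcal{S})$ inside $\dcb_{B,c}(X)$ equals the whole category. Because $X = G/P$ is stratified by $B$-orbits, each isomorphic to an affine space, the recollement along this stratification reduces the claim to an induction on the closure order. At each step the open inclusion gives a distinguished triangle $j_! j^* \to \id \to i_* i^* \to$ in which $j_!$ of the equivariant constant sheaf on the open stratum is controlled by $\IC_B(S)$ together with the standard and costandard objects, all of which lie in the thick closure of $\IC_B(\mathcal{S})$ by well-known manipulations with shifts and cones.

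The main obstacle, and the real content of the theorem, is the formality of $\mathcal{A}$: the existence of a zig-zag of dga quasi-isomorphisms connecting $\mathcal{A}$ to $\mathcal{E}$ equipped with zero differential. I would establish this by a weight argument in the spirit of Beilinson--Ginzburg--Soergel: lift the IC sheaves to a mixed context (mixed Hodge modules on $X$, or an $\ell$-adic mixed structure on an $\DF_q$-form of $G/P$) so that $\mathcal{A}$ inherits a compatible weight filtration. Purity of the IC sheaves together with Deligne's theorem then forces $\Ext^i(\IC_B(\mathcal{S}),\IC_B(\mathcal{S}))$ to be pure of weight $i$, and the standard principle that such a \emph{pointwise pure} dga is formal supplies the required quasi-isomorphism. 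The equivariant complication is that $\mathcal{A}$ is a dg algebra over the polynomial ring $\Ho^*_B(\point)$, which must be incorporated into the weight bookkeeping via an algebraic Borel construction so that weights behave functorially through the $EB \times_B (-)$ operation.

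Granted these three ingredients, the functor $R\Hom_\mathcal{T}(\widetilde{\IC}_B(\mathcal{S}),-) : \dcb_{B,c}(X) \to \dgDer(\mathcal{A})$ is a triangulated functor sending $\IC_B(\mathcal{S})$ to the free module $\mathcal{A}$, and is fully faithful on the generator. Transporting across the formality quasi-isomorphism $\mathcal{A} \simeq \mathcal{E}$ and using that $\IC_B(\mathcal{S})$ generates the source as a thick subcategory while $\mathcal{E}$ compactly generates $\dgPerDer(\mathcal{E})$, the functor becomes fully faithful everywhere with essential image exactly $\dgPerDer(\mathcal{E})$, yielding the claimed equivalence.
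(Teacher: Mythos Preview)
Your high-level strategy---prove that $\IC_B(\mathcal{S})$ generates, show that its dg endomorphism algebra is formal by a weight/purity argument, then invoke Keller---is exactly the paper's strategy. The discrepancy lies in what you call ``the equivariant complication,'' which you dispatch in one sentence but which is in fact the main technical content of the theorem.

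The difficulty is this: to run the formality-from-purity argument you need $\mathcal{A}$ itself to carry an honest weight filtration (or second grading) compatible with the differential and the multiplication, so that one can pass to the sub-dg-algebra of elements of weight at most the cohomological degree and project to cohomology. Non-equivariant mixed Hodge modules on $X$ do not see the $B$-action and so do not furnish such a structure on the \emph{equivariant} dg algebra, and at the level of generality of this paper there is no off-the-shelf theory of $B$-equivariant mixed Hodge modules to appeal to. Your phrase ``via an algebraic Borel construction'' is pointing at the right mechanism, but implementing it is the bulk of the paper: one realizes $\dcb_{B,c}(X)$ as the inverse limit of the non-equivariant categories $\dcb(B_n,\mathcal{S}_n)$ over finite-dimensional approximations $B_n$ to the Borel construction (Section~\ref{cha:form-equiv-deriv}), proves formality on each $B_n$ using mixed Hodge modules and perverse-projective resolutions there (Section~\ref{cha:form-deriv-categ}), and then---this is the delicate step you have not addressed---checks that the resulting formality equivalences are compatible with the transition functors $[-c_n]f_n^*$ between successive approximations (Section~\ref{cha:form-clos-embedd}). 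Only after this compatibility is established can one pass to the limit and identify it with $\dgPerDer(\mathcal{E})$.

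A secondary point: you propose to work in a fixed dg enhancement of $\dcb_{B,c}(X)$ and take a homotopically projective replacement of $\IC_B(\mathcal{S})$ there. The paper bypasses this by working instead, on each $B_n$, with finite projective resolutions inside the abelian category $\Perv(B_n,\mathcal{S}_n)$ (available by Theorem~\ref{t:bgsequi}) and the endomorphism dg algebra of their sum; the Keller step is then the elementary Proposition~\ref{p:hom-p-dot-iso}. This avoids having to build or choose a well-behaved dg model of the equivariant category.
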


Similar equivalences between equivariant derived categories and
categories of dg modules over the extension algebra of the simple
equivariant perverse sheaves 
are known for a connected Lie group acting on
a point (\cite[12.7.2]{BL}), for a torus acting on an affine or
projective normal toric variety (\cite{Luntstoric}),
and for a complex semisimple adjoint group acting on a smooth
complete symmetric variety (\cite{guillermou}).
The key point in the proof of these equivalences is the formality
of some dg algebra whose cohomology is the extension algebra.

Conjecturally (\cite[0.1.3]{Luntstoric}, \cite[4]{Soergel-langlands}), 
the analog of Theorem
\ref{t:einleitung-equivariant-partial} should hold for 
the equivariant derived category of a complex reductive group acting
on a projective variety with a finite number of orbits.
(Theorem \ref{t:einleitung-equivariant-partial} is a
special case of this 
conjecture since $\dcb_{G,c}(G \times_B X)$ and $\dcb_{B,c}(X)$ are
equivalent by the
induction equivalence.)

Let $\dcb(X)$ be the (bounded) derived category of sheaves of real
vector spaces on $X=G/P$, and $\dcb(X, \mathcal{S})$ the full
subcategory of  $\mathcal{S}$-constructible objects.
Let $\IC(\mathcal{S})$ be the direct sum of the
(non-equivariant) simple $\mathcal{S}$-constructible perverse sheaves on $X$, and
$\mathcal{F}=\Ext(\IC(\mathcal{S}))$ its graded algebra of self-extensions in
$\dcb(X)$. The category $\dgPerDer(\mathcal{F})$ is defined similarly as
$\dgPerDer(\mathcal{E})$ above.
In the course of the proof of Theorem 
\ref{t:einleitung-equivariant-partial}
we obtain the following non-equivariant analog.
\begin{theorem}[{cf.\ Theorem~\ref{t:formality-partial-flag-variety}}]
  \label{t:einleitung-partial}
  There is an equivalence of triangulated categories
  \begin{equation*}
    \dbc(X, \mathcal{S}) 
    \cong \dgPerDer(\Ext(\IC(\mathcal{S}))).
  \end{equation*}
\end{theorem}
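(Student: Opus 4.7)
The plan is to construct a chain of triangulated equivalences
$$\dbc(X,\mathcal{S}) \;\cong\; \dgPerDer(\mathcal{A}) \;\cong\; \dgPerDer(\mathcal{F}),$$
where $\mathcal{F}=\Ext(\IC(\mathcal{S}))$ (carrying zero differential) and $\mathcal{A}$ is a dg model of the derived endomorphism algebra of $\IC(\mathcal{S})$. Concretely, I would choose an h-injective resolution $I$ of $\IC(\mathcal{S})$ in complexes of sheaves on $X$ and set $\mathcal{A}:=\cEnd^\bullet(I)$; then $\Ho^\bullet(\mathcal{A})=\mathcal{F}$ canonically.

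The first equivalence is a form of dg Morita theory in the spirit of Keller. The functor $\rightderiv\Hom^\bullet(I,-)$ sends $\IC(\mathcal{S})$ to $\mathcal{A}$ and induces a fully faithful embedding of the thick subcategory $\langle\IC(\mathcal{S})\rangle\subset\dbc(X,\mathcal{S})$ generated by $\IC(\mathcal{S})$ into $\dgPerDer(\mathcal{A})$, whose essential image is all of $\dgPerDer(\mathcal{A})$. To see that $\langle\IC(\mathcal{S})\rangle$ exhausts $\dbc(X,\mathcal{S})$, I use that $X=G/P$ is partitioned into affine Schubert cells: by induction on the Bruhat order, and using the distinguished triangles that relate $\IC(S)$ to $j_{S!}\DR_S[\Dim_\DC S]$ and to simple objects supported on strictly smaller closed strata, every standard extension $j_{S!}\DR_S[\Dim_\DC S]$ lies in $\langle\IC(\mathcal{S})\rangle$, and these standards are well known to generate $\dbc(X,\mathcal{S})$.

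The hard part will be the second equivalence, which reduces to \emph{formality} of $\mathcal{A}$: a zigzag $\mathcal{A}\xleftarrow{\sim}\mathcal{A}'\xrightarrow{\sim}\mathcal{F}$ of dg algebra quasi-isomorphisms. The essential geometric input is purity: each $\IC(S)$ on the flag variety underlies a pure object in the category of mixed Hodge modules, so $\Ext^n(\IC(\mathcal{S}),\IC(\mathcal{S}))$ is pure of weight $n$. I would lift $I$ to a mixed-Hodge-module enhancement of $\dbc(X)$, endow $\mathcal{A}$ with the induced weight filtration $W$, and apply a Deligne--Griffiths--Morgan--Sullivan style decomposition: the dg subalgebra $\mathcal{A}'\subset\mathcal{A}$ of cochains with weight at most the cohomological degree includes quasi-isomorphically into $\mathcal{A}$ (because purity forces $\Ext^n$ to lie in $W_n$), while its projection onto the associated graded $\gr^W\mathcal{A}'\cong\mathcal{F}$ is likewise a quasi-isomorphism.

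Once formality is in hand, Morita invariance of $\dgPerDer$ under dg algebra quasi-isomorphism yields $\dgPerDer(\mathcal{A})\cong\dgPerDer(\mathcal{F})$ and completes the chain. The bulk of the remaining technical work lies in choosing a sufficiently functorial mixed-Hodge-module enhancement of $\dbc(X)$ so that $\mathcal{A}$ and its weight filtration are genuine objects rather than merely defined up to quasi-isomorphism; with such a framework in place, the geometric input from the flag variety enters only through the classical purity of Schubert $\IC$ sheaves.
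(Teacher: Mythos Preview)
Your overall strategy matches the paper's: both routes factor through a dg endomorphism algebra of $\IC(\mathcal{S})$ and then prove formality using weight filtrations from mixed Hodge theory.  The genuine difficulty, which you flag yourself at the end, is exactly where your proposal diverges from the paper and where it currently has a gap.

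You propose to take an h-injective resolution $I$ of $\IC(\mathcal{S})$ in sheaves and then ``lift $I$ to a mixed-Hodge-module enhancement'' so that $\mathcal{A}=\cEnd(I)$ carries a weight filtration.  But the paper explicitly notes that there are not enough injective Hodge sheaves, so such a lift is not available.  The paper's solution is to avoid injectives entirely: it passes through the realization equivalence $\dbc(\Perv(X,\mathcal{S}))\sira\dbc(X,\mathcal{S})$, takes \emph{projective} resolutions $P_S\to\IC(S)$ in $\Perv(X,\mathcal{S})$, and proves (Proposition~\ref{p:enoughperverseprojective}) that these projective resolutions can be chosen to come from $\MHM(X,\mathcal{S})$ (``perverse-projective'' Hodge sheaves).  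This yields a genuine dg algebra $\cEnd(\tilde P)$ in complexes of mixed Hodge structures, not merely one defined up to quasi-isomorphism, and is the main technical input you are missing.

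A second point you gloss over: purity of the IC sheaves alone does not force $\Ext^n(\IC(\mathcal{S}),\IC(\mathcal{S}))$ to be pure of weight~$n$.  What is needed is \emph{pointwise} purity along the stratification (the $l_S^*$- and $l_S^!$-restrictions of each $\tilde\IC(S)$ are pure), from which purity of $\Ext$ follows via a local-to-global spectral sequence (Theorem~\ref{t:purity-for-Ext}).  For flag varieties this pointwise purity is Soergel's Parabolic Purity Theorem (Theorem~\ref{t:bptgp}); it is the essential geometric input specific to $G/P$, and should be stated rather than absorbed into ``each $\IC(S)$ underlies a pure object''.

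Once these two points are addressed, your DGMS-style truncation argument is essentially the paper's Proposition~\ref{p:formality-of-pure-dgg-algebras}: one passes to $\gr^W$ (a dgg algebra), takes the sub-dgg-algebra $\Gamma$ of elements with weight at least the cohomological degree, and projects to cohomology; the isomorphism of fibre functors $\omega_0\sira\omega_W$ then transports this back to a dg subalgebra of the original $\mathcal{A}$.
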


The category $\Perv_{B}(X)$ of equivariant
perverse
sheaves on $X$ is the heart of the perverse t-structure on
$\dbc_{B,c}(X)$,
and similarly for $\Perv(X, \mathcal{S}) \subset \dbc(X,
\mathcal{S})$. 
The corresponding t-structure on
$\dgPerDer(\mathcal{E})$ and $\dgPerDer(\mathcal{F})$ can be
defined for a more general class of dg algebras; we explain this below.
It turns out that the heart of such a t-structure is equivalent to 
a full abelian subcategory $\dgFilMod$ of the abelian category of dg
modules.
The equivalences in Theorems 
\ref{t:einleitung-equivariant-partial} and
\ref{t:einleitung-partial} are in fact t-exact and
induce equivalences
\begin{align*}
  \Perv_{B}(X) & \cong \dgFilMod(\Ext(\IC_B(\mathcal{S}))),\\
  \Perv(X, \mathcal{S}) & \cong \dgFilMod(\Ext(\IC(\mathcal{S}))),
\end{align*}
i.\,e.\ algebraic descriptions of the categories of (equivariant)
perverse sheaves. The simple object $\IC_B(S)$ is mapped
to $e_S \mathcal{E}$ 
(where $e_S \in \mathcal{E}$ is the projector from $\IC_B(\mathcal{S})$ onto the
direct summand $\IC_B(S)$), which is an indecomposable projective dg
$\mathcal{E}$-module.
This seems to be part of a Koszul duality
(cf.\ \cite[1.2.6]{BGS}).

The forgetful functor $\Forget:\dcb_{B,c}(X) \ra
\dcb(X,\mathcal{S})$ induces a surjective morphism 
$\mathcal{E} \ra \mathcal{F}$ of dg algebras and an extension of
scalars functor 
$(?\Lotimes_{\mathcal{E}}\mathcal{F}): \dgPerDer(\mathcal{E}) \ra
\dgPerDer(\mathcal{F})$.
These two functors provide a connection between the equivalences in
Theorems \ref{t:einleitung-equivariant-partial} and
\ref{t:einleitung-partial}, 
i.\,e.\ there is a commutative (up to natural isomorphism) square
(see Remark \ref{rem:formality-equivariant-partial-flag-variety})
\begin{equation*}
  \xymatrix{
    {\dcb_{B,c}(X)} \ar[r]^-\sim
    \ar[d]^{\Forget}
    &
    {\dgPerDer(\Ext(\IC_B(\mathcal{S})))}
    \ar[d]^{(?\Lotimes_{\mathcal{E}} \mathcal{F})}
    \\
    {\dcb(X, \mathcal{S})} \ar[r]^-\sim
    &
    {\dgPerDer(\Ext(\IC(\mathcal{S}))).}
  }
\end{equation*}

Let us comment on some purely algebraic results concerning
certain perfect derived categories of dg modules
mentioned above (see \cite{OSdiss-perfect-dg-arXiv}). 
Let $\mathcal{A}=(A=\bigoplus_{i \geq 0} A^i, d)$ be 
a positively graded dg algebra with $A^0$ a semisimple ring and
$d(A^0)=0$ (i.\,e. $A^0$ is a dg subalgebra).
Let $(L_x)_{x \in W}$ be the finite collection of non-isomorphic simple
(right) $A^0$-modules, and $\dgPraeDer(\mathcal{A})$ the smallest
strict full triangulated subcategory of the derived category $\dgDer(\mathcal{A})$ of dg
$\mathcal{A}$-modules that contains all 
$\hat{L}_x:=L_x \otimes_{A^0}\mathcal{A}$ (where $L_x$ is concentrated 
in degree zero).
Let $\dgMod(\mathcal{A})$ be the abelian category of dg
$\mathcal{A}$-modules,
and $\dgFilMod(\mathcal{A})$ the full subcategory of $\dgMod(\mathcal{A})$
consisting of objects that have an 
$\hat{L}_x$-flag, i.\,e.\ a
finite filtration with subquotients isomorphic to objects of
$\{\hat{L}_x\}_{x \in W}$ (without shifts). 
Then $\dgPraeDer(\mathcal{A})$ coincides with $\dgPerDer(\mathcal{A})$ and
carries a natural bounded t-structure.
Moreover 
$\dgFilMod(\mathcal{A})$ is a full abelian subcategory 
of $\dgMod(\mathcal{A})$ and naturally equivalent to the heart of this
t-structure.
Let us note that there is another equivalent
full subcategory of $\dgPerDer(\mathcal{A})$ consisting of certain
filtered dg modules that is quite accessible
to computations (cf.\ Theorem \ref{t:form-filt-iso-perfect}). 

These remarks apply in particular to the
dg algebras $\mathcal{E}$ 
and $\mathcal{F}$ defined above and make the categories of dg modules
appearing in our main equivalences quite explicit.
They also show that the categories of dg modules appearing in the main equivalences 
of \cite{Luntstoric} and \cite{guillermou} are in fact 
of the form $\dgPerDer$.

Assume for this paragraph that we work with sheaves of complex vector
spaces. Our main Theorems \ref{t:einleitung-equivariant-partial} and
\ref{t:einleitung-partial} remain true 
(see subsection \ref{sec:complex-coefficients} and Remark \ref{rem:equivariant-complex}).
Assume now in addition that $G$ is
semisimple and that $P=B$. Then the extension algebras
are isomorphic to morphism spaces of Soergel's bimodules (see 
\cite{Soergel-langlands, Soergel-combi-HC, Soe-Kat}). 
These bimodules are isomorphic to the ($B$-equivariant)
intersection cohomologies of Schubert varieties and
can be described using the
moment graph picture (see \cite{braden-mcpherson-moment-graphs}).
Thus, if $T \subset B$ is a maximal torus,
the $B$-equivariant derived category of the flag
variety $G/B$ only depends on 
the moment graph associated to $T$ acting on $G/B$.

\neuerabschnitt

Let us describe in more detail our approach to prove Theorem
\ref{t:einleitung-equivariant-partial}. 
We use notation
from subsequent sections without further explanation.

\textbf{Step 1}
(see section \ref{cha:form-deriv-categ}).
Let $X$ be a complex variety with a stratification $\mathcal{T}$ into
cells (i.\,e.\ $T\cong \DC^{d_T}$ for each $T \in \mathcal{T}$).
Under some purity assumptions explained below we will establish
an equivalence
\begin{equation}
  \label{eq:step-1}
  \dbc(X, \mathcal{T}) 
  \cong \dgPerDer(\Ext(\IC(\mathcal{T})))
\end{equation}
of triangulated categories, of which 
Theorem \ref{t:einleitung-partial} is a special case.
Note that we could write equivalently $\dgPraeDer$ on the right hand side.
The proof works as follows.
Since $\mathcal{T}$ is a cell-stratification,
there is an equivalence  
\begin{equation*}
  \dbc(\Perv(X,\mathcal{T})) \sira \dbc(X, \mathcal{T}).
\end{equation*}
There are enough projective objects in $\Perv(X, \mathcal{T})$,
so we find
projective resolutions $P_T \ra \IC(T)$ of finite length
\begin{equation*}
  \dots \ra P_T^{-2} \ra P_T^{-1} \ra P_T^0 \ra \IC(T)\ra 0.
\end{equation*}
Let $P \ra \IC(\mathcal{T})$ be the direct
sum of these resolutions and 
$\mathcal{B}=\cEnd(P)$ the dg
algebra of endomorphisms of $P$. The functor $\cHom(P,?)$ induces an
equivalence
\begin{equation*}
  \dcb(\Perv(X, \mathcal{T})) \sira \dgPraeDer_{\mathcal{B}}(\{e_T\mathcal{B}\}_{T \in \mathcal{T}}).
\end{equation*}
Note that the cohomology of $\mathcal{B}$ is isomorphic to $\Ext(\IC(\mathcal{T}))$.
Thus we obtain equivalence \eqref{eq:step-1} if $\mathcal{B}$ is formal. 
In order to prove formality, we need to choose the resolutions 
$P_T \ra \IC(T)$ more carefully.

Each $\IC(T)$ is the underlying perverse sheaf of a mixed Hodge module
$\tilde{\IC}(T)$ that is pure of weight $d_T$. 
We construct resolutions $\tilde P_T \ra
\tilde{\IC}(T)$ in the category of mixed Hodge modules
so that the underlying resolutions $P_T \ra \IC(T)$ are projective
resolutions as considered above. 
From these resolutions we get a dg algebra 
of
mixed Hodge structures with underlying dg algebra
$\mathcal{B}=\cEnd(P)$.
If each $\tilde{\IC}(T)$ is $\mathcal{T}$-pure of weight $d_T$ (i.\,e.\ all
restrictions to strata in $\mathcal{T}$ are pure of weight $d_T$),
this additional structure on $\mathcal{B}$
enables us to construct  
a dg subalgebra $\Sub(\mathcal{B})$ of
$\mathcal{B}$ and quasi-isomorphisms 
\begin{equation*}
  \mathcal{B} \hla \Sub(\mathcal{B}) \sra \Ho(\mathcal{B})
\end{equation*}
of dg algebras, establishing the formality of $\mathcal{B}$. 

We will need the following slightly more general statement than equivalence
\eqref{eq:step-1}, with essentially the same proof. 
 
\begin{theorem}[{cf.\ Theorem \ref{t:formality-ic}}]
  \label{t:einleitung-formality-ic}
  Let $(X, {\mathcal{S}})$ be a stratified complex variety with
  irreducible and simply connected 
  strata. Let $\mathcal{T}$ be a cell-stratification refining
  ${\mathcal{S}}$. 
  If $\tilde{\IC}({S})$ is $\mathcal{T}$-pure of weight
  $d_{S}$ for each $S \in {\mathcal{S}}$, there is a triangulated
  equivalence 
  \begin{equation*}
    \dcb(X, {\mathcal{S}})
    \cong \dgPerDer(\Ext(\IC({\mathcal{S}}))).
  \end{equation*}
\end{theorem}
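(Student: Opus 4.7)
The proof runs parallel to the argument sketched for equivalence \eqref{eq:step-1}, with the coarser stratification $\mathcal{S}$ taking the role of $\mathcal{T}$ on the sheaf side, while the cell refinement $\mathcal{T}$ is used only to control weights via mixed Hodge modules. The four steps are (i) a realization equivalence $\dcb(\Perv(X,\mathcal{S})) \sira \dcb(X,\mathcal{S})$; (ii) the construction of finite projective resolutions $P_S \to \IC(S)$ in $\Perv(X,\mathcal{S})$; (iii) the equivalence $\dcb(\Perv(X,\mathcal{S})) \sira \dgPraeDer_{\mathcal{B}}(\{e_S\mathcal{B}\}_{S\in\mathcal{S}})$ induced by $\cHom(P,?)$, where $P:=\bigoplus_S P_S$ and $\mathcal{B}:=\cEnd(P)$, together with the identification $\Ho^*(\mathcal{B})\cong\Ext(\IC(\mathcal{S}))$; and (iv) formality of $\mathcal{B}$, which upgrades (iii) to the desired equivalence.

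\textbf{Steps (i)--(iii).} Since each $S\in\mathcal{S}$ is irreducible and simply connected, every $\mathcal{S}$-constructible local system is constant of rank one, so $\Perv(X,\mathcal{S})$ is equivalent to the category of finite-dimensional modules over a finite-dimensional $\DR$-algebra; in particular it has enough projectives, the simple objects are the $\IC(S)$, and finite projective resolutions $P_S \to \IC(S)$ exist. The realization equivalence (i) follows from the Beilinson criterion; the required $\Ext$-agreement between the heart and the ambient triangulated category may be reduced to the corresponding statement over $\mathcal{T}$, which is handled by the cell-case proof, using that morphisms between $\mathcal{S}$-constructible objects computed in $\dcb(X,\mathcal{T})$ coincide with those computed in $\dcb(X,\mathcal{S})$. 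Standard dg-homological algebra then supplies (iii) and the cohomological identification.

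\textbf{Formality.} The heart of the argument is step (iv). I would lift each resolution $P_S \to \IC(S)$ to a finite resolution $\tilde P_S \to \tilde{\IC}(S)$ in the category of mixed Hodge modules, so that $\tilde{\mathcal{B}}:=\cEnd(\tilde P)$ acquires a weight filtration compatible with its dg structure. The $\mathcal{T}$-purity of $\tilde{\IC}(S)$ is propagated to the $\tilde P_S$, which one arranges to be $\mathcal{T}$-pure of weight $d_S$; then the weight-zero subspace $\Sub(\mathcal{B})\subset\mathcal{B}$ is closed under the differential, and the resulting zig-zag
\begin{equation*}
  \mathcal{B} \hla \Sub(\mathcal{B}) \sra \Ho^*(\mathcal{B})
\end{equation*}
consists of dga quasi-isomorphisms, proving formality of $\mathcal{B}$.

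\textbf{Main obstacle.} The principal difficulty lies in the mixed Hodge module lift. For a cell-stratification a direct inductive construction through the standard and costandard objects attached to the cells is available; for a general $\mathcal{S}$ the simples $\IC(S)$ are not pointwise pure along $\mathcal{S}$, and the refinement $\mathcal{T}$ enters precisely to provide the finer weight control. Arranging the $\tilde P_S$ to be simultaneously finite in length, projective in $\Perv(X,\mathcal{S})$ after forgetting the Hodge data, and $\mathcal{T}$-pure of weight $d_S$ is the main technical task.
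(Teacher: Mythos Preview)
Your outline differs substantially from the paper's route and, more importantly, contains a genuine misconception in step~(iv).

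\textbf{How the paper argues.} The paper never establishes a realization equivalence for $\mathcal{S}$, never constructs projectives in $\Perv(X,\mathcal{S})$, and never lifts anything to pure Hodge modules. Instead it works entirely inside the cell-stratified world: each $S\in\mathcal{S}$ contains a unique dense cell $T(S)\in\mathcal{T}$, and $\IC_S=\IC_{T(S)}$. One applies Theorem~\ref{t:formality-for-cell-stratified-varieties} to the cell-stratification $\mathcal{T}$ and the collection $\{\tilde{\IC}_{T(S)}\}_{S\in\mathcal{S}}$; the perverse-projective resolutions live in $\Perv(X,\mathcal{T})$, the realization functor is $\real_{X,\mathcal{T}}$, and the identification $\dcb(X,\mathcal{S})=\PraeVerd(\{\IC_S\},\dcb(X))$ is the only place where $\mathcal{S}$ enters. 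All of your steps (i)--(iii), which you try to redo intrinsically for $\mathcal{S}$, are thus bypassed; in particular your hand-waved reduction of the $\mathcal{S}$-realization to the $\mathcal{T}$-realization (which amounts to the non-trivial claim $\Ext^i_{\Perv(X,\mathcal{S})}=\Ext^i_{\Perv(X,\mathcal{T})}$ on the Serre subcategory) is simply not needed.

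\textbf{The error in your formality step.} You write that one ``arranges the $\tilde P_S$ to be $\mathcal{T}$-pure of weight $d_S$'' and that the ``weight-zero subspace $\Sub(\mathcal{B})$'' then gives the formality zig-zag. This is not how the argument works, and it cannot work: perverse-projective objects are almost never pure Hodge modules (already for $\mathbb{P}^1$ with the Bruhat stratification the projective cover of the skyscraper is a non-split extension and carries two weights). What the paper actually proves is that the $\mathcal{T}$-purity of the $\tilde{\IC}_S$ forces the \emph{cohomology} of $\tilde{\mathcal{B}}=\cEnd(\tilde P)$ to be a pure Hodge structure of weight zero (Theorem~\ref{t:purity-for-Ext} together with Proposition~\ref{p:coincide}). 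One then passes to the associated-graded for the weight filtration, obtaining a dgg algebra whose cohomology is concentrated on the diagonal, and Proposition~\ref{p:formality-of-pure-dgg-algebras} supplies the zig-zag. The subalgebra $\Sub(\mathcal{B})$ is not a weight eigenspace of $\mathcal{B}$ but the preimage under $\omega_0\sira\omega_W$ of a truncation of $\gr^W_\DR(\tilde{\mathcal{B}})$. Your ``main obstacle'' is therefore a phantom: one neither needs nor can achieve purity of the resolutions themselves.
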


\textbf{Step 2}
(see section \ref{cha:form-clos-embedd}).
Let $(X, \mathcal{S})$ and $(Y, \mathcal{T})$ be stratified complex varieties
with irreducible and simply connected strata.
Let $i:Y \ra X$ be a closed embedding so that
$\mathcal{S} \ra \mathcal{T}$, $S \mapsto S\cap Y$, is
bijective and $i|_{\ol S\cap Y}: \ol S \cap Y \ra \ol S$ is a
normally nonsingular inclusion of a fixed codimension $c$ for all $S \in
\mathcal{S}$. 
Then 
\begin{equation}
  \label{eq:einl-normally-nonsingular-ic}
  [-c]i^*(\tilde{\IC}(S)) \sira \tilde{\IC}(S\cap Y)
\end{equation}
for all $S \in \mathcal{S}$.
If both stratifications $\mathcal{S}$ and $\mathcal{T}$ have
compatible refinements by cell-stratifications satisfying the purity conditions of
Theorem \ref{t:einleitung-formality-ic}, we obtain the vertical
equivalences in the following diagram.
\begin{equation}
  \label{eq:einleitung-the-goal}
  \xymatrix@C+0.25cm{
    {\dcb(X, {\mathcal{S}})}
    \ar[rr]^{[-c]i^*}
    \ar[d]^\sim
    &&
    {\dcb(Y, {\mathcal{T}})}
    \ar[d]^\sim
    \\
    {\dgPerDer(\Ext(\IC({\mathcal{S}})))}
    \ar[rr]_-{? \Lotimesover{\Ext(\IC({\mathcal{S}}))} \Ext(\IC({\mathcal{T}}))}
    &&
    {\dgPerDer(\Ext(\IC({\mathcal{T}})))}
  }
\end{equation}
The extension of scalars functor in the lower row is induced by the
isomorphisms \eqref{eq:einl-normally-nonsingular-ic}. 
This diagram is
commutative (up to natural isomorphism).
Unfortunately the proof is rather technical.

\textbf{Step 3}
(see section \ref{cha:form-equiv-deriv}).
Let $X=G/P$ be a partial flag variety
with stratification $\mathcal{S}$ into $B$-orbits as before. 
We construct a sequence 
\begin{equation*}
  E_0 \xra{f_0} E_1 \xra{f_1} 
  E_2 \ra 
  \dots 
  \ra 
  E_n \xra{f_n} E_{n+1} \ra \dots
\end{equation*}
of resolutions $p_n:E_n \ra X$ of $X$ satisfying several nice
properties. For example, each $p_n$ is smooth
and \acyclic{n} (in the classical topology), the quotient
morphisms $q_n:E_n \ra \ol{E}_n:= B\bl E_n$ are Zariski locally
trivial fiber bundles, and each
$\mathcal{S}_n :=\{q_n(p_n\inv(S)) \mid S \in \mathcal{S}\}$ is a
stratification of $\ol{E}_n$. 
The induced morphisms
$\ol{f}_n:\ol{E}_n \ra \ol{E}_{n+1}$
define functors 
$\ol{f}_n^*:\dcb(\ol{E}_{n+1}, \mathcal{S}_{n+1}) \ra 
\dcb(\ol{E}_{n}, \mathcal{S}_{n})$, and we obtain a sequence of
categories whose inverse limit is equivalent to the
category we want to describe,
\begin{equation*}
  \dcb_{B,c}(X) \cong \invlim \dcb(\ol{E}_{n}, \mathcal{S}_{n}).
\end{equation*}
Moreover, the morphisms
$\ol{f}_n$
satisfy the assumptions of Step 2
(in particular, the stratifications $\mathcal{S}_n$ admit refinements where the
purity conditions hold),
and the obtained commutative diagrams
of the form \eqref{eq:einleitung-the-goal} provide
an equivalence 
\begin{equation*}
  \invlim \dcb(\ol{E}_{n}, \mathcal{S}_{n}) \sira \invlim \dgPerDer(\Ext(\IC({\mathcal{S}_n}))).
\end{equation*}
Finally, the obvious morphisms 
$\mathcal{E}=\Ext(\IC_B(\mathcal{S}))\ra
\Ext(\IC({\mathcal{S}_n}))$ of dg algebras induce an equivalence
\begin{equation*}
  \dgPerDer(\Ext(\IC_B({\mathcal{S}}))) \sira \invlim \dgPerDer(\Ext(\IC({\mathcal{S}_n}))).
\end{equation*}
This finishes the sketch of proof of
Theorem \ref{t:einleitung-equivariant-partial}.

\neuerabschnitt

It would be nice to know whether the analog of Theorem
\ref{t:einleitung-equivariant-partial}
is true for $\dcb_{Q,c}(G/P)$ if $Q$ is a parabolic subgroup of $G$
containing $B$. 
Theorem \ref{t:formality-partial-flag-variety} shows
that the non-equivariant version holds, i.\,e.\ we can
replace the
stratification $\mathcal{S}$ in Theorem \ref{t:einleitung-partial} by
the stratification into $Q$-orbits.
We expect that our methods can be generalized to affine flag varieties.

This
article is organized as follows:
In section~\ref{sec:differential-graded-modules}
we introduce the main categories of dg modules, show how dg modules
can be used to describe certain triangulated categories and prove an
elementary but crucial result establishing the formality of some dg
algebras with an additional grading.
Sections~\ref{cha:form-deriv-categ},~\ref{cha:form-clos-embedd} and \ref{cha:form-equiv-deriv} contain
essentially the results explained above
in Steps 1, 2 and 3 respectively.
However the methods are developed in a broader context and may be applied to other situations.
Section~\ref{sec:inverse-limits} contains some results on inverse limits of categories (of
dg modules) used in Step 3.

\subsection*{Acknowledgments}
\label{sec:ack}
This article contains the main results of my thesis
\cite{OSdiss} written at the University of Freiburg. I am very
grateful to my advisor Wolfgang Soergel for all his advice and enthusiasm.
I would like to thank Peter Fiebig, Geordie Williamson, and Anne and Martin Balthasar
for useful comments and discussions.

\section{Differential Graded Modules}
\label{sec:differential-graded-modules}

\subsection{DG Modules}
\label{sec:review-dg-modules}

We review the language of differential graded
(dg) modules over a dg algebra (see \cite{Keller-deriving-dg-cat,
  Keller-construction-of-triangle-equiv, BL}).

Let $k$ be a commutative ring and 
$\mathcal{A}=(A=\bigoplus_{i \in \DZ} A^i, d)$ 
a differential graded $k$-algebra (= dg algebra). 
A dg (right) module over $\mathcal{A}$ will also be called an
$\mathcal{A}$-module or a dg module if there is no doubt about the dg algebra.  
We often write $M$ for a dg module $(M, d_M)$.
We consider the category $\dgMod(\mathcal{A})$ of dg modules, the
homotopy category $\dgHot(\mathcal{A})$ and the derived category
$\dgDer(\mathcal{A})$ of dg modules.  
We denote the shift functor on all these categories 
by $M \mapsto \{1\}M$, e.\,g.\
$(\{1\}M)^i=M^{i+1}$, $d_{\{1\}M}= -d_M$. 
We define $\{n\}=\{1\}^n$ for $n
\in \DZ$.
Both $\dgHot(\mathcal{A})$ and $\dgDer(\mathcal{A})$ are triangulated categories.

A dg module ${P}$ is called \define{homotopically projective}
(\cite{Keller-construction-of-triangle-equiv}),
if it satisfies one of
the following equivalent conditions
(\cite[10.12.2.2]{BL}):
\begin{enumerate}
\item $\Hom_{\dgHot}({P}, ?) = \Hom_{\dgDer}({P},?)$, i.\,e.\ 
  for all dg modules $M$, the canonical map $\Hom_{\dgHot}({P}, M) \ra
  \Hom_{\dgDer}({P},M)$ is an isomorphism.
\item 
  $\Hom_{\dgHot}({P}, {M})=0$ for each acyclic dg module ${M}$.
\end{enumerate}
In \cite[3.1]{Keller-deriving-dg-cat} such a module is said to have \stress{property (P)},
in \cite[10.12.2]{BL} the term \stress{$\mathcal{K}$-projective} is
used.
For example, $\mathcal{A}$ and each direct summand
of $\mathcal{A}$ is homotopically projective.

Let $\dgHotproj(\mathcal{A})$ be the full subcategory of $\dgHot(\mathcal{A})$
consisting of homotopically projective dg modules.
The quotient functor
$\dgHot(\mathcal{A}) \ra \dgDer(\mathcal{A})$
induces a triangulated
equivalence (\cite[3.1, 4.1]{Keller-deriving-dg-cat}) 
\begin{equation}
  \label{eq:dgHotp-equiv-dgDer}
  \dgHotproj(\mathcal{A}) \sira \dgDer(\mathcal{A}).
\end{equation}

Let $\dgPerDer(\mathcal{A})$ be the perfect derived category,
i.\,e.\ the smallest strict (= closed under isomorphisms) full triangulated subcategory of
$\dgDer(\mathcal{A})$ containing $\mathcal{A}$ and closed under
forming direct summands.

Each morphism of dg algebras (\define{dga-morphism}) $f:\mathcal{A} \ra \mathcal{B}$ induces on
cohomology a
dga-morphism $\Ho(f):\Ho(\mathcal{A}) \ra
\Ho(\mathcal{B})$. If $\Ho(f)$ is an isomorphism, $f$ is called a
\define{dga-quasi-isomorphism}.
Two dg algebras $\mathcal{A}$ and $\mathcal{B}$ are \define{equivalent} 
if there is a sequence
$  \mathcal{A} \la \mathcal{C}_1 \ra \mathcal{C}_2 \la \dots \ra \dots \la
  \mathcal{C}_n \ra \mathcal{B}$
of dga-quasi-isomorphisms. A dg algebra $\mathcal{A}$ is \define{formal} if it is
equivalent to a dg algebra with differential $d=0$. In this case,
$\mathcal{A}$ is equivalent to $\Ho(\mathcal{A})$.

If $\mathcal{A} \ra \mathcal{B}$ is a morphism of dg algebras,
we have the  \define{extension of scalars} functor
(\cite[10.11]{BL})
\begin{equation*}
  \pro_{\mathcal{A}}^\mathcal{B} = (? \otimes_{\mathcal{A}}
  \mathcal{B}): \dgMod(\mathcal{A}) \ra \dgMod(\mathcal{B}).   
\end{equation*}
It descends to a triangulated functor
$\pro_{\mathcal{A}}^\mathcal{B} = (? \otimes_{\mathcal{A}}
\mathcal{B})$ between the homotopy categories 
and has the left derived functor $\Lpro_{\mathcal{A}}^\mathcal{B} = (? \Lotimes_{\mathcal{A}}
  \mathcal{B})$ on the level of derived categories.
This left derived functor is an equivalence if $\mathcal{A} \ra
\mathcal{B}$ is a dga-quasi-isomorphism
(\cite[6.1]{Keller-deriving-dg-cat}).

\subsection{Differential Graded Graded Algebras and Formality}
\label{sec:diff-grad-grad}

We show that some dg algebras with an extra grading are formal.
Let $k$ be a commutative ring and 
$\mathcal{R}$
a differential
graded graded (dgg) algebra, i.\,e.\ a $\DZ^2$-graded associative
$k$-algebra $R=\bigoplus_{i,j \in \DZ}R^{ij}$ endowed with a
$k$-linear differential $d: R \ra R$ that is homogeneous of degree
$(1,0)$ and satisfies the Leibniz rule
$d(a b)=(da) b + (-1)^i a db$
for all $a \in R^{ij}$, $b \in R^{kl}$.
A dgg module $\mathcal{M}=(M, d)$ over
$\mathcal{R}$ is a $\DZ^2$-graded right $R$-module 
$M=\bigoplus_{i,j \in \DZ}M^{ij}$ with a $k$-linear differential 
$d: M \ra M$ of degree $(1, 0)$ satisfying
$d(m a)=(dm)a + (-1)^i m da$
for all $m \in M^{ij}$, $a \in R^{kl}$. 
Morphisms of dgg modules are morphisms of the underlying
$\DZ^2$-graded $R$-modules of degree $(0, 0)$ that commute with the
differentials. 
We denote the category of dgg modules over $\mathcal{R}$ 
by $\dggMod(\mathcal{R})$.

The cohomology of a dgg module over a dgg algebra $\mathcal{R}$ is a
dgg module over the the dgg algebra $\Ho(\mathcal{R})$.
Morphisms of dgg algebras (\define{dgga-morphisms}) are algebra homomorphisms
that are morphisms of dgg modules.
The meanings of \define{dgga-quasi-isomorphism}, \define{equivalent}
and \define{formal} are the obvious generalizations from dg algebras.

A $\DZ^2$-graded $k$-module $M = \bigoplus M^{ij}$ is \define{pure of weight} $w$, if $M^{ij}\not= 0$
implies $j=i+w$. 
A dgg module or algebra is pure of weight $w$ if the underlying bigraded module
is pure of weight $w$.
Every pure dgg algebra $\mathcal{R}$ of weight $w\not=0$ is the zero algebra, since $1
\in R^{00}$; hence it is also pure of weight $0$.

Let $M$ be in $\dggMod(\mathcal{R})$. We define a bigraded
$k$-submodule $\Gamma(M)$ of $M$ 
by 
\begin{equation}
  \label{eq:def-gamma}
  \Gamma(M)^{ij}=
  \begin{cases}
    M^{ij} & \text{if $i<j$,}\\
    \kernel(d^{ij}: M^{ij} \ra M^{i+1,j}) & \text{if $i=j$,}\\
    0 & \text{if $i>j$.}
  \end{cases}
\end{equation}
The differential of $M$ restricts to a differential of $\Gamma(M)$. 
The multiplication on $\mathcal{R}$ restricts to a multiplication on
$\Gamma(\mathcal{R})$ and $\Gamma(\mathcal{R})$ becomes a dgg
algebra. Similarly, $\Gamma(M)$ is a dgg module over
$\Gamma(\mathcal{R})$.
In fact, we obtain a functor
\begin{equation}
  \label{eq:functor-gamma}
  \Gamma: \dggMod(\mathcal{R}) \ra \dggMod(\Gamma(\mathcal{R})).
\end{equation}

\begin{proposition}\label{p:formality-of-pure-dgg-algebras}
  If the cohomology $\Ho(\mathcal{R})$ of a dgg algebra $\mathcal{R}$
  is pure of weight $0$,
  then $\mathcal{R}$ is formal. 
  More precisely,
  $\mathcal{R} \hla \Gamma(\mathcal{R}) \sra \Ho(\mathcal{R})$
  are dgga-quasi-isomorphisms where $\Gamma(\mathcal{R}) \hra \mathcal{R}$ is the obvious
  inclusion and $\Gamma(\mathcal{R}) \sra \Ho(\mathcal{R})$ the componentwise
  projection.
\end{proposition}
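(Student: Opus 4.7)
The plan is to verify that the two explicit maps in the statement are both dgga-quasi-isomorphisms; this produces the required zigzag $\mathcal{R} \hookleftarrow \Gamma(\mathcal{R}) \twoheadrightarrow \Ho(\mathcal{R})$ and, since $\Ho(\mathcal{R})$ has zero differential, establishes formality at the same time.

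First I would check that $\Gamma(\mathcal{R})$ is honestly a sub-dgg-algebra of $\mathcal{R}$. The unit lies in $R^{00}$ and is a cocycle, so $1 \in \Gamma(\mathcal{R})^{00}$. For multiplicativity, if $a \in \Gamma(\mathcal{R})^{ij}$ and $b \in \Gamma(\mathcal{R})^{kl}$, then $i \leq j$ and $k \leq l$, so $ab \in R^{i+k,j+l}$ with $i+k \leq j+l$; equality forces $i=j$ and $k=l$, and then $a,b$ are cocycles so $d(ab)=(da)b \pm a(db)=0$, placing $ab$ in $\Gamma(\mathcal{R})^{i+k,i+k}$. Stability under $d$ is equally easy: for $i<j$ one has $d(a) \in R^{i+1,j}$ with $i+1 \leq j$, and the boundary case $i+1=j$ lands in $\ker d^{jj}$ automatically by $d^2=0$; for $i=j$, $a$ is already a cocycle. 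So $\Gamma(\mathcal{R}) \hookrightarrow \mathcal{R}$ is a dgga-morphism.

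Next I would compute $\Ho(\Gamma(\mathcal{R}))$ in each bidegree $(i,j)$ and show that the inclusion induces an isomorphism onto $\Ho(\mathcal{R})$. By purity, $\Ho(\mathcal{R})^{ij}=0$ whenever $i \neq j$, so it suffices to show $\Ho(\Gamma(\mathcal{R}))^{ij}=0$ for $i \neq j$ and that the inclusion is an isomorphism on the diagonal. For $i>j$ both modules are zero. For $i<j$, the complex computing $\Ho(\Gamma(\mathcal{R}))^{ij}$ agrees with the complex computing $\Ho(\mathcal{R})^{ij}$ (the truncation at the diagonal introduces a $\ker d$, but this does not affect cohomology in interior degrees, and in the boundary case $i+1=j$ the image of $d^{ij}$ already lies in $\ker d^{jj}$ by $d^2=0$), so both vanish by purity. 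For $i=j$, the boundaries into $\Gamma(\mathcal{R})^{ii}$ come from $R^{i-1,i}$ (unchanged) and the cocycles are $\ker d^{ii}$ by construction, giving exactly $\Ho(\mathcal{R})^{ii}$. Thus $\Gamma(\mathcal{R}) \hookrightarrow \mathcal{R}$ is a dgga-quasi-isomorphism.

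Finally I would verify that the componentwise projection $\Gamma(\mathcal{R}) \twoheadrightarrow \Ho(\mathcal{R})$—which sends $\Gamma(\mathcal{R})^{ii}=\ker d^{ii}$ onto $\Ho(\mathcal{R})^{ii}$ and kills $\Gamma(\mathcal{R})^{ij}$ for $i<j$—is a dgga-morphism. It is compatible with differentials trivially, since the target has $d=0$ and the kernel contains everything off the diagonal (where the source differential maps to) and the cocycles on the diagonal (whose image vanishes anyway). Multiplicativity reduces to the diagonal part, where it is the evident projection $\ker d \to \ker d/\Bild d$. The previous cohomology computation shows this projection is also a quasi-isomorphism. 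No step requires more than bookkeeping with the bigrading, and the essential input—that the off-diagonal cohomology vanishes—is precisely the purity hypothesis; the only thing to be careful about is the boundary case $i+1=j$ when verifying that truncating to cocycles on the diagonal does not change cohomology in degree $(i,j)$.
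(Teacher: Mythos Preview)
Your proposal is correct and follows essentially the same route as the paper's proof: both verify that the inclusion $\Gamma(\mathcal{R})\hookrightarrow\mathcal{R}$ induces a cohomology isomorphism in bidegrees $(i,j)$ with $i\leq j$ (handling the boundary case $i+1=j$ via $d^2=0$), use purity to dispose of the remaining bidegrees, and then check that the projection $\Gamma(\mathcal{R})\twoheadrightarrow\Ho(\mathcal{R})$ is a well-defined dgga-morphism and a quasi-isomorphism on the diagonal. The only difference is one of presentation: the paper simply declares these verifications ``evident'' and summarizes them in two bullet points, whereas you spell out the bookkeeping (including the check that $\Gamma(\mathcal{R})$ is a sub-dgg-algebra, which the paper establishes in the paragraph preceding the proposition).
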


\begin{proof}
  Let $\mathcal{R}$ be an arbitrary dgg algebra.
  We include the following picture illustrating the 
  morphisms  $\mathcal{R} \hla \Gamma(\mathcal{R}) \sra
  \Ho(\mathcal{R})$.
  The differentials go to the right, the cocycle and cohomology modules
  of the complexes $R^{*j}$ are denoted by
  $\Cy^{ij}$ and $\Ho^{ij}$ respectively.  
  \begin{equation*}
    \xymatrix{
      {\begin{vmatrix}
          R^{01}& \ra & R^{11} \\
          R^{00}& \ra & R^{10}   
        \end{vmatrix}} 
      &
      \ar@{_{(}->}[l]
      {\begin{vmatrix}
          R^{01}  & \ra & \Cy^{11}  \\
          \Cy^{00} & \ra & 0 
        \end{vmatrix}}
      \ar@{->>}[r]
      &
      {\begin{vmatrix}
          0        & \ra & \Ho^{11} \\
          \Ho^{00} & \ra & 0       
        \end{vmatrix}}
    }
  \end{equation*}
  The proposition results from
  the following evident statements.
  \begin{enumerate}
  \item The dgga-inclusion $\Gamma(\mathcal{R}) \hra \mathcal{R}$ induces on cohomology an
    isomorphism in degrees $(i,j)$
    with $i \leq j$ (above the diagonal).
  \item If $\Ho(\mathcal{R})$ vanishes in degrees $(i,j)$ with $i < j$ (the
    cohomology lives below the diagonal), componentwise projection
    $\Gamma(\mathcal{R}) \ra \Ho(\mathcal{R})$ is a well-defined dgga-morphism and induces
    on cohomology an isomorphism in degrees $(i,i)$ (on the diagonal).
  \end{enumerate}
\end{proof}

We generalize Proposition \ref{p:formality-of-pure-dgg-algebras} slightly
to dgg algebras that look like matrices.
Let $\mathcal{R}$ be a dgg algebra and $\{e_\alpha\}_{\alpha \in I}$ a
finite set of orthogonal idempotent elements of $R^{00}$ satisfying 
$1=\sum_{\alpha \in I} e_\alpha$ and $d(e_\alpha)=0$ 
for all $\alpha \in I$. 
We get a direct sum decomposition 
$R=\bigoplus R_{\alpha\beta}$ where 
$R_{\alpha\beta}:=e_\alpha Re_\beta$ for $\alpha$, $\beta \in I$.
The differential of $\mathcal{R}$ induces differentials on each
component $R_{\alpha\beta}$. In particular, we can consider the
cohomologies $\Ho(R_{\alpha\beta})$.

\begin{proposition}\label{p:formality-of-pure-dgg-algebras-with-objects}
  Let $\mathcal{R}$ and $\{e_\alpha\}_{\alpha \in I}$ be as above.
  If there are integers $(n_\alpha)_{\alpha \in I}$
  such that each $\Ho(\mathcal{R}_{\alpha\beta})$ is pure of weight 
  $n_\alpha - n_\beta$, 
  then $\mathcal{R}$ is formal. More precisely, 
  there are a dgg subalgebra $\mathcal{S}$ of $\mathcal{R}$ containing
  all $\{e_\alpha\}_{\alpha \in I}$ and quasi-isomorphisms 
  $\mathcal{R} \hla \mathcal{S} \sra \Ho(\mathcal{R})$
  of dgg algebras.
\end{proposition}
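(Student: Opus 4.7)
The plan is to reduce Proposition~\ref{p:formality-of-pure-dgg-algebras-with-objects} to the already-established Proposition~\ref{p:formality-of-pure-dgg-algebras} by a block-dependent regrading of $\mathcal{R}$ that turns the component-wise purity hypothesis into a single global purity of weight~$0$.

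First I would define a new dgg algebra $\mathcal{R}'$ with the same underlying $k$-algebra, differential and idempotents as $\mathcal{R}$, but with a modified bigrading: an element of original bidegree $(i,j)$ lying in $R_{\alpha\beta}$ is declared to have new bidegree $(i,\,j - n_\alpha + n_\beta)$. Since every element decomposes uniquely as $\sum_{\alpha,\beta} e_\alpha x e_\beta$, this does assemble into a $\DZ^2$-grading on $R$. Three compatibility checks are in order. The differential preserves each block $R_{\alpha\beta}$ (because $d(e_\alpha)=0$), so it still has bidegree $(1,0)$. The multiplication stays bihomogeneous: for $a \in R_{\alpha\beta}^{ij}$ and $b \in R_{\beta\gamma}^{kl}$ the sum of the new bidegrees is $(i+k,\,j+l - n_\alpha + n_\gamma)$, which matches the new bidegree of $ab \in R_{\alpha\gamma}^{i+k,j+l}$. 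The idempotents $e_\alpha \in R_{\alpha\alpha}^{00}$ retain new bidegree $(0,0)$.

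Second, I would observe that the hypothesis $\Ho^{ij}(R_{\alpha\beta}) \neq 0 \Rightarrow j = i + n_\alpha - n_\beta$ says exactly that the cohomology of $\mathcal{R}'$ is concentrated on the new diagonal $j'=i$, i.\,e.\ $\Ho(\mathcal{R}')$ is pure of weight~$0$. Proposition~\ref{p:formality-of-pure-dgg-algebras} then applies directly to $\mathcal{R}'$ and furnishes dgga-quasi-isomorphisms
\begin{equation*}
  \mathcal{R}' \hla \Gamma(\mathcal{R}') \sra \Ho(\mathcal{R}').
\end{equation*}
Transporting this back to the original bigrading, the subalgebra $\Gamma(\mathcal{R}')$ is simultaneously a dgg subalgebra $\mathcal{S}$ of $\mathcal{R}$, described block by block by
\begin{equation*}
  \mathcal{S}_{\alpha\beta}^{ij} =
  \begin{cases}
    R_{\alpha\beta}^{ij} & \text{if } i < j - n_\alpha + n_\beta,\\
    \op{ker}(d^{ij}) & \text{if } i = j - n_\alpha + n_\beta,\\
    0 & \text{if } i > j - n_\alpha + n_\beta.
  \end{cases}
\end{equation*}
Each $e_\alpha$ has original bidegree $(0,0)$ in $R_{\alpha\alpha}$, lies on the shifted diagonal (with $n_\alpha - n_\alpha = 0$), and is a cocycle, hence belongs to $\mathcal{S}$. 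The same two maps then serve as the required dgga-quasi-isomorphisms $\mathcal{R} \hla \mathcal{S} \sra \Ho(\mathcal{R})$.

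The only real obstacle is the bookkeeping of step one: verifying that the block-dependent shift assembles into a well-defined $\DZ^2$-grading compatible with multiplication and the differential. Once this is in place, Proposition~\ref{p:formality-of-pure-dgg-algebras} carries out all of the homological work for free, and the explicit description of $\mathcal{S}$ emerges automatically by translating $\Gamma(\mathcal{R}')$ back to the original bigrading.
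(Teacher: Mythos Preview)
Your proof is correct and lands on exactly the same subalgebra $\mathcal{S}$ as the paper: the paper writes down the block-shifted truncation $S^{ij}_{\alpha\beta}$ directly and checks by hand that inclusion and projection are dgga-quasi-isomorphisms, whereas you achieve the same outcome by regrading so that Proposition~\ref{p:formality-of-pure-dgg-algebras} applies verbatim. The two arguments are essentially identical; your regrading is just a clean way to avoid re-running the verification.
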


\begin{proof}
  Define $S=\bigoplus S^{ij}_{\alpha\beta} \subset R$ by
  \begin{equation*}
    S^{ij}_{\alpha\beta}=
    \begin{cases}
      R^{ij}_{\alpha\beta} & \text{if $i+n_\alpha-n_\beta<j$,}\\
      \kernel(d^{ij}_{\alpha\beta}:R^{ij}_{\alpha\beta} \ra
      R^{i+1,j}_{\alpha\beta}) & \text{if $i+n_\alpha-n_\beta=j$,}\\
      0 & \text{if $i+n_\alpha-n_\beta>j$.}
    \end{cases}
  \end{equation*}
  With the induced multiplication and differential, $S$ becomes a dgg
  subalgebra $\mathcal{S}$ of $\mathcal{R}$.
  The inclusion $\mathcal{S} \hra \mathcal{R}$ and the obvious projection $\mathcal{S} \sra \Ho(\mathcal{R})$
  are easily seen to be quasi-isomorphisms of dgg algebras. 
\end{proof}

\subsection{Subcategories of Triangulated Categories}
\label{sec:subc-triang-categ}

Let $\mathcal{T}$ be a triangulated category, with shift 
$X \mapsto [1]X$. 
If $M$ is a set of
objects of $\mathcal{T}$, we denote by 
$\PraeVerd(M)=\PraeVerd(M, \mathcal{T})$
the smallest strict full triangulated
subcategory of $\mathcal{T}$ that contains 
all objects of $M$, and by 
$\Verd(M)=\Verd(M, \mathcal{T})$
the closure of $\PraeVerd(M)$ under taking direct summands.
If $X$ is an object of $\mathcal{T}$, 
we abbreviate $\PraeVerd(\{X\})$ by $\PraeVerd(X)$, and similarly for $\Verd$.

\begin{lemma}[Beilinson's Lemma]\label{l:praeverdier-cat-iso}
  Let $F:\mathcal{T} \ra \mathcal{T}'$ be a triangulated functor between
  triangulated categories, and let $M$ be a set of objects of
  $\mathcal{T}$. If $F$ induces isomorphisms
  \begin{equation*}
    \Hom_{\mathcal{T}}(X, [i]Y) \sira \Hom_{\mathcal{T}'}(F(X), [i]F(Y)),
  \end{equation*}
  for all $X$, $Y$ in $M$ and all $i \in \DZ$, it induces a triangulated
  equivalence 
  \begin{equation*}
    \PraeVerd(M) \sira \PraeVerd(F(M)),
  \end{equation*}
  where $F(M)=\{F(X) \mid X \in M\}$.
\end{lemma}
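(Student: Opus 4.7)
The plan is a three-step dévissage of the kind standard in constructing equivalences between triangulated subcategories. First I would observe that $F$ restricts to a functor $\PraeVerd(M) \ra \PraeVerd(F(M))$ and deduce essential surjectivity from this observation with no additional work. Then I would prove full faithfulness on $\PraeVerd(M)$ by the usual technique of fixing one variable and varying the other through a triangulated subcategory cut out by the isomorphism condition, applying the five lemma.

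For the first step, let $\mathcal{C} \subset \mathcal{T}'$ denote the full subcategory of objects isomorphic to some $F(Y)$ with $Y \in \PraeVerd(M)$. Since $F$ is triangulated and $\PraeVerd(M)$ is a strict full triangulated subcategory of $\mathcal{T}$, the class $\mathcal{C}$ is closed under shifts and cones, hence is itself a strict full triangulated subcategory of $\mathcal{T}'$. It contains $F(M)$, so by minimality of $\PraeVerd(F(M))$ we obtain $\PraeVerd(F(M)) \subset \mathcal{C}$. This shows at once that $F$ lands in $\PraeVerd(F(M))$ and is essentially surjective onto it.

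For full faithfulness, fix $X \in M$ and let $\mathcal{T}_X \subset \mathcal{T}$ be the full subcategory of objects $Y$ for which the natural map
\begin{equation*}
  \Hom_{\mathcal{T}}(X, [i]Y) \ra \Hom_{\mathcal{T}'}(F(X), [i]F(Y))
\end{equation*}
is an isomorphism for every $i \in \DZ$. Applying both the source and target cohomological functors in $Y$ to a distinguished triangle in $\mathcal{T}$ produces a commutative ladder of long exact sequences, and the five lemma shows that $\mathcal{T}_X$ is closed under cones; closure under shifts and strictness are obvious. Hence $\mathcal{T}_X$ is a strict full triangulated subcategory containing $M$, so $\PraeVerd(M) \subset \mathcal{T}_X$. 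Now fix $Y \in \PraeVerd(M)$ and rerun the identical argument in the first variable, using the contravariant cohomological functors $\Hom_{\mathcal{T}}(?, [i]Y)$ and $\Hom_{\mathcal{T}'}(F(?), [i]F(Y))$: the corresponding full subcategory of $\mathcal{T}$ is again triangulated and contains $M$, hence $\PraeVerd(M)$. I expect no real obstacle; the argument is entirely formal, with the only mild subtlety being strictness of $\mathcal{T}_X$, which holds automatically because the $\Hom$-isomorphism condition is stable under isomorphism of the test object.
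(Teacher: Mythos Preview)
Your approach is precisely the standard d\'evissage the paper invokes (the paper's proof reads, in full, ``This follows by a standard d\'evissage argument''), and your full-faithfulness step is correct. There is, however, an ordering problem in your essential-surjectivity step: you assert that the essential image $\mathcal{C}$ of $F|_{\PraeVerd(M)}$ is closed under cones \emph{before} establishing fullness. This is not automatic. Given a morphism $f': F(Y_1) \ra F(Y_2)$ in $\mathcal{T}'$ between objects of $\mathcal{C}$, its cone lies in $\mathcal{C}$ only once you know $f' = F(f)$ for some $f: Y_1 \ra Y_2$ in $\PraeVerd(M)$, and that is exactly fullness of $F$ on $\PraeVerd(M)$. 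The essential image of a triangulated functor is in general not a triangulated subcategory.

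The fix is simply to swap the order: first run your two d\'evissage passes to obtain full faithfulness on $\PraeVerd(M)$; then every morphism in $\mathcal{C}$ lifts, so $\mathcal{C}$ is a strict full triangulated subcategory of $\mathcal{T}'$ containing $F(M)$, whence $\PraeVerd(F(M)) \subset \mathcal{C}$. (That $F(\PraeVerd(M)) \subset \PraeVerd(F(M))$ is immediate from the fact that $F^{-1}(\PraeVerd(F(M)))$ is a strict full triangulated subcategory of $\mathcal{T}$ containing $M$.)
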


\begin{proof}
  This follows by a standard d{\'e}vissage argument.
\end{proof}

\subsection{Derived Categories and DG Modules}
\label{sec:deriv-categ-dg}

Let $\mathcal{A}$ be an abelian category. We denote by
$\Ket(\mathcal{A})$, $\Hot(\mathcal{A})$ and $\Der(\mathcal{A})$ (or $\dc(\mathcal{A})$) the
category of (cochain) complexes in $\mathcal{A}$, the homotopy
category of complexes in $\mathcal{A}$ and the derived category of
$\mathcal{A}$ respectively, with shift functor $A \mapsto [1]A$.
We often
consider $\mathcal{A}$ as a full subcategory of these categories, consisting
of complexes (with cohomology) concentrated in degree zero.
If $I$ is a subset of $\DZ$, we write $\Der^I(\mathcal{A})$ for the
full subcategory of $\Der(\mathcal{A})$ with objects whose cohomology
vanishes in degrees outside $I$.
For objects $A$ and $B$ in the derived category of $\mathcal{A}$, we
write $\Ext^n_\mathcal{A}(A,B)$ for
$\Hom^n_{\Der(\mathcal{A})}(A,B) :=\Hom_{\Der(\mathcal{A})}(A,[n]B)$
and 
$\Ext_\mathcal{A}(A, B)$ for the direct sum of all
$\Ext_\mathcal{A}^n(A, B)$, $n \in \DZ$.
We call 
\begin{equation}
  \label{eq:ext-algebra}
  \Ext(A) := \Ext_{\mathcal{A}}(A,A) =
  \bigoplus_{n \in \DZ}\Ext_\mathcal{A}^n(A, A)\\.
\end{equation}
the \define{extension algebra} of $A$.

If $M$, $N$ are
complexes in $\mathcal{A}$, let
$\complexHom(M,N)$ 
or $\complexHom_{\mathcal{A}}(M,N)$
denote the complex of abelian groups with $n$-th component 
\begin{equation*}
  \complexHom^n(M,N)
  =\prod_{i+j=n}\Hom_{\mathcal{A}}(M^{-i}, N^j)
\end{equation*}
and differential 
$df = d \comp f - (-1)^n f \comp d$
for each homogeneous $f$ of degree $n$. 
The $n$-th cohomology of this complex is
$\Hom_{\Hot(\mathcal{A})}(M,[n]N)$.
With the obvious composition, $\complexHom(M,M)$ becomes a dg
algebra that we denote by $\complexEnd(M)$.
The functor  
\begin{equation*}
  \complexhom(M, ?):\Ket(\mathcal{A}) \ra \dgMod(\complexEnd(M)),
\end{equation*} 
induces a triangulated functor between the homotopy categories.

Recall the category
$\dgPerDer(\mathcal{R})$
for a dg algebra $\mathcal{R}$ from subsection 
\ref{sec:review-dg-modules}. By definition, it is equal to 
$\Verd(\mathcal{R},\dgDer(\mathcal{R}))$.
If $M$ is a set of $\mathcal{R}$-modules, we define
\begin{equation*}
  \dgPraeDer_{\mathcal{R}}(M) := \PraeVerd(M,\dgDer(\mathcal{R})).
\end{equation*}

\begin{proposition}\label{p:hom-p-dot-iso}
  Let $\mathcal{A}$ be an abelian category, and
  $\{P_\alpha\}_{\alpha \in I}$ a finite set of complexes in
  $\mathcal{A}$ such that the canonical maps
  \begin{equation}
    \label{eq:endazyclic}
    \Hom_{\Hot(\mathcal{A})}(P_\alpha,[n]P_\beta) \ra \Hom_{\dc(\mathcal{A})}(P_\alpha,[n]P_\beta)
  \end{equation}
  are isomorphisms for all $n \in \DZ$ and all $\alpha$, $\beta \in I$.
  (For example, all $P_\alpha$ could be bounded
  above complexes of projective objects of $\mathcal{A}$.) 
  Define $P=\bigoplus P_\alpha$ and $\mathcal{R}= \cEnd(P)$.
  Let $e_\alpha \in \mathcal{R}^0$ be
  the projector from $P$ onto its direct summand $P_\alpha$.
  Then the functor $\complexHom(P, ?)$ induces a triangulated equivalence
  \begin{equation}
    \label{eq:chompequi}
    \PraeVerd(\{P_\alpha\}_{\alpha \in I}, \dc(\mathcal{A})) \sira
    \dgPraeDer_{\mathcal{R}}(\{e_\alpha\mathcal{R}\}_{\alpha \in I}).
  \end{equation}
\end{proposition}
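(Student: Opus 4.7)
My plan is to decompose the desired equivalence \eqref{eq:chompequi} as a composition of three equivalences, each obtained from Beilinson's Lemma (Lemma~\ref{l:praeverdier-cat-iso}). The preliminary observation is that composing with the inclusion $\iota_\alpha: P_\alpha \hra P$ identifies $\cHom(P, P_\alpha)$ with $e_\alpha \cEnd(P) = e_\alpha\mathcal{R}$ as a dg right $\mathcal{R}$-module. Consequently, the triangulated functor $\cHom(P, ?): \Hot(\mathcal{A}) \ra \dgHot(\mathcal{R})$ sends $P_\alpha$ to $e_\alpha\mathcal{R}$.

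I would then apply Lemma~\ref{l:praeverdier-cat-iso} three times. First, to the quotient $\Hot(\mathcal{A}) \ra \dc(\mathcal{A})$ on $\{P_\alpha\}$: the hypothesis \eqref{eq:endazyclic} is precisely the required Hom-isomorphism, so this gives $\PraeVerd(\{P_\alpha\}, \Hot(\mathcal{A})) \sira \PraeVerd(\{P_\alpha\}, \dc(\mathcal{A}))$. Second, to the quotient $\dgHot(\mathcal{R}) \ra \dgDer(\mathcal{R})$ on $\{e_\alpha\mathcal{R}\}$: each $e_\alpha\mathcal{R}$ is a direct summand of $\mathcal{R}$, hence homotopically projective, so by the defining property the Hom-isomorphism holds, yielding $\PraeVerd(\{e_\alpha\mathcal{R}\}, \dgHot(\mathcal{R})) \sira \dgPraeDer_{\mathcal{R}}(\{e_\alpha\mathcal{R}\})$. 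Third, to $\cHom(P, ?): \Hot(\mathcal{A}) \ra \dgHot(\mathcal{R})$: the standard computation
$$\Hom_{\dgHot(\mathcal{R})}(e_\alpha\mathcal{R}, \{n\}e_\beta\mathcal{R}) \cong H^n(e_\beta\mathcal{R}e_\alpha) \cong H^n(\cHom(P_\alpha, P_\beta)) \cong \Hom_{\Hot(\mathcal{A})}(P_\alpha, [n]P_\beta),$$
with the first step by evaluation at $e_\alpha$ and the second by the identification above, provides the required Hom-isomorphism and gives $\PraeVerd(\{P_\alpha\}, \Hot(\mathcal{A})) \sira \PraeVerd(\{e_\alpha\mathcal{R}\}, \dgHot(\mathcal{R}))$.

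Inverting the first equivalence and composing with the third and then the second yields \eqref{eq:chompequi}. The main obstacle is purely bookkeeping: verifying that the identification $\cHom(P, P_\alpha) \cong e_\alpha\mathcal{R}$ respects differentials and the right $\mathcal{R}$-action, reconciling the shift conventions $[1]$ and $\{1\}$ under $\cHom(P, ?)$, and confirming that the composed equivalence restricts on generators to the expected functor. None of these present conceptual difficulty, so essentially all the content is encapsulated in the three applications of Beilinson's Lemma.
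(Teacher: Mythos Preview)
Your proposal is correct and follows essentially the same approach as the paper: the paper organizes the argument as a square diagram whose left vertical, right vertical, and top horizontal arrows are precisely your three equivalences, established by the same reasoning (hypothesis \eqref{eq:endazyclic}, homotopical projectivity of the $e_\alpha\mathcal{R}$, and the identification of both Hom-sets with $\Ho^n(e_\beta\mathcal{R}e_\alpha)$). The only cosmetic difference is that the paper invokes the equivalence $\dgHotproj(\mathcal{R}) \sira \dgDer(\mathcal{R})$ directly for the second step rather than phrasing it as another instance of Beilinson's Lemma.
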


\begin{proof}
  Consider the diagram
  \begin{equation*}
    \xymatrix{
      {\PraeVerd(\{P_\alpha\}_{\alpha \in I}, \Hot(\mathcal{A}))}
      \ar[d]
      \ar[rr]^-{\cHom(P, ?)} &&
      {\PraeVerd(\{e_\alpha\mathcal{R}\}_{\alpha \in I}, \dgHot({\mathcal{R}}))}
      \ar[d] \\
      {\PraeVerd(\{P_\alpha\}_{\alpha \in I}, \dc(\mathcal{A}))} &&
      {\dgPraeDer_{\mathcal{R}}(\{e_\alpha\mathcal{R}\}_{\alpha \in I})}
    }
  \end{equation*}
  with obvious vertical functors. We claim that all arrows are
  equivalences.
  For the arrow on the left this follows from
  \eqref{eq:endazyclic} and Lemma \ref{l:praeverdier-cat-iso}.
  Since all $e_\alpha\mathcal{R}$ are homotopically projective dg
  modules, equivalence \eqref{eq:dgHotp-equiv-dgDer} restricts to the
  equivalence on the right.
  Since $\Hom_{\Hot(\mathcal{A})}(P_\alpha,[n]P_\beta)$ and
  $\Hom_{\dgHot(\mathcal{R})}(e_\alpha
  \mathcal{R},[n]e_\beta\mathcal{R})$ 
  are both naturally identified
  with 
  $\Ho^n(e_\beta\mathcal{R}e_\alpha)$ and these identifications are
  compatible with the functor $\complexhom(P,?)$,
  Lemma \ref{l:praeverdier-cat-iso} proves our claim.
\end{proof}

\begin{remark}\label{rem:hom-p-dot-iso}
    Let $P$ be a complex in an abelian category $\mathcal{A}$ with
    endomorphism complex $\mathcal{R}=\cEnd(P)$. 
    If the composition 
    \begin{equation}
      \label{eq:qchomp}
      \Hot(\mathcal{A}) \xra{\cHom(P, ?)}
      \dgHot(\mathcal{R}) \ra \dgDer(\mathcal{R})
    \end{equation}
    vanishes on acyclic complexes,
    it
    factors through $q:\Hot(\mathcal{A}) \ra \Der(\mathcal{A})$ to a
    triangulated functor
    \begin{equation}
      \label{eq:chomp}
      \cHom(P, ?): \Der(\mathcal{A}) \ra \dgDer(\mathcal{R}).
    \end{equation}
    This is the case, for example, if $P$ is a bounded
    above complex of projective objects of $\mathcal{A}$.

    If we keep the assumptions of Proposition \ref{p:hom-p-dot-iso} and
    assume that the composition \eqref{eq:qchomp} vanishes on acyclic
    complexes, then the restriction of \eqref{eq:chomp} yields directly equivalence 
    \eqref{eq:chompequi}.
\end{remark}

\subsection{Perfect DG Modules}
\label{sec:perfect-dg-modules}

We recall some results from \cite{OSdiss-perfect-dg-arXiv}.
We assume in this subsection that $\mathcal{A}=(A,d)$ is a dg algebra
satisfying the following conditions:
\begin{enumerate}[label={(P\arabic*)}]
\item 
\label{enum:form-pg}
$A$ is positively graded, i.\,e.\ $A^i=0$ for $i < 0$;
\item 
\label{enum:form-ss}
$A^0$ is a semisimple ring;
\item 
\label{enum:form-sdga}
the differential of $\mathcal{A}$ vanishes on $A^0$, i.\,e.\ $d(A^0)=0$. 
\end{enumerate}

The semisimple ring $A^0$ has only a finite number
of non-isomorphic simple (right) modules $(L_x)_{x \in W}$.
We view $A^0$ as a dg subalgebra $\mathcal{A}^0$ of $\mathcal{A}$ and the
$L_x$ as $\mathcal{A}^0$-modules concentrated in degree zero.
Extension of scalars yields 
$\mathcal{A}$-modules $\hat L_x:= L_x \otimes_{\mathcal{A}^0}
\mathcal{A}$.
Define 
\begin{equation*}
  \dgPraeDer(\mathcal{A})
:=\dgPraeDer_{\mathcal{A}}(\{L_x\}_{x \in W}).
\end{equation*}

Let $\dgPerDer^{\leq 0}$ (and $\dgPerDer^{\geq 0}$ resp.) be the full
subcategories of $\dgPerDer(\mathcal{A})$ consisting of objects
$\mathcal{M}$ such that $\Ho^i(\mathcal{M} \Lotimes_{\mathcal{A}} \mathcal{A}^0)$
vanishes for $i > 0$ (for $i < 0$ respectively).
Let $\dgFilMod(\mathcal{A}) \subset \dgMod(\mathcal{A})$ be 
the full subcategory
consisting of objects that have an 
$\hat{L}_x$-flag, i.\,e.\ a
finite filtration with subquotients isomorphic to objects of $\{\hat{L}_x\}_{x \in W}$
(without shifts). 

\begin{theorem}[\cite{OSdiss-perfect-dg-arXiv}]
  \label{t:t-structure-auf-perf}
  Let $\mathcal{A}$ be a dg algebra satisfying 
  \ref{enum:form-pg}-\ref{enum:form-sdga}.
  \begin{enumerate}[label={(\arabic*)}]
  \item 
    Then $\dgPraeDer(\mathcal{A})=\dgPerDer(\mathcal{A})$, i.\,e.\
    $\dgPraeDer(\mathcal{A})$ is closed under taking direct summands.
  \item 
    $(\dgPerDer^{\leq 0}, \dgPerDer^{\geq 0})$ defines a bounded 
    (hence non-de\-gen\-er\-ate) 
    t-struc\-ture on
    $\dgPerDer(\mathcal{A})$.
  \item 
  Its heart $\dgPerDer^0$ 
  is
  equivalent to $\dgFilMod(\mathcal{A})$. More precisely,
  $\dgFilMod(\mathcal{A})$ is a full abelian subcategory
  of $\dgMod(\mathcal{A})$ and the obvious functor
  $\dgMod(\mathcal{A}) \ra \dgDer(\mathcal{A})$ 
  induces an equivalence 
  $\dgFilMod(\mathcal{A}) \sira \dgPerDer^0$.
  \item 
    Any object in the heart $\dgPerDer^0$ has finite length, and  
    the simple objects in $\dgPerDer^0$ are (up to isomorphism) the
    $\{\hat{L}_x\}_{x \in W}$.
  \end{enumerate}
\end{theorem}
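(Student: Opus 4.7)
Everything rests on three preliminary facts about the modules $\hat L_x = L_x \otimes_{\mathcal{A}^0} \mathcal{A}$, all immediate consequences of \ref{enum:form-pg}--\ref{enum:form-sdga}. First, since $\mathcal{A}^0$ is semisimple, $L_x$ is a direct summand of $\mathcal{A}^0$, whence $\hat L_x$ is a direct summand of $\mathcal{A}$, and in particular homotopically projective; dually, $\mathcal{A} = \bigoplus_{x\in W}\hat L_x^{m_x}$ is a \emph{finite} direct sum, so $\mathcal{A}\in\dgPraeDer(\mathcal{A})$ and $\dgPraeDer(\mathcal{A})\subseteq\dgPerDer(\mathcal{A})$. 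Second, the $(?\otimes_{\mathcal{A}^0}\mathcal{A},\text{restriction})$ adjunction combined with homotopical projectivity yields
\begin{equation*}
  \Hom_{\dgDer(\mathcal{A})}(\hat L_x,\{n\}\hat L_y)
  = \Ho^n\cHom_{\mathcal{A}^0}(L_x,\hat L_y|_{\mathcal{A}^0}),
\end{equation*}
which vanishes for $n<0$ (since $\mathcal{A}^n=0$ there) and equals $\delta_{xy}\End_{\mathcal{A}^0}(L_x)$ for $n=0$ (since $d|_{\mathcal{A}^0}=0$). Third, $\hat L_x\Lotimes_{\mathcal{A}}\mathcal{A}^0 = L_x$ is concentrated in degree $0$, and a dévissage on the generators shows that $\Ho^\bullet(M\Lotimes_{\mathcal{A}}\mathcal{A}^0)$ has finite cohomological amplitude for every $M\in\dgPerDer(\mathcal{A})$.

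\textbf{The t-structure (2).} Orthogonality $\Hom(\dgPerDer^{\leq 0},\dgPerDer^{\geq 1})=0$ reduces via Lemma~\ref{l:praeverdier-cat-iso} to the Ext-vanishing $\Hom(\hat L_x\{a\},\hat L_y\{b\})=0$ for $a\geq 0$, $b\leq -1$, and this is immediate from the second fact since then $\Hom(\hat L_x,\hat L_y\{b-a\})$ sits in a negative degree. For the existence of truncation triangles I proceed by descending induction on the largest $k>0$ with $\Ho^k(M\Lotimes_{\mathcal{A}}\mathcal{A}^0)\neq 0$: choosing a simple summand $L_x$ of $\Ho^k$, a standard representability argument based on the second fact realises it as the image of a morphism $\hat L_x\{-k\}\to M$; one then passes to the cone, which strictly drops the amplitude, and iterates. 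The third fact guarantees termination inside $\dgPerDer(\mathcal{A})$, producing the desired triangle $M^{\leq 0}\to M\to M^{\geq 1}\to$, and boundedness of the t-structure follows again from Fact~3. Claim (1) is then a consequence: once the bounded t-structure with finite-length heart is established, every perfect object is a finite iterated extension of shifts of $\hat L_x$'s and so lies in $\dgPraeDer(\mathcal{A})$.

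\textbf{The heart (3)--(4).} An object $M$ in $\dgPerDer^0$ satisfies $\Ho^0(M\Lotimes_{\mathcal{A}}\mathcal{A}^0) = \bigoplus_{x\in W}L_x^{m_x}$ with finite multiplicities. Realising simple summands one at a time as images of maps $\hat L_x\to M$ and iterating --- the Ext-vanishing of Fact~2 keeping the intermediate fibres inside the heart --- exhibits $M$ as a finite iterated extension of $\hat L_x$'s. That this extension is realised by an honest submodule filtration in $\dgMod(\mathcal{A})$, and not merely up to homotopy, is forced by the vanishing of negative Ext's among the $\hat L_x$; thus $M\in\dgFilMod(\mathcal{A})$. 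Conversely every $\hat L_x$-flag is an iterated extension inside the heart, and full faithfulness of $\dgFilMod(\mathcal{A})\to\dgPerDer^0$ together with its abelian structure is forced by the same Ext computations. Finite length is now immediate. Finally, any proper sub $N\subsetneq\hat L_x$ in $\dgFilMod(\mathcal{A})$ would contain some simple $\hat L_y$, giving a non-zero map $\hat L_y\to\hat L_x$; Fact~2 in degree $0$ (Schur) then forces $y=x$ and $N=\hat L_x$ --- contradiction, so $\hat L_x$ is simple. Conversely, any simple $S$ of the heart receives a surjection from some $\hat L_x$ (by lifting a summand of $\Ho^0(S\Lotimes_{\mathcal{A}}\mathcal{A}^0)$), which must be an isomorphism.

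\textbf{Main obstacle.} The decisive step is the construction of the truncation triangles in (2): even once the Ext-vanishing is in hand, one must verify carefully that the inductive lifting of cohomology classes of $M\Lotimes_{\mathcal{A}}\mathcal{A}^0$ back to morphisms really takes place \emph{inside} $\dgPerDer(\mathcal{A})$ and terminates after finitely many steps. This hinges on Fact~3 (finite amplitude of derived restriction on perfect objects), which is itself a separate dévissage on the generators and is the main technical input to the whole proof.
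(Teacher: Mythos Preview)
The paper does not prove this theorem: it is quoted from the companion paper \cite{OSdiss-perfect-dg-arXiv} (as is the closely related Theorem~\ref{t:form-filt-iso-perfect}), so there is no ``paper's own proof'' to compare against. Your sketch is therefore being judged on its own merits.

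The overall architecture is reasonable and the three preliminary facts are correct, but two steps contain genuine gaps.

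\emph{Orthogonality.} You write that $\Hom(\dgPerDer^{\leq 0},\dgPerDer^{\geq 1})=0$ ``reduces via Lemma~\ref{l:praeverdier-cat-iso}'' to vanishing on the $\hat L_x$'s. Lemma~\ref{l:praeverdier-cat-iso} (Beilinson's Lemma) is about a functor inducing an equivalence; it says nothing about propagating Hom-vanishing. A d\'evissage would work only if you already knew that $\dgPerDer^{\leq 0}$ is built (by extensions, no shifts in the wrong direction) from the $\hat L_x\{a\}$ with $a\geq 0$---but that is essentially the content of the truncation triangles you construct afterwards. As written, the orthogonality step is circular.

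\emph{The lifting step.} For the truncation, you take the maximal $k$ with $\Ho^k(M\Lotimes_{\mathcal A}\mathcal A^0)\neq 0$, pick a simple summand $L_x$, and claim a ``standard representability argument'' produces $\hat L_x\{-k\}\to M$ inducing this class. But $\Hom_{\dgDer(\mathcal A)}(\hat L_x\{-k\},M)=\Hom_{A^0}(L_x,\Ho^k(M))$ (via the $(?\otimes_{\mathcal A^0}\mathcal A,\text{res})$ adjunction), whereas the class you want lives in $\Hom_{A^0}(L_x,\Ho^k(M/MA^{>0}))$. The comparison map $\Ho^k(M)\to\Ho^k(M/MA^{>0})$ need not be surjective for an arbitrary homotopically projective $M$, even at the top degree; one must first replace $M$ by a model whose underlying graded module is free over $A$ with generators in the relevant range (i.e.\ an object of $\dgFiltDer(\mathcal A)$). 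That replacement is exactly Theorem~\ref{t:form-filt-iso-perfect}, and it is the real technical core---your sketch presupposes it rather than proving it.

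In the companion paper the logic runs the other way: one first constructs, by an explicit inductive argument on generators, a filtered model for every object of $\dgPerDer(\mathcal A)$ (Theorem~\ref{t:form-filt-iso-perfect}); the t-structure, its boundedness, the description of the heart, and claim~(1) then drop out immediately from the shape of these models.
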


\section{Formality of Derived Categories}
\label{cha:form-deriv-categ}

\subsection{Sheaves and Perverse Sheaves}
\label{sec:perv-sheav-strat}

We only consider complex (algebraic) varieties. 
Let $X$ be a variety. 
We denote by $\Sh(X)$ the abelian
category of sheaves of real vector spaces with respect to the
classical topology on $X$
and by 
$\dcb(X)=\Der^\bd(\Sh(X))$ its bounded derived category. 
Let $\dbcc(X)$ be the full triangulated subcategory of $\dbc(X)$,
consisting of complexes with algebraically constructible cohomology
(\cite[2.2.1]{BBD}).

Any morphism $f:X \ra Y$ of varieties gives rise to functors
$f^*$, $f_*$, $f_!$, $f^!$ relating $\dbc(X)$ and $\dbc(Y)$.
These functors would classically be written $f\inv$, $R\! f_*$,
$R\! f_!$ and $f^!$ respectively. Similarly we write $\otimes$ and
$\sHom$ for the derived functors of tensor product and sheaf
homomorphisms.
We denote the constant sheaf with stalk $\DR$ on $X$ by $\ul{X}$.
Verdier duality is defined by $\DD =\DD_X =\sHom(?, c^!(\ul{\point}))$
where $c:X \ra \point$ is the unique map to the final object $\point$
in the category of varieties. We have
$\DD f_* = f_! \DD$, $\DD f^* = f^! \DD$, and $\DD^2=\id$ on $\dbcc(X)$.

An algebraic stratification of $X$ is a
finite partition $\mathcal{S}$ of $X$ into non-empty locally closed
subvarieties, called strata, such that the closure of each stratum is
a union of strata. 
If $S \in \mathcal{S}$ is a stratum, we denote by
$l_S$ the inclusion of $S$ in $X$. 
From now on, if we speak about stratifications, we always mean
algebraic Whitney stratifications.
In particular, all strata are nonsingular.
We assume in the following that all strata are irreducible varieties.
The (complex) dimension of a stratum $S$ is denoted by $d_S$.
A \define{cell-stratification} is a stratification such that each
stratum $S$ is isomorphic to an affine linear space,
so $S \cong \DC^{d_S}$. 

A sheaf $F \in \Sh(X)$ is called 
\define{smooth (along a stratification $\mathcal{S}$)}
or \define{$\mathcal{S}$-con\-struct\-ible},
if $l_S^*(F)$ is a
local system on $S$, for all $S \in \mathcal{S}$.
Let $\Sh(X, \mathcal{S}) \subset \Sh(X)$ be the full subcategory of
such sheaves.
An object $F$ of $\dcb(X)$ is called
\define{smooth (along $\mathcal{S}$)} or
\define{$\mathcal{S}$-constructible}, if all $\Ho^i(F)$ are  
in $\Sh(X, \mathcal{S})$. 

Let $(X, \mathcal{S})$ be a stratified variety.
The full subcategory $\dcb(X,\mathcal{S}) \subset \dcb(X)$ of
$\mathcal{S}$-constructible objects is a triangulated subcategory and
closed under taking direct summands.
Middle perversity defines perverse t-structures on 
$\dcb(X, \mathcal{S})$ and $\dbcc(X)$, see \cite[2.1, 2.2]{BBD}.
Their hearts $\Perv(X, \mathcal{S})$
and $\Perv(X)$ are the categories of smooth perverse sheaves and of
perverse sheaves respectively.
We have $\Perv(X, \mathcal{S})=\Perv(X)\cap \dcb(X, \mathcal{S})$.
Since any object of $\dcb(X, \mathcal{S})$ has perverse cohomology
in finitely many degrees only (\cite[2.1.2.1]{BBD}), 
perverse truncation shows the non-trivial inclusion in
\begin{equation}
  \label{eq:derbxs-praeverd-verd}
  \dbc(X, \mathcal{S}) 
  = \PraeVerd(\Perv(X, \mathcal{S}), \dbc(X)).
\end{equation}

There is a triangulated equivalence of categories (see
\cite{Beilinson, BBD})
\begin{equation}
  \label{eq:beilinsonequi}
  \real=\real_X: \dbc(\Perv(X)) \sira \dbcc(X).
\end{equation}
We denote this functor often by $A \mapsto \ul{A} := \real(A)$.
In particular,
\begin{equation}
  \label{eq:realextiso}
  \real:\Ext^n_{\Perv(X)}(A,B) 
  \sira \Ext^n_{\Sh(X)}(\ul A, \ul B) 
\end{equation}
is an isomorphism for all $A$, $B \in \dcb(\Perv(X))$.
The corresponding statement for sheaves that are smooth along a fixed
stratification $\mathcal{S}$ is usually false. 

If $\mathcal{S}$ is a cell-stratification and $S \in \mathcal{S}$ a stratum, we
define $\Delta_S=l_{S*}([d_S]\ul{S})$. Since $l_S$ is affine, $\Delta_S$
belongs to $\Perv(X,\mathcal{S})$. The objects isomorphic to some
$\Delta_S$ are called \define{standard objects}.

\begin{theorem}[{\cite[3.2, 3.3]{BGS}}]
  \label{t:bgsequi}
  Let $(X, \mathcal{S})$ be a cell-stratified variety.
  Then the category $\Perv(X,\mathcal{S})$ is artinian and has enough projective and injective
  objects. Each projective object has a finite filtration with standard subquotients.
  Each object has a projective resolution of finite length. 
  There is a triangulated equivalence
  \begin{equation}
    \label{eq:real-X-S}
    \real=\real_{X,\mathcal{S}}: \dbc(\Perv(X,\mathcal{S})) \sira \dbc(X, \mathcal{S});
  \end{equation}
  (this functor is constructed in \cite[3.1]{BBD}) we denote it by $A \mapsto \ul{A}$.
\end{theorem}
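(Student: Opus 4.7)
My approach would closely follow the strategy of BGS, proceeding by induction on the number of strata via the standard recollement. Let $j: S_0 \hookrightarrow X$ be the inclusion of an open stratum and $i: Z \hookrightarrow X$ the inclusion of the closed complement (which is cell-stratified by $\mathcal{S}\setminus\{S_0\}$). The base case is a single cell $X \cong \DC^{d}$, where $\IC(X)=[d]\ul{X} = \Delta_X$ generates $\Perv(X, \mathcal{S})$ and the category is equivalent to finite-dimensional real vector spaces; all claims are then immediate. A key point of the inductive step is that every stratum is simply connected and a cell, so every $\mathcal{S}$-constructible local system on a stratum is constant. This forces the simples of $\Perv(X, \mathcal{S})$ to be precisely the $\IC(T)$, $T \in \mathcal{S}$, and identifies standard and costandard objects as $\Delta_T = l_{T*}[d_T]\ul{T}$ and $\nabla_T = l_{T!}[d_T]\ul{T}$.

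\textbf{Structural statements.} Artinianness and the existence of simples follow from the recollement: an object $A \in \Perv(X, \mathcal{S})$ with $j^*A = 0$ is pushed forward from $\Perv(Z, \mathcal{S}|_Z)$ (artinian by induction), while otherwise one peels off subquotients isomorphic to $\IC(S_0)$ using the perverse cohomology of $j^*A$. To produce enough projectives with $\Delta$-filtrations, I would build the projective cover $P_T$ of $\IC(T)$ inductively: for $T$ in a closed stratum, $\IC(T) = \Delta_T$ is itself projective because the gluing data forces extensions by other simples to vanish. For strata higher up in the specialization order, one extends a projective $P^Z_T$ from the inductive data along $i_*$ and then uses the universal properties of the recollement to successively adjoin copies of $\Delta_{S_0}$ that absorb all possible extensions by $\IC(S_0)$. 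This automatically yields a finite $\Delta$-filtration on $P_T$, and finite projective dimension (hence bounded finite-length projective resolutions for arbitrary objects) follows because the recursion terminates after $|\mathcal{S}|$ steps.

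\textbf{The equivalence $\real$.} The functor $\real_{X, \mathcal{S}}$ is obtained by restricting the BBD realisation functor, constructed in \cite[3.1]{BBD} via the filtered derived category, to $\mathcal{S}$-constructible objects, and is triangulated by construction. Essential surjectivity is immediate from \eqref{eq:derbxs-praeverd-verd}, which gives $\dbc(X, \mathcal{S}) = \PraeVerd(\Perv(X, \mathcal{S}), \dbc(X))$. For full faithfulness I would invoke Beilinson's Lemma (Lemma~\ref{l:praeverdier-cat-iso}) with $M = \{\Delta_S\}_{S \in \mathcal{S}}$, so that it suffices to verify
\begin{equation*}
  \Ext^n_{\Perv(X,\mathcal{S})}(\Delta_S, \Delta_T) \sira \Ext^n_{\dbc(X,\mathcal{S})}(\Delta_S, \Delta_T)
\end{equation*}
for all $S$, $T \in \mathcal{S}$ and all $n \in \DZ$. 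The left-hand side is computed from the $\Delta$-filtration of a projective resolution of $\Delta_S$ constructed in the previous step, while the right-hand side is computable via the adjunctions $(l_T^*, l_{T*})$, $(l_{T!}, l_T^!)$ and the fact that each stratum $\cong \DC^{d}$ is contractible, which forces vanishing of higher cohomology of constant local systems. Reconciling these two $\Ext$ computations stratum by stratum is the main technical obstacle; it ultimately reduces to standard cohomological statements on affine space, and it is exactly here that the cell hypothesis is essential.
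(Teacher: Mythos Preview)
The paper does not give its own proof of this theorem; it simply cites \cite[3.2, 3.3]{BGS}. Your outline follows the BGS recollement induction, which is the right framework, but the inductive step for projectives is stated backwards.

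You claim that for a \emph{closed} stratum $T$ one has $\IC(T)=\Delta_T$ projective in $\Perv(X,\mathcal{S})$. This is false already for $X=\DP^1$ stratified by a point and its complement: the skyscraper at the closed point has $\Ext^1$ into $\IC(\DP^1)$, computed via $i^!$. What is true is the opposite: for the \emph{open} stratum $S_0$, the standard object $\Delta_{S_0}=j_![d_{S_0}]\ul{S_0}$ is projective, because $j_!$ is left adjoint to the exact functor $j^*$. The inductive construction then goes the other way: for $T\subset Z$ one takes the projective cover $P_T^Z$ in $\Perv(Z,\mathcal{S}|_Z)$ (known by induction), pushes it forward by $i_*$, and extends by enough copies of $\Delta_{S_0}$ to kill $\Ext^1(-,\IC(S_0))$. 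Your sentence ``for strata higher up \dots\ one extends $P_T^Z$ along $i_*$'' cannot be right as written, since $i_*$ comes from $Z$, which contains the \emph{lower} (more closed) strata. (Relatedly, your conventions $\Delta_T=l_{T*}$, $\nabla_T=l_{T!}$ are swapped relative to the standard ones; the paper itself is inconsistent here, but this may be contributing to the confusion.)

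For the realization equivalence, reducing via Lemma~\ref{l:praeverdier-cat-iso} is fine, but comparing $\Ext^n(\Delta_S,\Delta_T)$ on both sides is the hard way: these groups are nontrivial in several degrees and awkward to match. The standard device (and what BGS do) is to compare $\Ext^n(\Delta_S,\nabla_T)$, which by adjunction equals $\delta_{S,T}\,\delta_{n,0}\,\DR$ in $\dbc(X,\mathcal{S})$, and vanishes for $n>0$ in $\Derb(\Perv(X,\mathcal{S}))$ because projectives are $\Delta$-filtered. Since the $\Delta_S$ and the $\nabla_T$ together generate, this suffices for Beilinson's Lemma and avoids the stratum-by-stratum cohomological matching you describe as the ``main technical obstacle''.
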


\subsection{Mixed Hodge Structures}
\label{sec:MHS}

The following definitions and results are
taken 
from \cite{HodgeII, DeligneSHMR,DeligneMilneTannakian}.

A (real) mixed Hodge structure $M$ consists of
\begin{enumerate}
\item a real vector space $M_\DR$ of finite dimension,
\item a finite increasing filtration $W$ on $M_\DR$, called 
  weight filtration,
\item a finite decreasing filtration $F$ on the complexification
  $M_\DC=\DC\otimes_\DR M_\DR$, called Hodge filtration,
\end{enumerate}
such that the filtration $W_\DC$, obtained by extension of scalars,
the filtration $F$ and its complex conjugate filtration $\ol{F}$
form a system of three opposed filtrations on $M_\DC$,
i.\,e.\ $\gr^p_F\gr^q_{\ol{F}}\gr^{W_{\DC}}_n (M_{\DC})=0$ if $n \not=
p+q$.
A morphism $f:M \ra N$ of mixed Hodge structures is an
$\DR$-linear map $f_\DR:M_\DR \ra N_\DR$ that is compatible with the
weight filtrations and whose complexification $f_\DC$ is compatible
with the Hodge filtration.

A mixed Hodge structure $M$ has weights $\leq n$ (resp.\
$\geq n$), if $\gr^W_{j,\DR} (M) := \gr^W_j (M_{\DR})=W_j M_\DR /W_{j-1} M_\DR= 0$ for $j > n$ (resp.\ $j<n$).
It is pure of weight $n$, if it is of weight $\leq n$ and of weight $\geq n$.

Let $\DR(n)$ be the Tate structure of weight $-2n$. It is a pure Hodge
structure of weight $-2n$, with $\DR(n)_\DR = (2 \pi i)^n\DR
\subset \DC = \DR(n)_\DC$.

The category $\MHS$ of mixed Hodge structures is a rigid abelian
$\DR$-linear tensor category. It admits the fiber functor ``underlying
vector space'' $\omega_0:\MHS \ra \modover{\DR}$ to the category of
finite dimensional real vector spaces and is hence neutral tannakian.
A mixed Hodge structure $M$ is
polarizable, if each graded piece $\gr^W_n(M)$ is a polarizable Hodge
structure (\cite[2.1.16]{HodgeII}). The polarizable mixed Hodge 
structures 
are a rigid tensor subcategory of $\MHS$.

The functor $\gr^W_{\DR}:\MHS \ra \gmodover{\DR}$, $M \mapsto \bigoplus_{n \in
  \DZ} \gr^W_{n, \DR}(M)$, is an exact faithful $\DR$-linear tensor
functor 
to the category of
finite dimensional graded real vector spaces.
We denote the composition of $\gr^W_\DR$ with the functor ``underlying
vector space'' $\eta:\gmodover{\DR} \ra \modover{\DR}$ by $\omega_W$. This
functor 
$\omega_W:\MHS \ra \modover{\DR}$ is a fiber functor and there is an
isomorphism of fiber functors (\cite[p.~513]{DeligneSHMR})
\begin{equation}
  \label{eq:ffiso}
  a: \omega_0 \silongra \omega_W.
\end{equation}

\subsection{Mixed Hodge Modules}
\label{sec:MHM}

We denote by $\MHM(X)$ the abelian category of mixed Hodge modules
(over $\DR$) on a complex variety $X$ (see
\cite{SaitoOntheTheoryofMHMs, SaitoIntrotoMHMs,BGS}). Instead of mixed
Hodge module we also say Hodge sheaf.
There is a faithful and exact functor $\rat: \MHM(X) \ra \Perv(X)$.
It induces a triangulated functor $\rat: \dbc(\MHM(X))\ra
\dbc(\Perv(X))$. 
Objects and morphisms in $\MHM(X)$ or in $\dbc(\MHM(X))$ are sometimes
denoted by a letter with a tilde, and omission of the tilde means
application of $\rat$, 
e.\,g.\ $\tilde M \mapsto M=\rat(\tilde M)$.

There are functors $\sHom$, $\otimes$ and
Verdier duality
$\DD$.
For $f:X \ra Y$ a morphism of
complex varieties, we have
functors $f^*$, $f_*$, $f_!$, $f^!$ relating
$\dbc(\MHM(X))$ and $\dbc(\MHM(Y))$.
We have the usual adjunctions $(f^*,f_*)$ and
$(f_!, f^!)$ between these functors, and $\DD f_* = f_! \DD$, $\DD f^*
= f^! \DD$ and $\DD^2=\id$. 
All these functors ``commute'' with the composition
\begin{equation*}
  \rera := \real \comp \rat : \dbc(\MHM(X)) \ra \dbc(\Perv(X))\sira
\dbcc(X),
\end{equation*}
where $\real$ is the equivalence \eqref{eq:beilinsonequi}.

The Hodge sheaves on the point $\point$ are the
polarizable mixed Hodge structures (\cite[1.4]{SaitoIntrotoMHMs}). 
Each Tate structure $\DR(n)$ is in $\MHM(\point)$.

Each Hodge sheaf $M \in \MHM(X)$ has a finite
increasing filtration $W$ in $\MHM(X)$ called weight filtration. This filtration is
functorial, and $M
\mapsto \gr^W_n(M)$ is an exact functor (\cite[1.5]{SaitoIntrotoMHMs}).
A Hodge sheaf $M$ has weights $\leq n$ (resp.\
$\geq n$), if $\gr^W_j(M) = 0$ for $j > n$ (resp.\ $j<n$).
More generally, a complex of Hodge sheaves $M$ has weights $\leq n$ (resp.\
$\geq n$), if each $\Ho^i(M)$ has weights $\leq n+i$ (resp.\ $\geq n+i$).
It is called pure of weight $n$, if it has weights $\leq n$ and $\geq n$.

We give some properties of mixed Hodge modules.

\begin{enumerate}[label={(M\arabic*)}]
\item\label{enum:weights-f-upper-star} If $M \in \dbc(\MHM(X))$ is of
  weight $\leq w$ (resp.\ $\geq w$), so are $f_!M$, $f^*M$ (resp.\
  $f_*M$, $f^!M$) (\cite[1.7]{SaitoIntrotoMHMs}).
\item\label{enum:weight-verdier} 
  $M$ is of weight $\leq w$ if and only
  if $\DD M$ is of weight $\geq -w$.
\item\label{enum:GrW-semisimple} For any $M \in \MHM(X)$, every
  $\gr^W_n(M)$ is a semisimple object of $\MHM(X)$
  (\cite[1.9]{SaitoIntrotoMHMs}).
\item\label{enum:pure-cplxs-are-stalk} If $M \in \dbc(\MHM(X))$ is
  pure of weight $n$, we have a noncanonical isomorphism $M \cong
  \bigoplus_{j \in \DZ}[-j]\Ho^j(M)$ (\cite[1.11]{SaitoIntrotoMHMs}).
\end{enumerate}
In the following, $f:X\ra Y$ is a morphism of complex varieties, $M$,
$N$, $A$,
$B$, $C$, $D$ are objects of $\dbc(\MHM(X))$ or $\dbc(\MHM(Y))$, and
$c:X \ra \point$ is the constant map.
\begin{enumerate}[label={(M\arabic*)}, resume]
\item\label{enum:f-upper-star-tensor}
  We have $f^*(A \otimes B) = f^*A \otimes f^*B$. 
\item \label{enum:tate-def} The Tate twist $M(n)$ of $M$ is defined by
  $M(n)=M\otimes c^*(\DR(n))$ (\cite[1.15]{SaitoIntrotoMHMs}),
  satisfies $M(0)=M$ and 
  commutes with all functors $f^*$, $f^!$, $f_*$, $f_!$. 
\item \label{enum:weight-under-shifts} 
  If $M$ is of weight $\leq w$,
  then $M(n)$ is of weight $\leq w - 2n$, and $[n]M$ is of weight $\leq
  w+n$. The same statement with $\leq$ replaced by $\geq$.
\item\label{enum:composition-and-tensor}
  The adjunction $(? \otimes B, \sHom(B, ?))$
  (\cite[2.9]{SaitoExtensionofMHMs})
  yields the composition morphism
  \begin{equation*}
    \sHom(B,C) 
    \otimes 
    \sHom(A,B) 
    \ra \sHom(A, C)
  \end{equation*}
  and in combination with the symmetry of the tensor product a morphism
  \begin{equation*}
    \sHom(A,B) \otimes \sHom(C,D) \ra \sHom(A\otimes C, B
    \otimes D).
  \end{equation*}
\item\label{enum:sheafhom-via-verdier}
  We have 
  (\cite[2.9.3]{SaitoExtensionofMHMs})
   \begin{equation*}
     \sHom(A,B) = \DD(A \otimes \DD B).
   \end{equation*}
\item\label{enum:f-upper-shriek-hom} 
  From \ref{enum:sheafhom-via-verdier} and
  \ref{enum:f-upper-star-tensor} we get
  \begin{equation*}
    f^!\sHom(A,B) 
    = \DD (f^*A \otimes \DD f^!B)
    = \sHom(f^*A, f^! B).
  \end{equation*}
\item \label{enum:smooth-iso} 
  If $f$ is smooth of relative (complex) dimension $n$, we have
   \begin{equation*}
     [2n]f^*(M)(n)=f^!(M).
   \end{equation*}
\end{enumerate}

Let $(X,\mathcal{S})$ be a stratified variety and
$\MHM(X, \mathcal{S})$
the full
abelian subcategory of $\MHM(X)$ consisting of Hodge sheaves $M$ satisfying
$\rat(M) \in \Perv(X, \mathcal{S})$. 
We denote by $\dcb(\MHM(X), \mathcal{S})$ the full subcategory of
$\dcb(\MHM(X))$ consisting of complexes $M$ satisfying $\rera(M) \in
\dcb(X, \mathcal{S})$ (or, equivalently $\Ho^i(M) \in \MHM(X, \mathcal{S})$, for all $i \in \DZ$). 
Objects of $\MHM(X, \mathcal{S})$ and $\dcb(\MHM(X), \mathcal{S})$ are
called smooth (along $\mathcal{S})$.

\begin{proposition}\label{p:hodge-on-Cn}
  Let $S=\DC^n$ for some $n \in \DN$ and $c:S \ra \point$ the constant map.
  If $M \in \MHM(S, \{S\})$ is a
  pure Hodge sheaf of weight $w$ and
  smooth along the trivial stratification, there is a pure
  Hodge structure $E \in \MHM(\point)$ of weight $w-n$ such that $M
  \cong [n]c^*(E)$.
\end{proposition}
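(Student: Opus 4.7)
The plan is to construct $E$ as a shift of the direct image $c_*M$, identify its cohomology using contractibility of $\DC^n$, and check purity via the weight-preservation properties of $c_*$ and $c_!$.

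\textbf{Construction of $E$.} Since $M$ is smooth along $\{S\}$ and $S=\DC^n$ is simply connected, its underlying perverse sheaf is $\rat(M)\cong\ul{L}[n]$ for some finite-dimensional real vector space $L$. Set
\begin{equation*}
  E:=c_*M[-n]\in\dbc(\MHM(\point)).
\end{equation*}
Contractibility of $\DC^n$ gives $c_*\ul{L}\cong L$ concentrated in ordinary degree zero, so $\rat(E)\cong c_*(\ul{L}[n])[-n]\cong c_*\ul{L}\cong L$ lives in degree zero. Conservativity of $\rat$ (a consequence of its faithfulness on $\MHM$) then forces $E$ to lie in $\MHM(\point)\subset\dbc(\MHM(\point))$, and we have $c_*M\cong [n]E$.

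\textbf{Isomorphism $M\cong c^*E[n]$.} Take the counit $c^*c_*M\to M$ of the adjunction $(c^*,c_*)$ and use $c_*M\cong [n]E$ to rewrite it as a morphism $[n]c^*E\to M$ in $\dbc(\MHM(S))$. Since $c$ is smooth of relative dimension $n$, the functor $c^*[n]$ is perverse t-exact (compare~\ref{enum:smooth-iso}), so $[n]c^*E$ lies in $\MHM(S)$. Applying $\rat$, the morphism becomes the counit $c^*c_*(\ul{L}[n])\to\ul{L}[n]$, which is an isomorphism because $\DC^n$ is contractible. Conservativity of $\rat$ then promotes this to an isomorphism $c^*E[n]\sira M$ in $\MHM(S)$.

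\textbf{Purity of $E$.} For the lower bound, $M$ has weight $\geq w$, so by~\ref{enum:weights-f-upper-star} the object $c_*M$ has weight $\geq w$, and by~\ref{enum:weight-under-shifts} the shift $E=c_*M[-n]$ has weight $\geq w-n$. For the upper bound, $M$ has weight $\leq w$, so by~\ref{enum:weights-f-upper-star} so does $c_!M$. Using the isomorphism $M\cong c^*E[n]$, the projection formula, and the standard identification $c_!\ul{S}\cong\DR(-n)[-2n]$ (top compactly supported cohomology of $\DC^n$), one computes $c_!M\cong E(-n)[-n]$; applying~\ref{enum:weight-under-shifts} twice then yields that $E$ has weight $\leq w-n$. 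Combining gives purity of weight $w-n$.

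The main obstacle is the t-exactness bookkeeping: since $c_*$ and $c_!$ are not perverse t-exact, one must verify that both $c_*M$ and $c_!M$ are concentrated in a single perverse degree (using the smoothness of $M$ and contractibility of $\DC^n$), and one must confirm that the projection formula and the Poincar\'e-type identification $c_!\ul{S}\cong\DR(-n)[-2n]$ carry over at the level of mixed Hodge modules rather than merely constructible sheaves.
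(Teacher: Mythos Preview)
Your argument is correct, and it takes a genuinely different route from the paper's proof. The paper invokes the equivalence between smooth pure objects of $\MHM(\DC^n,\{\DC^n\})$ and polarizable variations of Hodge structure of weight $w-n$, and then applies Schmid's Rigidity Theorem to conclude that any such variation on $\DC^n$ is constant (isomorphic to the constant extension of its fibre at the origin). This is conceptually transparent---a VHS on a contractible base is constant---but it imports a substantial analytic result from outside the paper.

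Your approach, by contrast, stays entirely within the six-functor formalism for mixed Hodge modules already recorded as \ref{enum:weights-f-upper-star}--\ref{enum:smooth-iso}. You build $E$ as $c_*M[-n]$, check via $\rat$ and contractibility that it sits in a single degree, use the adjunction counit and Lemma~\ref{l:isorat} to identify $M\cong[n]c^*E$, and then squeeze the weight of $E$ between $w-n$ and $w-n$ using $c_*$ and $c_!$. The only ingredient not on the paper's list is the projection formula; if you prefer to avoid it, you can compute $c_!M=c_!([n]c^*E)$ via Verdier duality and \ref{enum:smooth-iso} directly: $c_!([n]c^*E)=\DD c_*([-n]c^!\DD E)=\DD c_*([n]c^*(\DD E)(n))\cong\DD([n](\DD E)(n))=E(-n)[-n]$, using again that the unit $\id\to c_*c^*$ is an isomorphism by contractibility. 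Either way the identification $c_!\tilde{\ul S}\cong\DR(-n)[-2n]$ follows from $\DD\tilde{\ul S}=c^!\DR(0)=[2n]\tilde{\ul S}(n)$ and the same unit isomorphism, so your flagged obstacles all dissolve. The net effect is a proof that is more formal and more self-contained relative to the paper's toolkit, at the cost of being less geometrically suggestive than the rigidity argument.
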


\begin{proof}
By \cite[2.2
Theorem]{SaitoIntrotoMHMs}, $M$ corresponds to a polarizable variation
$V$ of Hodge structure of weight $w-n$ on $S=\DC^n$. The fiber $V_0$
of $V$ at $0 \in \DC^n$ is a polarizable Hodge structure of weight
$w-n$. We denote its constant extension to $\DC^n$ by $\ul{V_0}$.
Obviously, there is an isomorphism $V \sira \ul{V_0}$ of the
underlying local systems that respects the Hodge filtration at $0 \in
\DC^n$. By the Rigidity Theorem (\cite[7.24]{Schmid}, see also
\cite[13.1.9, 13.1.10]{CMP}), this isomorphism is an isomorphism of
polarizable variations of Hodge structures of weight $w-n$.  We obtain
$M \cong [n]c^*(V_0)$, where we now consider $V_0$ as a polarizable
Hodge structure of weight $w-n$ on $\point$, in particular as an
element of $\MHM(\point)$.
\end{proof}

If $Y$ is a variety, we define $\tilde{\ul{Y}} = c^*(\DR(0)) \in
\dbc(\MHM(Y))$, so $\rera(\tilde{\ul{Y}})=\ul{Y}$.
Let $X$ be an irreducible variety of dimension $d_X$ and $j: U \ra X$ the inclusion of a
nonsingular affine open dense 
subset. The intersection cohomology complexes of $X$ are defined by (\cite[1.13]{SaitoIntrotoMHMs})
\begin{align*}
  \IC(X) & := \bild(j_! ([d_X]\ul U) \ra j_*  ([d_X]\ul U)) \in
  \Perv(X) \text{ and}\\
  \tilde{\IC}(X) & := \bild(j_! ([d_x]\tilde{\ul U}) \ra j_* ([d_x]\tilde{\ul U})) \in \MHM(X).
\end{align*}
This definition does not depend on the choice of $U$,
$\tilde{\IC}(X)$ is simple and pure of weight $d_X:= \dim_\DC X$ and
satisfies $\rat(\tilde{\IC}(X))=\IC(X)$.

If $l_{\ol S}:\ol S \ra X$ is the inclusion of the closure of a stratum
$S$ in a stratified variety $(X, \mathcal{S})$, we denote $l_{\ol{S}*}
(\IC(\ol{S}))$ by 
$\IC_S$, and similarly for $\tilde{\IC}_S$. 
These
objects are smooth, $\tilde{\IC}_S$ is simple and pure of weight 
$d_S$, and we have $\rat(\tilde{\IC}_S)=\IC_S$.
If $\mathcal{S}$ is a cell-stratification, the 
$(\IC_S)_{S \in \mathcal{S}}$ are precisely the simple
objects of $\Perv(X, \mathcal{S})$.

In the introduction, we wrote $\IC(S)$ and $\tilde{\IC}(S)$ instead of
$\IC_S$ and $\tilde{\IC}_S$. We will use this notation later on again.

\subsection{Construction of Epimorphisms from Projective Objects}
\label{sec:constructingepis}

In this subsection we describe an algorithm for
constructing
an epimorphism from a projective object onto a given object. This algorithm
will be used in subsection \ref{sec:perverseprojective} in order to
show that there are enough perverse-projective mixed Hodge modules.

Let $\mathcal{A}$ be an artinian $k$-category, where $k$ is a field.
We write $\Hom$, $\End$, $\Ext$, $\otimes$ instead of $\Hom_\mathcal{A}$,
$\End_\mathcal{A}$, $\Ext_\mathcal{A}$, $\otimes_k$, respectively.
We make the following assumptions:
\begin{enumerate}[label={(E\arabic*)}]
\item\label{en:endo-simple} $\End(L)=k$ for all simple objects 
  $L$ in $\mathcal{A}$.
\item\label{en:enough-projectives} There are enough projective objects in $\mathcal{A}$.
\end{enumerate}
Note that \ref{en:endo-simple} implies that $\Hom(M,N)$ is finite dimensional,
for all $M$, $N \in \mathcal{A}$. Then \ref{en:enough-projectives} shows that 
$\Ext^1(M, N)$ is finite dimensional, for all $M$, $N \in \mathcal{A}$. 

The following algorithm keeps extending simple objects to a given
object until this is no longer possible. In doing this, only
non-trivial extensions are used.

\begin{enumerate}
\item[Step 1:] Take an object $A \in \mathcal{A}$ as input datum.
\item[Step 2:] Set $i= 0$ and $A_0=A$.
\item[Step 3:] While there is a simple object $L \in \mathcal{A}$ with
  $\Ext^1(A_i,L)\not=0$ 
  \begin{enumerate}
  \item[Step 3.1:] Take a simple object $L \in \mathcal{A}$ with $E:=\Ext^1( A_i,L)\not=0$. 
  \item[Step 3.2:] The element $\id \in E^* \otimes E= \Ext^1(A_i,E^*\otimes L)$ 
    gives rise\footnote{Perhaps we should explain what we mean by the object $E^* \otimes L$.  Let
$\Nat$ be the following category: Its objects are the natural numbers
$\DN$; if $m$, $n$ are objects of $\Nat$, we define $\Hom_{\Nat}(m,n)$
to be the set of $n\times m$-matrices over $k$; composition 
is matrix multiplication.  We fix an equivalence of
categories $\phi: \modover{k} \sira \Nat$ between the category
of finite dimensional vector spaces over $k$ and $\Nat$.
There is an obvious functor $(? \otimes ?): \Nat \times
\mathcal{A} \ra \mathcal{A}$, 
$(n, M) \mapsto n \otimes M := M^{\oplus n}$. If
$V$ is a finite dimensional vector space and $M$ is in $\mathcal{A}$, we
define $V \otimes M:= \phi(V) \otimes M$.}
    to an extension $E^* \otimes L \hra A_{i+1} \sra A_i$.
  \item[Step 3.3:] Increase $i$ by $1$.
  \end{enumerate}
\item[Step 4:] Define $Q=A_i$ and return the epimorphism $Q=A_i \sra A_0=A$.
\end{enumerate}

\begin{proposition}\label{p:algoterminatesprojective}
  Given any $A \in \mathcal{A}$, the above algorithm terminates after
  finitely many steps and returns an epimorphism $Q \ra A$ from a
  projective object $Q$.
\end{proposition}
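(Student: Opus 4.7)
The strategy is to realize the sequence $A_0, A_1, A_2, \ldots$ as successive quotients of a single fixed projective object $P$, so that the kernels form a strictly descending chain $K_0 \supsetneq K_1 \supsetneq \cdots$ of subobjects of $P$. The descending chain condition in the artinian category $\mathcal{A}$ then forces termination, and the terminal quotient is projective because the algorithm only stops when $\Ext^1$ vanishes on every simple.

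By \ref{en:enough-projectives} pick a projective $P$ and a surjection $\pi_0: P \sra A = A_0$; set $K_0 := \Ker(\pi_0)$. I claim inductively that $A_i \cong P/K_i$. Given $\pi_i: P \sra A_i$ and a simple $L$ with $E := \Ext^1(A_i, L) \neq 0$, apply $\Hom(-, L)$ to $0 \to K_i \to P \to A_i \to 0$ and use $\Ext^1(P, L) = 0$ to obtain a surjection $\sigma: \Hom(K_i, L) \sra E$. Choose a $k$-linear section $s: E \to \Hom(K_i, L)$ of $\sigma$ and let $f_i: K_i \to E^* \otimes L$ be the morphism corresponding to $s$ under the canonical identification $\Hom(K_i, E^* \otimes L) = E^* \otimes \Hom(K_i, L) = \Hom(E, \Hom(K_i, L))$.

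Crucially, $f_i$ is surjective. By \ref{en:endo-simple} and the simplicity of $L$ this is equivalent to injectivity of the dual map $\Hom(E^* \otimes L, L) = k^{\dim E} \to \Hom(K_i, L)$, $(c_j) \mapsto \sum_j c_j\, s(e_j)$, for a basis $(e_j)$ of $E$; this holds because $s$, being a section of a surjection, is injective, so $\{s(e_j)\}_j$ is linearly independent. The connecting homomorphism $\partial : \Hom(K_i, E^* \otimes L) \sra \Ext^1(A_i, E^* \otimes L) = E^* \otimes E$ factors as $\id_{E^*} \otimes \sigma$ by naturality of the long exact sequence, so $\partial(f_i)$ corresponds to $\sigma \circ s = \id_E$. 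Hence the pushout of $K_i \hra P$ along $f_i$ represents the extension class $\id$, i.e.\ it equals $A_{i+1}$. Because $f_i$ is surjective, this pushout is canonically $P/\Ker(f_i)$, so setting $K_{i+1} := \Ker(f_i)$ gives $A_{i+1} \cong P/K_{i+1}$, with $K_{i+1} \subsetneq K_i$ (strict since $E \neq 0$ forces $f_i \neq 0$).

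The descending chain $K_0 \supsetneq K_1 \supsetneq \cdots$ of subobjects of $P$ must stabilize after finitely many steps by artinianity of $\mathcal{A}$, so the algorithm halts at some step $N$. At termination $\Ext^1(A_N, L) = 0$ for every simple $L$; since objects of $\mathcal{A}$ have finite length, d\'evissage on the target gives $\Ext^1(A_N, -) = 0$, hence $Q := A_N$ is projective. The composition of the successive surjections $Q = A_N \sra \cdots \sra A_0 = A$ is the required epimorphism. The main subtle point is identifying the pushout with $A_{i+1}$; the surjectivity of $f_i$, coming from the injectivity of the section $s$, is the critical ingredient, after which termination is a direct descending-chain-condition argument.
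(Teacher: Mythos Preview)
Your proof is correct and follows the same overall strategy as the paper: realize each $A_i$ as a quotient of a fixed projective $P$, so that termination follows from a finite-length/DCC argument. The technical route differs, however. The paper lifts $\pi_0:P\sra A_0$ to $\pi_i:P\to A_i$ using projectivity of $P$ (an arbitrary lift), and then proves in a separate lemma that \emph{any} such lift is automatically surjective; the contradiction is phrased via the strictly increasing lengths $\lambda(A_i)\leq\lambda(P)$. You instead construct a \emph{specific} surjective lift by identifying $A_{i+1}$ with the pushout of $K_i\hra P$ along the map $f_i:K_i\to E^*\otimes L$ coming from a chosen section of the connecting homomorphism, and then appeal directly to DCC on the kernels $K_i$. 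Your approach is more explicit and avoids isolating a separate lemma, at the cost of the computation verifying that $\partial(f_i)=\id_E$ and that $f_i$ is surjective; the paper's lemma is slightly more general (it applies to every lift, not just the one you build), and its proof packages essentially the same Ext/Hom bookkeeping. Both arguments hinge on the same use of \ref{en:endo-simple} to reduce surjectivity to an injectivity statement about maps into $L$.
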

\begin{proof}
  We denote the length of an object $X \in  \mathcal{A}$ by $\length(X)$. 
  Assume that our algorithm does not stop. Then it constructs a 
  sequence $\dots \sra A_2 \sra A_1 \sra A_0$ of objects and
  epimorphisms with $\length(A_0) < \length(A_1) < \length(A_2) < \dots$. 
  By \ref{en:enough-projectives}, there are a projective object $P$ and an
  epimorphism $\pi_0:P \ra A_0=A$.  
  Since $A_1 \ra A_0$ is epimorphic, there is a
  morphism $\pi_1:P \ra A_1$ lifting $\pi_0:P \ra A_0$. Proceeding in this
  manner we obtain liftings $\pi_i:P \ra A_i$ of $\pi_{i-1}$ for all $i > 0$.
  Now Lemma \ref{l:length-bounded} below shows that all these $\pi_i$ are
  epimorphisms. In particular, we get the contradiction $\length(A_i) \leq \length(P)$ for all $i$.
  So our algorithm stops. The returned object $Q$ 
  is projective since $\Ext^1(Q,L)=0$ for all simple objects $L \in
  \mathcal{A}$.
\end{proof}

\begin{lemma}\label{l:length-bounded}
  Let $\pi:P \sra A$ be an epimorphism from an object $P$ (not
  necessarily projective) onto
  $A$. Let $L$ be a simple object, $E=\Ext^1(A,L)$ and
  \begin{equation}
    \label{eq:lemma-extension}
    0 \ra E^*\otimes L \xra{i} M \xra{c} A \ra 0
  \end{equation} 
  the extension defined by $\id \in E^* \otimes E= \Ext^1(A,
  E^*\otimes L)$. Let $\tilde \pi:P \ra M$ be a morphism such that
  $c \comp \tilde{\pi} =\pi$. 
  Then $\tilde \pi$ is an epimorphism.
\end{lemma}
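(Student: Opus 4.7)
The strategy is to set $N := \Bild(\tilde\pi) \subseteq M$ and show $N = M$ by exploiting the ``universality'' of the extension class $\id \in E^* \otimes E$. Since $c\comp\tilde\pi = \pi$ is epi, we have $c(N) = A$, hence $N + (E^* \otimes L) = M$ as subobjects of $M$. Setting $K := N \cap (E^* \otimes L)$, the second isomorphism theorem supplies $M/N \cong (E^* \otimes L)/K$. Because $L$ is simple, $E^* \otimes L$ is isotypic of type $L$ and every subobject has the form $V \otimes L$ for some $k$-subspace $V \subseteq E^*$; choosing a complement we write $E^* \otimes L = K \oplus K'$ with $K' = W \otimes L$ for some $W \subseteq E^*$, and then $M/N \cong K'$. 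The task thus reduces to proving $W = 0$.

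The crux is to compute the pushout of \eqref{eq:lemma-extension} along the projection $p : E^* \otimes L \sra K'$. This pushout sits in a short exact sequence
\begin{equation*}
  0 \ra K' \ra M/K \ra A \ra 0,
\end{equation*}
and $N/K \subseteq M/K$ maps isomorphically onto $A$ via $c$, so it splits this sequence and forces the pushout class in $\Ext^1(A, K')$ to vanish. On the other hand, by assumption \ref{en:endo-simple} and additivity of $\Ext^1$ in the second argument, the natural isomorphism $\Ext^1(A, V \otimes L) \cong V \otimes E$ (for finite-dimensional $V$) identifies the class of \eqref{eq:lemma-extension} with $\id \in E^* \otimes E$ and identifies the pushforward $p_*$ with the map $E^* \otimes E \ra W \otimes E$ induced by the projection $E^* \sra W$. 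This latter map sends $\id$ to a nonzero element whenever $W \neq 0$, since $\id$ has a nonzero component in each nonzero direct summand of $E^* \otimes E$. Comparing the two descriptions forces $W = 0$, hence $K' = 0$ and $N = M$.

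The only mildly delicate point I anticipate is verifying the naturality of $\Ext^1(A, V \otimes L) \cong V \otimes E$ in $V$, i.e.\ that pushing out an extension along $E^* \otimes L \sra W \otimes L$ corresponds on $\Ext^1$-classes exactly to the induced map $E^* \otimes E \ra W \otimes E$. This is routine, following directly from the canonical decomposition $V \otimes L = L^{\oplus \dim V}$ and the additivity of $\Ext^1$, so I do not expect any real obstacle.
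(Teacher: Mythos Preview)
Your proof is correct and takes a genuinely different route from the paper's. The paper argues dually: it shows $\tilde\pi$ is epi by proving that $\tilde\pi^*:\Hom(M,N)\to\Hom(P,N)$ is injective for every simple $N$. The key step there is that $c^*:\Hom(A,N)\to\Hom(M,N)$ is already a bijection---trivially for $N\not\cong L$, and for $N=L$ because the connecting map $\delta:\Hom(E^*\otimes L,L)\to\Ext^1(A,L)=E$ is an isomorphism (the canonical $E\to\Hom(E^*\otimes L,L)$ satisfies $\delta\circ\can=\id$). Then $\tilde\pi^*\circ c^*=\pi^*$ with $\pi^*$ injective forces $\tilde\pi^*$ injective. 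Your argument instead works directly with the image $N=\Bild(\tilde\pi)$: you identify $M/N$ with a summand $W\otimes L$ of $E^*\otimes L$, push out the universal extension to $W\otimes L$, and observe that the pushout is split (by $N/K$) while its class is the image of $\id$ under $E^*\otimes E\to W\otimes E$, nonzero whenever $W\neq 0$. Both arguments hinge on the same universality of the class $\id$, but yours packages it via pushouts and the image, while the paper's packages it via the long exact $\Hom$--$\Ext$ sequence and testing against simples. One small remark: your assertion that every subobject of $E^*\otimes L$ has the form $V\otimes L$ for a $k$-subspace $V\subseteq E^*$ uses not only that $L$ is simple but also that $\End(L)=k$, i.e.\ assumption \ref{en:endo-simple}; you invoke \ref{en:endo-simple} later, so just move the reference up.
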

\begin{proof}
  We have to show that 
  \begin{enumerate}
  \item[$(*)_N$] The map $\tilde{\pi}^*:\Hom(M,N) \ra \Hom(P,N)$ is injective.
  \end{enumerate}
  holds for all objects $N$.
  If $(N', N, N'')$ is a short exact sequence and
  $(*)_{N'}$ and $(*)_{N''}$ hold, then it is easy to see that
  $(*)_N$ is satisfied. So it is enough to prove $(*)_N$
  for simple objects $N$.

  Let $N$ be simple. By applying $\Hom(?, N)$ to the exact
  sequence \eqref{eq:lemma-extension}, we obtain the exact sequence
  \[0 \ra \Hom(A,N) \xra{c^*} \Hom(M,N) \xra{i^*} \Hom(E^*\otimes L,N)
  \xra{\delta} \Ext^1(A,N).\]
  Our claim is that $c^*$ is bijective.
  For $N\not\cong L$ this is clear, since $\Hom(E^*\otimes L,N)$
  vanishes.
  If $N=L$ there is an obvious map $\can:E \ra {\Hom(E^* \otimes L, L)}$
  such that $\delta \comp \can =\id$.
  Since $\Hom(L,L)=k$ by \ref{en:endo-simple}, $\can$ and hence $\delta$
  are isomorphisms. This shows that $c^*$ is an isomorphism if $N=L$
  or $N\cong L$. 
  
  We now apply $\Hom(?, N)$ to 
  $c \comp \tilde{\pi} =\pi$ and obtain $\tilde{\pi}^* \comp c^* = \pi^*$.
  Since $c^*$ is bijective and $\pi^*$ is injective, $\tilde{\pi}^*$
  is injective and $(*)_N$ is true.
\end{proof}

\subsection{Existence of Enough Perverse-Projective Hodge Sheaves}
\label{sec:perverseprojective}

Let $(X, \mathcal{S})$ be a cell-stratified complex variety.
A smooth Hodge sheaf $\tilde P \in \MHM(X, \mathcal{S})$ is called
\define{perverse-projective}, if the underlying perverse sheaf
$P=\rat(\tilde P)$ is a projective object of $\Perv(X, \mathcal{S})$.
A complex in $\MHM(X, \mathcal{S})$ is called
\define{perverse-projective}, if all its components are perverse-projective.

\begin{proposition}
  \label{p:enoughperverseprojective}
  If $(X, \mathcal{S})$ is a cell-stratified complex variety,
  there are enough perverse-projective objects in $\MHM(X,
  \mathcal{S})$, i.\,e.\ 
  for every smooth Hodge sheaf $\tilde A \in \MHM(X, \mathcal{S})$,
  there is a
  perverse-projective smooth Hodge sheaf $\tilde P \in \MHM(X, \mathcal{S})$ and an
  epimorphism $\tilde P \sra \tilde A$.
\end{proposition}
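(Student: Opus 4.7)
The plan is to adapt the algorithm from subsection \ref{sec:constructingepis} (Proposition \ref{p:algoterminatesprojective}) to the abelian $\DR$-linear category $\MHM(X, \mathcal{S})$. This category is artinian, and its simple objects are, up to isomorphism, the Tate twists $\tilde{\IC}_S(n)$ for $S \in \mathcal{S}$ and $n \in \DZ$; each has endomorphism ring $\DR$, so hypothesis \ref{en:endo-simple} is satisfied with $k=\DR$. Hypothesis \ref{en:enough-projectives}, on the other hand, does \emph{not} hold in $\MHM(X, \mathcal{S})$, since $\Ext^1_{\MHM(X,\mathcal{S})}(\tilde A, \tilde{\IC}_S(n))$ can stay non-zero for infinitely many $n$, and the unmodified algorithm would never terminate. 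The modification is to weaken the loop condition: rather than driving $\tilde A_i$ to be projective in $\MHM(X,\mathcal{S})$, stop as soon as $\rat(\tilde A_i)$ is projective in $\Perv(X, \mathcal{S})$, which is exactly what perverse-projectivity demands.

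Concretely, set $\tilde A_0 := \tilde A$ and $A_i := \rat(\tilde A_i)$. While $A_i$ is not projective in $\Perv(X, \mathcal{S})$, Theorem \ref{t:bgsequi} furnishes a simple $L = \IC_S \in \Perv(X, \mathcal{S})$ with $E := \Ext^1_{\Perv(X,\mathcal{S})}(A_i, L) \neq 0$. The heart of the argument is then to exhibit a Tate twist $\tilde L = \tilde{\IC}_S(n)$ of a Hodge-theoretic lift of $L$ for which the forgetful map $\rat : \Ext^1_{\MHM(X,\mathcal{S})}(\tilde A_i, \tilde L) \to \Ext^1_{\Perv(X,\mathcal{S})}(A_i, L)$ has non-zero image; choosing classes in $\Ext^1_{\MHM}$ whose $\rat$-images span a non-zero subspace of $E$, one forms, in analogy with Step 3.2 of the algorithm, the Hodge extension $0 \to \tilde L^{\oplus m} \to \tilde A_{i+1} \to \tilde A_i \to 0$ in $\MHM(X, \mathcal{S})$ whose underlying perverse sequence is a non-trivial extension of $A_i$ by $L^{\oplus m}$.

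Termination is enforced at the perverse level via Lemma \ref{l:length-bounded}. By Theorem \ref{t:bgsequi}, $A_0 = \rat(\tilde A)$ has a projective cover $\pi_0 : P \sra A_0$ in $\Perv(X, \mathcal{S})$; since each $A_{i+1} \sra A_i$ is an epimorphism in $\Perv(X, \mathcal{S})$, one lifts $\pi_0$ inductively to $\pi_i : P \to A_i$. Applying Lemma \ref{l:length-bounded} in $\Perv(X, \mathcal{S})$ --- which satisfies \ref{en:endo-simple} because simple $\mathcal{S}$-constructible perverse sheaves have endomorphism ring $\DR$, and \ref{en:enough-projectives} by Theorem \ref{t:bgsequi} --- shows that each $\pi_i$ is epimorphic, so $\length(A_i) \leq \length(P)$. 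Each loop pass strictly increases $\length(A_i)$ by $m \geq 1$, so the process halts with some $\tilde Q := \tilde A_N$ such that $\rat(\tilde Q)$ is projective in $\Perv(X, \mathcal{S})$, and the composition $\tilde Q = \tilde A_N \sra \tilde A_{N-1} \sra \dots \sra \tilde A_0 = \tilde A$ is the required epimorphism from a perverse-projective Hodge sheaf.

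The main obstacle is the lifting claim invoked at each loop pass: given a non-zero class in $\Ext^1_{\Perv(X,\mathcal{S})}(A_i, L)$, produce a Tate twist $\tilde L = \tilde{\IC}_S(n)$ and a Hodge extension class above it. The expected argument is a weight-filtration analysis via properties \ref{enum:weights-f-upper-star}--\ref{enum:weight-under-shifts}: the weights of $\tilde A_i$ occupy a finite range, and for $n$ chosen so that the weight of $\tilde{\IC}_S(n) = d_S - 2n$ sits suitably relative to that range, the middle term of any given perverse extension inherits a compatible weight filtration and so lifts to $\MHM(X, \mathcal{S})$; as $n$ varies, the $\rat$-images of $\Ext^1_{\MHM}(\tilde A_i, \tilde{\IC}_S(n))$ should together exhaust $\Ext^1_{\Perv(X,\mathcal{S})}(A_i, L)$. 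Pinning this down rigorously is the technical crux of the proof.
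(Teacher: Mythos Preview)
Your overall strategy---run the algorithm of Proposition~\ref{p:algoterminatesprojective} at the level of mixed Hodge modules while checking termination at the perverse level---is exactly what the paper does. The genuine gap is precisely where you place it: the lifting claim. Your proposed argument via Tate twists is not adequate, and in fact the paper does \emph{not} proceed this way.

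The difficulty with Tate twists is that the image of $\rat:\Ext^1_{\MHM(X,\mathcal{S})}(\tilde A_i,\tilde{\IC}_S(n))\to E=\Ext^1_{\Perv(X,\mathcal{S})}(A_i,\IC_S)$ is, via the short exact sequence \eqref{eq:hodgestratiextension}, identified with $\Hom_{\MHS}(\DR(0),\tilde E(n))$, where $\tilde E$ is the mixed Hodge structure on $E$. There is no reason these subspaces should exhaust $E$ as $n$ varies; already for $\tilde E$ a nonsplit extension of $\DR(0)$ by $\DR(1)$ one checks that $\bigcup_n\Hom_{\MHS}(\DR(0),\tilde E(n))$ is a proper subspace of $E$. (Incidentally, the simple objects of $\MHM(X,\mathcal{S})$ are not only the $\tilde{\IC}_S(n)$: by Proposition~\ref{p:hodge-on-Cn} the coefficient can be any simple polarizable Hodge structure, not just a Tate object.) So the weight-filtration heuristic you sketch does not close the gap.

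The paper sidesteps this entirely by a single observation: for \emph{any} mixed Hodge structure $\tilde E$, the identity $\id_{\tilde E}$ gives a morphism $\DR(0)\to\tilde E^*\otimes\tilde E$ of mixed Hodge structures. Combined with Lemma~\ref{l:otimestildeHompervers}, this places the canonical class $\id_E$ inside $\Ho^0_{\MHM(\point)}\big(\Ext^1_{\Perv(X,\mathcal{S})}(\tilde A_i,\,c^*(\tilde E^*)\otimes\tilde{\IC}_S)\big)$, and \eqref{eq:hodgestratiextension} then lifts it to an extension $c^*(\tilde E^*)\otimes\tilde{\IC}_S\hookrightarrow\tilde A_{i+1}\twoheadrightarrow\tilde A_i$ in $\MHM(X,\mathcal{S})$ whose underlying perverse extension is exactly the universal one from Step~3.2. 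In particular Lemma~\ref{l:length-bounded} applies verbatim, and termination follows from Proposition~\ref{p:algoterminatesprojective} without modification. The key idea you are missing is thus to extend by $c^*(\tilde E^*)\otimes\tilde{\IC}_S$ rather than by Tate twists of $\tilde{\IC}_S$.
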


\begin{proof}
  Postponed to the end of this subsection.
\end{proof}

\begin{corollary}\label{cd:ppreso}
  Every smooth Hodge sheaf $\tilde A \in \MHM(X, \mathcal{S})$ has a
  \define{perverse-projective resolution} $\tilde P \ra \tilde A$, i.\,e.\ 
  there is a perverse-projective complex $\tilde {P} = (\tilde{P}^n,
  d^n)$ in $\MHM(X, \mathcal{S})$ with $\tilde{P}^n = 0$
  for $n > 0$ and a quasi-isomorphism
  $\tilde P \ra \tilde A$
  in  $\Ket(\MHM(X, \mathcal{S}))$. 
  Moreover, we can assume that this resolution is \define{of finite length}, i.\,e.\ $\tilde{P}^n=0$ for $n \ll 0$.
\end{corollary}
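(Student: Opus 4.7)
The plan is to iterate Proposition~\ref{p:enoughperverseprojective} to construct a (possibly unbounded) perverse-projective resolution, and then to truncate it using the finite projective dimension of $\Perv(X,\mathcal{S})$ provided by Theorem~\ref{t:bgsequi}.

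First I would build the resolution inductively. Set $\tilde K_0 := \tilde A$. Given $\tilde K_n \in \MHM(X,\mathcal{S})$, Proposition~\ref{p:enoughperverseprojective} supplies a perverse-projective $\tilde P^{-n} \in \MHM(X,\mathcal{S})$ together with an epimorphism $\tilde P^{-n} \sra \tilde K_n$, and we let $\tilde K_{n+1}$ be its kernel. Since $\Perv(X,\mathcal{S})$ is closed under kernels inside $\Perv(X)$ and $\rat$ is exact and faithful, $\tilde K_{n+1}$ again lies in $\MHM(X,\mathcal{S})$, so the induction continues. Splicing the resulting short exact sequences yields an exact complex
\begin{equation*}
  \cdots \ra \tilde P^{-2} \ra \tilde P^{-1} \ra \tilde P^0 \ra \tilde A \ra 0
\end{equation*}
in $\MHM(X,\mathcal{S})$, whose differential in position $-n$ is the composition $\tilde P^{-n} \sra \tilde K_n \hra \tilde P^{-n+1}$. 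Equivalently, $\tilde P \ra \tilde A$ is a quasi-isomorphism in $\Ket(\MHM(X,\mathcal{S}))$, which establishes the first part of the corollary.

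For finite length, apply the exact faithful functor $\rat$ to obtain a projective resolution $\cdots \ra P^{-1} \ra P^0 \ra A \ra 0$ of $A := \rat(\tilde A)$ in $\Perv(X,\mathcal{S})$. By Theorem~\ref{t:bgsequi}, $A$ has finite projective dimension, say $N$. The classical syzygy argument then shows that $\ker(P^{-N+1} \ra P^{-N+2}) = \rat(\tilde K_N)$ is automatically projective in $\Perv(X,\mathcal{S})$, so by the very definition of perverse-projective, $\tilde K_N$ itself is perverse-projective in $\MHM(X,\mathcal{S})$. Replacing the $(-N)$-th term of our complex by $\tilde K_N$, with the kernel inclusion $\tilde K_N \hra \tilde P^{-N+1}$ as its differential, and setting all terms in degrees below $-N$ to zero, produces the desired perverse-projective resolution of finite length.

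There is no real obstacle in this argument: it is a formal combination of Proposition~\ref{p:enoughperverseprojective}, the exact faithfulness of $\rat$, and the standard fact that the $N$-th syzygy of an object of projective dimension at most $N$ is projective in any projective resolution. The only point that requires a moment of care is the verification that $\MHM(X,\mathcal{S})$ is closed under the kernels used in the inductive construction, which is immediate from the corresponding closure of $\Perv(X,\mathcal{S})$ under kernels in $\Perv(X)$.
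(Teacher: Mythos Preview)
Your proof is correct and is precisely what the paper has in mind: the paper's proof is the two-line remark that the first claim ``is obvious from the proposition'' (i.e.\ iterate Proposition~\ref{p:enoughperverseprojective}) and the second ``follows from Theorem~\ref{t:bgsequi}'' (i.e.\ truncate using finite projective dimension via the syzygy argument). You have simply spelled out these two steps in detail.
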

\begin{proof}
  The first statement is obvious from the proposition, the second one
  follows from Theorem \ref{t:bgsequi}.
\end{proof}

  If $\tilde{M}$, $\tilde{N}\in \MHM(X)$ are Hodge sheaves 
  on $X$ with underlying perverse sheaves $M$ and $N$,
  there is a (polarizable) mixed Hodge structure on all
  $\Ext^i_{\Perv(X)}(M,N)$, defined as follows.
  Let $c: X \ra \point$ be the constant map and $\ul{M}=\real(M)$,
  $\ul{N}=\real(N)$. On a point, perverse cohomology  
  and ordinary cohomology coincide, and we get
  \begin{equation*}
    \rera(\Ho^i c_*\sHom(\tilde{M},
    \tilde{N})) 
    = \Ho^i c_* \sHom(\ul{M},\ul{N})
    = \Ext_{\Sh(X)}^i(\ul{M},\ul{N}).
  \end{equation*}
  Thus, we obtain 
  a natural mixed Hodge structure on
  $\Ext_{\Sh(X)}^i(\ul{M},\ul{N})$.
  We transfer this structure to
  $\Ext^i_{\Perv(X)}(M,N)$ (using \eqref{eq:realextiso}) and denote it
  by $\Ext^i_{\Perv(X)}(\tilde M, \tilde N)$
  (and by $\Hom_{\Perv(X)}(\tilde M, \tilde N)$ for $i=0$).
  If $\tilde{M}$, $\tilde{N}$ are smooth along our
  cell-stratification, the analogous argument 
  equips $\Ext^i_{\Perv(X,\mathcal{S})}(M,N)$
  with a mixed Hodge structure
  $\Ext^i_{\Perv(X,\mathcal{S})}(\tilde{M}, \tilde{N})$.
  
  \begin{remark}\label{rem:extnice}
    Our construction defines bifunctors
    \[\Ext_{\Perv(X)}^i(?, ?):\MHM(X)^\opposite \times \MHM(X) \ra \MHS.\]
    The usual long exact $\Ext$-sequences in both
    variables underlie exact sequences of mixed Hodge structures.
    Furthermore, if $\tilde A$, $\tilde B$ and $\tilde C$ are Hodge
    sheaves, composition defines a morphism of mixed Hodge structures
    \begin{equation}
      \label{eq:extcompo}
      \Ext_{\Perv(X)}^i(\tilde B, \tilde C)
      \otimes
      \Ext_{\Perv(X)}^j(\tilde A, \tilde B) 
      \ra
      \Ext^{i+j}_{\Perv(X)}(\tilde A, \tilde C).
    \end{equation}
    This can be seen as follows. If $F$, $G$ are in $\dbc(\MHM(X))$ 
    there is a natural  morphism 
    $c_*F \otimes
    c_*G \ra c_*(F \otimes G)$. We compose this morphism for
    $F=\sHom(\tilde B, \tilde C)$ and $G=\sHom(\tilde A,
    \tilde B)$ with $c_*$ of the morphism
    $ \sHom(\tilde B, \tilde C) \otimes \sHom(\tilde A, \tilde B)
    \ra \sHom(\tilde A, \tilde C)$
    (cf.~\ref{enum:composition-and-tensor})
    and get a morphism
    \begin{equation*}
      c_*\sHom(\tilde B, \tilde C) 
      \otimes 
      c_*\sHom(\tilde A, \tilde B) 
      \ra c_*\sHom(\tilde A, \tilde C).
    \end{equation*}
    Now we take the $(i+j)$-th cohomology and use the obvious morphism
    of mixed Hodge structures 
    $\Ho^i (c_*F) \otimes \Ho^j (c_*G) \ra \Ho^{i+j}(c_*F \otimes c_*G)$
    in order to get morphism \eqref{eq:extcompo}.
    The analogous remarks are valid for $\Ext_{\Perv(X, \mathcal{S})}^i$.
  \end{remark}

\begin{lemma}\label{l:isorat}
  Let $F: \mathcal{A} \ra \mathcal{B}$ be an exact faithful 
  functor between abelian categories, $F:
  \dc(\mathcal{A}) \ra \dc(\mathcal{B})$ its derived functor, and
  $f$ a morphism in $\dc(\mathcal{A})$. 
  Then $f$ is an isomorphism if and only if $F(f)$ is an isomorphism.
\end{lemma}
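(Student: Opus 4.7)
The plan is to reduce the statement to the analogous assertion for morphisms in the abelian category $\mathcal{A}$ and then use exactness together with faithfulness of $F$.

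First, I recall that a morphism $f$ in $\dc(\mathcal{A})$ is an isomorphism if and only if each cohomology morphism $\Ho^i(f)$ is an isomorphism in $\mathcal{A}$. Since $F: \mathcal{A} \ra \mathcal{B}$ is exact, its derived functor commutes with cohomology, i.\,e.\ there is a natural isomorphism $\Ho^i(F(f)) \cong F(\Ho^i(f))$. Thus it suffices to prove the statement for morphisms in $\mathcal{A}$: a morphism $g: A \ra B$ in $\mathcal{A}$ is an isomorphism if and only if $F(g)$ is an isomorphism in $\mathcal{B}$.

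The ``only if'' direction is automatic. For the converse, the key auxiliary fact is that an exact faithful functor $F$ reflects zero objects: if $F(X)=0$ then $F(\id_X)=0$, and since $F$ is faithful we must have $\id_X=0$, hence $X=0$. Now suppose $F(g)$ is an isomorphism. By exactness, $F(\kernel g) = \kernel F(g) = 0$ and $F(\coker g) = \coker F(g) = 0$, and the previous observation gives $\kernel g = 0 = \coker g$. Therefore $g$ is an isomorphism in $\mathcal{A}$, completing the reduction.

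Nothing in this argument is deep; the only point requiring a moment of care is noting that $F$ being exact ensures it commutes with kernels and cokernels (and with taking cohomology of complexes), which is exactly what makes the faithful-detects-zero trick go through.
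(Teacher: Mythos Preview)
Your argument is correct and is precisely the standard one; the paper itself just writes ``This is an easy exercise'' for this lemma, so there is nothing to compare.
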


\begin{proof}
  This is an easy exercise.
\end{proof}

\begin{lemma}
Let ${A}$, ${B} \in \MHM(X)$ be Hodge sheaves on
$X$, let ${M} \in
\MHM(\point)$ be a polarizable mixed Hodge
structure and $c:X \ra \point$ the constant map.
Then there are natural isomorphisms 
\begin{align}
  \label{eq:isodirectimagetensor}
  M\otimes c_*A & \sira c_*(c^*M \otimes A)\\
  \label{eq:homandconstcommute}
  c^* {M} \otimes \sHom({A},{B})
  & \sira \sHom({A},c^* {M} \otimes{B}). 
\end{align}
in $\dbc(\MHM(\point))$ and $\dbc(\MHM(X))$ respectively.
\end{lemma}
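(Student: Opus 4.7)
The plan is to construct the two morphisms by adjunction within the six-functor formalism of mixed Hodge modules, and then verify they are isomorphisms by applying Lemma~\ref{l:isorat} to the exact faithful functor $\rat$ in order to reduce to standard statements about complexes of constructible sheaves.

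For \eqref{eq:isodirectimagetensor}, I would define the morphism as the $(c^{*},c_{*})$-adjoint of the composition
\[c^{*}(M\otimes c_{*}A)\sira c^{*}M\otimes c^{*}c_{*}A\xra{\id\otimes\varepsilon}c^{*}M\otimes A,\]
where the first arrow is the isomorphism \ref{enum:f-upper-star-tensor} and $\varepsilon:c^{*}c_{*}\to\id$ is the counit of adjunction. For \eqref{eq:homandconstcommute}, I would use the adjunction $(?\otimes A,\sHom(A,?))$ from \ref{enum:composition-and-tensor} and define the morphism as the adjoint of
\[(c^{*}M\otimes\sHom(A,B))\otimes A\sira c^{*}M\otimes(\sHom(A,B)\otimes A)\xra{\id\otimes\op{ev}}c^{*}M\otimes B,\]
where $\op{ev}$ is the evaluation counit. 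Naturality in $M$, $A$, $B$ is then automatic.

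To prove these are isomorphisms, I would apply Lemma~\ref{l:isorat} to the exact faithful functor $\rat:\MHM(?)\to\Perv(?)$ and then compose with the triangulated equivalence $\real$; it thus suffices to check isomorphy of the induced morphisms in $\dbcc$. Using that $\rera$ commutes with $c^{*}$, $c_{*}$, $\otimes$, and $\sHom$ (section~\ref{sec:MHM}), these induced morphisms agree, modulo the canonical commutation isomorphisms, with the classical projection formula $\ul{M}\otimes c_{*}\ul{A}\to c_{*}(c^{*}\ul{M}\otimes\ul{A})$ and with the canonical comparison $c^{*}\ul{M}\otimes\sHom(\ul{A},\ul{B})\to\sHom(\ul{A},c^{*}\ul{M}\otimes\ul{B})$. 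The first is an isomorphism because $\ul{M}$ is a perfect complex (bounded, with finite-dimensional cohomology) on the point. The second is an isomorphism because $c^{*}\ul{M}$ is a bounded complex of constant sheaves with finite-dimensional stalks, so the statement can be checked stalkwise, where it reduces to the tautological identification $V\otimes\Hom(P,Q)\sira\Hom(P,V\otimes Q)$ for a finite-dimensional vector space $V$.

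The main obstacle will be the bookkeeping needed to confirm that the morphism constructed via the Hodge-module adjunctions really does correspond under $\rera$ to the classical morphism on the underlying complexes of sheaves. This is a naturality check that rests on the recorded compatibility of $\rera$ with the six functors together with the fact that the units and counits of the relevant Hodge-module adjunctions are sent by $\rera$ to the analogous units and counits of the sheaf-theoretic adjunctions; once this compatibility is laid out, no further conceptual difficulty arises.
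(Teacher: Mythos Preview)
Your proof is correct and follows the same overall strategy as the paper: construct the morphisms via adjunctions in the six-functor formalism for mixed Hodge modules, then apply Lemma~\ref{l:isorat} to reduce the isomorphism check to the underlying constructible sheaves, where finite-dimensionality of $\rera(M)$ settles the matter.

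The construction of \eqref{eq:isodirectimagetensor} is essentially identical to the paper's. For \eqref{eq:homandconstcommute}, however, you take a more direct route: you use the $(?\otimes A,\sHom(A,?))$ adjunction from \ref{enum:composition-and-tensor} and the evaluation counit. The paper instead first identifies $c^{*}M$ with $\sHom(c^{*}\DR(0),c^{*}M)$ via \ref{enum:tate-def} and \ref{enum:sheafhom-via-verdier}, and then applies the tensor-of-$\sHom$'s morphism from \ref{enum:composition-and-tensor} to obtain
\[
\sHom(c^{*}\DR(0),c^{*}M)\otimes\sHom(A,B)\ra\sHom(c^{*}\DR(0)\otimes A,\,c^{*}M\otimes B)=\sHom(A,\,c^{*}M\otimes B).
\]
Your construction is cleaner and avoids the detour through Verdier duality; the paper's construction has the minor advantage that it only invokes operations explicitly itemized in the list \ref{enum:weights-f-upper-star}--\ref{enum:smooth-iso}, whereas your use of the evaluation counit implicitly appeals to the unit/counit of the $(?\otimes A,\sHom(A,?))$ adjunction, which the paper mentions but does not spell out. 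Either way the verification via Lemma~\ref{l:isorat} goes through, and your explicit identification of the underlying sheaf-theoretic statements (projection formula, stalkwise check) is a welcome addition over the paper's terser ``mapped to isomorphisms by $\rera$''.
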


\begin{proof}
The morphism \eqref{eq:isodirectimagetensor} is the image of the
identity morphism of $c^*M \otimes A$ under the chain of obvious morphisms
\begin{align*}
\Hom(c^*M \otimes A,c^*M \otimes A)
 & \ra \Hom(c^*M \otimes c^*c_*A,c^*M \otimes A)\\
 & = \Hom(c^*(M \otimes c_*A),c^*M \otimes A)\\
 & = \Hom(M \otimes c_*A, c_*(c^*M \otimes A)),
\end{align*}
where $\Hom=\Hom_{\dcb(\MHM)}$.
Since $v(M)$ is a finite dimensional vector space, 
$v \eqref{eq:isodirectimagetensor}$ is an
isomorphism, and Lemma \ref{l:isorat}, applied to $\rat$, 
shows that \eqref{eq:isodirectimagetensor} is an isomorphism.

The morphism \eqref{eq:homandconstcommute} comes from the
identifications (\ref{enum:tate-def},
\ref{enum:sheafhom-via-verdier}) 
\[c^*M = \DD (c^*\DR(0) \otimes \DD c^*M) = \sHom(c^*\DR(0),c^*M)\]
and the morphism (\ref{enum:composition-and-tensor}, \ref{enum:tate-def}) 
\begin{align*}
\sHom(c^*\DR(0),c^*M) \otimes \sHom(A,B) & \ra
\sHom(c^*\DR(0)\otimes A, c^*M \otimes B)\\ 
& =\sHom(A,c^*M \otimes B).
\end{align*}
All these morphisms
are mapped to isomorphisms by the functor $v$, so
\eqref{eq:homandconstcommute} is an isomorphism by Lemma
\ref{l:isorat} and equivalence \eqref{eq:beilinsonequi}.
\end{proof}

\begin{lemma}\label{l:otimestildeHompervers}
  Let $\tilde{A}$, $\tilde{B} \in \MHM(X, \mathcal{S})$, $\tilde{M} \in
  \MHM(\point)$, and let $c:X \ra \point$ be the constant map.
  Then $c^*\tilde{M}\otimes \tilde{B} \in \MHM(X,
  \mathcal{S})$, and there is a natural isomorphism
  \[\tilde{M}\otimes
  {\Ext}^i_{\Perv(X, \mathcal{S})}(\tilde{A},\tilde{B})\sira
  {\Ext}^i_{\Perv(X, \mathcal{S})}(\tilde{A}, c^*
  \tilde{M}\otimes \tilde{B})\]
  of (polarizable) mixed Hodge structures, for all $i \in \DZ$.
\end{lemma}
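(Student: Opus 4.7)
The plan is to reduce the statement to a purely formal manipulation using the two natural isomorphisms established in the preceding lemma, after unfolding the definition of the Hodge structure on $\Ext^i_{\Perv(X,\mathcal{S})}$. Recall from the construction carried out just before Remark~\ref{rem:extnice} that, by definition,
\[
 \Ext^i_{\Perv(X,\mathcal{S})}(\tilde{A},\tilde{B})
 = \Ho^i c_*\sHom(\tilde{A},\tilde{B})
\]
as mixed Hodge structures, and analogously with $\tilde{B}$ replaced by $c^*\tilde{M}\otimes\tilde{B}$. Since $\tilde{M}\in\MHM(\point)$ is a single polarizable Hodge structure whose underlying real vector space is finite-dimensional, the functor $\tilde{M}\otimes(?)$ on $\dbc(\MHM(\point))$ is exact and hence commutes with the formation of $\Ho^i$.

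The calculation then runs as follows:
\begin{align*}
 \tilde{M}\otimes\Ext^i_{\Perv(X,\mathcal{S})}(\tilde{A},\tilde{B})
 &= \Ho^i\bigl(\tilde{M}\otimes c_*\sHom(\tilde{A},\tilde{B})\bigr) \\
 &\cong \Ho^i c_*\bigl(c^*\tilde{M}\otimes\sHom(\tilde{A},\tilde{B})\bigr) \\
 &\cong \Ho^i c_*\sHom\bigl(\tilde{A},\,c^*\tilde{M}\otimes\tilde{B}\bigr) \\
 &= \Ext^i_{\Perv(X,\mathcal{S})}\bigl(\tilde{A},\,c^*\tilde{M}\otimes\tilde{B}\bigr).
\end{align*}
The first isomorphism is \eqref{eq:isodirectimagetensor} applied with $M=\tilde{M}$ and $A=\sHom(\tilde{A},\tilde{B})$; the second is $c_*$ applied to \eqref{eq:homandconstcommute}. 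Naturality in $\tilde{A}$, $\tilde{B}$, and $\tilde{M}$ is inherited from the naturality of these two isomorphisms.

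The only non-formal ingredient, and the step that I expect will require the most care, is the preliminary assertion that $c^*\tilde{M}\otimes\tilde{B}$ actually lies in $\MHM(X,\mathcal{S})$ rather than merely in $\dbc(\MHM(X))$. Morally this is the same phenomenon that makes the Tate twist $\tilde{B}(n)=\tilde{B}\otimes c^*(\DR(n))$ of \ref{enum:tate-def} a single Hodge module. To dispose of it, I would verify that $\rera(c^*\tilde{M}\otimes\tilde{B})$ is non-canonically a finite direct sum of copies of $\rera(\tilde{B})$ (indexed by a basis of the underlying vector space of $\tilde{M}$), and hence a perverse sheaf that is $\mathcal{S}$-constructible, and then appeal to Saito's formalism to confirm that tensoring with the ``constant coefficient'' object $c^*\tilde{M}$ preserves the heart of the perverse $t$-structure on $\dbc(\MHM(X))$. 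Once this bookkeeping is in place, the chain of isomorphisms above delivers the claimed natural isomorphism of mixed Hodge structures.
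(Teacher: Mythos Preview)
Your proof is correct and is essentially identical to the paper's own argument: combine \eqref{eq:isodirectimagetensor} and \eqref{eq:homandconstcommute} to get $\tilde{M}\otimes c_*\sHom(\tilde{A},\tilde{B})\sira c_*\sHom(\tilde{A},c^*\tilde{M}\otimes\tilde{B})$, then take $\Ho^i$ using exactness of $\tilde{M}\otimes(?)$. The only difference is that the paper dismisses the preliminary claim $c^*\tilde{M}\otimes\tilde{B}\in\MHM(X,\mathcal{S})$ as ``obvious'' rather than spelling out the finite-direct-sum justification you give.
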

\begin{proof}
  The first statement is obvious. Isomorphisms \eqref{eq:isodirectimagetensor} and
  \eqref{eq:homandconstcommute} yield an isomorphism
  \begin{equation*}
    \tilde{M} \otimes c_* \sHom(\tilde{A},\tilde{B})
    \sira c_*\sHom(\tilde{A},c^* \tilde{M} \otimes\tilde{B}) 
  \end{equation*}
  Taking the $i$-th cohomology and using the exactness of the functor
  $(\tilde{M} \otimes ?)$ finishes the proof.
\end{proof}

\begin{proof}[Proof of Proposition \ref{p:enoughperverseprojective}]

  If $\tilde M$, $\tilde N \in \MHM(X)$ are Hodge sheaves, there is
  a short exact sequence (see \cite{SaitoExtensionofMHMs})
  \begin{equation*}
    \begin{split}
    0 \ra \Ho^1_{\MHM(\point)} (\Hom_{\Perv(X)}&(\tilde M, \tilde N))
    \ra \Ext^1_{\MHM(X)}(\tilde M, \tilde N) \\
    \ra & \Ho^0_{\MHM(\point)}(\Ext^1_{\Perv(X)}(\tilde M, \tilde N)) \ra
    0, 
    \end{split}
  \end{equation*}
  where $\Ho^i_{\MHM(\point)}$ is the absolute Hodge cohomology
  functor: 
  For $A \in \MHM(\point)$,
  it is defined by 
  $\Ho^i_{\MHM(\point)}(A):=\Ext^i_{\MHM(\point)}(\DR(0), A)$. 
  The categories $\MHM(X, \mathcal{S})$
  and $\Perv(X, \mathcal{S})$ are closed under extensions in
  $\dbc(\MHM(X))$ and $\dbc(X)$
  (\cite[1.3.6, 3.1.17]{BBD}). Thus, for smooth $\tilde M$, $\tilde N \in
  \MHM(X, \mathcal{S})$, there is a short
  exact sequence 
  \begin{equation}
    \label{eq:hodgestratiextension}
    \begin{split}
    0 \ra \Ho^1_{\MHM(\point)} (\Hom_{\Perv(X, \mathcal{S})}&(\tilde M, \tilde N))
    \ra \Ext^1_{\MHM(X,\mathcal{S})}(\tilde M, \tilde N) \\
    \ra & \Ho^0_{\MHM(\point)}(\Ext^1_{\Perv(X, \mathcal{S})}(\tilde M, \tilde N)) \ra
    0.
    \end{split}
  \end{equation}
  
  Let $\tilde M$, $\tilde N \in \MHM(X, \mathcal{S})$ and consider
  the polarizable mixed Hodge structure $\tilde E =
  \Ext^1_{\Perv(X,\mathcal{S})}(\tilde M, \tilde N)$. The map
  $\can:\DR(0) \ra \tilde{E}^* \otimes \tilde E$, $1 \mapsto
  \id_{\tilde E}$, is a
  morphism of polarizable mixed Hodge structures, i.\,e.\ an element
  $\can \in \Ho^0_{\MHM(\point)}(\tilde{E}^* \otimes \tilde E)$. 
  Lemma \ref{l:otimestildeHompervers} yields an isomorphism
  \begin{equation*}
    \tilde{E}^* \otimes \tilde E=\tilde{E}^*\otimes {\Ext}^1_{\Perv(X, \mathcal{S})}(\tilde{M},\tilde{N})\sira
    {\Ext}^1_{\Perv(X, \mathcal{S})}(\tilde{M}, c^*(\tilde{E}^*)\otimes \tilde{N})
  \end{equation*}
  of polarizable mixed Hodge structures. The exact sequence
  \eqref{eq:hodgestratiextension} shows that there is an 
  extension of smooth Hodge sheaves 
  \begin{equation*}
    c^*(\tilde {E}^*) \otimes \tilde{N} \hra \tilde{K} \sra
    \tilde{M}
  \end{equation*}
  such that the underlying
  extension of perverse sheaves is given by the element $\id_E \in E^*
  \otimes E \sira \Ext^1_{\Perv(X, \mathcal{S})}(M, c^*(E^*) \otimes N)$.
    
  We now use the following algorithm in order to prove our
  proposition. 
    
  \begin{enumerate}
  \item[Step 1:] Take an object $\tilde A \in \MHM(X, \mathcal{S})$ as input datum.
  \item[Step 2:] Set $i= 0$ and $\tilde{A}_0=\tilde A$.
  \item[Step 3:] While there is a stratum $S \in \mathcal{S}$ with
    $\Ext^1_{\Perv(X,\mathcal{S})}(\tilde{A}_i,\tilde{\IC}_S)\not=0$
    \begin{enumerate}
    \item[Step 3.1:] Take a stratum $S \in \mathcal{S}$ with
      $\tilde{E}=\Ext^1_{\Perv(X,\mathcal{S})}(\tilde{A}_i,\tilde{\IC}_S)\not=0$.
    \item[Step 3.2:] Choose, as explained above, an extension
      $c^*(\tilde {E}^*) \otimes \tilde{\IC}_S \hra \tilde{A}_{i+1} \sra
      \tilde{A}_i$ of smooth Hodge sheaves such that the underlying
      extension of perverse sheaves is given by $\id \in E^* \otimes
      E$.
    \item[Step 3.3:] Increase $i$ by $1$.
    \end{enumerate}
  \item[Step 4:] Define $\tilde P=\tilde{A}_i$ and return the
    epimorphism $\tilde{P}=\tilde{A}_i \sra \tilde{A}_0=\tilde{A}$
    of smooth Hodge sheaves.
  \end{enumerate}
  The underlying algorithm is the algorithm from subsection
  \ref{sec:constructingepis} for $\mathcal{A}=\Perv(X, \mathcal{S})$.
  Since $\mathcal{S}$ is a cell-stratification, this choice of
  $\mathcal{A}$ is justified by Theorem \ref{t:bgsequi}. 
  Thus, Proposition \ref{p:algoterminatesprojective} shows that our
  algorithm terminates and returns an 
  epimorphism $\tilde P \sra 
  \tilde A$ from a perverse-projective smooth Hodge sheaf
  $\tilde P$.
\end{proof}

\subsection{Comparison of  Mixed Hodge Structures}
\label{sec:comparing}

Let $\tilde A$, $\tilde B \in \MHM(X,\mathcal{S})$ be smooth Hodge
sheaves on a cell-stratified variety $(X, \mathcal{S})$, with
underlying smooth perverse sheaves $A$ and $B$.
As explained in subsection \ref{sec:perverseprojective}, there is a
(polarizable) mixed Hodge structure 
$\Ext^i_{\Perv(X, \mathcal{S})}(\tilde A, \tilde B)$ on 
$\Ext^i_{\Perv(X, \mathcal{S})}(A, B)$.

Now assume that $\tilde{P} \ra \tilde A$ and $\tilde{Q}
\ra \tilde B$ are perverse-projective resolutions of finite length
(cf.\ Corollary and Definition \ref{cd:ppreso}), with underlying
projective resolutions $P \ra A$ and $Q \ra B$. We apply
$\Hom_{\Perv(X,\mathcal{S})}(?, ?)$ to $\tilde{P}$ and
$\tilde{Q}$ and get a double complex 
of (polarizable) mixed Hodge structures
(see Remark \ref{rem:extnice}) with $(i,j)$-component $\Hom_{\Perv(X,
  \mathcal{S})}(\tilde{P}^{-i}, \tilde{Q}^j)$. 
We denote its simple complex by
$\complexHom_{\Perv(X,\mathcal{S})}(\tilde{P}, \tilde{Q})$
or simply by $\cHom(\tilde P, \tilde Q)$.
(We use this notation also for arbitrary complexes $\tilde P$ and
$\tilde Q$ in $\MHM(X, \mathcal{S})$.)
The underlying complex of real vector spaces is
the complex
$\cHom(P,Q)=
\cHom_{\Perv(X, \mathcal{S})}(P,Q)$ from 
subsection \ref{sec:deriv-categ-dg}.
The $n$-th cohomology $\Ho^n(\complexHom_{\Perv(X,\mathcal{S})}(\tilde{P},
\tilde{Q}))$ is a mixed Hodge structure, and its underlying vector
space is 
\begin{equation}
  \label{eq:coho-exts}
  \begin{split}
    \Ho^n(\complexHom_{\Perv(X,\mathcal{S})}(P, Q))
    = \Hom_{\Hot(\Perv(X, \mathcal{S}))}(P, [n]Q)\\
    \notag = \Ext_{\Perv(X, \mathcal{S})}^n(P, Q)
    \notag \sira 
    \Ext^n_{\Perv(X,\mathcal{S})}(P,B) \\
    \notag
    \sila \Ext^n_{\Perv(X,\mathcal{S})}(A,B).
  \end{split}
\end{equation}

\begin{proposition}\label{p:coincide}
  The (polarizable) mixed Hodge structures
  \begin{equation*}
    \Ext^n_{\Perv(X, \mathcal{S})}(\tilde A, \tilde B) \text{ and }
    \Ho^n(\complexHom_{\Perv(X,\mathcal{S})}(\tilde{P},\tilde{Q}))
  \end{equation*}
  with underlying vector space $\Ext^n_{\Perv(X, \mathcal{S})}(A,B)$
  are isomorphic.
\end{proposition}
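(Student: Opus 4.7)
The plan is to compare both mixed Hodge structures through their common source
$c_*\sHom(\tilde A,\tilde B)\in\dbc(\MHM(\point))$ and to replace
$\tilde A$, $\tilde B$ by the perverse-projective resolutions $\tilde P$, $\tilde Q$
inside $\dbc(\MHM(X,\mathcal{S}))$. The first mixed Hodge structure
is by definition $\Ho^n c_*\sHom(\tilde A,\tilde B)$ (transported along \eqref{eq:realextiso}).
Since $\tilde P\ra\tilde A$ and $\tilde Q\ra\tilde B$ are quasi-isomorphisms in
$\Ket(\MHM(X,\mathcal{S}))$, they are isomorphisms in $\dbc(\MHM(X,\mathcal{S}))$,
hence in $\dbc(\MHM(X))$, and therefore induce a canonical isomorphism
\begin{equation*}
  c_*\sHom(\tilde A,\tilde B)\sira c_*\sHom(\tilde P,\tilde Q)
\end{equation*}
in $\dbc(\MHM(\point))$ (with $\sHom$ interpreted as a derived bifunctor).

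Next I would represent $c_*\sHom(\tilde P,\tilde Q)$ by the total complex of
the bicomplex $c_*\sHom(\tilde P^{-i},\tilde Q^j)$. The key vanishing is that
each entry of this bicomplex is concentrated in degree $0$. Indeed, full faithfulness of
$\dbc(X,\mathcal{S})\hra\dbc(X)$ together with the equivalence \eqref{eq:real-X-S}
and isomorphism \eqref{eq:realextiso} gives
\begin{equation*}
  \Ext^k_{\Perv(X)}(P^{-i},Q^j)=\Ext^k_{\Perv(X,\mathcal{S})}(P^{-i},Q^j),
\end{equation*}
and the right hand side vanishes for $k>0$ because $P^{-i}$ is projective in
$\Perv(X,\mathcal{S})$ (and for $k<0$ by the perverse t-structure). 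Since
the underlying-vector-space functor on $\MHM(\point)=\MHS$ is faithful,
this forces $c_*\sHom(\tilde P^{-i},\tilde Q^j)$ to be concentrated in degree $0$,
where it equals the mixed Hodge structure $\Hom_{\Perv(X,\mathcal{S})}(\tilde P^{-i},\tilde Q^j)$
of subsection~\ref{sec:perverseprojective}.

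Consequently the bicomplex lifts canonically to a bicomplex in $\MHM(\point)$,
its total complex represents $c_*\sHom(\tilde P,\tilde Q)$ in $\dbc(\MHM(\point))$,
and this total complex is by construction
$\complexHom_{\Perv(X,\mathcal{S})}(\tilde P,\tilde Q)$. Taking $\Ho^n$ and composing
with the previous isomorphism yields an isomorphism
\begin{equation*}
  \Ext^n_{\Perv(X,\mathcal{S})}(\tilde A,\tilde B)
  =\Ho^n c_*\sHom(\tilde A,\tilde B)
  \sira \Ho^n(\complexHom_{\Perv(X,\mathcal{S})}(\tilde P,\tilde Q))
\end{equation*}
of mixed Hodge structures. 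I would then check that this isomorphism induces,
on underlying real vector spaces, exactly the chain of identifications
\eqref{eq:coho-exts}; this is a functoriality check that follows from the
construction of both sides out of the same derived functor $c_*\sHom$.

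The main obstacle I expect is the compatibility bookkeeping in the last step:
one has to verify that the concrete comparison map used in \eqref{eq:coho-exts}
(built out of the quasi-isomorphisms $P\ra A$ and $Q\ra B$ via $\cHom$) agrees with
the abstract isomorphism coming from the derived functor $c_*\sHom$ applied to
$\tilde P\sira\tilde A$ and $\tilde Q\sira\tilde B$. This should follow from the
naturality of $\sHom$ together with the compatibilities recorded in
Remark~\ref{rem:extnice}, but it is the part where care is needed to ensure that no
sign or choice of representative spoils the identification of underlying vector spaces.
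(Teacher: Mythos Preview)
Your approach is conceptually sound, but you have misidentified where the real work lies. The step you treat as routine --- ``represent $c_*\sHom(\tilde P,\tilde Q)$ by the total complex of the bicomplex $c_*\sHom(\tilde P^{-i},\tilde Q^j)$'' --- is precisely the content of the proposition. The functor $\sHom$ on $\dbc(\MHM(X))$ is \emph{not} the derived functor of an abelian-level bifunctor (it is defined via $\sHom(A,B)=\DD(A\otimes\DD B)$, cf.\ \ref{enum:sheafhom-via-verdier}), and there are not enough injectives in $\MHM(X)$, so no general totalisation formula is available. To justify this step you must run a d\'evissage: filter $\tilde P$ (by stupid truncations, or as the paper does by the kernels $\tilde K^i=\bild(\tilde P^{i-1}\to\tilde P^i)$), use that $c_*\sHom(-,-)$ sends triangles in each variable to triangles, and invoke your degree-$0$ concentration to identify the pieces. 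Carrying this out carefully \emph{is} the paper's proof.

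The paper organises the induction slightly differently and more economically: it first establishes $\Ext^n(\tilde A,\tilde B)\cong\Ho^n(\complexHom(\tilde P,\tilde B))$ by splicing $\tilde P\to\tilde A$ into short exact sequences $(\tilde K^i,\tilde P^i,\tilde K^{i+1})$ and applying the long exact $\Ext$-sequences of mixed Hodge structures from Remark~\ref{rem:extnice}; only afterwards does it replace $\tilde B$ by $\tilde Q$ via Lemma~\ref{l:qiso-hom}. Handling one variable at a time makes the underlying-vector-space identification with \eqref{eq:coho-exts} transparent --- the compatibility you flag as the main obstacle then comes for free, since both sides are visibly built from the same connecting maps. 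Your symmetric formulation would also work, but it needs the same inductive argument, and the ``bookkeeping'' you worry about at the end is the easy part once the totalisation step is done honestly.
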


\begin{proof}
  We write $\Hom$, $\complexHom$ and $\Ext$ instead
  of $\Hom_{\Perv(X, \mathcal{S})}$, $\complexHom_{\Perv(X,
    \mathcal{S})}$ and $\Ext_{\Perv(X, \mathcal{S})}$ respectively,
  and show the existence of isomorphisms of mixed Hodge structures
  \begin{equation}
    \label{eq:two-isos}
    \Ext^n(\tilde{A}, \tilde{B})
    \sila \Ho^n(\complexHom(\tilde{P},\tilde{B}))
    \sila \Ho^n(\complexHom(\tilde{P},\tilde{Q})).
  \end{equation}

  Let us construct the isomorphism on the left in \eqref{eq:two-isos}.
  We decompose $\tilde{P}\ra\tilde A$ into short exact sequences
  as follows. For $i \leq 0$, let $\tilde{K}^{i}$ be the image of the
  differential $\tilde{P}^{i-1} \ra \tilde{P}^i$, and define
  $\tilde{K}^1 = \tilde{A}$. For each $i \leq 0$, we get a short exact sequence
  $(\tilde{K}^{i}, \tilde{P}^{i}, \tilde{K}^{i+1})$.
  The associated long exact $\Ext$-sequence in the first variable gives an exact
  sequence of mixed Hodge modules (see Remark \ref{rem:extnice})
  \begin{align}
    \label{eq:longext}
    0 & \ra \Hom(\tilde{K}^{i+1}, \tilde{B}) \ra \Hom(\tilde{P}^{i},
    \tilde{B})) \ra \Hom(\tilde{K}^{i}, \tilde{B}) \\
    & \ra \Ext^1(\tilde{K}^{i+1}, \tilde{B}) \ra 0 \notag
  \end{align}
  and isomorphisms
  \begin{equation}
    \label{eq:higher-ext}
    \Ext^j(\tilde{K}^{i}, \tilde{B}) \sira
    \Ext^{j+1}(\tilde{K}^{i+1}, \tilde{B}) 
  \end{equation}
  for all $j \geq 1$. 
  The $0$-th cohomology of the complex $\tilde{C} :=
  \complexHom(\tilde{P},\tilde{B})$ is
  \begin{equation*}
    \Ho^0(\tilde{C}) = \Ker \big(\Hom(\tilde{P}^{0},
    \tilde{B}) \ra \Hom(\tilde{K}^{0}, \tilde{B})\big)
    \sira \Hom(\tilde{A}, \tilde{B})
  \end{equation*}
  by \eqref{eq:longext} for $i=0$.
  For $m \geq 0$ we have
  \begin{align*}
    \Ho^{m+1}(\tilde{C}) & = \Cok \big(\Hom(\tilde{P}^{-m},
    \tilde{B}) \ra \Hom(\tilde{K}^{-m}, \tilde{B})\big)\\
    & \sira \Ext^1(\tilde{K}^{-m+1}, \tilde{B})\\
    & \sira \Ext^{m+1}(\tilde{A}, \tilde{B})
  \end{align*}
  by \eqref{eq:longext} and repeated use of \eqref{eq:higher-ext}. 
  This establishes the isomorphism on the left in \eqref{eq:two-isos}.
  The isomorphism on the right 
  is a consequence of the following Lemma \ref{l:qiso-hom}
  applied to the quasi-isomorphism 
  $\tilde{Q} \ra \tilde{B}$.
\end{proof}

\begin{lemma}
  \label{l:qiso-hom}
  Let $\tilde{P}$, $\tilde{Q}$ and $\tilde{R}$ be
  complexes in $\MHM(X, \mathcal{S})$, and assume that
  $\tilde{P}$ is perverse-projective and bounded above.
  Then any quasi-isomorphism 
  $f:\tilde{Q} \ra \tilde{R}$ in $\Ket(\MHM(X, \mathcal{S}))$ induces
  a quasi-iso\-mor\-phism
  $\complexHom(\tilde{P},\tilde{Q}) \ra \complexHom(\tilde{P},\tilde{R})$.
\end{lemma}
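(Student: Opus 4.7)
My approach reduces the lemma to a classical homological algebra fact by first passing to the mapping cone of $f$ and then stripping away the Hodge-theoretic decoration via the functor $\rat$.

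First I form the mapping cone $\tilde{C}=\op{cone}(f)$ in $\Ket(\MHM(X,\mathcal{S}))$. Since $f$ is a quasi-isomorphism, $\tilde{C}$ is acyclic. Applying the additive bifunctor $\cHom(\tilde{P},?)$ to the standard triangle $\tilde{Q}\ra\tilde{R}\ra\tilde{C}\ra[1]\tilde{Q}$ yields a triangle in the homotopy category of complexes of mixed Hodge structures (here I use the functorial isomorphism $\cHom(\tilde{P},\op{cone}(f))\cong\op{cone}(\cHom(\tilde{P},f))$). Consequently it suffices to show: if $\tilde{C}$ is acyclic in $\Ket(\MHM(X,\mathcal{S}))$, then $\cHom(\tilde{P},\tilde{C})$ is an acyclic complex of mixed Hodge structures.

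Next I strip away the Hodge decoration. The underlying complex of real vector spaces of $\cHom(\tilde{P},\tilde{C})$ is $\cHom_{\Perv(X,\mathcal{S})}(P,C)$, where $P=\rat(\tilde{P})$ and $C=\rat(\tilde{C})$; and a complex of mixed Hodge structures is acyclic if and only if its underlying complex of real vector spaces is acyclic, since the fiber functor $\omega_0$ is exact and faithful. Because $\rat$ is exact and faithful, $C$ is acyclic in $\Ket(\Perv(X,\mathcal{S}))$; because each $\tilde{P}^{k}$ is perverse-projective by hypothesis, each $P^{k}$ is projective in $\Perv(X,\mathcal{S})$; and $P$ is bounded above.

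The problem is thereby reduced to the classical fact that if $P$ is a bounded-above complex of projective objects of an abelian category and $C$ is an acyclic complex therein, then $\cHom(P,C)$ is acyclic. I would prove this by the standard descending induction starting from the top nonzero degree $N$ of $P$: given a degree-$n$ cocycle $\phi:P\ra[n]C$, one constructs a null-homotopy $h$ by setting $h^{k}=0$ for $k>N$ and, having produced $h^{k+1},h^{k+2},\ldots,h^{N}$, observing that the map $\phi^{k}-h^{k+1}\circ d_{P}^{k}:P^{k}\ra C^{n+k}$ is a cocycle in $C$ (a direct computation from the chain-map condition for $\phi$ together with the inductive relation), hence a coboundary by acyclicity of $C$, so that projectivity of $P^{k}$ provides the required lift $h^{k}$. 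The bounded-above hypothesis on $P$ is exactly what allows the induction to start; no serious obstacle arises, and notably no boundedness assumption on $\tilde{R}$ is needed since the construction is performed one degree at a time.
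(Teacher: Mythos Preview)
Your proof is correct and follows essentially the same approach as the paper: reduce via the exact faithful functor $\rat$ (equivalently $\omega_0$) to the underlying complexes of perverse sheaves, and then invoke the classical fact that a bounded-above complex of projectives is homotopically projective. The paper is simply terser --- it states the reduction in one sentence and then cites Remark~\ref{rem:hom-p-dot-iso} for the classical statement, whereas you unpack the same content via the mapping cone and an explicit null-homotopy construction.
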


\begin{proof}
  It suffices to prove that $\rat(f): Q \ra R$ in $\Ket(\Perv(X, \mathcal{S}))$
  induces a quasi-isomorphism
  $\complexHom({P},{Q}) \ra \complexHom({P},{R})$.
  But this follows from Remark \ref{rem:hom-p-dot-iso}.
\end{proof}

\begin{remark}\label{rem:componice}
Assume that $\tilde{P} \ra \tilde A$, 
$\tilde{Q} \ra \tilde B$ 
and $\tilde{R} \ra \tilde C$ 
are perverse-projective resolutions of finite length.
Then $\complexHom(\tilde{P}, \tilde{Q})$ and
$\complexHom(\tilde{Q}, \tilde{R})$ are complexes
of (polarizable) mixed Hodge structures. The obvious composition map 
\begin{equation*}
  \complexHom(\tilde{Q}, \tilde{R})
  \otimes
  \complexHom(\tilde{P}, \tilde{Q})
  \ra \complexHom(\tilde{P}, \tilde{R})
\end{equation*}
is a morphism of complexes of mixed Hodge structures. It induces a
morphism of mixed Hodge structures
\begin{equation*}
\Ho^i(\complexHom(\tilde{Q}, \tilde{R}))
\otimes
\Ho^j(\complexHom(\tilde{P}, \tilde{Q}))
\ra \Ho^{i+j}(\complexHom(\tilde{P}, \tilde{R})).
\end{equation*}
Under the identifications from (the proof of) Proposition
\ref{p:coincide}, this morphism corresponds to the morphism
\eqref{eq:extcompo} in Remark \ref{rem:extnice}. 
The reason for this fact is that both morphisms are just the composition of
morphisms in the derived category of perverse sheaves, if we forget
about the mixed Hodge structures. 
\end{remark}

\subsection{Local-to-Global Spectral Sequence}

If $X$ is a complex variety and $M$ a complex of sheaves 
(of Hodge sheaves respectively) on $X$,
the hypercohomology $\hyper(M) := \Ho(X;M):=\Ho(c_*M)$ is
a complex (with differential zero) of vector spaces (of mixed Hodge
structures respectively). 
Here $c:X \ra \point$ is the constant map.
If $l: Y \hra X$ is a locally closed subvariety,
the local hypercohomology of $M$ along $Y$ is
$\hyper_Y(M) := \hyper(l^!M)  = \hyper(l_*l^! M)$.

Consider now a complex variety $X$ filtered by closed subvarieties
$X=X_0 \supset X_1 \supset \dots \supset X_r = \emptyset$.
If $M\in\dbc(X)$ is a complex of sheaves, there is a local-to-global
spectral sequence with $E_1$-term
$E_1^{p,q}=\hyper^{p+q}_{X_p-X_{p+1}}(M)$ converging to
$E_\infty^{p,q}=\gr^p(\hyper^{p+q}(M))$. 
It can be constructed from an injective resolution of $M$, 
cf. \cite[3.4]{BGS}. 

Even though there are not enough injective Hodge sheaves, we shall
construct a similar spectral sequence of mixed (polarizable) Hodge
structures, if $M$ is a complex of Hodge sheaves on our filtered
variety $X$. In order to do so, we need the following technical proposition.

\begin{proposition}\label{p:baue-filtrierten-komplex}
  Let $\mathcal{A}$ be an abelian category and 
  \begin{equation*}
    \xymatrix{
      {A^0} 
      & \ar[l]_-{a_1} {A^1} 
      & \ar[l]_-{a_2} {\dots}
      & \ar[l]_-{a_{r-1}} {A^{r-1}} 
      & \ar[l]_-{a_r} {A^r=0}
    }
  \end{equation*}
  a finite sequence of objects and morphisms in
  $\dbc(\mathcal{A})$. Then there is a bounded 
  complex $K$ in $\mathcal{A}$ with a finite filtration $F$ by
  subcomplexes, $K=F^0 K \supset F^1 K \supset \dots \supset F^{r-1} K
  \supset F^r K=0$, and quasi-isomorphisms $u_p:F^p K \ra A^p$ in
  $\Ket(\mathcal{A})$ such that the diagram
  \begin{equation*}
    \xymatrix{
      {A^0} 
      & {A^1} \ar[l]_-{a_1}
      & {A^2} \ar[l]_-{a_2}
      & \ar[l]_{a_3} {\dots}
      & {A^{r-1}} \ar[l]_-{a_{r-1}}
      & {A^r} \ar[l]_-{a_r}\\
      {F^0 K} \ar[u]_-{u_0}
      & {F^1 K} \ar[l]_-{{k}_1} \ar[u]_-{u_1} 
      & {F^2 K} \ar[l]_-{{k}_2} \ar[u]_-{u_2} 
      & \ar[l]_{k_3} {\dots}
      & {F^{r-1} K} \ar[l]_-{{k}_{r-1}} \ar[u]_-{u_{r-1}} 
      & {F^{r} K} \ar[l]_-{{k}_{r}} \ar[u]_-{u_{r}} 
    }
  \end{equation*}
  commutes in $\dbc(\mathcal{A})$. Here $k_p: F^pK \hra F^{p+1}K$
  denotes the inclusion.
\end{proposition}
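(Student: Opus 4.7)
My plan is to proceed by induction on $r$. The base case $r=1$ is trivial: choose any bounded complex $K$ representing $A^0$, set $F^0 K = K$ and $F^1 K = 0$, and let $u_0$ be the chosen quasi-isomorphism.

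For the inductive step from $r-1$ to $r$, I first apply the induction hypothesis to the truncated sequence $A^1 \la A^2 \la \dots \la A^{r-1} \la A^r = 0$ to obtain a bounded complex $K'$ with filtration $K' = G^0 K' \supset G^1 K' \supset \dots \supset G^{r-1} K' = 0$ and quasi-isomorphisms $v_p : G^p K' \to A^{p+1}$ realizing the corresponding smaller diagram. The remaining task is to extend $K'$ upward by one step so as to also realize $a_1 : A^1 \to A^0$.

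The key tool is the mapping cylinder. Using the calculus of fractions in $\dbc(\mathcal{A}) = \Ket^\bd(\mathcal{A})[\text{qis}^{-1}]$, I represent the morphism $a_1 \circ v_0 : K' \to A^0$ in $\dbc(\mathcal{A})$ by a roof $K' \xra{\phi} L \xla{s} C^0$, where $C^0$ is some bounded complex representing $A^0$, $\phi$ is an honest chain map, and $s$ is a quasi-isomorphism; the complex $L$ then represents $A^0$ (via $[s]^{-1}$), and $\phi$ realizes $a_1 \circ v_0$ as a genuine chain map. I form the mapping cylinder $K := \operatorname{Cyl}(\phi)$ in the category of bounded complexes of $\mathcal{A}$. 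Its standard properties provide a subcomplex inclusion $i : K' \hra K$ and a quasi-isomorphism $\pi : K \to L$ with $\pi \circ i = \phi$. I set $F^0 K := K$, and for $1 \leq p \leq r$ define $F^p K := i(G^{p-1} K')$, so that in particular $F^1 K = i(K') \cong K'$ and $F^r K = 0$. The quasi-isomorphisms are $u_0 := [s]^{-1} \circ [\pi]$ and $u_p := v_{p-1}$ for $1 \leq p \leq r$.

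It remains to verify the commutativity of the full diagram in $\dbc(\mathcal{A})$. The squares with indices $p \geq 2$ commute by the inductive hypothesis applied to $K'$, since the filtration on $K'$ was carried over unchanged. The top square commutes by construction, because $\pi \circ i = \phi$ represents $a_1 \circ v_0$ in $\dbc(\mathcal{A})$, which is exactly the assertion $u_0 \circ k_1 = a_1 \circ u_1$. The main obstacle is purely technical: one must represent the derived-category morphism $a_1 \circ v_0$ by an actual chain map in order to be able to attach it to $K'$ via a genuine subcomplex inclusion. The calculus of fractions combined with the mapping cylinder circumvents this in one stroke, since the mapping cylinder realizes any chain map between bounded complexes as an inclusion of subcomplex followed by a deformation retraction.
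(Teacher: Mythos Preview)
Your induction-plus-mapping-cylinder approach is the standard one, and the paper itself does not give an argument in the text: it only cites \cite[3.1.2.7]{BBD} and refers to \cite[Prop.~26]{OSdiss} for an explicit proof. So there is nothing to compare against at the level of strategy; what you wrote is exactly the expected line of attack.

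There is, however, one genuine gap. The statement requires the $u_p$ to be quasi-isomorphisms \emph{in $\Ket(\mathcal{A})$}, i.e.\ honest chain maps to the given complexes $A^p$, not merely isomorphisms in $\dbc(\mathcal{A})$. Your $u_0 := [s]^{-1}\circ[\pi]$ is only a derived-category morphism: you have chain maps $\pi:K\to L$ and $s:A^0\to L$, but no chain map $K\to A^0$. Saying that ``$L$ represents $A^0$'' does not fix this, because $A^0$ is a fixed bounded complex, not an isomorphism class.

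The repair is easy and worth making explicit. Before starting the induction, preprocess the sequence so that every $a_p$ is an actual chain map: working from $p=1$ upward, represent $a_p$ (composed with the previously chosen quasi-isomorphism) by a \emph{left} roof $A^p \xla{t_p} B^p \xra{\tilde a_p} B^{p-1}$ with $t_p$ a quasi-isomorphism, and record the chain map quasi-isomorphisms $B^p\to A^p$. Now run your induction on the $B^p$ with the honest chain maps $\tilde a_p$. In the inductive step, $\phi := \tilde a_1 \circ v_0 : K' \to B^0$ is already a chain map, so $K=\operatorname{Cyl}(\phi)$ comes with the canonical chain map projection $\pi:K\to B^0$, which is a quasi-isomorphism. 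Finally compose each $F^pK\to B^p$ with $B^p\to A^p$ to obtain the required chain map quasi-isomorphisms $u_p:F^pK\to A^p$. With this adjustment your proof is complete.
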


\begin{proof}
  This seems to be well known, cf.\ the similar statement given in
  \cite[3.1.2.7]{BBD} without proof.
  For an explicit proof see \cite[Prop.~26]{OSdiss}
\end{proof}

Now let $M \in \dbc(\MHM(X))$ be a complex of Hodge sheaves on $X$,
where $X$ is a complex variety, filtered by closed subvarieties
$X=X_0 \supset X_1 \supset \dots \supset X_r = \emptyset$.
If we let $i_p:X_p \hra X$ denote the inclusion, the adjunctions
$(i_{p*}=i_{p!},i_p^!)$ yield a sequence in $\dbc(\MHM(X))$
\begin{equation*}
  \xymatrix{
    {M=i_{0*}i_0^! M} 
    & \ar[l] {i_{1*}i_1^! M} 
    & \ar[l] {\dots} 
    & \ar[l] {i_{r-1*}i_{r-1}^! M} 
    & \ar[l] {i_{r*}i_{r}^! M=0.}
  }
\end{equation*}
We apply $c_*$ to this sequence, where $c:X \ra \point$. Proposition
\ref{p:baue-filtrierten-komplex} shows that there is a diagram 
\begin{equation*}
  \xymatrix{
    {c_*M} 
    & {c_*i_{1*}i_1^! M} \ar[l]
    & {c_*i_{2*}i_2^! M} \ar[l]
    & \ar[l] {\dots}
    & 0 \ar[l]\\
    {K=F^0 K} \ar[u]
    & {F^1 K} \ar[l]
\ar[u]
    & {F^2 K} \ar[l]
\ar[u]
    & \ar[l]
{\dots}
    & {F^{r} K = 0,} \ar[l]
\ar[u]
  }
\end{equation*}
where the lower horizontal row is a finite filtration on a complex
$K$ in $\MHS$, the vertical maps are quasi-isomorphisms in
$\Ketb(\MHS)$, and the diagram commutes in $\dbc(\MHS)$.
(We could write $\MHM(\point)$ instead of $\MHS$.)

By \cite[Proposition XX.9.3]{Lang-Algebra}, there exists a spectral
sequence $(E_r, d_r)_{r \geq 0}$ with 
$E_1^{p,q}  = \Ho^{p+q}(\gr_F^p (K))$ and 
$E_\infty^{p,q}  = \gr^p(\Ho^{p+q}(K))$.
Using standard techniques it is easy to identify the $E_1$-term
of this spectral sequence with
$\hyper^{p+q}_{X_p-X_{p+1}}(M)$
(for details see \cite[2.11]{OSdiss}). 
This proves

\begin{proposition}\label{p:hodge-lgss}
Let $X$ be a complex variety, filtered by closed subvarieties
$X=X_0 \supset X_1 \supset \dots \supset X_r = \emptyset$, and 
$M \in \dbc(\MHM(X))$ a complex of Hodge sheaves on $X$.
Then there is a spectral sequence $(E_r, d_r)_{r \geq 0}$ of
(polarizable) mixed Hodge structures with $E_1$-term  
$E_1^{p,q}=\hyper^{p+q}_{X_p-X_{p+1}}(M)$ 
that converges to $E_\infty^{p,q} =
\gr^p(\hyper^{p+q}(M))$
(where $\hyper(M)$ is filtered by the images of the obvious maps
$\hyper_{X_p}(M) \ra \hyper(M)$).
\end{proposition}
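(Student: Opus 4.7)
The plan is to lift the filtration data to an honest filtered complex in $\MHS$ via Proposition \ref{p:baue-filtrierten-komplex}, then apply the standard spectral sequence of a filtered complex, and finally identify the $E_1$- and $E_\infty$-terms. First I will let $i_p : X_p \hra X$ denote the closed inclusions (so $i_0 = \id$ and $X_r = \emptyset$) and $l_p : X_p - X_{p+1} \hra X$ the locally closed inclusions. The adjunction counits yield a sequence
\begin{equation*}
M = i_{0*} i_0^! M \la i_{1*} i_1^! M \la \dots \la i_{r*} i_r^! M = 0
\end{equation*}
in $\dbc(\MHM(X))$, which I transport to $\dbc(\MHM(\point)) = \dbc(\MHS)$ by applying $c_*$ for $c : X \ra \point$.

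Next I invoke Proposition \ref{p:baue-filtrierten-komplex} with $\mathcal{A} = \MHS$ to realize the resulting sequence as the filtration steps of a bounded complex $K$ in $\MHS$, equipped with a subcomplex filtration $K = F^0 K \supset F^1 K \supset \dots \supset F^r K = 0$ and quasi-isomorphisms $u_p : F^p K \sira c_* i_{p*} i_p^! M$ compatible with the transition maps. The standard spectral sequence of a filtered bounded complex in an abelian category (Lang, Algebra XX.9.3) then lives entirely in $\MHS$ and satisfies
\begin{equation*}
E_1^{p,q} = \Ho^{p+q}(\gr^p_F K), \quad E_\infty^{p,q} = \gr^p \Ho^{p+q}(K).
\end{equation*}

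Finally I identify the terms. The quasi-isomorphism $u_0 : K \sira c_* M$ gives $\Ho^{p+q}(K) \sira \hyper^{p+q}(M)$, and the commutativity of the lifting diagram shows the induced filtration agrees with the images of $\hyper_{X_p}(M) = \Ho^{p+q}(c_* i_{p*} i_p^! M) \ra \hyper^{p+q}(M)$, giving the asserted $E_\infty$. For $E_1$, the short exact sequence $F^{p+1} K \hra F^p K \sra \gr^p_F K$ together with the open-closed distinguished triangle
\begin{equation*}
i_{(p+1)*} i_{p+1}^! M \ra i_{p*} i_p^! M \ra l_{p*} l_p^! M \ra
\end{equation*}
identifies $\gr^p_F K$ with $c_* l_{p*} l_p^! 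M = c_* l_p^! M$ in $\dbc(\MHS)$, whose $(p+q)$-th cohomology is by definition $\hyper^{p+q}_{X_p - X_{p+1}}(M)$.

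The main obstacle is this last $E_1$-identification: one must verify that the strict short exact sequence $F^{p+1} K \hra F^p K \sra \gr^p_F K$ corresponds, under the quasi-isomorphisms $u_p$, to the distinguished triangle arising from the complementary inclusions $(X_p - X_{p+1}, X_p, X_{p+1})$. This is not automatic from the commutativity furnished by Proposition \ref{p:baue-filtrierten-komplex} alone; it requires checking that cones of the lifted chain maps compute the derived cones and that $c_*$ intertwines them — the "standard techniques" that the text ultimately defers to \cite[2.11]{OSdiss}.
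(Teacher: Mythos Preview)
Your proposal is correct and follows essentially the same route as the paper: you build the sequence of adjunction maps $i_{p*}i_p^! M$, push down via $c_*$, invoke Proposition~\ref{p:baue-filtrierten-komplex} to realize this as a filtered complex in $\MHS$, and then read off the spectral sequence via \cite[XX.9.3]{Lang-Algebra}. You even identify correctly that the only nontrivial point is matching $\gr^p_F K$ with $c_* l_{p*} l_p^! M$ via the open--closed triangle, which is exactly what the paper defers to \cite[2.11]{OSdiss} under the phrase ``standard techniques''.
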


\subsection{Purity}
\label{sec:purity}
Let $(X, \mathcal{S})$ be a stratified complex variety, $\tilde M
\in \MHM(X)$ and $w \in \DZ$. We say that $\tilde M$ is
\define{$\mathcal{S}$-$*$-pure of weight $w$}, if, for all strata $S
\in \mathcal{S}$, the restrictions 
$l_S^*\tilde M$ are pure of weight $w$.
It is \define{$\mathcal{S}$-$!$-pure of weight $w$}, if all restrictions
$l_S^!\tilde M$ are pure of weight $w$,
and \define{$\mathcal{S}$-pure of weight $w$}, if it is 
$\mathcal{S}$-$*$-pure and 
$\mathcal{S}$-$!$-pure of weight $w$.

\begin{theorem}\label{t:purity-for-Ext}
Let $(X, \mathcal{S})$ be a cell-stratified complex variety,
$\tilde M$, $\tilde N \in \MHM(X,\mathcal{S})$ 
smooth Hodge sheaves, and $m$, $n \in \DZ$.
If $\tilde M$ is $\mathcal{S}$-$*$-pure of weight $m$ and $\tilde N$ is
$\mathcal{S}$-$!$-pure of weight $n$,
the complex (with
differential zero) of (polarizable) mixed Hodge structures 
\[\Ext_{\Perv(X,\mathcal{S})}(\tilde M, \tilde N) = \bigoplus_{i \in
  \DN} \Ext^i_{\Perv(X,\mathcal{S})}(\tilde M, \tilde N)\]
is pure of weight $n-m$.
\end{theorem}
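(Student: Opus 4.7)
My plan is to realize $\Ext^i_{\Perv(X,\mathcal{S})}(\tilde M,\tilde N)$ as the hypercohomology $\hyper^i(\sHom(\tilde M,\tilde N))$ of the complex of Hodge sheaves $\sHom(\tilde M,\tilde N) \in \dbc(\MHM(X))$ and to extract its weight from the local-to-global spectral sequence of Proposition~\ref{p:hodge-lgss}. For the filtration I take the closed skeleta $X_p = \bigsqcup_{d_S \leq \dim X - p} S$, so that $X_p - X_{p+1}$ is the disjoint union of the strata of fixed codimension $p$. Using \ref{enum:f-upper-shriek-hom} the $E_1$-term becomes
\begin{equation*}
  E_1^{p,q} = \bigoplus_{d_S = \dim X - p} \hyper^{p+q}(\sHom(l_S^*\tilde M, l_S^!\tilde N)),
\end{equation*}
and the problem reduces to proving that $c_{S*}\sHom(l_S^*\tilde M, l_S^!\tilde N)$ is pure of weight $n-m$ for each stratum $S$.

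Fix $S \cong \DC^{d_S}$ with constant map $c_S \colon S \ra \point$. By hypothesis $l_S^*\tilde M$ is pure of weight $m$ and $l_S^!\tilde N$ is pure of weight $n$, and both are smooth along $\{S\}$. Property~\ref{enum:pure-cplxs-are-stalk} then gives noncanonical decompositions $l_S^*\tilde M \cong \bigoplus_j [-j]\Ho^j(l_S^*\tilde M)$ and $l_S^!\tilde N \cong \bigoplus_k [-k]\Ho^k(l_S^!\tilde N)$, with $\Ho^j(l_S^*\tilde M),\Ho^k(l_S^!\tilde N) \in \MHM(S,\{S\})$ pure of weights $m+j$ and $n+k$ respectively, and Proposition~\ref{p:hodge-on-Cn} identifies them with $[d_S] c_S^*(E_j)$ and $[d_S] c_S^*(F_k)$ for pure Hodge structures $E_j$, $F_k$ of weights $m+j-d_S$ and $n+k-d_S$. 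Combining \ref{enum:sheafhom-via-verdier}, \ref{enum:f-upper-shriek-hom}, \ref{enum:smooth-iso} and the compatibility of $\sHom$ with shifts and Tate twists I obtain $\sHom(c_S^*E, c_S^*F) \cong c_S^*\sHom(E,F)$ for $E,F \in \MHM(\point)$, hence
\begin{equation*}
  \sHom(l_S^*\tilde M, l_S^!\tilde N) \cong \bigoplus_{j,k} [j-k]\, c_S^*\sHom(E_j,F_k).
\end{equation*}
Each $\sHom(E_j,F_k)$ is a pure Hodge structure of weight $(n+k-d_S)-(m+j-d_S)=n-m+k-j$; pullback along the smooth map $c_S$ preserves purity (combining \ref{enum:weights-f-upper-star} and \ref{enum:smooth-iso} with \ref{enum:weight-under-shifts}); and the shift $[j-k]$ adds $j-k$ to the weight by \ref{enum:weight-under-shifts}. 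Every summand is therefore pure of weight $n-m$, and contractibility of $\DC^{d_S}$ gives $c_{S*}c_S^* \cong \id$ on $\dbc(\MHM(\point))$, so
\begin{equation*}
  c_{S*}\sHom(l_S^*\tilde M, l_S^!\tilde N) \cong \bigoplus_{j,k}[j-k]\sHom(E_j,F_k)
\end{equation*}
is again pure of weight $n-m$.

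Returning to the spectral sequence, $E_1^{p,q}$ is now pure of weight $n-m+p+q$. The differential $d_r\colon E_r^{p,q} \ra E_r^{p+r,q+1-r}$ is a morphism of mixed Hodge structures between subquotients of pure MHS of distinct weights $n-m+p+q$ and $n-m+p+q+1$; no nonzero such morphism exists, so every $d_r$ vanishes and $E_\infty^{p,q} = E_1^{p,q}$ stays pure of weight $n-m+p+q$. Since $E_\infty^{p,q} = \gr^p\hyper^{p+q}(\sHom(\tilde M,\tilde N))$ and any MHS whose weight-graded pieces are all pure of the same weight is itself pure, I conclude that $\hyper^i(\sHom(\tilde M,\tilde N)) = \Ext^i_{\Perv(X,\mathcal{S})}(\tilde M,\tilde N)$ is pure of weight $n-m+i$, which is exactly the purity of weight $n-m$ for the zero-differential complex $\bigoplus_i \Ext^i$. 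The main obstacle will be the stalkwise Hodge computation of the second paragraph --- in particular the identification $\sHom(c_S^*E, c_S^*F) \cong c_S^*\sHom(E,F)$ and the preservation of purity by $c_S^*$; once these are in place the spectral sequence collapse and the conclusion are purely formal.
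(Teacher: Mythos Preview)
Your proof is correct and follows essentially the same route as the paper: both set up the local-to-global spectral sequence of Proposition~\ref{p:hodge-lgss} for the codimension filtration, reduce the $E_1$-purity to a stratumwise computation via \ref{enum:f-upper-shriek-hom}, decompose using \ref{enum:pure-cplxs-are-stalk} and Proposition~\ref{p:hodge-on-Cn}, and then collapse the spectral sequence by weight reasons. The only cosmetic difference is that the paper (in Lemma~\ref{l:E1-pure}) packages one factor via $c^!$ rather than $c^*$ before invoking the adjunction $\id \sira c_*c^*$, whereas you pull both through $c_S^*$ and use $\sHom(c_S^*E,c_S^*F)\cong c_S^*\sHom(E,F)$; these are equivalent bookkeepings via \ref{enum:smooth-iso}.
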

\begin{proof}
Recall that the mixed
Hodge structure 
$\Ext^i_{\Perv(X,\mathcal{S})}(\tilde M, \tilde N)$ 
was defined from 
$\Ho^i(c_*\sHom(\tilde{M}, \tilde N)) =  
\hyper^i(\sHom(\tilde{M}, \tilde N))$.
We define $X_p$ to be the union of all strata whose codimension in $X$
is greater or equal to $p$, $X_p = \bigcup_{\text{$S \in \mathcal{S}$, $d_S + p \leq \dim_\DC X$}} S$.
This defines a filtration of $X$ by closed subvarieties, $X=X_0 \supset X_1 \supset \dots \supset X_r
= \emptyset$, where $r = \dim_\DC X +1$. Proposition
\ref{p:hodge-lgss} shows that there is a spectral sequence of mixed
Hodge structures with $E_1$-term  
$E_1^{p,q}=\hyper^{p+q}_{X_p-X_{p+1}} (\sHom(\tilde M, \tilde N))$
converging to 
$E_\infty^{p,q} =
\gr^p(\hyper^{p+q}(\sHom(\tilde M, \tilde N)))$.
Lemma \ref{l:E1-pure} below shows
that $E_1^{p,q}$ is a pure Hodge structure of weight $p+q+n-m$. 
There are no non-zero morphisms between pure Hodge structures of different
weights, hence our spectral sequence degenerates at the
$E_1$-term, i.\,e.\ $E_1=E_2=\ldots=E_\infty$.
Furthermore, $\hyper^{p+q}(\sHom(\tilde M, \tilde N))$ is
pure of weight $p+q+n-m$, since it has a finite filtration with
successive subquotients that are pure and of the same weight
(it is in fact isomorphic to the direct sum of these subquotients, see
\ref{enum:GrW-semisimple}).
\end{proof}

\begin{lemma}\label{l:E1-pure}
  Under the assumptions of Theorem \ref{t:purity-for-Ext} and with the
  notation introduced in its proof,
  $E_1^{p,q}=\hyper^{p+q}_{X_p-X_{p+1}}(\sHom(\tilde M, \tilde
  N))$ is a pure Hodge structure of weight $p+q+n-m$. 
\end{lemma}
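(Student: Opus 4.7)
The plan is to reduce the computation to a single stratum and then use purity together with the decomposition $S \cong \DC^{d_S}$ to identify everything explicitly. First, since $X_p - X_{p+1}$ is the disjoint union of the codimension-$p$ strata $S \in \mathcal{S}$ (each open in $X_p - X_{p+1}$), the local hypercohomology splits as
\begin{equation*}
  E_1^{p,q} = \bigoplus_{S : \, d_S = \dim_\DC X - p} \hyper^{p+q}(l_S^! \sHom(\tilde M, \tilde N)).
\end{equation*}
By \ref{enum:f-upper-shriek-hom} we have $l_S^! \sHom(\tilde M, \tilde N) = \sHom(l_S^* \tilde M, l_S^! \tilde N)$, so it suffices to prove that for each codimension-$p$ stratum $S$ the complex $\hyper^{p+q}(\sHom(l_S^* \tilde M, l_S^! \tilde N))$ is pure of weight $p+q+n-m$.

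Next I would exploit the purity hypotheses. Since $l_S^* \tilde M$ is pure of weight $m$ and $l_S^! \tilde N$ is pure of weight $n$, property \ref{enum:pure-cplxs-are-stalk} gives noncanonical decompositions
\begin{equation*}
  l_S^* \tilde M \cong \bigoplus_{j} [-j]\, \Ho^j(l_S^* \tilde M), \qquad
  l_S^! \tilde N \cong \bigoplus_{k} [-k]\, \Ho^k(l_S^! \tilde N),
\end{equation*}
where by \ref{enum:weight-under-shifts} the summand $\Ho^j(l_S^* \tilde M)$ is a pure Hodge sheaf of weight $m+j$ on $S \cong \DC^{d_S}$, and $\Ho^k(l_S^! \tilde N)$ is pure of weight $n+k$. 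As $\tilde M$ and $\tilde N$ are smooth along $\mathcal{S}$, both of these lie in $\MHM(S, \{S\})$, so Proposition \ref{p:hodge-on-Cn} yields
\begin{equation*}
  \Ho^j(l_S^* \tilde M) \cong [d_S] c_S^*(E_M^j), \qquad
  \Ho^k(l_S^! \tilde N) \cong [d_S] c_S^*(E_N^k),
\end{equation*}
with $c_S : S \ra \point$ and pure Hodge structures $E_M^j$, $E_N^k$ of weights $m+j-d_S$ and $n+k-d_S$ respectively.

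Finally I would combine these using the standard compatibilities. A short computation from \ref{enum:sheafhom-via-verdier}, \ref{enum:f-upper-star-tensor}, and \ref{enum:smooth-iso} gives $\sHom(c_S^* E, c_S^* F) \cong c_S^* \sHom(E, F)$ for pure Hodge structures $E, F$, so
\begin{equation*}
  \sHom(l_S^* \tilde M, l_S^! \tilde N) \cong \bigoplus_{j,k} [j-k]\, c_S^*\sHom(E_M^j, E_N^k),
\end{equation*}
where $\sHom(E_M^j, E_N^k)$ is pure of weight $n-m+k-j$ on $\point$, hence each summand is pure of weight $n-m$. Since $S \cong \DC^{d_S}$ is contractible, \eqref{eq:isodirectimagetensor} applied with $A = \tilde{\ul S}$ gives $c_{S*} c_S^* F \cong F$, so $\hyper^i(c_S^* F) = F$ if $i=0$ and vanishes otherwise. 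Consequently
\begin{equation*}
  \hyper^{p+q}\bigl(\sHom(l_S^* \tilde M, l_S^! \tilde N)\bigr) \cong \bigoplus_{\substack{j,k \\ k-j = p+q}} \sHom(E_M^j, E_N^k),
\end{equation*}
and each summand on the right is pure of weight $(n+k-d_S) - (m+j-d_S) = n-m+(k-j) = n-m+p+q$, as required.

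The main technical obstacle is keeping track of the interplay between shifts, Tate twists, and weights; the underlying geometric content is simply that on each contractible cell the complex $\sHom(l_S^* \tilde M, l_S^! \tilde N)$ decomposes as shifted pullbacks of pure Hodge structures, so hypercohomology picks out the single degree in which a shift and a cohomological degree conspire to give the right weight. A subsidiary verification is the commutation $\sHom(c_S^* E, c_S^* F) \cong c_S^* \sHom(E, F)$, which is where \ref{enum:smooth-iso} enters via $\DD c_S^* = c_S^! \DD = [2d_S] c_S^* \DD(d_S)$.
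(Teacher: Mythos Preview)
Your proof is correct and follows essentially the same route as the paper's: decompose $X_p-X_{p+1}$ into codimension-$p$ strata, invoke \ref{enum:pure-cplxs-are-stalk} and Proposition~\ref{p:hodge-on-Cn} on each, and read off the weights. The only cosmetic difference is that the paper writes $[-j]\Ho^j(l_S^!\tilde N)\cong c^!(B)$ (with $B$ pure of weight $n$) rather than $c_S^*$ of something, which lets it compute $\hyper(\DD(c^*A\otimes\DD c^!B))=\Ho(c_*c^!\sHom(A,B))$ directly via \ref{enum:f-upper-shriek-hom} and \ref{enum:smooth-iso}; you instead keep both factors as $c_S^*$ and must then verify $\sHom(c_S^*E,c_S^*F)\cong c_S^*\sHom(E,F)$ separately --- which you do correctly. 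One small presentational point: your appeal to \eqref{eq:isodirectimagetensor} with $A=\tilde{\ul S}$ to get $c_{S*}c_S^*F\cong F$ is slightly circular, since it presupposes $c_{S*}\tilde{\ul S}\cong\DR(0)$; it is cleaner just to cite the adjunction $\id\sira c_*c^*$ for $S\cong\DC^{d_S}$ directly, as the paper does.
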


\begin{proof}
  The decomposition $X_p-X_{p+1}= \bigcup_{\text{$S \in \mathcal{S}$, $d_S + p = \dim_\DC X$}} S$
  into strata of codimension $p$ is the decomposition of $X_p-X_{p+1}$
  into connected components. Therefore, we have
  \begin{equation*}
    \label{eq:39}
    \hyper^{p+q}_{X_p-X_{p+1}}(\sHom(\tilde M, \tilde N)) =
    \bigoplus_{\text{$S \in \mathcal{S}$, $d_S + p = \dim_\DC X$}} \hyper^{p+q}(l_S^! \sHom(\tilde M, \tilde N)),
  \end{equation*}
  where $l_S:S \hra X$ is the inclusion of the stratum $S \in
  \mathcal{S}$. For each $S \in \mathcal{S}$, we have by property \ref{enum:f-upper-shriek-hom}
  \begin{equation*}
    l_S^! \sHom(\tilde M, \tilde N)  =
    \DD(l_S^* \tilde M \otimes \DD l_S^! \tilde N).
  \end{equation*}
  
  The restrictions $l_S^* \tilde M$ and $l_S^! \tilde N$ are pure,
  so \ref{enum:pure-cplxs-are-stalk} yields 
  isomorphisms
  $l_S^* \tilde M \cong \bigoplus_{i \in \DZ} [-i]\Ho^i(l_S^*\tilde M)$
  and $l_S^! \tilde N \cong \bigoplus_{j \in \DZ} [-j]\Ho^j(l_S^! \tilde N)$.

  Fix $i$, $j \in \DZ$.
  Since $\Ho^i(l_S^* \tilde M)$ is in $\MHM(S, \{S\})$ and pure of
  weight $m+i$, Proposition \ref{p:hodge-on-Cn} shows that there is
  $A' \in \MHM(\point)$ pure of weight $m+i-d_S$ such that 
  $\Ho^i(l_S^* \tilde M) \cong [d_S]c^*A'$. 
  Then $A:= [d_S-i] A' \in \dcb(\MHM(\point))$ is pure of weight $m$
  and we have $[-i]\Ho^i(l_S^* \tilde M) \cong c^*(A)$. 
  If we proceed similarly and use \ref{enum:weight-under-shifts} and
  \ref{enum:smooth-iso}, we 
  find $B \in \dcb(\MHM(\point))$ pure of weight $n$ such that 
  $[-j]\Ho^j(l_S^! \tilde N) \cong c^!(B)$.
  Note that $A$ and $B$ are up to shift objects of $\MHM(\point)$.
  Using \ref{enum:f-upper-shriek-hom}, \ref{enum:smooth-iso} and 
  the adjunction isomorphism $\id \sira c_*c^*$, we obtain
  \begin{align*}
    \HH^{p+q}\big(\DD(c^*(A)\otimes \DD(c^!B))\big)
    &
    = \Ho^{p+q}\big(c_*c^!\sHom(A, B)\big)\\
    &
    = \Ho^{p+q}\big([2d_S]c_*c^*\sHom(A, B)(d_S)\big)\\
    &
    \sila \Ho^{p+q}\big([2d_S]\sHom(A, B)(d_S)\big),
  \end{align*}
  and this is pure of weight $p+q+n-m$ by \ref{enum:weight-verdier},
  \ref{enum:weight-under-shifts} and \ref{enum:sheafhom-via-verdier}.
\end{proof}

\subsection{Formality of some DG Algebras}
\label{sec:formality-of-some-dga}

Let $X$ be a complex variety with a cell-stratification $\mathcal{S}$,
and $\tilde M \in \MHM(X,\mathcal{S})$ a smooth
Hodge sheaf. By Corollary \ref{cd:ppreso}, there 
is a perverse-projective resolution $\tilde{P} \ra \tilde M$
of finite length, with underlying projective resolution $P \ra M$.
As in subsection \ref{sec:comparing}, we consider the complex
\begin{equation*}
\tilde{A} := \complexEnd(\tilde{P}) := \complexHom_{\Perv(X, \mathcal{S})}(\tilde{P},\tilde{P}).
\end{equation*}
of (polarizable) mixed Hodge structures. Remark \ref{rem:componice}
shows that the multiplication (= composition map)
$\tilde A \otimes \tilde A \ra \tilde A$
is a morphism of complexes of mixed Hodge
structures. Note that a complex of mixed Hodge structures is the same as an
object of the tensor category $\dgMHS$ of differential graded
mixed Hodge structures. So $\tilde A$ is a (unital) ring object in
$\dgMHS$ (a ``dg algebra of mixed Hodge structures'').

The exact faithful $\DR$-linear tensor functors ``underlying vector
space'' $\omega_0$, ``associated graded vector space'' $\gr^W_{\DR}$,
``underlying vector space'' $\eta$ and
``underlying vector space of the associated graded vector space''
$\omega_W=\eta \comp \gr^W_{\DR}$ (see subsection \ref{sec:MHS}) induce
tensor functors (denoted by the same symbol):
\begin{equation}
  \label{eq:omega-eta}
  \xymatrix{
    {\dgMHS} \ar[d]^{\omega_0} \ar[rd]^{\omega_W} \ar[r]^{\gr^W_{\DR}}
    & {\dggMod(\DR)} \ar[d]^{\eta}\\
    {\dgMod(\DR)} & {\dgMod(\DR)} 
  }
\end{equation}
Here we consider $\DR$ as a dg $\DR$-algebra concentrated in degree
$0$ and as a dgg $\DR$-algebra concentrated in degree $(0,0)$ 
(see subsections \ref{sec:review-dg-modules} and \ref{sec:diff-grad-grad}).
More elementary, $\dgMod(\DR)$ and $\dggMod(\DR)$ are the categories of 
dg real vector spaces and dg graded real vector spaces respectively.
The isomorphism $a$ from \eqref{eq:ffiso} induces an isomorphism
\begin{equation}
  \label{eq:a-omega-iso}
  a:\omega_0 \sira \omega_W  
\end{equation}
between the induced functors.
Then $A=\omega_0(\tilde A)$ is the dg algebra
$\cEnd(P)$.
Its cohomology is the extension algebra 
$\Ext_{\Perv(X, \mathcal{S})}(P)$ and isomorphic to  
$\Ext_{\Perv(X,\mathcal{S})}(M)$.

\begin{theorem}
\label{t:formality-of-endo-complex}
Let $(X, \mathcal{S})$, $\tilde{P} \ra \tilde M$, $\tilde A$
and $A$ be as above, and $w$ an integer. 
If $\tilde M$ is $\mathcal{S}$-pure of weight $w$,
then $A$ is formal. More precisely, there are 
a dg subalgebra $\Sub(A)$ of $A$ and 
dga-quasi-isomorphisms $A \hla \Sub(A) \sra \Ho(A)$.
\end{theorem}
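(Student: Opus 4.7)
The plan is to deduce formality of $A$ from the elementary Proposition~\ref{p:formality-of-pure-dgg-algebras} by enriching $A$ with the mixed Hodge structures that it inherits from $\tilde A$, then using purity to force the cohomology to live on the diagonal of a suitable bigrading. The three ingredients are: (i)~the dg algebra $\tilde A$ lives in $\dgMHS$, (ii)~the exact tensor functor $\gr^W_{\DR}:\dgMHS\to\dggMod(\DR)$ sends ring objects to dgg algebras, and (iii)~the tensor isomorphism $a:\omega_0\sira\omega_W=\eta\circ\gr^W_{\DR}$ of \eqref{eq:a-omega-iso} identifies $A=\omega_0(\tilde A)$ with $\eta(\gr^W_{\DR}(\tilde A))$ as dg algebras.

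First I would set $\mathcal{R}:=\gr^W_{\DR}(\tilde A)$, a dgg $\DR$-algebra with bidegree $(i,j)$-component $\gr^W_j(\tilde A^i)$, where $i$ denotes the (cohomological) complex degree inherited from $\tilde A$. Since $\gr^W_{\DR}$ is exact, we have $\Ho^i(\mathcal{R})^j=\gr^W_j(\Ho^i(\tilde A))$. By Proposition~\ref{p:coincide} applied with $\tilde Q=\tilde P$, $\Ho^i(\tilde A)$ is isomorphic, as a mixed Hodge structure, to $\Ext^i_{\Perv(X,\mathcal{S})}(\tilde M,\tilde M)$. Since $\tilde M$ is $\mathcal{S}$-pure of weight $w$, Theorem~\ref{t:purity-for-Ext} says that the complex $\bigoplus_i\Ext^i_{\Perv(X,\mathcal{S})}(\tilde M,\tilde M)$ is pure of weight $n-m=0$, which (with the convention for complexes of Hodge sheaves) means $\Ext^i_{\Perv(X,\mathcal{S})}(\tilde M,\tilde M)$ is a MHS pure of weight $i$. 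Hence $\gr^W_j$ vanishes on this Ext group unless $j=i$, and therefore $\Ho(\mathcal{R})$ is pure of weight $0$ in the dgg sense of subsection~\ref{sec:diff-grad-grad}.

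Now Proposition~\ref{p:formality-of-pure-dgg-algebras} applies, producing a dgg subalgebra $\Gamma(\mathcal{R})\subset\mathcal{R}$ together with dgga-quasi-isomorphisms $\mathcal{R}\hla\Gamma(\mathcal{R})\sra\Ho(\mathcal{R})$. Applying the exact tensor functor $\eta:\dggMod(\DR)\to\dgMod(\DR)$ that forgets the internal grading yields dga-quasi-isomorphisms
\begin{equation*}
\eta(\mathcal{R})\;\hla\;\eta(\Gamma(\mathcal{R}))\;\sra\;\eta(\Ho(\mathcal{R}))=\Ho(\eta(\mathcal{R})).
\end{equation*}
Finally, since $a:\omega_0\sira\omega_W$ is an isomorphism of tensor functors, its component $a_{\tilde A}:A=\omega_0(\tilde A)\sira\eta(\mathcal{R})=\omega_W(\tilde A)$ is an isomorphism of dg $\DR$-algebras. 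Setting $\Sub(A):=a_{\tilde A}^{-1}(\eta(\Gamma(\mathcal{R})))$, a dg subalgebra of $A$, and transporting the above chain through $a_{\tilde A}$ gives the desired dga-quasi-isomorphisms $A\hla\Sub(A)\sra\Ho(A)$.

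The main obstacle is purely formal: one needs to verify carefully that the weight conventions line up (so that \textquotedblleft pure of weight $0$ as a complex of MHS\textquotedblright\ really translates into \textquotedblleft pure of weight $0$ in the dgg sense\textquotedblright\ after applying $\gr^W_{\DR}$), and that $\gr^W_{\DR}$ and $\eta$ genuinely act as tensor functors on ring objects so that multiplications and inclusions are preserved at every step. Both are straightforward since the weight filtration is compatible with tensor products of mixed Hodge structures and both functors are exact and $\DR$-linear; no additional geometric input is required beyond the purity statement Theorem~\ref{t:purity-for-Ext} that has already been established.
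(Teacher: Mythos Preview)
Your proof is correct and follows essentially the same approach as the paper: both define the dgg algebra $\gr^W_{\DR}(\tilde A)$, use Proposition~\ref{p:coincide} and Theorem~\ref{t:purity-for-Ext} to see that its cohomology is pure of weight~$0$, apply Proposition~\ref{p:formality-of-pure-dgg-algebras}, and then transport the resulting quasi-isomorphisms back to $A$ via the tensor isomorphism $a:\omega_0\sira\omega_W$. Your explicit unpacking of the weight convention (that purity of weight~$0$ for the complex means $\Ext^i$ is pure of weight~$i$) is a helpful clarification that the paper leaves implicit.
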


\begin{proof}
  Consider the dgg algebra $\tilde{R} :=\gr^W_{\DR} (\tilde A)$.
  Its graded components are 
  $\tilde{R}^{ij} = \gr_{j, \DR}^W(\tilde{A}^i)$.
  By Proposition \ref{p:coincide} and Theorem \ref{t:purity-for-Ext},
  the complexes (with differential zero) of mixed Hodge structures 
  $\Ho(\tilde A)$ and $\Ext_{\Perv(X, \mathcal{S})}(\tilde M, \tilde M)$
  are isomorphic and pure of weight $0$. So
  \begin{equation*}
    \gr^W_{j, \DR} (\Ho^i(\tilde A))=\Ho^i(\gr^W_{j, \DR}(\tilde A))
    =\Ho^i(\tilde{R}^{*j})
    =(\Ho(\tilde{R}))^{ij}
  \end{equation*}
  vanishes for $i \not = j$, and the dgg algebra $\Ho(\tilde{R})$ is pure of weight
  $0$.
  Proposition \ref{p:formality-of-pure-dgg-algebras} shows the
  existence of dgga-quasi-isomorphisms
  $\tilde R \hla \Gamma(\tilde R) \sra \Ho(\tilde R)$.
  We define $R:= \eta(\tilde{R})$ and $\Gamma(R) := \eta(\Gamma(\tilde
  R))$, apply $\eta$ to 
  $\tilde R \hla \Gamma(\tilde R) \sra \Ho(\tilde R)$
  and obtain
  the dga-quasi-isomorphisms in the first row in the following diagram
  \begin{equation}
    \label{eq:sub-def}
    \xymatrix{
      {\omega_W(\tilde{A})}
      =
      R 
      & 
      \ar@{_{(}->}[l] 
      {\Gamma(R)}
      \ar@{-{>>}}[r] 
      &
      {\Ho(R)}
      =
      {\omega_W(\Ho(\tilde{A}))}
      \\
      {\omega_0(\tilde{A})=A}
      \ar[u]^{\sim}_{a_{\tilde{A}}}
      &
      {\Sub(A)}
      \ar@{_{(}->}[l] 
      \ar[u]^{\sim}
      &
      {{\Ho(A)={\omega_0(\Ho(\tilde{A}))}},}
      \ar[u]^{\sim}_{a_{\Ho(\tilde{A})}}
    }
  \end{equation}
  where 
  the vertical isomorphisms come from the natural isomorphism \eqref{eq:a-omega-iso} and 
  the dg subalgebra
  $\Sub(A)\subset A$ is defined as the pull-back, i.\,e.\ 
  $\Sub(A) := a\inv(\Gamma(R))$. 
  All vertical (horizontal) morphisms in this diagram are dga-(quasi-)iso\-mor\-phisms.
\end{proof}

We generalize Theorem \ref{t:formality-of-endo-complex} slightly as
follows. Let $(X, \mathcal{S})$ be as above, $I$ a finite set
and $\tilde{P}_\alpha \ra \tilde{M}_\alpha$ perverse-projective
resolutions of finite length of smooth Hodge sheaves 
$\tilde{M}_\alpha$ ($\alpha \in I$).
Let $\tilde{P}$ be the direct sum of the 
$(\tilde{P}_\alpha)_{\alpha \in I}$, 
$\tilde{A} = \complexEnd(\tilde{P})$,
$A=\omega_0(\tilde{A})=\cEnd(P)$
the
dg algebra underlying the 
``dg algebra of mixed Hodge structures'' $\tilde A$, and $e_\alpha \in
A^0$ the projector from $P$ onto the 
direct summand ${P}_\alpha$.
The cohomology $\Ho(A)$ of $A$ is isomorphic to the extension algebra
$\Ext_{\Perv(X,\mathcal{S})}(M)$, 
where $M$
is the direct sum of the underlying perverse sheaves 
$(M_\alpha)_{\alpha \in I}$.

\begin{theorem}
  \label{t:formality-of-endo-complex-with-objects}
  Let $X$, $\mathcal{S}$, $I$, $\tilde{P}_\alpha \ra
  \tilde{M}_\alpha$, $\tilde A$, $A$, $e_\alpha$ be as above.  Let $w_\alpha
  \in \DZ$ ($\alpha \in I$) be integers. 
  If $\tilde{M}_\alpha$ is $\mathcal{S}$-pure of weight $w_\alpha$,
  for all $\alpha \in I$, 
  then $A$ is formal. More precisely, there are
  a dg subalgebra $\Sub(A)$ of $A$ containing 
  all $(e_\alpha)_{\alpha \in I}$ and
  quasi-isomorphisms $A \hla \Sub(A) \sra \Ho(A)$ of 
  dg algebras.
\end{theorem}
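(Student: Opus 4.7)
The plan is to follow the proof of Theorem \ref{t:formality-of-endo-complex} line by line, replacing the invocation of Proposition \ref{p:formality-of-pure-dgg-algebras} by the more general Proposition \ref{p:formality-of-pure-dgg-algebras-with-objects} so as to cope with the several different weights $w_\alpha$.

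First I would form the dgg algebra $\tilde{R} := \gr^W_{\DR}(\tilde{A})$, which inherits from $\tilde{A}$ both the idempotents and the block decomposition $\tilde{A}_{\alpha\beta} = \cHom(\tilde{P}_\beta, \tilde{P}_\alpha)$. Proposition \ref{p:coincide} combined with Theorem \ref{t:purity-for-Ext} shows that, as a complex with zero differential of polarizable mixed Hodge structures,
\[
\Ho(\tilde{A}_{\alpha\beta}) \cong \Ext_{\Perv(X,\mathcal{S})}(\tilde{M}_\beta, \tilde{M}_\alpha)
\]
is pure of weight $w_\alpha - w_\beta$; hence each $\Ho(\tilde{R}_{\alpha\beta})$ is pure of bigraded weight $w_\alpha - w_\beta$. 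Setting $n_\alpha := w_\alpha$, the hypothesis of Proposition \ref{p:formality-of-pure-dgg-algebras-with-objects} is met and yields a dgg subalgebra $\tilde{\Sigma} \subset \tilde{R}$ containing all idempotents together with dgga-quasi-isomorphisms $\tilde{R} \hla \tilde{\Sigma} \sra \Ho(\tilde{R})$.

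Proceeding exactly as in diagram \eqref{eq:sub-def}, I would then apply the exact tensor functor $\eta$ and transport along the natural isomorphism $a \colon \omega_0 \sira \omega_W$ of fiber functors, setting $\Sub(A) := a^{-1}(\eta(\tilde{\Sigma})) \subset A$. The horizontal dgga-quasi-isomorphisms above descend to the desired dga-quasi-isomorphisms $A \hla \Sub(A) \sra \Ho(A)$.

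The one genuinely new verification, and what I expect to be the main obstacle, is that each idempotent $e_\alpha$ lies in $\Sub(A)$, equivalently that $a(e_\alpha) \in \tilde{\Sigma}$. Since $\tilde{P}_\alpha$ is a direct summand of $\tilde{P}$ in $\Ket(\MHM(X,\mathcal{S}))$, the projector $e_\alpha$ is the image of $1$ under a morphism $f \colon \DR(0) \to \tilde{A}^0$ in $\MHS$. Applying naturality of $a$ to $f$ and using that $\omega_W(\DR(0)) = \gr^W_0(\DR(0)) = \DR$ while $\gr^W_j(\DR(0)) = 0$ for $j \neq 0$, the element $a(e_\alpha) = \omega_W(f)(1)$ lies in $\gr^W_{0,\DR}(\tilde{A}^0) = \tilde{R}^{00}$; by multiplicativity of $a$ it moreover sits in the $(\alpha,\alpha)$ block and is annihilated by the differential (since $d e_\alpha = 0$). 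The explicit description of $\tilde{\Sigma}^{00}_{\alpha\alpha}$ in the proof of Proposition \ref{p:formality-of-pure-dgg-algebras-with-objects} as $\ker(d^{00}_{\alpha\alpha})$ then forces $a(e_\alpha) \in \tilde{\Sigma}^{00}_{\alpha\alpha}$, completing the argument.
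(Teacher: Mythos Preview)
Your proposal is correct and follows exactly the approach the paper indicates: the paper's own proof consists of two sentences saying that one imitates the proof of Theorem~\ref{t:formality-of-endo-complex} while replacing Proposition~\ref{p:formality-of-pure-dgg-algebras} by Proposition~\ref{p:formality-of-pure-dgg-algebras-with-objects}. You have in fact gone further than the paper by spelling out the verification that each $e_\alpha$ lies in $\Sub(A)$, which the paper leaves entirely implicit; your argument via naturality of $a$ applied to the morphism $\DR(0)\to\tilde{A}^0$ of mixed Hodge structures is the right way to see this.
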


\begin{proof}
  The proof is very similar to that of Theorem
  \ref{t:formality-of-endo-complex}. 
  Mainly, we use Proposition
  \ref{p:formality-of-pure-dgg-algebras-with-objects} instead of
  Proposition \ref{p:formality-of-pure-dgg-algebras}.
\end{proof}

\subsection{Formality of Cell-Stratified Varieties}
\label{sec:form-cell-strat}

\begin{theorem}
  \label{t:formality-for-cell-stratified-varieties}
  Let $X$ be a complex variety with a
  cell-stratification $\mathcal{S}$, 
  $(\tilde{M}_\alpha)_{\alpha \in I}$ a finite number of
  smooth Hodge sheaves $\tilde{M}_\alpha \in \MHM(X, \mathcal{S})$ 
  with direct sum $\tilde M = \bigoplus \tilde M_\alpha$.
  Denote by 
  $\Ext(\ul M):=\Ext_{\Sh(X)}(\ul M)$
  the extension algebra of $\ul{M}=\rera(\tilde M)$, a dg algebra with differential $d=0$.
  Let $e_\alpha \in \Ext^0(\ul M)=\End_{\Sh(X)}(\ul M)$ be the
  projector from $\ul M$ onto the direct summand 
  $\ul{M}_\alpha=\rera(\tilde{M}_\alpha)$.
  If there are integers $(w_\alpha)_{\alpha \in I}$, such that
  $\tilde{M}_\alpha$ is $\mathcal{S}$-pure of weight $w_\alpha$, for
  all $\alpha \in I$,
  there is an
  equivalence of triangulated categories
  \begin{equation}
    \label{eq:praeverd-dgprae}
    \PraeVerd(\{\ul M_\alpha\}_{\alpha \in I}, \dcb(X))
    \sira \dgPraeDer_{\Ext(\ul M)}(\{e_\alpha\Ext(\ul M)\}_{\alpha \in I}).
  \end{equation}
\end{theorem}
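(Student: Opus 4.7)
The plan is to combine three ingredients: perverse-projective resolutions (Corollary \ref{cd:ppreso}), the reduction to dg modules over an endomorphism dg algebra (Proposition \ref{p:hom-p-dot-iso}), and the formality result (Theorem \ref{t:formality-of-endo-complex-with-objects}).

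First I choose, for each $\alpha \in I$, a perverse-projective resolution $\tilde{P}_\alpha \ra \tilde{M}_\alpha$ of finite length, set $\tilde{P} = \bigoplus_\alpha \tilde{P}_\alpha$ and $\tilde{A} = \cEnd(\tilde P)$ (the dg algebra of mixed Hodge structures of subsection \ref{sec:formality-of-some-dga}), and write $P = \rat(\tilde P) = \bigoplus_\alpha P_\alpha$, $A = \cEnd(P)$, and let $e_\alpha \in A^0$ be the projector onto $P_\alpha$. Each $P_\alpha$ is a bounded above complex of projective objects of $\Perv(X, \mathcal{S})$, so the hypothesis \eqref{eq:endazyclic} holds, and Proposition \ref{p:hom-p-dot-iso} yields a triangulated equivalence
\[
\PraeVerd(\{P_\alpha\}_{\alpha \in I}, \dc(\Perv(X, \mathcal{S}))) \sira \dgPraeDer_A(\{e_\alpha A\}_{\alpha \in I}).
\]

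Next I transport the left-hand side to the sheaf side. Since each $P_\alpha$ is a bounded complex quasi-isomorphic to $M_\alpha$, the subcategory $\PraeVerd(\{P_\alpha\}, \dc(\Perv(X, \mathcal{S})))$ agrees with $\PraeVerd(\{M_\alpha\}, \dcb(\Perv(X, \mathcal{S})))$. Applying the equivalence $\real_{X, \mathcal{S}}$ of Theorem \ref{t:bgsequi}, under which $M_\alpha \mapsto \ul{M}_\alpha$, and using that $\dcb(X, \mathcal{S}) \subset \dcb(X)$ is a full triangulated subcategory, this becomes $\PraeVerd(\{\ul{M}_\alpha\}_{\alpha \in I}, \dcb(X))$, i.\,e.\ the left-hand side of \eqref{eq:praeverd-dgprae}.

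For the right-hand side I invoke formality. Theorem \ref{t:formality-of-endo-complex-with-objects} (applicable since each $\tilde{M}_\alpha$ is $\mathcal{S}$-pure of weight $w_\alpha$) produces a dg subalgebra $\Sub(A) \subset A$ containing every $e_\alpha$ together with dga-quasi-isomorphisms $A \hla \Sub(A) \sra \Ho(A)$. By \cite[6.1]{Keller-deriving-dg-cat} these induce triangulated equivalences of derived categories of dg modules that match $e_\alpha A \leftrightarrow e_\alpha \Sub(A) \leftrightarrow e_\alpha \Ho(A)$, and therefore restrict to equivalences
\[
\dgPraeDer_A(\{e_\alpha A\}) \sira \dgPraeDer_{\Sub(A)}(\{e_\alpha \Sub(A)\}) \sira \dgPraeDer_{\Ho(A)}(\{e_\alpha \Ho(A)\}).
\]
Finally the computation of $\Ext$ from a projective resolution, together with \eqref{eq:realextiso}, identifies $\Ho(A) = \Ho(\cEnd(P))$ with $\Ext_{\Perv(X,\mathcal{S})}(M) \cong \Ext_{\Sh(X)}(\ul M) = \Ext(\ul M)$ as graded algebras, matching the idempotents. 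Composing all equivalences yields \eqref{eq:praeverd-dgprae}.

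The main obstacle I expect is the bookkeeping around the idempotents and the multiplicative identification of $\Ho(A)$ with $\Ext(\ul M)$: one must check that composition of morphisms of perverse-projective resolutions matches the Yoneda product on $\Ext_{\Perv(X,\mathcal{S})}$, and that this in turn is carried to composition on $\Ext_{\Sh(X)}(\ul M)$ by $\real$. Both facts are standard but need to be verified carefully to ensure the final equivalence matches the distinguished generators $e_\alpha \Ext(\ul M)$ on the target side.
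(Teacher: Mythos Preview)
Your proposal is correct and follows essentially the same approach as the paper: choose perverse-projective resolutions, apply Proposition~\ref{p:hom-p-dot-iso} to pass to $\dgPraeDer_A$, transport via $\real_{X,\mathcal{S}}$, invoke the formality Theorem~\ref{t:formality-of-endo-complex-with-objects} to replace $A$ by $\Ho(A)$ through the intermediate $\Sub(A)$, and then identify $\Ho(A)\cong\Ext(\ul M)$ as dg algebras. The paper writes out exactly this chain of five equivalences (\eqref{eq:the-equivalence-i}--\eqref{eq:the-equivalence-v}), and your anticipated bookkeeping issue is addressed there by the explicit dga-isomorphism $\Ho(\cEnd(P))=\Ext(P)\xrightarrow{\sim}\Ext(M)\xrightarrow{\real}\Ext(\ul M)$.
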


    Under the equivalence we construct in the proof, the objects
    $\ul{M}_\alpha$ and $e_\alpha\Ext(\ul M)$ correspond.
    We do not emphasize similar obvious correspondences in the
    following.
 
    Due  to the equivalences \eqref{eq:beilinsonequi} and \eqref{eq:real-X-S}
    we can replace $\Ext(\ul M)$
    by $\Ext_{\Perv(X, \mathcal{S})}(M)$ or 
    $\Ext_{\Perv(X)}(M)$,
    and also the left hand side of
    \eqref{eq:praeverd-dgprae} by
    $\PraeVerd(\{M_\alpha\}_{\alpha \in I})$ formed in $\dcb(\Perv(X,
    \mathcal{S})))$ 
    or
    in $\dcb(\Perv(X))$.

\begin{proof}
Let $\tilde{P}_\alpha \ra \tilde{M}_\alpha$ be 
perverse-projective resolutions of finite length (Corollary
\ref{cd:ppreso}) and 
$P_\alpha \ra M_\alpha$ the underlying
projective resolutions in $\Perv(X, \mathcal{S})$.
Let $\tilde P=\bigoplus \tilde{P}_\alpha$ and $P = \bigoplus
P_\alpha$.
As in the second part of subsection
\ref{sec:formality-of-some-dga}, we define 
$\tilde{A} =
\complexEnd(\tilde{P})$
and $A=\omega_0(\tilde{A})
=\cEnd(P)$.
Theorem \ref{t:formality-of-endo-complex-with-objects} yields 
a dg subalgebra $\Sub(A)$ of $A$ and dga-quasi-isomorphisms
$A \hla \Sub(A) \sra \Ho(A)$.
We claim that
\begin{align}
  \PraeVerd(\{\ul M_\alpha\}, \dcb(X)) 
  \label{eq:the-equivalence-i}
  & \xla{\real_{X, \mathcal{S}}} 
  \PraeVerd(\{M_\alpha\}, \dcb(\Perv(X, \mathcal{S})))\\
  \label{eq:the-equivalence-ii}
  & \xra{\complexHom(P, ?)}
  \dgPraeDer_A(\{e_\alpha A\})\\
  \label{eq:the-equivalence-iii}
  & \xla{? \Lotimesover{\Sub(A)} A}
  \dgPraeDer_{\Sub(A)}(\{e_\alpha \Sub(A)\})\\
  \label{eq:the-equivalence-iv}
  & \xra{? \Lotimesover{\Sub(A)} \Ho(A)}
  \dgPraeDer_{\Ho(A)}(\{e_\alpha \Ho(A)\})\\
  \label{eq:the-equivalence-v}
  & \xra{? \Lotimesover{\Ho(A)} \Ext(\ul M)}
  \dgPraeDer_{{\Ext(\ul M)}}(\{e_\alpha {\Ext(\ul M)}\}).
\end{align}
is a sequence of triangulated equivalences. 
By Theorem \ref{t:bgsequi}, \eqref{eq:the-equivalence-i} is an
equivalence.
The isomorphisms $P_\alpha \ra
M_\alpha$ in $\dcb(\Perv(X, \mathcal{S}))$,
Proposition \ref{p:hom-p-dot-iso} and Remark \ref{rem:hom-p-dot-iso}
show that \eqref{eq:the-equivalence-ii} is an equivalence.
(The $e_\alpha$ here are in fact representatives of the $e_\alpha$ in
the theorem, but we do not care about this too much.)
The dga-quasi-isomorphisms 
$A \hla \Sub(A) \sra \Ho(A)$
induce 
equivalences 
between the respective derived 
categories $\dgDer$ and restrict to the equivalences
 \eqref{eq:the-equivalence-iii} and \eqref{eq:the-equivalence-iv}.
The last equivalence is similarly induced by the dga-isomorphism
(cf.\ \eqref{eq:coho-exts})
\begin{equation*}
  \Ho(A)
   = \Ho(\cEnd(P))  {=} \Ext(P) 
   \xrightarrow[\sim]{P \sira M} \Ext(M) \xra{\real} \Ext(\ul{M}).
\end{equation*}
\end{proof}

\begin{remark}
  \label{rem:formy-pm}
  We use the notation 
  $\Formy_{\tilde{P} \ra \tilde{M}}$
  for the
  ``formality equivalence''
  \eqref{eq:praeverd-dgprae} constructed in the proof of 
  Theorem \ref{t:formality-for-cell-stratified-varieties} to indicate
  that it mainly depends on 
  the perverse-projective resolutions
  $\tilde{P}_\alpha \ra \tilde{M}_\alpha$. 
\end{remark}

\subsection{Formality and Intersection Cohomology Complexes}
\label{sec:form-inters-cohom}

Let $(X, \mathcal{T})$ be a cell-stratified complex variety,
$\mathcal{E}=\Ext(\IC(\mathcal{T}))$ the extension
algebra of the direct sum 
$\IC(\mathcal{T})$
of the $({{\IC}}_T)_{T \in \mathcal{T}}$, and $e_T$ the
projector from this direct sum onto $\IC_T$.
Then the dg algebra $\mathcal{E}$ satisfies the conditions
\ref{enum:form-pg}-\ref{enum:form-sdga}, hence 
\begin{equation*}
  \dgPraeDer_{\mathcal{E}}(\{e_T\mathcal{E}\}_{T \in \mathcal{T}})=\dgPraeDer(\mathcal{E})=\dgPerDer(\Ext(\IC(\mathcal{T}))
\end{equation*}
thanks to Theorem \ref{t:t-structure-auf-perf}.
If $\tilde{\IC}_T$ is
$\mathcal{T}$-pure of weight $d_T$, for all $T \in I$, these equalities, Theorem
\ref{t:formality-for-cell-stratified-varieties} and 
\eqref{eq:derbxs-praeverd-verd} yield an equivalence
\begin{equation*}
  \dbc(X, \mathcal{T}) 
  \cong \dgPerDer(\Ext(\IC(\mathcal{T}))).
\end{equation*}

Similarly, if $\mathcal{T}'$ is a subset of $\mathcal{T}$
and all $\tilde{\IC}_{T'}$ are $\mathcal{T}$-pure of weight $d_{T'}$, for
$T' \in \mathcal{T}'$, we 
get 
by Theorem
\ref{t:formality-for-cell-stratified-varieties}
an equivalence
\begin{equation}
  \label{eq:T-prime}
  \PraeVerd(\{\IC_{T'}\}_{T' \in \mathcal{T}'}, \dcb(X))
  \cong 
  \dgPerDer(\Ext(\IC(\mathcal{T}'))).
\end{equation}

\begin{theorem}
  \label{t:formality-ic}
  Let $(X, \mathcal{S})$ be a stratified variety with simply connected
  strata. Let $\mathcal{T}$ be a cell-stratification refining
  $\mathcal{S}$. 
  If $\tilde{\IC}_S$ is $\mathcal{T}$-pure of weight $d_S$ for all $S
  \in \mathcal{S}$, there is a triangulated equivalence
  \begin{equation}
    \label{eq:simplyconn}
    \dcb(X, \mathcal{S})
    \sira \dgPerDer(\Ext(\IC(\mathcal{S}))).
  \end{equation}
  This equivalence is t-exact with respect to the perverse
  t-structure on $\dcb(X, \mathcal{S})$ and the 
  t-structure from Theorem \ref{t:t-structure-auf-perf} on
  $\dgPerDer$. Restriction to the respective hearts yields an equivalence
  \begin{equation*}
    \Perv(X, \mathcal{S}) \cong \dgFilMod(\Ext(\IC(\mathcal{S}))).
  \end{equation*}
\end{theorem}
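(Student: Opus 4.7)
The plan is to apply Theorem~\ref{t:formality-for-cell-stratified-varieties} to the cell-stratified variety $(X, \mathcal{T})$ with the collection of smooth Hodge sheaves $\{\tilde{\IC}_S\}_{S \in \mathcal{S}}$; these lie in $\MHM(X, \mathcal{T})$ because $\mathcal{T}$ refines $\mathcal{S}$, and the $\mathcal{T}$-purity of $\tilde{\IC}_S$ of weight $d_S$ is exactly the hypothesis. Setting $\mathcal{E} = \Ext(\IC(\mathcal{S}))$ and letting $e_S \in \mathcal{E}^0$ denote the projector onto the summand $\IC_S$, the theorem yields a triangulated equivalence
\[\PraeVerd(\{\IC_S\}_{S \in \mathcal{S}}, \dcb(X)) \sira \dgPraeDer_{\mathcal{E}}(\{e_S \mathcal{E}\}_{S \in \mathcal{S}})\]
under which $\IC_S$ corresponds to $e_S \mathcal{E}$.

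Next I would identify each side with the category appearing in the statement. For the left-hand side, by \eqref{eq:derbxs-praeverd-verd} it suffices to show $\Perv(X, \mathcal{S}) \subset \PraeVerd(\{\IC_S\}, \dcb(X))$. Simple connectedness of the (irreducible) strata forces every irreducible local system on $S$ to be trivial, so the simple objects of $\Perv(X, \mathcal{S})$ are precisely the $\IC_S$; since every object of $\Perv(X, \mathcal{S})$ has finite length and each short exact sequence in $\Perv$ yields a distinguished triangle in $\dcb(X)$, the inclusion follows. For the right-hand side, I would check that $\mathcal{E}$ satisfies conditions \ref{enum:form-pg}--\ref{enum:form-sdga}: positivity of the grading follows from $\Perv(X)$ being the heart of a t-structure on $\dcb(X)$; the degree-zero part $\mathcal{E}^0 = \bigoplus_{S, T} \Hom_{\dcb(X)}(\IC_S, \IC_T) \cong \prod_{S} \DR$ is semisimple because the $\IC_S$ are simple, pairwise non-isomorphic, with $\End(\IC_S) = \DR$ (using simple connectedness); and $d = 0$ holds tautologically. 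Theorem~\ref{t:t-structure-auf-perf}(1) then gives $\dgPraeDer_\mathcal{E}(\{e_S \mathcal{E}\}) = \dgPerDer(\mathcal{E})$, yielding the equivalence \eqref{eq:simplyconn}.

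Finally, for t-exactness, let $F$ denote the equivalence produced above. By Theorem~\ref{t:t-structure-auf-perf}(4) the simple objects of the heart $\dgPerDer^0 \cong \dgFilMod(\mathcal{E})$ are the $\hat L_S = e_S \mathcal{E}$, and $F$ carries the simple $\IC_S \in \Perv(X, \mathcal{S})$ to $\hat L_S$. Since every object of $\Perv(X, \mathcal{S})$ is an iterated extension of the $\IC_S$ (the extensions being realized as distinguished triangles), $F$ sends it to an iterated extension of the $\hat L_S$ in $\dgPerDer(\mathcal{E})$, hence into the extension-closed $\dgFilMod(\mathcal{E})$. The symmetric argument, using finite length of objects in $\dgFilMod(\mathcal{E})$, gives $F^{-1}(\dgFilMod(\mathcal{E})) \subset \Perv(X, \mathcal{S})$; because both t-structures are bounded (by \eqref{eq:derbxs-praeverd-verd} on the left and Theorem~\ref{t:t-structure-auf-perf}(2) on the right), preservation of hearts in both directions implies t-exactness of $F$, and restriction to the hearts gives the desired equivalence $\Perv(X, \mathcal{S}) \cong \dgFilMod(\Ext(\IC(\mathcal{S})))$. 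The main subtlety will be this last step: tracing through the construction of $F$ to confirm that $\IC_S$ is really sent to $\hat L_S$ without an intervening shift, and then exploiting finite length on both sides to bootstrap from simples to all objects of the heart.
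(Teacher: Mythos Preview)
Your proposal is correct and follows essentially the same route as the paper: apply Theorem~\ref{t:formality-for-cell-stratified-varieties} on the cell-stratified variety $(X,\mathcal{T})$ to the collection $\{\tilde{\IC}_S\}_{S\in\mathcal{S}}$, identify the left side with $\dcb(X,\mathcal{S})$ via \eqref{eq:derbxs-praeverd-verd} and simple connectedness, and the right side with $\dgPerDer(\mathcal{E})$ via Theorem~\ref{t:t-structure-auf-perf}. Two minor remarks: your worry about shifts is already settled by the sentence after Theorem~\ref{t:formality-for-cell-stratified-varieties} (the equivalence sends $\ul{M}_\alpha$ to $e_\alpha\Ext(\ul M)=\hat L_\alpha$), and for t-exactness you only need that $F$ sends the heart into the heart, since the source t-structure is bounded---the symmetric argument with $F^{-1}$ is redundant.
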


\begin{proof}
  Since each $S \in \mathcal{S}$ is irreducible, it contains a (unique)
  dense stratum $T(S) \in \mathcal{T}$; then $\IC_{S}=
  \IC_{T(S)}$. 
  We apply equivalence \eqref{eq:T-prime} to the set $\mathcal{T}'$ of
  these dense strata. Then we use 
  \eqref{eq:derbxs-praeverd-verd} and the fact that 
  the $(\IC_S)_{S \in \mathcal{S}}$ are the simple objects of
  $\Perv(X, \mathcal{S})$. This show equivalence
  \eqref{eq:simplyconn}.
  
  Since
  $\IC(\mathcal{S})$ is mapped to $e_S\Ext(\IC(\mathcal{S}))$ (where
  $e_S$ is the obvious projector),
  the remaining statements follow from
  Theorem \ref{t:t-structure-auf-perf}.
\end{proof}

\begin{remark}
  \label{rem:formy-simplyconn}
  We use the notation 
  $\Formy_{\tilde{P} \ra \tilde{\IC}(\mathcal{S})}^{\mathcal{T}}$
  for equivalence \eqref{eq:simplyconn} to indicate its dependence on
  the refinement $\mathcal{T}$ and the perverse-projective resolutions
  $\tilde{P}_S \ra \tilde{\IC}_S$ (cf.\ Remark \ref{rem:formy-pm}).
\end{remark}

\begin{remark}
  \label{rem:T-pure}
  In Theorem \ref{t:formality-ic}, it is sufficient to require that
  each $\tilde{\IC}_S$ is $\mathcal{T}$-$*$-pure of weight
  $d_{S}$:
  If $S$ is a stratum of a stratified variety and $l$ the inclusion
  of a subvariety, we have
  $\DD(\tilde{\IC}_S)\cong \tilde{\IC}_S(d_{S})$ and obtain
  \begin{equation*}
      l^!(\tilde{\IC}_S)
      = \DD (l^* (\DD (\tilde{\IC}_S))) 
      \cong \DD (l^* (\tilde{\IC}_S(d_{S})))
      = \DD (l^* (\tilde{\IC}_S)) (-d_{S}).
  \end{equation*}
  So if $l^*(\tilde{\IC}_S)$ is pure of weight $d_S$, then
  $l^!(\tilde{\IC}_S)$ is pure of weight 
  $d_S$.
\end{remark}

\subsection{Formality of Partial Flag Varieties}
\label{sec:form-part-flag}

Let $G$ be a complex connected reductive
affine algebraic group and a $B \subset G$ Borel subgroup. Let $P$, $Q$ be parabolic
subgroups of $G$ containing $B$.
The Bruhat decomposition of the flag variety $G/B$ into $B$-orbits is a
cell-stratification. 
More generally, the $B$-orbits on the partial flag variety $G/P$ form
a cell-stratification, and the $Q$-orbits on $G/P$ form a
stratification.
(These stratifications are indeed Whitney stratifications thanks to \cite[Thm.~2]{kaloshin-whitney}.) 

\begin{proposition}
  \label{p:q-orbits-sc}
  The $Q$-orbits in $G/P$ are simply connected.
\end{proposition}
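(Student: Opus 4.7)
The plan is to realize each $Q$-orbit on $G/P$ as a homogeneous space $Q/H$ and then compute $\pi_1$ by means of the long exact sequence of the fibration $H \hookrightarrow Q \twoheadrightarrow Q/H$.

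First, by the Bruhat decomposition for $G$ (and the compatibility with the parabolics $P$ and $Q$ containing $B$), the $Q$-orbits on $G/P$ correspond to the double cosets in $W_Q \backslash W / W_P$. So every $Q$-orbit has the form $\mathcal{O} = Q \cdot (wP/P)$ for some $w \in W$, and as a $Q$-variety we have $\mathcal{O} \cong Q/H$, where $H := Q \cap wPw^{-1}$ is the stabilizer. Since $Q$ and $wPw^{-1}$ are both parabolic subgroups of the connected reductive group $G$, I will invoke the standard structure theorem for the intersection of two parabolic subgroups: $H$ is a closed connected subgroup of $G$ containing a common maximal torus $T$ of $G$ (hence of $Q$).

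Next, from the fibration $H \hookrightarrow Q \twoheadrightarrow Q/H = \mathcal{O}$ I get the piece of the long exact homotopy sequence
\begin{equation*}
\pi_1(H) \lra \pi_1(Q) \lra \pi_1(\mathcal{O}) \lra \pi_0(H) \lra \pi_0(Q).
\end{equation*}
Both outer $\pi_0$'s vanish because $Q$ and $H$ are connected, so $\pi_1(\mathcal{O})$ equals the cokernel of $\pi_1(H) \ra \pi_1(Q)$. Also $\mathcal{O}$ is connected, being the continuous image of the connected group $Q$. It therefore suffices to show $\pi_1(H) \twoheadrightarrow \pi_1(Q)$.

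For this final step I exploit the chain of inclusions $T \subset H \subset Q$. The Levi decomposition $Q = L \ltimes R_u(Q)$ together with contractibility of the unipotent radical (which as a complex variety is just an affine space) gives $\pi_1(Q) = \pi_1(L)$. For the connected reductive group $L$ with maximal torus $T$, the fibration $T \hookrightarrow L \twoheadrightarrow L/T$ and the simple connectedness of the flag variety $L/T$ (which admits a cell decomposition) yield a surjection $\pi_1(T) \twoheadrightarrow \pi_1(L) = \pi_1(Q)$. Since this surjection factors as $\pi_1(T) \ra \pi_1(H) \ra \pi_1(Q)$, the second arrow is also surjective, and hence $\pi_1(\mathcal{O}) = 1$.

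The only step that is not entirely routine is the invocation of the structure theorem for $H = Q \cap wPw^{-1}$ (connectedness plus existence of a common maximal torus); I would cite Borel's \emph{Linear Algebraic Groups} rather than reprove it. Everything else is essentially the long exact sequence of a fibration plus the well-known surjectivity $\pi_1(T) \twoheadrightarrow \pi_1(L)$ for a connected reductive $L$.
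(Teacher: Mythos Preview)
Your proof is correct and follows essentially the same route as the paper's: identify the stabilizer as the connected intersection $Q \cap wPw^{-1}$ containing $T$, reduce via the long exact sequence to surjectivity of $\pi_1(T) \to \pi_1(L_Q)$, and deduce the latter from simple connectedness of the relevant quotient. One small slip: $L/T$ is not the flag variety---that is $L/B_L$ for a Borel $B_L \subset L$---but since $L/T \to L/B_L$ has contractible fibre $B_L/T \cong U_L$, the space $L/T$ is indeed simply connected and your argument goes through (the paper instead uses $L/B_L$ directly and the identification $\pi_1(T)=\pi_1(B_L)$).
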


This is probably well-known but we could not find a proof in the literature.

\begin{proof}
  Let $Y \subset G/P$ be a $Q$-orbit and $T\subset B \subset G$ a
  maximal torus. 
  Then $Y = QwP/P$ for some $w$ in the normalizer of $T$ in $G$.
  The stabilizer $S$ of $wP/P$ in $Q$ is
  $Q \cap w P w\inv$ and is
  connected as the intersection of two parabolic subgroups of a connected reductive
  group (\cite[14.22]{borel-lag}). 
  The exact sequence of homotopy groups associated to the
  $S$-principal fiber bundle $Q \ra Y$, $q
  \mapsto qwP/P$, shows that it is sufficient to prove surjectivity of 
  $\pi_1(S) \ra \pi_1(Q)$.
  Note that $T \subset S$. 
  If $L_Q$ is a Levi subgroup of $Q$, then $T \subset L_Q$ and
  $\pi_1(Q)=\pi_1(L_Q)$. Hence surjectivity is a consequence of the
  following Lemma \ref{l:pi1tg}. 
\end{proof}

\begin{lemma}
  \label{l:pi1tg}
  If $T$ is a maximal torus in a connected reductive group $L$, then
  $\pi_1(T) \ra \pi_1(L)$ is surjective.
\end{lemma}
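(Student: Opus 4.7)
The plan is to reduce to a statement about the flag variety $L/B$, where $B \supset T$ is a Borel subgroup, and then exploit the fact that $L/B$ is simply connected.

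First I would choose a Borel subgroup $B$ of $L$ containing $T$, and write $B = T \ltimes U$ with $U$ the unipotent radical of $B$. Since $U$ is isomorphic as a variety to an affine space $\DC^N$, it is contractible in the classical topology, so the inclusion $T \hra B$ is a homotopy equivalence; in particular it induces an isomorphism $\pi_1(T) \sira \pi_1(B)$. It therefore suffices to show that $\pi_1(B) \ra \pi_1(L)$ is surjective.

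Next I would consider the fiber bundle $B \ra L \ra L/B$, which gives the exact sequence
\begin{equation*}
  \pi_1(B) \ra \pi_1(L) \ra \pi_1(L/B).
\end{equation*}
So it remains to show that $L/B$ is simply connected. This is classical: the Bruhat decomposition stratifies $L/B$ into affine cells $BwB/B \cong \DC^{\ell(w)}$ indexed by the Weyl group, and a CW-complex built exclusively from cells of even real dimension is automatically simply connected (its $1$-skeleton is a point). Combining with the previous step yields surjectivity of $\pi_1(T) \ra \pi_1(L)$.

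The main obstacle is really just citing the right facts in the right order; there is no serious technical difficulty. The only point that deserves a little care is the homotopy equivalence $T \simeq B$, which is where the assumption that $L$ is a \emph{connected} reductive group enters (so that a Borel subgroup exists with the desired Levi decomposition); all other ingredients (Bruhat decomposition, long exact sequence of a fibration) are standard.
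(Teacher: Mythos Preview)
Your argument is correct and follows essentially the same route as the paper: pass from $T$ to a Borel $B\supset T$ via the contractibility of the unipotent radical, then use the long exact homotopy sequence of $B\to L\to L/B$ and the simple connectivity of the flag variety. The only cosmetic difference is in the justification of $\pi_1(L/B)=0$: the paper notes that $\pi_1(L/B)$ is abelian (as a quotient of $\pi_1(L)$) and hence equals $H_1(L/B)=0$ by evenness of the cohomology, whereas you argue directly from the even-dimensional CW structure; both are fine.
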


\begin{proof}
  Let $A \subset L$ be a Borel subgroup containing $T$. Then
  $\pi_1(T)=\pi_1(A)$ and the long exact
  homotopy sequence for the $A$-principal fiber bundle $L \ra L/A$
  yields an exact sequence
  $ \pi_1(A) \ra 
    \pi_1(L) \ra 
    \pi_1(L/A) \ra \pi_0(A)$.
  Since $A$ is connected and fundamental groups of topological groups
  are abelian, $\pi_1(L/A)$ is abelian and vanishes since
  the flag variety $L/A$ has only even cohomology.
  So $\pi_1(A) \ra \pi_1(L)$ is surjective.
\end{proof}

\begin{theorem}
  \label{t:bptgp}
  Let $F \in \dcb(\MHM(G/P))$
  be
  pure and smooth along the stratification by $B$-orbits. Let $l:Y
  \ra G/P$ be the inclusion of a $B$-orbit. Then $l^*(F)$ and $l^!(F)$
  are pure as well, of the same weight as $F$.
\end{theorem}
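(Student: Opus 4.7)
The plan is to reduce to the case $F=\tilde{\IC}_S$ by semisimplicity of pure smooth Hodge modules, and then to derive the purity of the stalks $l^*\tilde{\IC}_S$ (equivalently the costalks $l^!\tilde{\IC}_S$) from a Bott--Samelson resolution, proper base change, and the Decomposition Theorem.

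First, by \ref{enum:pure-cplxs-are-stalk} there is a (noncanonical) splitting $F\cong\bigoplus_{j\in\DZ}[-j]\Ho^j(F)$ with each $\Ho^j(F)\in\MHM(G/P,\mathcal{S})$ pure of weight $w+j$. By \ref{enum:GrW-semisimple} each $\Ho^j(F)$ is semisimple, and since Bruhat cells are simply connected (Proposition \ref{p:q-orbits-sc} with $Q=B$), the simple objects of $\MHM(G/P,\mathcal{S})$ are exactly the Tate twists $\tilde{\IC}_S(n)$, $S\in\mathcal{S}$. Since Tate twists and shifts commute with $l^*$ and $l^!$ and change weights uniformly, it suffices to treat $F=\tilde{\IC}_S$. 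Property \ref{enum:weights-f-upper-star} already gives $l^*\tilde{\IC}_S$ of weight $\leq d_S$ and $l^!\tilde{\IC}_S$ of weight $\geq d_S$; combining $\DD\tilde{\IC}_S\cong\tilde{\IC}_S(d_S)$ (cf.\ Remark \ref{rem:T-pure}), the commutation $\DD\comp l^*=l^!\comp\DD$, property \ref{enum:weight-under-shifts} and \ref{enum:weight-verdier}, the two remaining purity statements are equivalent, so it is enough to show $l^*\tilde{\IC}_S$ has weight $\geq d_S$.

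Let $w\in W$ with $S=BwP/P$, fix a reduced expression for $w$, and let $\pi:Z\to\ol{S}$ be the associated Bott--Samelson resolution: $Z$ is smooth projective of dimension $d_S$, $\pi$ is proper and birational, and every fiber $\pi\inv(y)$ admits an affine paving (from the Bialynicki--Birula decomposition for a generic cocharacter into $T\subset B$ acting on the iterated $\DP^1$-tower $Z$). Since $Z$ is smooth, $\tilde{\IC}_Z=[d_S]\tilde{\ul{Z}}$ is pure of weight $d_S$, so by properness of $\pi$ and Saito's Decomposition Theorem $\pi_*\tilde{\IC}_Z$ is a pure Hodge module of weight $d_S$ on $G/P$ containing $\tilde{\IC}_S$ as a direct summand. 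Hence it is enough to prove that $l^*\pi_*\tilde{\IC}_Z$ is pure of weight $d_S$.

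By proper base change, $i_y^*\pi_*\tilde{\IC}_Z\cong [d_S]R\Gamma(\pi\inv(y),\tilde{\ul{\pi\inv(y)}})$ for each $y\in Y$, and the affine paving of $\pi\inv(y)$ into cells of complex dimensions $e_1,\dots,e_k$ yields $R\Gamma(\pi\inv(y),\tilde{\ul{\phantom{Z}}})\cong\bigoplus_j[-2e_j]\DR(-e_j)$ in $\dbc(\MHM(\point))$. Each stalk of $\pi_*\tilde{\IC}_Z$ along $Y$ is therefore a direct sum of terms $[d_S-2e_j]\DR(-e_j)$, pure of weight $d_S$ by \ref{enum:weight-under-shifts}. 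Since the restriction $l^*\pi_*\tilde{\IC}_Z$ is smooth along $\{Y\}$, each weight-graded piece $\gr^W_n\Ho^i(l^*\pi_*\tilde{\IC}_Z)$ is a pure smooth Hodge module on $Y\cong\DC^{d_Y}$ and hence of the form $[d_Y]c_Y^*E_n$ by Proposition \ref{p:hodge-on-Cn}; exactness of $\gr^W$ and pullback give $\gr^W_n$ of the fiber, which vanishes unless $n=d_S+i$, so $E_n=0$ for $n\neq d_S+i$. Thus each $\Ho^i(l^*\pi_*\tilde{\IC}_Z)$ is pure of weight $d_S+i$, i.e.\ $l^*\pi_*\tilde{\IC}_Z$ is pure of weight $d_S$, and its direct summand $l^*\tilde{\IC}_S$ inherits this purity; duality then yields the corresponding statement for $l^!\tilde{\IC}_S$. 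The main obstacle is the geometric input guaranteeing the affine paving of the Bott--Samelson fibers over Bruhat strata; granted this classical fact, the rest is bookkeeping with the weight formalism \ref{enum:weights-f-upper-star}--\ref{enum:smooth-iso} and the Decomposition Theorem.
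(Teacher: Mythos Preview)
Your argument is essentially correct and gives a genuinely different proof from the paper's. The paper does not reduce to intersection cohomology complexes at all: it pulls $F$ back along the smooth projection $\pi:G/B\ra G/P$, observes that $\pi^*F=[-2n]\pi^!F(-n)$ is again pure and Bruhat-smooth on $G/B$, picks the unique Bruhat cell $Z\subset\pi^{-1}(Y)$ with $\pi|_Z:Z\sira Y$, and then invokes Soergel's Parabolic Purity Theorem from \cite{Soepurity} for the full flag variety to conclude. So the paper trades self-containedness for brevity by citing an external purity result, while your Bott--Samelson/Decomposition-Theorem route is the standard direct geometric argument (essentially the proof of pointwise purity underlying Kazhdan--Lusztig positivity) and stays within the Hodge-module formalism already set up in the paper.

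Two small corrections that do not affect your strategy. First, the simple objects of $\MHM(G/P,\mathcal{S})$ are not only Tate twists of the $\tilde{\IC}_S$: by Proposition~\ref{p:hodge-on-Cn} they are of the form $\tilde{\IC}_S\otimes c^*E$ for $E$ an irreducible polarizable Hodge structure, and $E$ need not be one-dimensional. Your reduction still works because tensoring with $c^*E$ commutes with $l^*$, $l^!$ and shifts weights uniformly. Second, the Bialynicki--Birula decomposition does not directly apply to $\pi^{-1}(y)$, since Bott--Samelson fibres are in general singular; the affine paving comes instead from the inductive $\DP^1$-bundle structure of $Z$ (and for $G/P$ you should take $w$ to be the minimal-length coset representative so that $\dim Z=d_S$ and $Z\ra\ol S$ is birational). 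You already flag this paving as the main external input, which is fair.
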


\begin{proof}
  Let $\pi:G/B \ra G/P$ be the obvious projection and $Z \subset
  \pi\inv(Y)$ the unique Bruhat cell such that $\pi$ induces an
  isomorphism $Z \sira Y$:
  \begin{equation*}
    \xymatrix{
      Z \ar[r] \ar[d]_\sim^{\pi} 
      & {\pi\inv(Y)} \ar[r] \ar[d]^{\pi}
      & {G/B} \ar[d]^{\pi}\\
      Y \ar@{=}[r]
      & Y \ar[r]^{l}
      & {G/P.}
    }
  \end{equation*}
  Since $\pi$ is smooth of relative dimension $n = \dim_\DC
  (P/B)$, $\pi^*(F)= [-2n]\pi^!(F)(-n)$ and $\pi^!(F)$ are pure of
  the same weight as $F$ 
  (use 
  \ref{enum:weights-f-upper-star}, 
  \ref{enum:weight-under-shifts} and 
  \ref{enum:smooth-iso}).
  Obviously, $\pi^*(F)$ is
  smooth along the stratification by Bruhat cells, and so is
  $\pi^!(F)$. Hence we deduce from 
  \cite[Parabolic Purity Theorem]{Soepurity}
  that $l^*(F)$ and $l^!(F)$ are pure of the same
  weight as $F$.
\end{proof}

\begin{theorem}
  \label{t:formality-partial-flag-variety}
  Let $\mathcal{Q}$ be the stratification of $G/P$ into
  $Q$-orbits. Then
  there is a t-exact equivalence
  $\dbc(G/P, \mathcal{Q}) 
  \cong 
  \dgPerDer(\Ext(\IC(\mathcal{Q})))$
  inducing an equivalence
  $\Perv(G/P, \mathcal{Q}) \cong \dgFilMod(\Ext(\IC(\mathcal{Q})))$.
\end{theorem}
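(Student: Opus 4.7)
The plan is to apply Theorem \ref{t:formality-ic} directly, with the stratification $\mathcal{S} = \mathcal{Q}$ (the $Q$-orbit stratification) and with $\mathcal{T}$ the stratification of $G/P$ into $B$-orbits. First I would verify the hypotheses. The $B$-orbits form a cell-stratification of $G/P$ (Bruhat decomposition), and since $B \subset Q$, every $Q$-orbit is a union of $B$-orbits, so $\mathcal{T}$ refines $\mathcal{Q}$. The $Q$-orbits are irreducible (as images of $Q$ under orbit maps) and simply connected by Proposition \ref{p:q-orbits-sc}, which takes care of the topological hypotheses of Theorem \ref{t:formality-ic}.

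The main point is the purity condition. For each $S \in \mathcal{Q}$, the mixed Hodge module $\tilde{\IC}_S$ is pure of weight $d_S$ (this is the general purity statement for intersection cohomology complexes recalled in subsection \ref{sec:MHM}), and in particular it is smooth along $\mathcal{Q}$ and hence along the refinement $\mathcal{T}$. By Remark \ref{rem:T-pure} it suffices to show $\mathcal{T}$-$*$-purity, i.e.\ that $l_T^*(\tilde{\IC}_S)$ is pure of weight $d_S$ for every $B$-orbit $T \in \mathcal{T}$. But this is exactly the content of Theorem \ref{t:bptgp} applied to $F = \tilde{\IC}_S$: since $F$ is pure of weight $d_S$ and smooth along the $B$-orbit stratification $\mathcal{T}$, its $*$-restriction to any $B$-orbit is again pure of the same weight.

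Having checked all hypotheses, Theorem \ref{t:formality-ic} produces the triangulated equivalence
\begin{equation*}
  \dbc(G/P, \mathcal{Q}) \sira \dgPerDer(\Ext(\IC(\mathcal{Q}))),
\end{equation*}
which is furthermore t-exact with respect to the perverse t-structure on the left and the t-structure from Theorem \ref{t:t-structure-auf-perf} on the right. Taking hearts yields the asserted equivalence
\begin{equation*}
  \Perv(G/P, \mathcal{Q}) \cong \dgFilMod(\Ext(\IC(\mathcal{Q}))).
\end{equation*}

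No step should be a serious obstacle, since all the work has been done: the simple connectivity of $Q$-orbits is Proposition \ref{p:q-orbits-sc}, and the parabolic purity of restrictions of pure $B$-smooth Hodge modules to $B$-orbits is Theorem \ref{t:bptgp}. The only mild subtlety is to appeal to Remark \ref{rem:T-pure} so that one only needs $*$-purity rather than full $\mathcal{T}$-purity; otherwise the proof is a clean verification of the hypotheses of Theorem \ref{t:formality-ic}.
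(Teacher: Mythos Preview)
Your proposal is correct and follows essentially the same approach as the paper's proof: verify the hypotheses of Theorem~\ref{t:formality-ic} using Proposition~\ref{p:q-orbits-sc} for simple connectivity and Theorem~\ref{t:bptgp} for $\mathcal{T}$-purity, with $\mathcal{T}$ the $B$-orbit cell-stratification. The only minor difference is that your appeal to Remark~\ref{rem:T-pure} is not actually needed, since Theorem~\ref{t:bptgp} already gives both $l^*$- and $l^!$-purity, hence full $\mathcal{T}$-purity directly.
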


\begin{proof}
  The strata of $\mathcal{Q}$ are simply connected by Proposition
  \ref{p:q-orbits-sc}.
  The cell-stra\-ti\-fi\-ca\-tion $\mathcal{T}$ of $G/P$ into
  $B$-orbits refines $\mathcal{Q}$, and
  every $\tilde{\IC}_Q$ is $\mathcal{T}$-pure of weight $d_Q$,
  for $Q \in \mathcal{Q}$ (Theorem \ref{t:bptgp}).
  Hence we can apply Theorem \ref{t:formality-ic}.
\end{proof}

\subsection{Complex coefficients}
\label{sec:complex-coefficients}

Let $(X, \mathcal{S})$ be a cell-stratified variety.
We denote the derived category of sheaves of complex vector
spaces on $X$ by $\dcb(X)_{\DC}$ and use similar notation in the
following. The obvious extension of
scalars functor $\dcb(X)\ra \dcb(X)_{\DC}$, $N \mapsto N_{\DC}$
restricts to a functor $\dcb(X, \mathcal{S}) \ra \dcb(X,
\mathcal{S})_{\DC}$. This functor is t-exact with respect to the perverse
t-structure and maps projective objects of $\Perv(X,
\mathcal{S})$ to projective objects of $\Perv(X, \mathcal{S})_{\DC}$.
If $M$, $N$ are in $\dbc(X)$ we have a
canonical isomorphism
\begin{equation}
  \label{eq:hom-complex}
  \DC \otimes_\DR 
  \Hom_{\dcb(X)}(M, N) \sira
  \Hom_{\dcb(X)_\DC}(M_\DC, N_\DC).
\end{equation}

Under the assumptions of Theorem 
\ref{t:formality-for-cell-stratified-varieties} the complexified
version 
\begin{equation*}
  \PraeVerd(\{\ul{M_\alpha}_\DC\}_{\alpha \in I}, \dcb(X)_{\DC})
  \sira \dgPraeDer_{\Ext(\ul{M}_{\DC})}(\{e_\alpha\Ext(\ul{M}_\DC)\}_{\alpha \in I}).
\end{equation*}
of equivalence \ref{eq:praeverd-dgprae} is true: With the notation
used in the proof of Theorem
\ref{t:formality-for-cell-stratified-varieties},
$(P_\alpha)_\DC \ra (M_\alpha)_\DC$ is a projective resolution in
$\Perv(X, \mathcal{S})_\DC$. 
From \eqref{eq:hom-complex} we see that $\DC \otimes_\DR A$ and
$\complexEnd(P_\DC)$ are isomorphic as dg algebras. Since $A$ is
formal, the same is true for $\complexEnd(P_\DC)$. Now it is easy
to adapt the sequence of equivalences 
\eqref{eq:the-equivalence-i}-\eqref{eq:the-equivalence-v} to the case
of complex coefficients.

In particular, Theorem \ref{t:formality-partial-flag-variety} is also true for
complex coefficients.

\section{Formality and Closed Embeddings}
\label{cha:form-clos-embedd}

We formulate in subsection \ref{sec:goal-section} the goal of this section
and explain in subsection \ref{sec:form-norm-nons} the main application.
The proof of the goal is divided into several parts and given in the following subsections.

\subsection{The Goal of the Section}
\label{sec:goal-section}

Let $(X, \mathcal{S})$ and $(Y, \mathcal{T})$ be cell-stratified
complex varieties, and $i: Y \ra X$ a closed embedding such that
$i(\mathcal{T}):=\{i(T)\mid T \in \mathcal{T}\} \subset \mathcal{S}$.
We say for short that $i: (Y, \mathcal{T}) \ra (X, \mathcal{S})$ is a
\define{closed embedding of cell-stratified varieties}.
(If $\mathcal{S}$ and $\mathcal{T}$ are merely stratifications, the
term \define{closed embedding of stratified varieties} is defined similarly.)

Assume that we are in the setting of Theorem
\ref{t:formality-for-cell-stratified-varieties}
on $X$ and on $Y$. More precisely, 
let 
$(\tilde{M}_\alpha)_{\alpha \in I}$ 
and $(\tilde{N}_\alpha)_{\alpha \in I}$ be finite collections of smooth
Hodge sheaves on $X$ and on $Y$. 
Assume that there are integers $(w_\alpha)_{\alpha \in I}$ (resp.\ $(v_\alpha)_{\alpha \in I}$) such that
$\tilde{M}_\alpha$ (resp.\ $\tilde{N}_\alpha$) is $\mathcal{S}$-pure
(resp.\ $\mathcal{T}$-pure) of weight $w_\alpha$ (resp.\ $v_\alpha$), for all
$\alpha \in I$. 

Let $\decal$ be an integer (in our applications, $\decal$ will be
the
negative complex codimension of the inclusion $i: Y \ra X$, and we
will have
$w_\alpha + \decal = v_\alpha$ for all $\alpha \in I$).
Suppose
that there are isomorphisms 
\begin{equation}
  \label{eq:assumption}
  \tilde\sigma_\alpha:[\decal]i^*(\tilde{M}_\alpha)\sira \tilde{N}_\alpha 
\end{equation}
in $\dcb(\MHM(Y))$, for all $\alpha \in I$. Let
$\tilde\sigma:[\decal]i^*(\tilde{M}) \sira \tilde{N}$ be the direct
sum of these isomorphisms, where $\tilde M = \bigoplus \tilde M_\alpha$ and 
$\tilde N = \bigoplus \tilde N_\alpha$. 

Let $\tilde \pi_\alpha:\tilde P_\alpha \ra \tilde M_\alpha$ and
$\tilde \rho_\alpha: \tilde Q_\alpha \ra \tilde N_\alpha$ 
be perverse-projective resolutions and 
$\tilde \pi:\tilde P \ra \tilde M$ and
$\tilde \rho: \tilde Q \ra \tilde N$ their direct sum.
The vertical
equivalences in 
\begin{equation}
  \label{eq:the-goal}
  \xymatrix{
    {\PraeVerd(\{\ul M_\alpha\},\dcb(X))}
    \ar[rr]^{[\decal]i^*}
    \ar[d]_{\Formy_{\tilde{P} \ra \tilde{M}}}^\sim
    &&
    {\PraeVerd(\{\ul N_\alpha\},\dcb(Y))}
    \ar[d]_{\Formy_{\tilde{Q} \ra \tilde{N}}}^\sim
    \\
    {\dgPraeDer_{\Ext(\ul M)}(\{e_\alpha \Ext(\ul M)\})}
    \ar[rr]_-{? \Lotimesover{\Ext(\ul M)} \Ext(\ul N)}
    \ar@{=>}[rru]^\sim
    &&
    {\dgPraeDer_{\Ext(\ul N)}(\{e_\alpha \Ext(\ul N)\})}
  }
\end{equation}
come from Theorem
\ref{t:formality-for-cell-stratified-varieties},
cf.\ Remark \ref{rem:formy-pm}.
The upper horizontal arrow is the restriction of 
$[\decal]i^*:\dcb(X) \ra \dcb(Y)$, the lower one is
the restriction of the
extension of scalars functor  
coming from the composition
\begin{equation}
  \label{eq:extmn}
  \Ext(\ul M) \xra{[\decal]i^*} \Ext([\decal]i^*(\ul M))
  \xrightarrow[\sim]{\sigma} \Ext(\ul N),
\end{equation}
where the $\sigma$ above the arrow indicates that the isomorphism is
constructed using the isomorphism
$\sigma:[\decal]i^*(\ul M) \sira \ul N$.
\begin{theorem}
  \label{t:closed-embedding-goal}
  Keep the above assumptions. Then diagram \eqref{eq:the-goal}
  commutes up to the indicated natural isomorphism, 
  i.\,e.\ there is a natural isomorphism (of triangulated functors)
  \begin{equation*}
    (? \Lotimesover{\Ext(\ul M)} \Ext(\ul N)) \comp \Formy_{\tilde{P} \ra
      \tilde{M}}
    \sira
    \Formy_{\tilde{Q} \ra \tilde{N}} \comp \; [\decal]i^*.
  \end{equation*}
\end{theorem}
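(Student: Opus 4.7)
I would prove the theorem by expanding both instances of $\Formy$ in~\eqref{eq:the-goal} into the five-step zigzag~\eqref{eq:the-equivalence-i}--\eqref{eq:the-equivalence-v}, and checking commutativity with $[\decal]i^*$ piece by piece. The first equivalence $\real$ commutes with $[\decal]i^*$ by compatibility of the realization functor with the six-functor formalism, and the three later equivalences involving $\Sub$, $\Ho$, and $\Ext(\ul M)$ are extensions of scalars that become transparent once the corresponding dga-level comparison between $A:=\complexEnd_X(\tilde P)$ and $B:=\complexEnd_Y(\tilde Q)$ has been set up. The entire content of the theorem thus concentrates in constructing this comparison together with a natural transformation of functors implementing it on $\complexHom$.

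The central construction is a ``derived Morita bridge'' built from the complex $\tilde R := \tilde Q \oplus [\decal]i^*\tilde P$ and its endomorphism dg algebra $\tilde\Lambda := \complexEnd_Y(\tilde R)$. This $\tilde\Lambda$ carries a natural Hodge structure, has orthogonal idempotents $e_Q$, $e_{i^*P}$ cutting out the dg subalgebras $B=e_Q\tilde\Lambda e_Q$ and $C:=e_{i^*P}\tilde\Lambda e_{i^*P}=\complexEnd_Y([\decal]i^*\tilde P)$, and receives a dga-morphism $A \xra{[\decal]i^*} C \hra \tilde\Lambda$ from $A$. The key observation is that $[\decal]i^*\tilde P \simeq \tilde N \simeq \tilde Q$ in the derived category --- a consequence of the assumed isomorphism $\tilde\sigma$ together with the fact that $\tilde Q$ is a perverse-projective resolution of $\tilde N$ --- so the two projective $\tilde\Lambda$-modules $e_Q\tilde\Lambda$ and $e_{i^*P}\tilde\Lambda$ are isomorphic in $\dgDer(\tilde\Lambda)$, by a Lemma~\ref{l:qiso-hom} style argument on the underlying perverse-sheaf level. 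The restriction-of-scalars functors $\dgDer(\tilde\Lambda)\to\dgDer(B)$ and $\dgDer(\tilde\Lambda)\to\dgDer(C)$ thus become mutually inverse derived Morita equivalences, yielding a zigzag
\[
\dgDer(A)\xra{\;\Lpro_A^C\;}\dgDer(C)\silongla\dgDer(\tilde\Lambda)\silongra\dgDer(B)
\]
whose composition induces on cohomology precisely the map $\Ext(\ul M)\to\Ext(\ul N)$ of~\eqref{eq:extmn}. The purity condition $v_\alpha=w_\alpha+\decal$, implicit in the existence of $\tilde\sigma_\alpha$, makes $\tilde\Lambda$ satisfy the hypotheses of Theorem~\ref{t:formality-of-endo-complex-with-objects}, and the resulting $\Sub(\tilde\Lambda)$ receives both $\Sub(B)$ and the image of $\Sub(A)$ in $C$, so the whole zigzag is compatible with the formality-producing $\Sub$-construction.

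The natural isomorphism asserted in the theorem then arises from applying $\complexHom_Y(\tilde R,[\decal]i^*F)$ to an arbitrary object $F\in\PraeVerd(\{M_\alpha\},\dcb(\Perv(X,\mathcal S)))$: this is a $\tilde\Lambda$-module which restricts over $B$ to $\complexHom_Y(\tilde Q,[\decal]i^*F)$ (the clockwise image of $F$) and over $C$ to $\complexHom_Y([\decal]i^*\tilde P,[\decal]i^*F)\cong [\decal]i^*\complexHom_X(\tilde P,F)$. A Beilinson-lemma argument (Lemma~\ref{l:praeverdier-cat-iso}), verified directly on the generators $F=M_\alpha$ where both sides reduce tautologically to $C$, identifies the $C$-module so obtained with $\complexHom_X(\tilde P,F)\Lotimesover{A} C$ --- i.e., with the counter-clockwise image of $F$. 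Transporting through the Morita equivalence then produces the required natural isomorphism of functors.

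The main technical obstacle is this last step: making the identification $\complexHom_Y([\decal]i^*\tilde P,[\decal]i^*F)\cong [\decal]i^*\complexHom_X(\tilde P,F)$ natural in $F$ and compatible with the $A$- and $C$-module structures, and then carefully propagating through the Morita equivalence while keeping track of the $\Sub$-subalgebras on all three dg algebras $A$, $B$, $\tilde\Lambda$. The weight-shift by $\decal$ is exactly what aligns the weight filtrations on the two summands of $\tilde R$, so that a single $\Sub(\tilde\Lambda)$ sees the formality of both $B$ and $C$ simultaneously; without this weight-bookkeeping the zigzag would not be compatible with formality, and~\eqref{eq:the-goal} would fail to commute as stated.
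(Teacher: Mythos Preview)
Your plan coincides with the paper's at the structural level: both proofs expand the two instances of $\Formy$ into the chain \eqref{eq:the-equivalence-i}--\eqref{eq:the-equivalence-v} and verify commutativity of the resulting five squares. The paper organises the middle three squares around the dg $\tilde A$-$\tilde B$-bimodule $\tilde V := \cHom(\tilde Q,[\decal]\,\p i^*\tilde P)$, with horizontal functors $?\Lotimes_A V$, $?\Lotimes_{\Sub(A)}\Sub(V)$, $?\Lotimes_{\Ho(A)}\Ho(V)$; your Morita bridge via $\tilde\Lambda=\cEnd(\tilde Q\oplus[\decal]\,\p i^*\tilde P)$ contains exactly this $\tilde V$ as the off-diagonal block $e_{i^*P}\tilde\Lambda e_Q$, and your zigzag through $\tilde\Lambda$ unwinds (computing the Morita functor as $e_Q\big(?\otimes_C e_{i^*P}\tilde\Lambda\big)$) to precisely $?\Lotimes_A V$. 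So the two arguments manipulate the same objects, only packaged differently --- the paper as a bimodule, you as a $2\times 2$ matrix algebra. Your packaging has the mild advantage that a single application of Theorem~\ref{t:formality-of-endo-complex-with-objects} to $\tilde\Lambda$ handles the formality of $B$, $C$ and $V$ simultaneously; the paper instead treats $V$ separately via Proposition~\ref{p:tildef} and Lemma~\ref{l:the-three-qisos}.

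Three points to tighten. First, the equivalences $\dgDer(\tilde\Lambda)\to\dgDer(B)$ and $\dgDer(\tilde\Lambda)\to\dgDer(C)$ are the \emph{corner} functors $e_Q(?)$, $e_{i^*P}(?)$, not restriction of scalars along $B\hra\tilde\Lambda$. Second, the relation $v_\alpha=w_\alpha+\decal$ is neither a consequence of the existence of $\tilde\sigma_\alpha$ nor needed here: $\tilde\Lambda$ satisfies the hypotheses of Theorem~\ref{t:formality-of-endo-complex-with-objects} simply because both $\tilde Q_\alpha$ and $[\decal]\,\p i^*\tilde P_\alpha$ are perverse-projective resolutions of $\tilde N_\alpha$, which is $\mathcal{T}$-pure of weight $v_\alpha$. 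Third, the expression ``$[\decal]i^*\cHom_X(\tilde P,F)$'' does not typecheck, since $\cHom_X(\tilde P,F)$ is a complex of mixed Hodge structures on a point; what you need is the natural transformation $\cHom_X(P,F)\Lotimes_A C\to\cHom_Y(\p i^*P,\p i^*F)$ given by ``apply $\p i^*$ and compose'', which is the paper's~\eqref{eq:trafo-for-pi}, and is checked to be an isomorphism on the generator $F=P$ exactly as you indicate.
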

\begin{proof}
  Let $\tilde A = \cEnd(\tilde P)$ and $\tilde B = \cEnd(\tilde Q)$.
  We define $A$ and $\Sub(A)$ as in the proof of Theorem 
  \ref{t:formality-for-cell-stratified-varieties}, and $B$ and
  $\Sub(B)$ accordingly.
  We only prove the theorem in the case that $I$ is a singleton, the
  general case being an obvious generalization.
  We expand diagram \eqref{eq:the-goal} according to the sequence of
  equivalences \eqref{eq:the-equivalence-i}-\eqref{eq:the-equivalence-v} to the
  following diagram.
\newpage
  \begin{equation}
    \label{eq:the-big}
    \xymatrix@C+1.5cm@R+2cm{
      {\PraeVerd(\ul{M},\dcb(X)) }
      \ar[r]^{[\decal]i^*}
      & 
      {\PraeVerd(\ul{N},\dcb(Y)) }
      \\
      {\PraeVerd(M, \dcb(\Perv(X, \mathcal{S})))}
      \ar[r]^{[\decal]L\p i^*} 
      \ar[u]^{\real_{X, \mathcal{S}}}_\sim
      \ar[d]_{\cHom(P, ?)}^\sim
      & 
      {\PraeVerd(N, \dcb(\Perv(Y, \mathcal{T})))}
      \ar[u]_{\real_{Y, \mathcal{T}}}^\sim
      \ar@{=>}[lu]^{\text{Proposition \ref{p:real-ius}}}_\sim
      \ar[d]^{\cHom(Q, ?)}_\sim \\
      {\dgPraeDer_A(A)}
      \ar@{=>}[ru]^\sim_{\text{Corollary \ref{c:LI-tensor}}}
      \ar[r]^-{?\Lotimesover{A} \cHom(Q, [\decal]\p i^* (P))}
      & 
      {\dgPraeDer_B(B)}
      \\
      {\dgPraeDer_{\Sub(A)}(\Sub(A))}
      \ar[u]^{?\Lotimesover{\Sub(A)}A}_{\sim}
      \ar[r]^(.55){?\Lotimesover{\Sub(A)} \Sub(\cHom(Q, [\decal]\p i^* (P)))}
      \ar[d]_{?\Lotimesover{\Sub(A)}\Ho(A)}^{\sim}
      & 
      {\dgPraeDer_{\Sub(B)}(\Sub(B))}
      \ar[u]_{?\Lotimesover{\Sub(B)}B}^{\sim}
      \ar[d]^{?\Lotimesover{\Sub(B)}\Ho(B)}_{\sim}
      \ar@{=>}[lu]_\sim^{\text{Subsection \ref{sec:pass-cohom-algebr}}} 
      \ar@{=>}[ld]_\sim^{\text{Subsection \ref{sec:pass-cohom-algebr}}} \\
      \dgPraeDer_{\Ho(A)}(\Ho(A))
      \ar[r]^-{?\Lotimesover{\Ho(A)} \Ho(\cHom(Q, [\decal]\p i^* (P)))}
      \ar[d]_{?\Lotimesover{\Ho(A)} \Ext(\ul M)}^{\sim}
      & 
      { \dgPraeDer_{\Ho(B)}(\Ho(B))}
      \ar[d]^{?\Lotimesover{\Ho(B)} \Ext(\ul N)}_{\sim}
      \ar@{=>}[ld]_\sim^{\text{Subsection \ref{sec:pass-geom-extens}}}
      \\
      {\dgPraeDer_{{\Ext(\ul M)}}({\Ext(\ul M)})}
      \ar[r]^-{? \Lotimesover{\Ext(\ul M)} \Ext(\ul N)}
      & 
      {\dgPraeDer_{{\Ext(\ul N)}}({\Ext(\ul N)})}
    }
  \end{equation}
  The horizontal functors in this diagram will be explained in the rest of this section.
  It will result from the specified Proposition, Corollary and subsections that 
  all squares commute up to natural isomorphism, as
  indicated by the diagonal arrows.
  Since all vertical arrows are equivalences, this proves the theorem.
\end{proof}

\subsection{Formality and Normally Nonsingular Inclusions}
\label{sec:form-norm-nons}

If $f: Y \ra X$ is a closed embedding of irreducible varieties and, in the
classical topology, a normally nonsingular inclusion of (complex) codimension
$c$ (cf.\ \cite[I.1.11]{gormac-stratified}), we have a canonical isomorphism (\cite[5.4.1]{gormac-IH-ii},
and \cite[0]{BBD} for the different normalization)
\begin{align}
  \label{eq:nnperv}
  [-c]f^*(\IC(X)) & \sira \IC(Y)
  && \text{in $\Perv(Y)$.}
\intertext{It comes from a canonical isomorphism}
  \label{eq:nnmhm}
  [-c]f^*(\tilde{\IC}(X)) & \sira \tilde{\IC}(Y)
  && \text{in $\MHM(Y)$.}
\end{align}

Let $i:Y \ra X$ be a closed embedding of varieties and
assume that we have stratifications $\mathcal{S}$ of $X$ and
$\mathcal{T}$ of $Y$ (with irreducible strata). 
Suppose that
$\mathcal{S} \ra \mathcal{T}$, $S \mapsto i\inv(S)=S\cap Y$, is
bijective and that $i|_{\ol S\cap Y}: \ol S \cap Y \ra \ol S$ is a
normally nonsingular inclusion of a fixed codimension $c$, for all $S \in \mathcal{S}$.
The isomorphisms \eqref{eq:nnperv} and \eqref{eq:nnmhm} induce
isomorphisms
\begin{align}
  \label{eq:nnpervstrat}
  [-c]i^*(\IC_S) & \sira \IC_{S\cap Y} 
  && \text{in $\Perv(Y, \mathcal{T})$ and}\\
  \label{eq:nnmhmstrat}
  [-c]i^*(\tilde{\IC}_S) & \sira \tilde{\IC}_{S\cap Y}
  && \text{in $\MHM(Y, \mathcal{T})$.}
\end{align}

\begin{theorem}
  \label{t:formality-ic-closed-embedding}
  Let $i:Y \ra X$ and $\mathcal{S}$, $\mathcal{T}$ be as above.
  Assume that
  \begin{enumerate}
  \item all strata in $\mathcal{S}$ and $\mathcal{T}$ are simply connected,
  \item there are cell-stratifications $\mathcal{S}'$ and
    $\mathcal{T}'$ refining $\mathcal{S}$ and $\mathcal{T}$
    such that
    \begin{itemize}
    \item $\tilde{\IC}_{S}$ is $\mathcal{S}'$-pure of weight $d_S$,
      for all $S \in \mathcal{S}$,
    \item $\tilde{\IC}_{T}$ is $\mathcal{T}'$-pure of weight $d_T$,
      for all $T \in \mathcal{T}$, and
    \item $i:(Y, \mathcal{T}') \ra (X, \mathcal{S}')$ is a closed
      embedding of cell-stratified varieties.
    \end{itemize}
  \end{enumerate}
  Let $\tilde{P}_S \ra \tilde{\IC}_S$ and $\tilde{Q}_T \ra
  \tilde{\IC}_T$ be perverse-projective resolutions
  (smooth along the cell-stratifications), for $S \in \mathcal{S}$ and $T \in \mathcal{T}$.
  Then diagram
  \begin{equation*}
    \xymatrix{
      {\dcb(X, {\mathcal{S}})}
      \ar[rrr]^{[-c]i^*}
      \ar[d]^\sim_{\Formy_{\tilde{P} \ra \tilde{\IC}(\mathcal{S})}^{\mathcal{S}'}}
      &&&
      {\dcb(Y, {\mathcal{T}})}
      \ar[d]_\sim^{\Formy_{\tilde{Q} \ra \tilde{\IC}(\mathcal{T})}^{\mathcal{T}'}}
      \\
      {\dgPerDer(\Ext(\IC({\mathcal{S}})))}
      \ar[rrr]^-{? \Lotimesover{\Ext(\IC({\mathcal{S}}))} \Ext(\IC({\mathcal{T}}))}
      &&&
      {\dgPerDer(\Ext(\IC({\mathcal{T}})))}
    }
  \end{equation*}
  is commutative (up to natural isomorphism). The vertical functors
  are given by Theorem \ref{t:formality-ic} (cf.\ Remark
  \ref{rem:formy-simplyconn}), the extension of scalars functor is induced by
  the isomorphisms \eqref{eq:nnpervstrat}.
  All functors in this diagram are t-exact (with respect to the
  perverse t-structure and the
  t-structure from Theorem \ref{t:t-structure-auf-perf}).
\end{theorem}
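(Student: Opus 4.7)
The plan is to reduce to Theorem~\ref{t:closed-embedding-goal}, applied to the closed embedding of cell-stratified varieties $i:(Y,\mathcal{T}')\to(X,\mathcal{S}')$. Using the bijection $\mathcal{S}\sira\mathcal{T}$, $S\mapsto S\cap Y$, I take as common index set $I:=\mathcal{S}$ and put $\tilde{M}_S:=\tilde{\IC}_S$ on $X$ and $\tilde{N}_S:=\tilde{\IC}_{S\cap Y}$ on $Y$. Assumption~(b) makes $\tilde{M}_S$ be $\mathcal{S}'$-pure of weight $w_S:=d_S$ and $\tilde{N}_S$ be $\mathcal{T}'$-pure of weight $v_S:=d_{S\cap Y}=d_S-c$; with $\decal:=-c$ we have $w_S+\decal=v_S$, and the isomorphisms~\eqref{eq:nnmhmstrat} furnish the data $\tilde{\sigma}_S:[\decal]i^*(\tilde{M}_S)\sira\tilde{N}_S$ required by~\eqref{eq:assumption}. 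With $\tilde{P}_S\to\tilde{\IC}_S$ and $\tilde{Q}_{S\cap Y}\to\tilde{\IC}_{S\cap Y}$ the given perverse-projective resolutions, Theorem~\ref{t:closed-embedding-goal} yields the natural isomorphism making the square~\eqref{eq:the-goal} commute.

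The remaining task is to identify the subcategories and functors in~\eqref{eq:the-goal} with those in our diagram. Since the strata of $\mathcal{S}$ and $\mathcal{T}$ are simply connected, the simples of $\Perv(X,\mathcal{S})$ and $\Perv(Y,\mathcal{T})$ are respectively the $\IC_S$ and the $\IC_{S\cap Y}$; combined with~\eqref{eq:derbxs-praeverd-verd} this gives $\PraeVerd(\{\IC_S\}_{S\in\mathcal{S}},\dcb(X))=\dcb(X,\mathcal{S})$ and similarly for $Y$. On the dg side, Theorem~\ref{t:t-structure-auf-perf} identifies the $\dgPraeDer$-categories appearing in~\eqref{eq:the-goal} with $\dgPerDer(\Ext(\IC(\mathcal{S})))$ and $\dgPerDer(\Ext(\IC(\mathcal{T})))$. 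Under these identifications the vertical equivalences of~\eqref{eq:the-goal} become precisely those built in Theorem~\ref{t:formality-ic}, cf.\ Remark~\ref{rem:formy-simplyconn}, so commutativity of the square follows.

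For t-exactness, the vertical equivalences are t-exact by Theorem~\ref{t:formality-ic}, and $[-c]i^*$ preserves the perverse t-structure since $i$ is normally nonsingular of codimension $c$ (concretely, \eqref{eq:nnpervstrat} shows $[-c]i^*(\IC_S)$ is perverse, and perverse truncation propagates this to all of $\dcb(X,\mathcal{S})$); commutativity of the square then forces t-exactness of the lower extension-of-scalars functor as well. The main obstacle is a bookkeeping one: one must verify that the morphism of extension algebras produced by~\eqref{eq:extmn} from our Hodge-theoretic $\tilde{\sigma}$ coincides, under~\eqref{eq:realextiso}, with the one induced by the perverse isomorphisms~\eqref{eq:nnpervstrat} appearing in the statement. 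Both morphisms arise by applying $[-c]i^*$ followed by $\rera=\real\comp\rat$ to the same Hodge-sheaf isomorphism~\eqref{eq:nnmhmstrat}, so the check reduces to the functoriality of $\real$ with respect to $i^*$.
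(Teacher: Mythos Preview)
Your proposal is correct and follows essentially the same route as the paper: reduce the commutativity of the square to Theorem~\ref{t:closed-embedding-goal} applied to the closed embedding $i:(Y,\mathcal{T}')\to(X,\mathcal{S}')$ of cell-stratified varieties, and then use the identifications from subsection~\ref{sec:form-inters-cohom} (simply connected strata, \eqref{eq:derbxs-praeverd-verd}, Theorem~\ref{t:t-structure-auf-perf}) to match the $\PraeVerd$/$\dgPraeDer$ categories with $\dcb(X,\mathcal{S})$, $\dcb(Y,\mathcal{T})$ and the two $\dgPerDer$ categories. The only notable difference is in the t-exactness of the horizontal functors: the paper argues directly that the extension-of-scalars functor is t-exact (immediate from the description of the t-structure in Theorem~\ref{t:t-structure-auf-perf}) and proves t-exactness of $[-c]i^*$ via finite length of perverse sheaves and the long exact perverse cohomology sequence, whereas you prove $[-c]i^*$ is t-exact and then deduce t-exactness of the lower functor from commutativity of the square; both arguments are valid, and your phrase ``perverse truncation propagates'' should be read as exactly the finite-length/long-exact-sequence argument the paper spells out.
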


\begin{proof}
  Except for the t-exactness of the horizontal functors, 
  this is a consequence of Theorem
  \ref{t:closed-embedding-goal} and the results of subsection
  \ref{sec:form-inters-cohom}. 

  It is obvious that the extension of scalars functor is t-exact. The
  t-exactness of $[-c]i^*$ can be proved as follows: 
  Since all strata in $\mathcal{S}$ are simply connected,
  the $(\IC_S)_{S \in \mathcal{S}}$ are the simple objects of
  $\Perv(X,\mathcal{S})$. Every object of $\Perv(X, \mathcal{S})$ has
  finite length. Now the isomorphisms \eqref{eq:nnpervstrat} and the
  long exact perverse cohomology sequence show the t-exactness.
\end{proof}

\subsection{Closed Embeddings and (Perverse) Sheaves}
\label{sec:closed-embeddings}

Let $i: (Y, \mathcal{T}) \ra (X, \mathcal{S})$ be a closed embedding
of stratified varieties.
Since $i_*$ is perverse t-exact \cite[1.3.17, 1.4.16]{BBD}, 
$\p i_*: \Perv(Y, \mathcal{T}) \ra \Perv(X, \mathcal{S})$
is exact and induces the 
functor $\p i_*$ in diagram
\begin{equation}
  \label{eq:ils-real}
  \xymatrix{
    {\dcb(\Perv(Y, \mathcal{T}))} \ar[rr]^-{\real_{Y,\mathcal{T}}} 
    \ar[d]^{\p i_*}
    & & {\dcb(Y, \mathcal{T})}
    \ar[d]^{i_*}\\
    {\dcb(\Perv(X, \mathcal{S}))} \ar[rr]^-{\real_{X,\mathcal{S}}} 
    & & {\dcb(X, \mathcal{S}).}
  }
\end{equation}
The horizontal functors were introduced in \eqref{eq:real-X-S}.

\begin{proposition}
  \label{p:ils-real}
  Keep the above assumptions. Then diagram \eqref{eq:ils-real}
  commutes up to natural isomorphism.
\end{proposition}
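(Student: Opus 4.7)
Because $i: Y \ra X$ is a closed embedding, the direct image $i_*: \dcb(Y, \mathcal{T}) \ra \dcb(X, \mathcal{S})$ is perverse t-exact by \cite[1.3.17, 1.4.16]{BBD}. In particular, $i_*$ restricts to an exact functor $\p i_* = i_*|_{\Perv(Y, \mathcal{T})}: \Perv(Y, \mathcal{T}) \ra \Perv(X, \mathcal{S})$ between the perverse hearts, and this exact functor extends canonically to the triangulated functor $\p i_*: \dcb(\Perv(Y, \mathcal{T})) \ra \dcb(\Perv(X, \mathcal{S}))$ appearing as the left vertical arrow of the diagram. Consequently both composites $\real_{X, \mathcal{S}} \comp \p i_*$ and $i_* \comp \real_{Y, \mathcal{T}}$ are t-exact triangulated functors from $\dcb(\Perv(Y, \mathcal{T}))$ (with its standard t-structure) to $\dcb(X, \mathcal{S})$ (with its perverse t-structure), and their restrictions to the heart $\Perv(Y, \mathcal{T})$ both coincide with the embedding $\Perv(Y, \mathcal{T}) \xra{\p i_*} \Perv(X, \mathcal{S}) \hra \dcb(X, \mathcal{S})$.

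To upgrade this agreement on the heart to a natural isomorphism of triangulated functors, I would invoke the construction of $\real$ in \cite[3.1]{BBD} via the filtered derived category (f-category) formalism. Briefly, one chooses an f-enhancement $DF(X, \mathcal{S})$ of $\dcb(X, \mathcal{S})$ together with its forgetful functor $\omega:DF(X,\mathcal{S}) \ra \dcb(X, \mathcal{S})$; this f-category carries a t-structure induced by the perverse t-structure whose heart is canonically equivalent to the category $\complexcat^{\bounded}(\Perv(X, \mathcal{S}))$ of bounded complexes of perverse sheaves, and $\real_{X, \mathcal{S}}$ is defined as the composition of a quasi-inverse of this equivalence (after passage to the bounded derived category) with $\omega$. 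The same applies to $Y$.

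The main step would be to produce a lift $\tilde i_*: DF(Y, \mathcal{T}) \ra DF(X, \mathcal{S})$ of $i_*$ which commutes with the forgetful functors $\omega$, is t-exact for the induced t-structures, and identifies on hearts with the termwise application of $\p i_* = i_*$ to complexes of perverse sheaves. Such a lift is obtained by applying $i_*$ to the filtration pieces of objects in a concrete model of the f-category, and its t-exactness reflects the perverse t-exactness of $i_*$. Unwinding the definition of $\real$ via this lift then immediately yields the desired natural isomorphism $\real_{X, \mathcal{S}} \comp \p i_* \sira i_* \comp \real_{Y, \mathcal{T}}$.

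The main obstacle is the functoriality of the f-category construction with respect to the closed embedding $i$, i.e.\ producing the lift $\tilde i_*$ with all required compatibilities. This is essentially formal once a concrete model of the f-category is fixed (e.g.\ the filtered complexes of sheaves or a suitable dg enhancement), but does require some bookkeeping. Alternatively, the proposition can be deduced from the general statement that, in the presence of compatible f-enhancements, $\real$ is functorial with respect to t-exact triangulated functors between triangulated categories with bounded t-structures, a result that is implicit in \cite[3.1]{BBD}.
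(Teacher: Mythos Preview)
Your proposal is correct and follows essentially the same approach as the paper: the paper's proof simply says ``This follows from the definition of the functor $\real_{X,\mathcal{S}}$ given in \cite[3.1]{BBD}; for details see \cite[3.3, 3.4]{OSdiss},'' and your sketch via the filtered derived category formalism and a t-exact lift $\tilde i_*$ of $i_*$ is exactly what unpacking that reference amounts to. Your write-up is in fact more informative than the paper's one-line citation.
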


\begin{proof}
  This follows from the definition of the 
  functor $\real_{X,\mathcal{S}}$ given in 
  \cite[3.1]{BBD}; for details see \cite[3.3, 3.4]{OSdiss}.
\end{proof}

Let $i:(Y, \mathcal{T}) \hra (X, \mathcal{S})$ be a closed embedding
of cell-stratified varieties. 
Theorem \ref{t:bgsequi} shows that the
right exact functor $\p i^*: \Perv(X, \mathcal{S}) \ra \Perv(Y,
\mathcal{T})$ has a left derived functor $L \p i^*$ between the bounded derived categories.

\begin{proposition}
  \label{p:real-ius}
  Let $i:(Y, \mathcal{T}) \hra (X, \mathcal{S})$ be a closed embedding
  of cell-stratified varieties.
  Then there exists a natural isomorphism 
  as indicated in the diagram
  \begin{equation*}
    \xymatrix{
      {\dcb(\Perv(X, \mathcal{S}))} \ar[rr]_-\sim^-{\real_{X,\mathcal{S}}}
      \ar[d]^{L \p i^*}      
      & & {\dcb(X, \mathcal{S})}
      \ar[d]^{i^*}\\
      {\dcb(\Perv(Y, \mathcal{T}))} \ar[rr]_-\sim^-{\real_{Y,\mathcal{T}}}
      \ar@{=>}[urr]^\sim
      & & {\dcb(Y, \mathcal{T}).}
    }
  \end{equation*}
\end{proposition}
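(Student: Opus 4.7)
The plan is to establish Proposition \ref{p:real-ius} via the adjunction formalism, using Proposition \ref{p:ils-real} and the uniqueness of left adjoints up to canonical natural isomorphism. On the perverse side, the functor $\p i_*: \Perv(Y, \mathcal{T}) \ra \Perv(X, \mathcal{S})$ coincides with $i_*$ (since $i_*$ is perverse t-exact for a closed embedding, \cite[1.3.17, 1.4.16]{BBD}) and is therefore exact. Consequently $R\p i_* = \p i_*$ on $\dcb(\Perv)$, obtained by termwise application. By Theorem \ref{t:bgsequi}, both $\Perv(X, \mathcal{S})$ and $\Perv(Y, \mathcal{T})$ have enough projectives and every object admits a finite projective resolution, so the right exact functor $\p i^*$ does admit a left derived functor $L\p i^*: \dcb(\Perv(X, \mathcal{S})) \ra \dcb(\Perv(Y, \mathcal{T}))$. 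The usual derived-functor machinery then yields the adjunction $(L\p i^*, \p i_*)$ on $\dcb(\Perv)$.

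On the sheaf side, $(i^*, i_*)$ is an adjoint pair of triangulated functors between $\dcb(X, \mathcal{S})$ and $\dcb(Y, \mathcal{T})$. Proposition \ref{p:ils-real} supplies a natural isomorphism $\real_{X, \mathcal{S}} \comp \p i_* \cong i_* \comp \real_{Y, \mathcal{T}}$. Since the realization functors are triangulated equivalences (Theorem \ref{t:bgsequi}), the adjunction $(L\p i^*, \p i_*)$ transports through them to a triangulated adjunction on the sheaf side whose right adjoint is (naturally isomorphic to) $i_*$. By uniqueness of left adjoints up to unique natural isomorphism, the transported left adjoint is canonically naturally isomorphic to $i^*$, giving the desired natural isomorphism $\real_{Y, \mathcal{T}} \comp L\p i^* \sira i^* \comp \real_{X, \mathcal{S}}$.

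The main technical obstacle will be ensuring that the argument is truly canonical, and not just an abstract existence statement: the resulting natural isomorphism must be compatible with units and counits so that commutativity claims relying on it, such as those in diagram \eqref{eq:the-big}, remain valid. Concretely, given $A \in \dcb(\Perv(X, \mathcal{S}))$ with a bounded projective resolution $\pi: P^\bullet \ra A$ (produced by Theorem \ref{t:bgsequi}), one identifies $L\p i^*(A) \cong \p i^*(P^\bullet)$ and must verify termwise that the canonical comparison $\real_{Y, \mathcal{T}}(\p i^*(P^\bullet)) \ra i^*(\real_{X, \mathcal{S}}(P^\bullet))$ is an isomorphism. This reduces, via the standard filtrations of projective perverse sheaves given by Theorem \ref{t:bgsequi} and base change for closed embeddings, to showing that projective objects of $\Perv(X, \mathcal{S})$ are $i^*$-acyclic, i.\,e.\ $i^*P$ lies in perverse degree zero, which in turn follows from the corresponding statement for standard objects $\Delta_S = l_{S*}([d_S]\ul S)$.
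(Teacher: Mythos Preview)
Your approach is correct and essentially identical to the paper's: invoke Proposition \ref{p:ils-real} to identify $\p i_*$ and $i_*$ through the realization equivalences (Theorem \ref{t:bgsequi}), then conclude by uniqueness of left adjoints from the adjunctions $(L\p i^*, \p i_*)$ and $(i^*, i_*)$. Your additional remarks about canonicity and the reduction to standard objects go beyond what the paper records but are consistent with it.
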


\begin{proof}
  Both realization functors are equivalences of categories (Theorem
  \ref{t:bgsequi}) and up to these equivalences the functors 
  $\p i_*$ and $i_*$ coincide (Proposition \ref{p:ils-real}).
  Now the statement is a consequence of the adjunctions
  adjunctions $(L\p i^*, \p i_*)$ and $(i^*, i_*)$. 
\end{proof}

\subsection{Tensor Product with a DG Bimodule}
\label{sec:tensor-product-with}

By \cite[8.1.1, 8.1.2, 8.2.5]{Keller-construction-of-triangle-equiv},
there is a fully faithful functor $p: \dgDer \ra \dgHot$ that is left
adjoint to the quotient functor $q: \dgHot \ra \dgDer$ and has image
in $\dgHotproj$. If we consider $p$ as a functor $\dgDer \ra
\dgHotproj$, it is quasi-inverse to the equivalence
\eqref{eq:dgHotp-equiv-dgDer}. 

Let $\mathcal{A}$ and $\mathcal{B}$ be dg algebras and $X$ a dg
$\mathcal{A}$-$\mathcal{B}$-bimodule (with $\mathcal{A}$ acting on the
left and $\mathcal{B}$ on the right). This yields a triangulated functor
\begin{equation*}
  (? \otimes_{\mathcal{A}} X) : \dgHot(\mathcal{A}) \ra \dgHot(\mathcal{B}).  
\end{equation*}
Its left derived functor is the pair 
$((? \Lotimes_{\mathcal{A}} X), \sigma)$ 
(we use the definition of derived functors from 
\cite[C.D.II.2.1.2, p. 301]{SGA4einhalb}), where 
\begin{equation*}
  (? \Lotimes_{\mathcal{A}} X) := q \comp (? \otimes_{\mathcal{A}} X)
\comp p : \dgDer(\mathcal{A}) \ra \dgDer(\mathcal{B})
\end{equation*}
and $\sigma$ is the natural transformation 
\begin{equation}
  \label{eq:tensor-trafo}
  \sigma:  (? \Lotimes_{\mathcal{A}} X) \comp q \ra q \comp (?
  \otimes_{\mathcal{A}} X)
\end{equation}
coming from the adjunction $(p,q)$.

\subsection{Passage from Geometry to DG Modules}
\label{sec:pass-from-geom}

Let $I: \mathcal{A} \ra \mathcal{B}$ be a right exact functor between
abelian categories. We denote the induced functor 
$\Hot^\bd(\mathcal{A}) \ra \Hot^\bd(\mathcal{B})$ by the same symbol. 
Assume that each object of $\mathcal{A}$ has a projective resolution
of finite length.
Then $I$ has a left derived functor $LI: \Der^\bd(\mathcal{A}) \ra
\Der^\bd(\mathcal{B})$. 
Let $P$ and $Q$ be bounded complexes of projective objects in $\mathcal{A}$
and $\mathcal{B}$ respectively. 
Then $\cHom(Q, I(P))$ is obviously a dg $\cEnd(P)$-$\cEnd(Q)$-bimodule.
It induces (see subsection \ref{sec:tensor-product-with}) the lower
horizontal arrow in diagram
\begin{equation}
  \label{eq:LI-Hom}
  \xymatrix{
    {\Der^\bd(\mathcal{A})} \ar[rrr]^{LI} \ar[d]^{\cHom(P,?)} 
    &&& {\Der^\bd(\mathcal{B})}  \ar[d]^{\cHom(Q,?)} \\
    {\dgDer(\cEnd(P))} \ar[rrr]_{?\Lotimesover{\cEnd(P)}\cHom(Q,I(P))} 
    \ar@{=>}[rrru]^\pi
    &&& {\dgDer(\cEnd(Q)).} 
  }
\end{equation}
The vertical functors are restrictions of the functors \eqref{eq:chomp} explained in 
Remark \ref{rem:hom-p-dot-iso}.
We construct now a natural transformation 
$\pi$
as indicated in the diagram.
  Since $\cHom(Q,?) \comp LI$ is the left derived functor of
  \begin{equation*}
    \cHom(Q, ?) \comp I: \Hot^\bd(\mathcal{A}) \ra \dgHot(\cEnd(Q)),
  \end{equation*}
  it is enough, by the universal property of left derived functors, to
  construct a natural transformation
  $\hat \pi$
  as indicated by the diagram
  \begin{equation*}
    \xymatrix{
      {\Hot^\bd(\mathcal{A})} \ar[rrr]^{I} \ar[d]_{q} 
      &&& {\Hot^\bd(\mathcal{B})}   \ar[d]^{\cHom(Q,?)}\\
      {\Der^\bd(\mathcal{A})}  \ar[d]_{\cHom(P,?)} 
      &&& {\dgHot(\cEnd(Q))}  \ar[d]^{q}\\
      {\dgDer(\cEnd(P))} \ar[rrr]_{?\Lotimesover{\cEnd(P)}\cHom(Q,I(P))} 
      \ar@{=>}[rrruu]^{\hat\pi}
      &&& {\dgDer(\cEnd(Q)).}
    }
  \end{equation*}
  Thus we define $\hat\pi$ to be the composition
  \begin{align}
    \label{eq:trafo-for-pi}
    \notag \cHom(P,q(?)) \Lotimesover{\cEnd(P)} & \cHom(Q, I(P)) \\
    & = q(\cHom(P,?)) \Lotimesover{\cEnd(P)} \cHom(Q, I(P)) \\
    \notag & \xrightarrow{\text{$\sigma$ from \eqref{eq:tensor-trafo}}} q\Big(\cHom(P,?) \otimesover{\cEnd(P)} \cHom(Q, I(P))\Big) \\
    \notag & \xra{\text{$I$ and composition}} q\big(\cHom(Q, I(?))\big).
  \end{align}
\begin{proposition}
  \label{p:LI-tensor}
  Assume in addition to the above assumptions that $[\decal]I(P) \cong Q$ in
  $\Der^\bd(\mathcal{B})$ for some integer $\decal$. Then diagram \eqref{eq:LI-Hom} induces the diagram
  \begin{equation}
    \label{eq:LI-Hom-Prae-modified}
    \xymatrix{
      {\PraeVerd(P, {\Der^\bd(\mathcal{A})})} \ar[rr]^{[\decal]LI} \ar[d]^{\cHom(P,?)} 
      && {\PraeVerd(Q, {\Der^\bd(\mathcal{B})})}  \ar[d]^{\cHom(Q,?)} \\
      {\dgPraeDer_{\cEnd(P)}(\cEnd(P))} \ar[rr]_{?\Lotimesover{\cEnd(P)}\cHom(Q,[\decal]I(P))} 
      \ar@{=>}[rru]^{\tilde{\pi}}_{\sim}
      && {\dgPraeDer_{\cEnd(Q)}(\cEnd(Q))} 
    }
  \end{equation}
  and $\tilde \pi$ is a natural isomorphism.
\end{proposition}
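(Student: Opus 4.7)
The plan is to verify that $\tilde\pi$ is well-defined (i.e.\ both horizontal functors in \eqref{eq:LI-Hom-Prae-modified} really do restrict to the displayed subcategories), and then to reduce the claim that $\tilde\pi$ is a natural isomorphism to a single pointwise check at $P$ by a standard dévissage.

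First I would check that the restriction is legitimate. For the upper arrow this is immediate: since $P$ is a bounded complex of projectives, $[\decal]LI(P) = [\decal]I(P) \cong Q$ by hypothesis, so the triangulated functor $[\decal]LI$ carries $\PraeVerd(P, \Der^\bd(\mathcal{A}))$ into $\PraeVerd(Q, \Der^\bd(\mathcal{B}))$. For the lower arrow, the generator $\cEnd(P)$ is sent to $\cEnd(P)\Lotimesover{\cEnd(P)} \cHom(Q, [\decal]I(P)) \cong \cHom(Q, [\decal]I(P))$; applying the functor $\cHom(Q,?)$ from Remark \ref{rem:hom-p-dot-iso} to the given isomorphism $[\decal]I(P) \cong Q$ identifies this with $\cEnd(Q)$ in $\dgDer(\cEnd(Q))$, so the image lies in $\dgPraeDer_{\cEnd(Q)}(\cEnd(Q))$.

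Next I would invoke dévissage. The full subcategory of $\PraeVerd(P, \Der^\bd(\mathcal{A}))$ consisting of objects on which $\tilde\pi$ is an isomorphism is strict and triangulated, because both of the functors being compared are triangulated. Since this subcategory is closed under isomorphisms, cones and shifts, it suffices to show that $\tilde\pi_P$ is an isomorphism; the strict triangulated closure of $\{P\}$ then exhausts $\PraeVerd(P)$.

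Finally I would unwind $\hat\pi$ from \eqref{eq:trafo-for-pi} at $P$. The first factor becomes $\cHom(P,P) = \cEnd(P)$, which is homotopy projective over itself, so the comparison $\sigma$ between underived and derived tensor is automatically an isomorphism at $\cEnd(P)$. The subsequent composition morphism, traced through the canonical isomorphism $\cEnd(P)\otimes_{\cEnd(P)} N \cong N$ with $N = \cHom(Q, [\decal]I(P))$, reduces to the identity, so $\tilde\pi_P$ is an isomorphism. The main (and essentially only) obstacle is the bookkeeping needed to identify $\hat\pi_P$ correctly through the two ingredients of \eqref{eq:trafo-for-pi} (the adjunction unit $\sigma$ and the composition-plus-$I$ morphism); once carried out, the dévissage concludes the proof.
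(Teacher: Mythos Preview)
Your proposal is correct and follows essentially the same approach as the paper: first verify that the horizontal arrows restrict to the $\PraeVerd$-subcategories, then reduce via d\'evissage to checking that $\tilde\pi_P$ is an isomorphism, and finally unwind \eqref{eq:trafo-for-pi} at $P$ using that $\cEnd(P)$ is homotopically projective over itself. The paper's proof is slightly terser (it leaves the d\'evissage implicit and writes the final check as a single chain of isomorphisms), but the argument is the same.
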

\begin{proof}
  We have $LI(P) \cong I(P)$
  in $\Der^\bd(\mathcal{B})$ and 
  $\cHom(Q,[\decal]I(P)) \cong
  \cEnd(Q)$ in $\dgDer(\cEnd(Q))$. 
  Thus, after replacing
  the horizontal functors in diagram \ref{eq:LI-Hom} by their
  composition with the shift $[\decal]$, this diagram restricts to 
  \eqref{eq:LI-Hom-Prae-modified}.

  In order to show that $\tilde{\pi}$ is a natural isomorphism, it is
  sufficient to check that $\tilde{\pi}_P$ or equivalently $\pi_P$ is
  an isomorphism. Since $\pi_P$ is
  obtained by plugging in $P$ in \eqref{eq:trafo-for-pi} ($P$ is a
  complex of projective objects), this follows
  from the obvious isomorphism 
  \begin{align*}
    \cHom(P,q(P)) \Lotimesover{\cEnd(P)} & \cHom(Q, I(P)) \\
    & = q\big(\cHom(P,P)) \otimes_{\cEnd(P)} \cHom(Q, I(P))\big) \\
    & \sira q(\cHom(Q, I(P))).
  \end{align*}
\end{proof}

\begin{corollary}
  \label{c:LI-tensor}
  Under the assumptions of subsection \ref{sec:goal-section}, 
  the second square in diagram \eqref{eq:the-big} commutes up to
  natural isomorphism.
\end{corollary}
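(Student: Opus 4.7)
The plan is to derive this corollary as a direct application of Proposition \ref{p:LI-tensor}, taking $\mathcal{A} = \Perv(X, \mathcal{S})$, $\mathcal{B} = \Perv(Y, \mathcal{T})$, and $I = \p i^*$, with $P$ and $Q$ the underlying (perverse-)projective resolutions of $M = \bigoplus M_\alpha$ and $N = \bigoplus N_\alpha$. By Theorem \ref{t:bgsequi} both abelian categories are artinian with enough projectives, every object has a projective resolution of finite length, and so the left derived functor $L\p i^*$ exists on bounded derived categories; moreover $A = \cEnd(P)$ and $B = \cEnd(Q)$ by definition, so after the shift $[\decal]$ is absorbed into the horizontal functors, diagram \eqref{eq:LI-Hom-Prae-modified} is literally the second square of \eqref{eq:the-big}.

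Thus the only substantive task is to verify the hypothesis $[\decal]\p i^*(P) \cong Q$ in $\dcb(\Perv(Y, \mathcal{T}))$ of Proposition \ref{p:LI-tensor}. The plan is to trace the given isomorphism $\tilde{\sigma} : [\decal]i^*(\tilde M) \sira \tilde N$ of \eqref{eq:assumption} through the various derived functors. First I would apply the faithful exact functor $\rat$ to obtain $[\decal]i^*(\ul{M}) \cong \ul{N}$ in $\dcb(Y, \mathcal{T})$. Next, using the equivalences $\real_{X, \mathcal{S}}$ and $\real_{Y, \mathcal{T}}$ of Theorem \ref{t:bgsequi} together with the commutative square of Proposition \ref{p:real-ius}, this isomorphism transfers to $[\decal]L\p i^*(M) \cong N$ in $\dcb(\Perv(Y, \mathcal{T}))$. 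Since $P$ consists of projective objects of $\Perv(X, \mathcal{S})$, we have $L\p i^*(M) \cong \p i^*(P)$; and since $Q \sira N$ is a resolution, $Q \cong N$ in $\dcb(\Perv(Y, \mathcal{T}))$. Combining these yields the required $[\decal]\p i^*(P) \cong Q$.

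With the hypothesis in hand, Proposition \ref{p:LI-tensor} immediately supplies the natural isomorphism $\tilde{\pi}$ witnessing commutativity of the second square. The main (and only) delicate point is the translation from the geometric isomorphism $\tilde \sigma$, living in $\dcb(\MHM(Y))$, to the corresponding isomorphism in $\dcb(\Perv(Y, \mathcal{T}))$ used as the hypothesis of Proposition \ref{p:LI-tensor}; this is the step where Proposition \ref{p:real-ius} is essential, as one must commute $L\p i^*$ past the realization. Apart from this translation, everything is a formal consequence of Proposition \ref{p:LI-tensor} and the definitions of $A$, $B$, and the bimodule $\cHom(Q, [\decal]\p i^*(P))$ appearing in the second row of \eqref{eq:the-big}.
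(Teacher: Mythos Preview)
Your proposal is correct and follows essentially the same route as the paper: apply Proposition~\ref{p:LI-tensor} with $\mathcal{A}=\Perv(X,\mathcal{S})$, $\mathcal{B}=\Perv(Y,\mathcal{T})$, $I=\p i^*$, and verify the hypothesis $[\decal]I(P)\cong Q$ by passing from the assumed isomorphism $\tilde\sigma$ in $\dcb(\MHM(Y))$ to $\dcb(Y,\mathcal{T})$ and then, via Proposition~\ref{p:real-ius} and Theorem~\ref{t:bgsequi}, to $\dcb(\Perv(Y,\mathcal{T}))$. The only nitpick is that the functor taking $\tilde M$ to $\ul M$ is $\rera=\real\circ\rat$ rather than $\rat$ alone, but your argument is otherwise identical to the paper's.
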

\begin{proof}
  Take as $\mathcal{A}$ the category $\Perv(X, \mathcal{S})$ (using 
  Theorem \ref{t:bgsequi}), as
  $I: \mathcal{A} \ra \mathcal{B}$ the right exact
  functor
  $\p i^*: \Perv(X, \mathcal{S}) \ra \Perv(Y, \mathcal{T})$
  and as $P$ and $Q$ the complexes denoted by the same symbols in
  subsection \ref{sec:goal-section}. 
  By assumption, we have an isomorphism
  $[\decal]i^*(\tilde{M}) \sira \tilde{N}$ in $\dcb(\MHM(Y))$. 
  Hence $[\decal]i^*(\ul{M}) \sira \ul{N}$ in $\dcb(Y, \mathcal{T})$ or
  equivalently $[\decal]L\p i^*(M) \sira N$ in $\dcb(\Perv(Y,
  \mathcal{T}))$, by Proposition \ref{p:real-ius} and Theorem
  \ref{t:bgsequi}.
  But $N$ is isomorphic to $Q$ and $L\p i^*(M)$ is isomorphic to $\p i^*(P)$ since $P \ra M$ is a
  projective resolution in $\Perv(X, \mathcal{S})$.
\end{proof}

\subsection{DG Bimodules and Transformations}
\label{sec:dg-bimod-transf}

\begin{lemma}
  \label{l:prod-hopro}
  Let $B \ra A$ be a dga-morphism and $P$ a homotopically projective
  dg $B$-module. Then $P \otimes_{B} A$ is homotopically
  projective.
\end{lemma}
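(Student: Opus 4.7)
The plan is to use the standard tensor-hom adjunction between extension and restriction of scalars, combined with the characterization of homotopic projectivity in terms of vanishing of homotopy-morphisms from $P$ into acyclics.

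More precisely, I would first recall that the dga-morphism $B \to A$ induces an adjoint pair $(? \otimes_B A, \res_B^A)$ between $\dgMod(B)$ and $\dgMod(A)$, which descends to an adjunction between the homotopy categories $\dgHot(B)$ and $\dgHot(A)$ (cf.\ \cite[10.11]{BL}). Thus for any dg $A$-module $N$ there is a natural isomorphism
\begin{equation*}
  \Hom_{\dgHot(A)}(P \otimes_B A, N)
  \cong \Hom_{\dgHot(B)}(P, \res_B^A N).
\end{equation*}

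Next I would use the characterization of homotopically projective dg modules recalled in subsection~\ref{sec:review-dg-modules}: $P \otimes_B A$ is homotopically projective over $A$ if and only if the left-hand side vanishes for every acyclic $N \in \dgMod(A)$. So let $N$ be acyclic over $A$. The restriction $\res_B^A N$ has the same underlying complex of $k$-modules, hence the same cohomology, so $\res_B^A N$ is acyclic over $B$. Since $P$ is homotopically projective over $B$, the right-hand side vanishes, and the claim follows.

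There is no real obstacle here; the only point to verify carefully is that the adjunction indeed descends to the homotopy categories (which amounts to checking that both $? \otimes_B A$ and $\res_B^A$ preserve homotopies, trivial from the definitions) and that restriction preserves acyclicity (which is clear since it does not change the underlying differential $k$-module).
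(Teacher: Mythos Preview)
Your proposal is correct and follows essentially the same argument as the paper: use the adjunction between $?\otimes_B A$ and restriction of scalars at the homotopy level, together with the fact that restriction preserves acyclicity, to conclude that $\Hom_{\dgHot(A)}(P\otimes_B A,?)$ vanishes on acyclics. The paper's proof is simply a one-sentence compression of what you wrote.
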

\begin{proof}
  Since $? \otimes_{B}A$ is left adjoint to the restriction of
  scalars functor, we see that
  $\Hom_{\dgHot(A)}(P \otimes_{B}A, ?)$ vanishes on acyclic dg
  $A$-modules.
\end{proof}

Assume that we are given dga-morphisms $\phi:B \ra A$ and $\psi:S \ra
R$. Let $M$ be a dg $A$-$R$-bimodule. 
By restriction of scalars we view $M$ as a dg
$B$-$S$-module. 
Let $\chi:N \ra M$ be a morphism of dg $B$-$S$-bimodules.
We denote this situation as follows.
\begin{equation}
  \label{eq:bimod-diagram}
  {\mathovalbox{B \curvearrowright {N} \curvearrowleft S}}
  \xra{\;(\phi, \; \chi, \;\psi)\;}
  {\mathovalbox{A \curvearrowright {M} \curvearrowleft R}}
\end{equation}
We get the following diagram
  \begin{equation}
    \label{eq:axrbys}
    \xymatrix{
      {\dgDer(B)} \ar[r]^-{? \Lotimesover{B}A} \ar[d]_-{? \Lotimesover{B}N}
      & {\dgDer(A)} \ar[d]^-{? \Lotimesover{A}M} \\
      {\dgDer(S)} \ar[r]_-{? \Lotimesover{S}R} \ar@{=>}[ur]^{\theta}
      & {\dgDer(R)}
    }
  \end{equation}
and construct now the indicated natural transformation
$\theta$.
  From Lemma \ref{l:prod-hopro} we see that the obvious transformation
  \begin{equation*}
    (? \Lotimesover{A}M) \comp (? \Lotimesover{B}A) \sira (? \Lotimesover{B}M)
  \end{equation*}
  is an isomorphism. So it is sufficient to define a transformation
  \begin{equation*}
    \tilde{\theta}:
    (? \Lotimesover{S}R) \comp (? \Lotimesover{B}N)
    \ra 
    (? \Lotimesover{B}M)
  \end{equation*}
  But there is an obvious natural transformation
  \begin{align}
    \label{eq:p}
    (? \Lotimesover{S}R) \comp (? \Lotimesover{B}N) \comp q
    & \xrightarrow{\text{{$\sigma$ from \eqref{eq:tensor-trafo}} twice}} 
    q \comp (? \otimesover{S}R) \comp (? \otimesover{B}N)\\
    \notag 
    & \xrightarrow{\quad\chi\quad} 
    q \comp (? \otimesover{B}M)
  \end{align}
  that induces, by the universal property of left derived functors,
  the transformation we want.

\begin{proposition}
  \label{p:dgprae-bimod}
  Keep the assumptions from above. Let $n \in N$ be an element such
  that the maps $f: S \ra N$, $s \mapsto ns$ and 
  $g: R \ra M$, $r \mapsto \chi(n)r$ are quasi-isomorphisms of dg modules
  (so $n \in \Cy(N)^0:=N^0\cap \Kern d_N$). Then
  diagram \eqref{eq:axrbys} restricts to 
  \begin{equation}
    \label{eq:axrbys-prae}
    \xymatrix{
      {\dgPraeDer_{B}(B)} \ar[r]^-{? \Lotimesover{B}A} \ar[d]_-{? \Lotimesover{B}N}
      & {\dgPraeDer_A(A)} \ar[d]^-{? \Lotimesover{A}M} \\
      {\dgPraeDer_S(S)} 
      \ar[r]_-{? \Lotimesover{S}R} \ar@{=>}[ur]^{{\theta}|}_{\sim}
      & {\dgPraeDer_R(R)}
    }
  \end{equation}
  and ${\theta}|$ is a natural isomorphism.
\end{proposition}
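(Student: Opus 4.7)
The plan has three steps: first verify that the four tensor functors in \eqref{eq:axrbys} restrict as claimed to the $\dgPraeDer_?$ subcategories, second reduce the isomorphism claim for $\theta|$ to a single check at the generator $B$, and third identify $\theta_B$ explicitly and observe that it is a quasi-isomorphism.

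For the first step, each of the four functors is triangulated and each category $\dgPraeDer_?(?)$ is by definition the smallest strict triangulated subcategory of $\dgDer(?)$ containing its canonical generator, so it suffices to evaluate on generators. The functors $? \Lotimesover{B} A$ and $? \Lotimesover{S} R$ send $B$ and $S$ to $A$ and $R$, which are in the targets. The condition $n \in \Cy(N)^0$ makes $f : S \to N$, $s \mapsto ns$, into a morphism of dg $S$-modules; since $f$ is a quasi-isomorphism by hypothesis, $N \cong S$ in $\dgDer(S)$, so $B \Lotimesover{B} N \cong N$ lies in $\dgPraeDer_S(S)$. The analogous argument using $g$ handles $? \Lotimesover{A} M$. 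The second step is immediate: the class of objects of $\dgPraeDer_B(B)$ on which $\theta$ is an isomorphism is a strict triangulated subcategory, hence it is enough to show that $\theta_B$ is an isomorphism.

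For the third step, since $B$ is homotopically projective we may take $p(B) = B$, in which case $\sigma_B$ in \eqref{eq:tensor-trafo} is the identity. Unwinding the definition of $\tilde\theta$ in \eqref{eq:p} together with the canonical identification $(? \Lotimesover{A} M) \comp (? \Lotimesover{B} A) \sira (? \Lotimesover{B} M)$ used to define $\theta$ from $\tilde\theta$ (which is an isomorphism on $B$ thanks to Lemma~\ref{l:prod-hopro}, since $B \otimesover{B} A = A$ is homotopically projective over $A$), one finds that $\theta_B$ is represented, in $\dgDer(R)$, by the dg-module map
\[
\xi : N \otimesover{S} R \lra M, \qquad n' \otimes r \longmapsto \chi(n') r.
\]
The identities $\chi(n s) = \chi(n) \psi(s) = g(\psi(s))$ yield a commutative square of dg $R$-modules
\[
\xymatrix@C+1cm{
S \otimesover{S} R \ar[r]^-{f \otimes \id} \ar[d]_-{\cong} & N \otimesover{S} R \ar[d]^-{\xi}\\
R \ar[r]^-g_-\sim & M .
}
\]
Since $S$ is homotopically projective over itself, $f : S \to N$ already serves as a homotopically projective model of $N$ in $\dgDer(S)$, so $f \otimes \id$ represents $f \Lotimesover{S} R$, which is an isomorphism in $\dgDer(R)$ because $f$ is a quasi-isomorphism. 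Combined with $g$ being a quasi-isomorphism, the square shows that $\xi$, and hence $\theta_B$, is an isomorphism in $\dgDer(R)$.

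The main obstacle I anticipate is the bookkeeping in step three: precisely tracing $\theta_B$ through the adjunction-based construction of the derived tensor products and through the natural transformation $\sigma$. Once $\theta_B$ has been identified with the explicit map $\xi$, the argument reduces to the commutative square above and the two quasi-isomorphism hypotheses on $f$ and $g$.
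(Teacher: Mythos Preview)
Your proof is correct and follows essentially the same route as the paper: restrict to generators, reduce to checking $\theta_B$, take $p(B)=B$, and then identify $\theta_B$ with $q(g)$ by using $f:S\to N$ as a homotopically projective model of $N$ over $S$. Your commutative square is precisely the computation the paper summarizes in one line when it says ``we may assume that the adjunction morphism $p(q(N))\to N$ is given by $f:S\to N$; then the composition of the last two maps is identified with $q(g)$.'' One small point of phrasing: when you say ``$\sigma_B$ is the identity,'' note that \eqref{eq:p} invokes $\sigma$ twice, and only the first instance (for $?\otimes_B N$ at $B$) becomes the identity under $p(B)=B$; the second instance (for $?\otimes_S R$ at $N$) is exactly what your choice of the model $f:S\to N$ handles.
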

\begin{proof}
  Since $S \cong N$ and $R \cong M$ in $\dgDer$, diagram
  \eqref{eq:axrbys} restricts to \eqref{eq:axrbys-prae}.
  If $N$ is a dg $B$-module, then ${\theta}_N$ is obtained 
  by plugging in $p(N)$ in \eqref{eq:p}
  (up to an
  isomorphism coming from the adjunction isomorphism $N \sira q(p(N)$).
  In order to show that ${\theta}|$ is a natural isomorphism, it is
  enough to check that ${\theta}_B$ is an isomorphism.
  Since $B$ is homotopically projective, we may assume $p(B)=B$.
  Then ${\theta}_B$ is given by
  \begin{equation}
    \label{eq:pi-B-test}
    \big((B \Lotimesover{B}N) \Lotimesover{S}R\big)
    \sira 
    (q(N) \Lotimesover{S}R)  
    \ra 
    q \big(N \otimesover{S}R\big) 
    \xrightarrow{\chi} 
    q (M)
  \end{equation}
  We may assume that the adjunction morphism $p(q(N)) \ra N$ is given
  by $f:S \ra N$. Then the composition of the last two maps
  in \eqref{eq:pi-B-test} is identified with the isomorphism
  $q(g):q(R) \ra q(M)$ in $\dgDer(R)$.
\end{proof}

\subsection{DGG Bimodules}
\label{sec:dgg-bimod}

In subsection \ref{sec:diff-grad-grad} we considered dgg modules over a
dgg algebra $R$. We defined 
a functor 
$\Gamma: \dggMod(R) \ra \dggMod(\Gamma(R))$ (see \eqref{eq:def-gamma},
\eqref{eq:functor-gamma}) and used it to show that
dgg algebras with pure cohomology are formal. 

The construction of $\Gamma$ is easily extended to bimodules.
If $A$ and $B$ are dgg algebras and $M$ is a dgg $A$-$B$-bimodule,
then $\Gamma(M)$ becomes a dgg $\Gamma(A)$-$\Gamma(B)$-bimodule.
We get the following situation similar to \eqref{eq:bimod-diagram}.
\begin{equation}
  \label{eq:bimod-diagram-dgg-void-gamma}
  {\mathovalbox{\Gamma(A) \curvearrowright {\Gamma(M)} \curvearrowleft \Gamma(B)}}
  \;\; \subset \;\;
  {\mathovalbox{A \curvearrowright {M} \curvearrowleft B}}
\end{equation}
Here the inclusion $\Gamma(M) \subset M$ is a morphism of dgg
$\Gamma(A)$-$\Gamma(B)$-bimodules.
The cohomology $\Ho(M)$ is a dgg $\Ho(A)$-$\Ho(B)$-bimodule.
Assume now that the cohomologies of $A$, $B$ and $M$ vanish
in degrees $(i,j)$ with $i < j$. Then componentwise projection defines
the following morphisms of dgg algebras and dgg bimodules:
\begin{equation}
  \label{eq:bimod-diagram-dgg-gamma-coho}
  {\mathovalbox{\Gamma(A) \curvearrowright \Gamma(M) \curvearrowleft \Gamma(B)}}
  \xra{\;\;}
  {\mathovalbox{\Ho(A) \curvearrowright \Ho(M) \curvearrowleft \Ho(B)}}
\end{equation}
We would like to apply Proposition \ref{p:dgprae-bimod} to the
situations sketched in 
\eqref{eq:bimod-diagram-dgg-void-gamma} and
\eqref{eq:bimod-diagram-dgg-gamma-coho},
i.\,e.\ we need an element $m \in \Gamma(M)$ inducing quasi-isomorphisms
$\Gamma(B) \ra \Gamma(M)$, $B \ra M$ and $\Ho(B) \ra \Ho(M)$. 

\begin{lemma}
  \label{l:the-three-qisos}
  Let $B$ be a dgg algebra, $M$ a dgg $B$-module
  and $f:B \ra M$ 
  a quasi-isomorphism of (right) dgg $B$-modules.
  Then $m := f(1) \in \Gamma(M)^{00}$ and the multiplication maps
  $(m\cdot):B \ra M$, $(m\cdot):\Gamma(B)  \ra \Gamma(M)$, and
  $([m]\cdot):\Ho(B)   \ra \Ho(M)$
  are quasi-isomorphisms of dgg modules (over $B$, $\Gamma(B)$ and
  $\Ho(B)$), where we denote by $[m]$ the 
  class of $m$ in $\Ho(M)$.
\end{lemma}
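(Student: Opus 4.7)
My plan is that the lemma reduces almost entirely to unravelling definitions, with the key observation that a morphism $f$ of right dgg $B$-modules is determined by $f(1)$ via right multiplication.

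First I would verify the claim that $m = f(1) \in \Gamma(M)^{00}$. Since $f$ is a morphism of dgg modules it is homogeneous of bidegree $(0,0)$, so $m \in M^{00}$; and since $f$ commutes with the differential, $d_M(m) = f(d_B(1)) = 0$, placing $m$ in $\kernel(d^{00}_M) = \Gamma(M)^{00}$. For the map $(m\cdot): B \to M$, right $B$-linearity of $f$ gives $f(b) = f(1 \cdot b) = m \cdot b$, so $(m\cdot) = f$ and is a quasi-isomorphism by hypothesis.

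Next I would treat the map $(m\cdot): \Gamma(B) \to \Gamma(M)$. Well-definedness is the first thing to check: for $b \in \Gamma(B)^{ij}$, the product $m \cdot b$ has bidegree $(i,j)$, and in the diagonal case $i=j$ the Leibniz rule $d(mb) = d(m)b + m\, d(b)$ gives $d(mb)=0$ since both $m$ and $b$ are cocycles. This produces a commutative square of dgg modules with horizontal arrows $(m\cdot)|_\Gamma$ and $f = (m\cdot)$ and vertical arrows the inclusions $\Gamma(B) \hookrightarrow B$ and $\Gamma(M) \hookrightarrow M$. The proof of Proposition \ref{p:formality-of-pure-dgg-algebras} already shows that such inclusions induce isomorphisms on cohomology in bidegrees $(i,j)$ with $i \leq j$, while in bidegrees $(i,j)$ with $i>j$ the cohomology of $\Gamma$ is zero. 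Combined with $f$ being a quasi-isomorphism, two-out-of-three in each bidegree forces $(m\cdot): \Gamma(B) \to \Gamma(M)$ to be a quasi-isomorphism.

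Finally, $([m]\cdot): \Ho(B) \to \Ho(M)$ is by construction $\Ho(f) = \Ho((m\cdot))$, which is an isomorphism of bigraded modules since $f$ is a quasi-isomorphism; as both sides carry the zero differential this is the same as being a quasi-isomorphism of dgg modules. The only mildly nontrivial step is the well-definedness in the middle paragraph, but this is immediate from Leibniz, so I do not expect any real obstacle.
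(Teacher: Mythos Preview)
Your proof is correct and follows essentially the same route as the paper. The paper is slightly more economical in the middle step: rather than setting up the commutative square and arguing bidegree by bidegree, it simply observes that $\Gamma$ is, for each fixed second index $j$, the intelligent truncation $\tau_{\leq j}$, hence preserves quasi-isomorphisms; then $(m\cdot)|_\Gamma = \Gamma(f)$ is a quasi-isomorphism directly. Your square argument is exactly the unpacking of this fact, so the two proofs coincide in substance.
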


\begin{proof}
  Since $1 \in B^{00}\cap \Kern d_B$, 
  we have $m=f(1) \in M^{00}\cap \Kern d_M =\Gamma(M)^{00}$.
  The functor $\Gamma$ is a ``truncation functor'' and maps
  quasi-isomorphisms to quasi-isomorphisms, so $\Gamma(f)$ is a
  quasi-isomorphism. But $f=(m\cdot)$, $\Gamma(f)=(m\cdot)$ and
  $\Ho(f)=([m]\cdot)$.
\end{proof}

\subsection{Triangulated Functors on Objects}
\label{sec:triang-funct-objects}

Let $\mathcal{A}$ be an abelian category.
The stupid truncation functors 
$\sigma_{\leq i}$, $\sigma_{\geq i}: \Ket(\mathcal{A}) \ra
\Ket(\mathcal{A})$, for $i \in \DZ$, are defined as follows:
$\sigma_{\leq i}$ preserves all components in degrees $\leq
i$ and replaces all components in degrees $>i$ by zero;
similarly for $\sigma_{\geq i}$.
There are obvious transformations $\id \ra \sigma_{\leq i}$ and
$\sigma_{\geq i} \ra \id$.

\begin{proposition}\label{p:compute-tri-fun}
  Let $\mathcal{A}$, $\mathcal{B}$ be abelian categories and
  $F:\Derb(\mathcal{A}) \ra \Derb(\mathcal{B})$ a triangulated
  functor. Let 
  \begin{equation*}
    \dots \ra 0 \ra P^{-n} \xra{d^{-n}} P^{-n+1} \ra
    \dots \ra P^{-1} \xra{d^{-1}} P^0 \xra{p} M \ra 0 \ra \dots
  \end{equation*}
  be a bounded exact complex in $\mathcal{A}$ (a resolution of
  $M$). Assume that $F(P^i)$ is an object of $\mathcal{B}$, 
  for all $i =-n, \dots, 0$, where we
  identify $\mathcal{B}$ with the heart of the standard t-structure on
  $\Derb(\mathcal{B})$. Let $\hat{F}(P)$ be the complex 
  \begin{equation*}
    \dots \ra 0 \ra F(P^{-n}) \xra{F(d^{-n})} F(P^{-n+1}) \ra
    \dots  \xra{F(d^{-1})} F(P^0) \ra 0 \ra \dots
  \end{equation*}
  in $\mathcal{B}$. Then $F(M)$ and $\hat{F}(P)$ 
  are isomorphic in $\Derb(\mathcal{B})$.
\end{proposition}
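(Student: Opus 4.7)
The plan is to prove this by induction on the length $n$ of the resolution, using stupid truncations. For the base case $n=0$, the augmentation $p\colon P^0\ra M$ is already an isomorphism in $\mathcal{A}$, so $F(M)\cong F(P^0)=\hat F(P)$. For the inductive step, I would set $K:=\Kern(p)=\bild(d^{-1})$. The short exact sequence $0\ra K\ra P^0\ra M\ra 0$ in $\mathcal{A}$ yields a distinguished triangle in $\Derb(\mathcal{A})$; applying $F$ produces
\begin{equation*}
  F(K)\ra F(P^0)\ra F(M)\ra [1]F(K).
\end{equation*}
Exactness of the original resolution shows that $P^{-n}\ra\dots\ra P^{-1}\ra K\ra 0$ is a length-$(n-1)$ resolution of $K$, so the inductive hypothesis supplies an isomorphism $F(K)\cong\hat F(P^{-n}\ra\dots\ra P^{-1})$ with $F(P^{-1})$ sitting in degree $0$.

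On the other hand, the stupid-truncation short exact sequence $0\ra F(P^0)\ra \hat F(P)\ra \sigma_{\leq -1}\hat F(P)\ra 0$ in $\Ketb(\mathcal{B})$ produces a distinguished triangle
\begin{equation*}
  F(P^0)\ra\hat F(P)\ra\sigma_{\leq -1}\hat F(P)\ra[1]F(P^0)
\end{equation*}
in $\Derb(\mathcal{B})$, whose third vertex $\sigma_{\leq -1}\hat F(P)$ is the complex $F(P^{-n})\ra\dots\ra F(P^{-1})$ with $F(P^{-1})$ in degree $-1$, i.e.~is identified with $[1]F(K)$ via the inductive isomorphism. Rotating both triangles, $F(M)$ and $\hat F(P)$ both appear as the third vertex of a distinguished triangle with other vertices $F(K)$ and $F(P^0)$ joined by some morphism $F(K)\ra F(P^0)$. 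Provided these two morphisms coincide, the axioms of a triangulated category then deliver the desired (non-canonical) isomorphism $F(M)\cong\hat F(P)$, completing the induction.

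The main technical hurdle is therefore to verify that these two morphisms $F(K)\ra F(P^0)$ agree under the inductive identification. The strategy is to trace both back, at the chain level, to the single map $F(d^{-1})\colon F(P^{-1})\ra F(P^0)$: the connecting morphism of the stupid-truncation triangle is by construction induced by the differential $F(d^{-1})$, while the inclusion $K\hra P^0$ appearing in the SES triangle fits into the factorization of $d^{-1}$ as $P^{-1}\sra K\hra P^0$, whose $F$-image agrees, via the inductive identification supplied by the augmentation $P^{-1}\sra K$ of the length-$(n-1)$ resolution of $K$, with $F(d^{-1})$. The rigorous check is a standard diagram chase in the spirit of the realization-functor argument of \cite[3.1.2.7]{BBD}.
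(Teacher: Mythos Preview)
Your approach is essentially the paper's: induct on $n$, compare the stupid-truncation triangle for $\hat F(P)$ with the triangle coming from the short exact sequence $K\hra P^0\sra M$, and complete the partial morphism of triangles. The paper does exactly this.

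However, the step you label ``a standard diagram chase'' is precisely where the work lies, and your sketch does not supply the two ingredients the paper uses to carry it out. First, the inductive hypothesis must be strengthened: one proves not merely $F(M)\cong\hat F(P)$ but that there exists an isomorphism $\alpha:\hat F(P)\ra F(M)$ satisfying $\alpha\circ s = F(p)$, where $s:F(P^0)\hra\hat F(P)$ is the degree-zero inclusion. Without this compatibility, the inductive identification $\beta:[-1]\sigma_{\leq -1}\hat F(P)\sira F(K)$ is only an abstract isomorphism in $\Derb(\mathcal B)$, and you have no handle on how it interacts with $F(d^{-1})$. Second, even with the strengthened hypothesis (which gives $\beta\circ t=F(a)$ for $t:F(P^{-1})\hra[-1]\sigma_{\leq -1}\hat F(P)$ and $a:P^{-1}\sra K$), you only know the desired equality $F(b)\circ\beta=f$ after precomposing with $t$. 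The paper closes this gap by exhibiting a further stupid-truncation triangle
\[
[-2]\sigma_{\leq -2}\hat F(P)\ra F(P^{-1})\xra{t}[-1]\sigma_{\leq -1}\hat F(P)\xra{[1]}
\]
and using $\Hom([-1]\sigma_{\leq -2}\hat F(P),F(P^0))=0$ (a degree count, since $F(P^0)\in\mathcal B$) to conclude that $(?\circ t)$ is injective on $\Hom([-1]\sigma_{\leq -1}\hat F(P),F(P^0))$. This Hom-injection is what lets you conclude $F(b)\circ\beta=f$ from $F(b)\circ\beta\circ t=F(b)\circ F(a)=F(d^{-1})=f\circ t$. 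Your reference to \cite[3.1.2.7]{BBD} is to an unrelated filtered-complex construction and does not substitute for this argument.
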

\begin{proof}
  We write $\Hom$ instead of $\Hom_{\Der(\mathcal{B})}$.
  The transformation $\sigma_{\geq 0} \ra \id$ yields 
  an inclusion $s:F(P^0) = \sigma_{\geq 0}(\hat{F}(P)) \ra
  \hat{F}(P)$ in $\Ket^\bd(\mathcal{B})$.
  By induction on $n$, we prove the
  following more precise statement: There is an isomorphism
  $\alpha \in \Hom(\hat{F}(P), F(M))$ such that 
  $\alpha \comp s = F(p)$ in $\Der(\mathcal{B})$.

  For $n=0$, this is obvious. Assume that $n \geq 1$.
  Consider the morphism 
  $f: [-1]\sigma_{\leq -1}(\hat{F}(P)) \ra \sigma_{\geq 0}
  (\hat{F}(P))=F(P^0)$ in $\Ket(\mathcal{B})$, given by $F(d^{-1})$ in degree zero.
  Its mapping cone is $\hat{F}(P)$, and we get a
  distinguished triangle 
  \begin{equation}
    \label{eq:ad-1}
    [-1]\sigma_{\leq -1}(\hat{F}(P)) \xra{f} F(P^0) \xra{s}
    \hat{F}(P) \xra{[1]} { }
  \end{equation}
  in $\Derb(\mathcal{B})$.
  Similarly, we get a distinguished triangle
  \begin{equation}
    \label{eq:ad-2}
    [-2]\sigma_{\leq -2}(\hat{F}(P)) \xra{g} F(P^{-1}) \xra{t} 
    [-1]\sigma_{\leq -1}(\hat{F}(P)) \xra{[1]} \quad
  \end{equation}
  in $\Derb(\mathcal{B})$, where $t$ is the obvious inclusion, defined similarly as $s$ above,
  and $g$ is a morphism in
  $\Ket(\mathcal{A})$, given by $F(d^{-2})$ in degree zero. 

  We factorize $d^{-1}:P^{-1}\ra P^0$ as 
  $P^{-1}\xra{a} K \xra{b} P^0$,    
  where $K= \Ker p = \Bild d^{-1}$. By induction, applied to the
  exact complex
  \begin{equation*}
    (\dots \ra 0 \ra P^{-n} \xra{d^{-n}} P^{-n+1} \ra
    \dots \ra P^{-2} \xra{d^{-2}} P^{-1} \xra{a} K \ra 0 \ra \dots),
  \end{equation*}
  there is an isomorphism 
  $\beta \in \Hom([-1]\sigma_{\leq -1}\hat{F}(P), F(K))$ such that
  $\beta \comp t = F(a)$ in $\Der(\mathcal{B})$.

  Consider now the diagram
  \begin{equation}
    \label{eq:partial-morph}
    \xymatrix{
      {[-1]\sigma_{\leq -1}(\hat{F}(P))} \ar[r]^-{f} \ar[d]^{\beta}_\sim & {F(P^0)} \ar[r]^s &
      {\hat{F}(P)} \ar[r]^-{[1]} & { }\\
      {F(K)} \ar[r]^{F(b)} & {F(P^0)} \ar[r]^{F(p)} \ar@{=}[u] & {F(M)} \ar[r]^-{[1]} &
      { }
    }
  \end{equation}
  Both rows are distinguished triangles, 
  the upper one is \eqref{eq:ad-1}, and
  the lower one comes
  from the short exact sequence $K \xra{b} P^0 \xra{p} M$. 
  We claim that this diagram is commutative. If this is the case, we
  can complete the partial morphism $(\beta, \id)$ of
  distinguished triangles in \eqref{eq:partial-morph} by some 
  morphism $\alpha \in \Hom(\hat{F}(P), F(M))$ to a
  morphism of distinguished triangles, and any such $\alpha$ is an isomorphism. 

  So let us show that $f = F(b) \comp \beta$. 
  If we apply $\Hom(?, F(P^0))$ to \eqref{eq:ad-2} and
  use $\Hom([-1]\sigma_{\leq -2}(\hat{F}(P)),
  F(P^0))=0$, we get an injection
  \begin{equation*}
    (? \comp t): \Hom([-1]\sigma_{\leq -1}(\hat{F}(P)), F(P^0)) 
    \hra 
    \Hom(F(P^{-1}), F(P^0)).
  \end{equation*}
  Hence it is enough to check the equality
  $f \comp t = F(b) \comp \beta \comp t$. 
  But 
  $F(b) \comp \beta \comp t = F(b) \comp F(a)=F(d^{-1}) = f \comp t$.
\end{proof}

\subsection{Restriction of Projective Objects}
\label{sec:restr-proj-objects}

Let $i:(Y, \mathcal{T}) \ra (X, \mathcal{S})$ be a closed embedding of cell-stratified varieties.
\begin{lemma}
  \label{l:i-perverse}
  If $V$ is an object of $\Perv(X, \mathcal{S})$
  with a finite filtration with standard subquotients, then
  $i^*(V)$ is in $\Perv(Y, \mathcal{T})$, so 
  $i^*(V)=\p i^*(V)$. 
  If $V$ is a projective object of $\Perv(X, \mathcal{S})$, then the
  restriction $i^*(V)$ is a projective object of $\Perv(Y, \mathcal{T})$.
\end{lemma}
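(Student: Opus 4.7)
The plan is to establish the lemma in two stages. In stage one I would prove, by induction on the length of a standard filtration, that $i^*(V)$ lies in $\Perv(Y, \mathcal{T})$ for any $V$ carrying such a filtration; the identity $i^*(V) = \p i^*(V)$ is then automatic. In stage two I would deduce projectivity of $i^*(V)$ from projectivity of $V$ by adjunction, using stage one to control where $i^*(V)$ lives.

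For the induction base case I must show $i^*(\Delta_S) \in \Perv(Y, \mathcal{T})$ for a single standard object $\Delta_S = l_{S*}[d_S]\ul S$. Since $i(\mathcal{T}) \subset \mathcal{S}$, the closed subvariety $Y$ is a union of strata of $\mathcal{S}$, explicitly $Y = \bigsqcup_{T \in \mathcal{T}} i(T)$. So either $S \subset Y$ (in which case $S = i(T)$ for a unique $T$, and moreover $\ol S \subset Y$ since $Y$ is closed) or $S \not\subset Y$. In the first case, $l_S = i \comp l_T$, whence $\Delta_S = i_*(\Delta_T)$, and the identity $i^* i_* = \id$ for the closed immersion $i$ yields $i^*(\Delta_S) = \Delta_T \in \Perv(Y, \mathcal{T})$. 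In the second case, I would factor $l_S = \ol l_S \comp j_S$ with $\ol l_S: \ol S \ra X$ closed and $j_S: S \hra \ol S$ open, apply proper base change along $\ol l_S$, and then analyze the restriction of $j_{S*}[d_S]\ul S$ to $\ol S \cap Y$ using the cell-stratified structure. For the inductive step, applying $i^*$ to $0 \ra V_{n-1} \ra V_n \ra \Delta_{S_n} \ra 0$ yields a distinguished triangle $i^*V_{n-1} \ra i^*V_n \ra i^*\Delta_{S_n} \xra{+1}$ in $\dcb(Y, \mathcal{T})$; since both outer terms are perverse, the long exact perverse cohomology sequence forces $i^*V_n \in \Perv(Y, \mathcal{T})$.

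For stage two, assume $V$ is projective in $\Perv(X, \mathcal{S})$. Theorem~\ref{t:bgsequi} gives $V$ a standard filtration, so stage one yields $i^*(V) = \p i^*(V) \in \Perv(Y, \mathcal{T})$. To see $i^*(V)$ is projective I would use the adjunction $(\p i^*, \p i_*)$ together with the fact that $i_* = \p i_*$ is exact on perverse sheaves for a closed immersion $i$ (\cite[1.4.16]{BBD}). Indeed, for each $W \in \Perv(Y, \mathcal{T})$,
\[
\Ext^1_{\Perv(Y, \mathcal{T})}(\p i^*(V), W) \cong \Ext^1_{\Perv(X, \mathcal{S})}(V, i_*(W)) = 0,
\]
with the vanishing from projectivity of $V$.

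The main obstacle will be the subcase $S \not\subset Y$ in the base case of stage one: here $j_S$ is open but not of a form making $i^*$ commute cleanly with $j_{S*}$, so genuinely understanding how $\ol S$ meets $Y$ inside the cell structure is required to control the perverse degrees of $i^*(\Delta_S)$. The remainder of the argument is largely formal.
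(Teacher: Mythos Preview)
Your overall architecture --- induction on the length of a standard filtration for the first claim, then adjunction $(\p i^*, \p i_*)$ with exactness of $\p i_*$ for the second --- matches the paper's proof exactly. Stage two is complete as written.

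The gap is precisely where you flagged it, and it arises from your choice of formula for $\Delta_S$. In the proof of this lemma the paper takes $\Delta_S = l_{S!}([d_S]\ul S)$ (this is the BGS convention under which projectives acquire standard filtrations; the $l_{S*}$ appearing earlier in the paper should be read as $l_{S!}$ here). With the $!$-extension the problematic case dissolves. Since $Y$ is a closed union of strata of $\mathcal{S}$, a stratum $S$ either lies in $Y$ or satisfies $S\cap Y=\emptyset$. Proper base change for $l_{S!}$ along the closed immersion $i$ gives
\[
i^*\,l_{S!}\bigl([d_S]\ul S\bigr)\;\cong\;(l_{S\cap Y})_{!}\bigl([d_S]\,\ul{S\cap Y}\bigr),
\]
which is $\Delta_T$ when $S=i(T)$ and is $0$ when $S\cap Y=\emptyset$. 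No analysis of how $\ol S$ meets $Y$ is needed.

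By contrast, with $\Delta_S = l_{S*}([d_S]\ul S)$ there is no base change identity relating $i^*$ and $j_{S*}$, and the restriction to $\ol S\cap Y$ (which may sit entirely in the boundary $\ol S\setminus S$) genuinely involves the local cohomology of the cell closure --- the obstacle you anticipated is real for that formulation. Switching to $l_{S!}$ removes it and yields the paper's one-line computation.
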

\begin{proof}
  For standard objects $\Delta_S=l_{S!}([d_S]\ul{S})$,
  we have
  $i^*(\Delta_S) = \Delta_S$ if $S \in \mathcal{T}$ and 
  $i^*(\Delta_S)= 0$ otherwise.
  Thus $i^*(\Delta_S)$ is in $\Perv(Y, \mathcal{T})$.
  Since $\Perv(Y, \mathcal{T})$ is
  stable by extensions (\cite[1.3.6]{BBD}) this proves the first
  statement.
  The second statement follows from 
  Theorem \ref{t:bgsequi} and the fact that
  $\p i^*$ is left adjoint to the exact functor $\p i_*$.
\end{proof}

\begin{corollary}
  \label{c:i-perverse}
  If $\tilde V \in \MHM(X, \mathcal{S})$ is perverse-projective, then
  $i^*(\tilde{V})$ is an object of $\MHM(Y, \mathcal{T})$ and
  perverse-projective,
  where we consider $\MHM(Y, \mathcal{T})$ as the heart of the
  standard t-structure on $\dcb(\MHM(Y), \mathcal{T})$.
\end{corollary}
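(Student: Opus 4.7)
The plan is to deduce both assertions from Lemma \ref{l:i-perverse} by pulling them back through the faithful exact functor $\rat$. First, set $V := \rat(\tilde V)$. Since $\tilde V$ is perverse-projective, $V$ is projective in $\Perv(X,\mathcal{S})$, hence by Theorem \ref{t:bgsequi} admits a finite filtration with standard subquotients. Lemma \ref{l:i-perverse} then guarantees that $i^*V$ lies in $\Perv(Y,\mathcal{T})$ and is projective there.

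To transport this to the Hodge-module side, I will use the commutation of $\rat$ with $i^*$. Since $\rera = \real \comp \rat$ commutes with $i^*$ by Saito's formalism (see subsection \ref{sec:MHM}), and since $\real$ is an equivalence sending the standard t-structure on $\dbc(\Perv(Y))$ to the perverse t-structure on $\dbc(Y)$, there is a canonical isomorphism $\rat(i^*\tilde V) \cong i^* V$ in $\dbc(\Perv(Y))$. Exactness of $\rat\colon \MHM(Y) \ra \Perv(Y)$ ensures that the derived $\rat$ commutes with the cohomology functors $\Ho^j$ of the standard t-structure, so
\begin{equation*}
  \rat\bigl(\Ho^j(i^*\tilde V)\bigr) \cong \Ho^j(\rat(i^*\tilde V)) \cong \Ho^j(i^*V),
\end{equation*}
and this vanishes for $j\neq 0$ because $i^*V$ lies in $\Perv(Y,\mathcal{T})$ in standard degree zero. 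Faithfulness of $\rat$ (Lemma \ref{l:isorat}) then forces $\Ho^j(i^*\tilde V) = 0$ for $j\neq 0$, so $i^*\tilde V$ sits in the heart $\MHM(Y)$. Combined with $\rat(i^*\tilde V) \cong i^*V \in \Perv(Y,\mathcal{T})$, this places $i^*\tilde V$ in $\MHM(Y,\mathcal{T})$, and its perverse-projectivity is inherited directly from that of $i^*V$ via the same identification.

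The only point requiring care is the compatibility $\rat \comp i^* \cong i^* \comp \rat$ on derived categories, which I extract from the general principle in subsection \ref{sec:MHM} that all six functors commute with $\rera$ together with the fact that $\real$ is a t-exact equivalence; the rest of the argument is then essentially bookkeeping with cohomology functors and the faithfulness of $\rat$.
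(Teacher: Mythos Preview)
Your proof is correct and follows essentially the same approach as the paper: invoke Lemma~\ref{l:i-perverse} on the underlying perverse sheaf, then use that $\rat$ is exact and faithful (together with the commutation of $\rera$ with $i^*$) to lift the conclusion to the Hodge-module side. The paper's proof is a two-sentence compression of exactly this argument; your version simply spells out the cohomology bookkeeping that the paper leaves implicit.
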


\begin{proof}
  Lemma \ref{l:i-perverse} shows that $\rera(i^*(\tilde{V}))$
  is a projective object of $\Perv(Y, \mathcal{T})$. This implies that
  $i^*(\tilde{V})$ is in $\MHM(Y, \mathcal{T})$, since 
  $\rat:\MHM(X) \ra \Perv(X)$ is exact and faithful.
\end{proof}

\begin{remark}
  \label{rem:p-restriction}
  We define
  $\p i^*(\tilde{V}) := i^*(\tilde{V})$
  for perverse-projective $\tilde{V}$ in $\MHM(X, \mathcal{S})$.
  This notation is justified by 
  $\rat(\p i^*(\tilde{V})) \cong \p i^*(\rat(\tilde{V}))$
  (cf.\ Corollary \ref{c:i-perverse}, Proposition \ref{p:real-ius}).
\end{remark}

In subsection \ref{sec:perverseprojective} we have defined
a mixed Hodge structure $\Ext^i_{\Perv(X)}(\tilde{M}, \tilde{N})$
on $\Ext^i_{\Perv(X)}(M,N)$, for 
objects 
$\tilde{M}$, $\tilde{N}$ of $\MHM(X)$.
The same definition also works for objects $\tilde{M}$, $\tilde{N}$ of
$\dcb(\MHM(X))$. 
Consider the obvious composition in $\dcb(\MHM(X))$ provided by several
adjunction morphisms
\begin{equation*}
  \sHom(\tilde{M}, \tilde{N}) 
  \ra i_* i^* \sHom(\tilde{M}, \tilde{N}) 
  \ra i_* \sHom(i^*(\tilde{M}), i^*(\tilde{N})).
\end{equation*}
We take hypercohomology and obtain morphisms of (polarizable) mixed
Hodge structures
\begin{equation*}
  \Ext^j_{\Perv(X)}(\tilde{M}, \tilde{N})
  \ra 
  \Ext^j_{\Perv(Y)}(i^*(\tilde{M}), i^*(\tilde{N})).
\end{equation*}
These morphisms are natural in $\tilde{M}$ and $\tilde{N}$.
In particular, if $\tilde{P}$ and $\tilde{Q}$ are smooth
perverse-projective Hodge sheaves, then $i^*(\tilde{P})$ and
$i^*(\tilde{Q})$ are smooth perverse-projective Hodge sheaves and we
get a morphism 
\begin{equation}
  \label{eq:mhs-hom-rest}
  \Hom_{\Perv(X,\mathcal{S})}(\tilde{P}, \tilde{Q})
  \ra 
  \Hom_{\Perv(Y,\mathcal{T})}(\p i^*(\tilde{P}), \p i^*(\tilde{Q}))
\end{equation}
of mixed Hodge structures (see Corollary \ref{c:i-perverse} and Remark
\ref{rem:p-restriction}).

\subsection{Passage to Cohomology Algebras}
\label{sec:pass-cohom-algebr}

We combine our results in order to prove that the third and fourth
square in diagram \eqref{eq:the-big} commute up to natural
isomorphism.

Assume that we are in the setting described in subsection
\ref{sec:goal-section} (with $I$ a singleton).
So we are given a perverse-projective resolution of finite length
$\tilde{P} \ra \tilde{M}$.
Let 
\begin{equation*}
  \p i^* (\tilde{P}) := (\dots \ra \dots  \ra \p i^*(\tilde{P}^{-1}) \ra \p i^*(\tilde{P}^0) \ra 0 \ra \dots)
\end{equation*}
be the complex obtained by applying $\p i^*$ to $\tilde{P}$
(cf. Corollary \ref{c:i-perverse} and Remark
\ref{rem:p-restriction}). We may and will assume that this complex 
$\p i^*(\tilde P)$ is a complex in $\MHM(Y, \mathcal{T})$.
The underlying complex of smooth projective perverse sheaves is
denoted by $\p i^*(P)$.

The definition of the complex 
$\tilde{A} = \cEnd(\tilde{P})$
in subsection \ref{sec:comparing} and the comments around
\eqref{eq:mhs-hom-rest} at the end of subsection
\ref{sec:restr-proj-objects} show that there is a morphism of dg
algebras ``of mixed Hodge structures''
\begin{equation}
  \label{eq:atildeaufV}
  \tilde{A} =
  \cEnd(\tilde{P})
  \ra
  \cEnd(\p i^*(\tilde{P}))
  =\cEnd([\decal]\p i^*(\tilde{P})).
\end{equation}
Recall that $\tilde{B}=\cEnd(\tilde{Q})$.
Hence the dg $\cEnd([\decal]\p i^*(\tilde{P})$-$\cEnd(\tilde{Q})$-bimodule
\begin{equation*}
  \tilde{V}:=\cHom(\tilde{Q}, [\decal]\p i^*(\tilde{P}))  
\end{equation*}
becomes
a dg $\tilde A$-$\tilde B$-bimodule.
Note that $\tilde{A}$, $\tilde{B}$ and 
$\tilde{V}$
are complexes of mixed Hodge
structures, and the differentials, multiplications and operations are
morphisms of mixed Hodge structures.

We apply the tensor functors $\omega_0$, $\gr^W_{\DR}$, and
$\omega_W=\eta \comp \gr^W_{\DR}$ from subsection
\ref{sec:formality-of-some-dga} (cf.\ diagram \eqref{eq:omega-eta})
to $\tilde A$, $\tilde B$ and the $\tilde{A}$-$\tilde{B}$-bimodule
$\tilde{V}$ and call the obtained dg(g) algebras and bimodules as
shown here: 
\begin{equation}
  \label{eq:the-bimodules}
  \xymatrix{
    {\mathovalbox{{\tilde A} \curvearrowright {\tilde{V}} \curvearrowleft {\tilde B}}}
    \ar@{|->}[d]^{\omega_0} \ar@{|->}[rd]^{\omega_W} \ar@{|->}[r]^{\gr^W_{\DR}}
    &
    {\mathovalbox{{\tilde R} \curvearrowright {\tilde{W}} \curvearrowleft {\tilde S}}}
    \ar@{|->}[d]^{\eta}\\
    {\mathovalbox{{A} \curvearrowright {{V}} \curvearrowleft {B}}}
    \ar[r]^a_{\sim}
    & 
    {\mathovalbox{{R} \curvearrowright {{W}} \curvearrowleft {S}}.}
  }
\end{equation}
The isomorphism of dg algebras and bimodules indicated by the
lower horizontal arrow comes from the natural isomorphism
\eqref{eq:a-omega-iso}.

\begin{proposition}
  \label{p:tildef}
  There is a quasi-isomorphism $\tilde f: \tilde{S} \ra \tilde{W}$ of (right)
  dgg $\tilde{S}$-modules.
\end{proposition}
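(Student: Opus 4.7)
The plan is to produce $\tilde f$ by first building, at the level of complexes of polarizable mixed Hodge structures, a quasi-isomorphism of right dgg $\tilde B$-modules $\tilde B \to \tilde V$, and then applying the exact tensor functor $\gr^W_\DR$ from subsection \ref{sec:formality-of-some-dga}. Exactness preserves quasi-iso\-mor\-phisms and tensor-functoriality preserves compatibility with multiplication, so this yields the required quasi-isomorphism of right dgg $\tilde S$-modules $\tilde f: \tilde S = \gr^W_\DR(\tilde B) \to \gr^W_\DR(\tilde V) = \tilde W$.

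The crucial input will be a morphism $\tilde h: \tilde Q \to [\decal]\p i^*(\tilde P)$ in $\Ket(\MHM(Y, \mathcal{T}))$ that is a quasi-isomorphism. Given such $\tilde h$, the morphism
\begin{equation*}
\cHom(\tilde Q, \tilde h): \tilde B = \cEnd(\tilde Q) \lra \cHom(\tilde Q, [\decal]\p i^*(\tilde P)) = \tilde V, \quad \phi \mapsto \tilde h \comp \phi,
\end{equation*}
is a morphism of complexes of mixed Hodge structures that respects the right $\tilde B$-action (since precomposition with $\tilde h$ commutes with right multiplication), and it is a quasi-isomorphism by Lemma \ref{l:qiso-hom} applied with $\tilde Q$ in the role of the perverse-projective bounded above complex.

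To construct $\tilde h$, I first observe via Corollary \ref{c:i-perverse} that each $\p i^*(\tilde P^{-n}) = i^*(\tilde P^{-n})$ lies in $\MHM(Y, \mathcal{T})$, so that $[\decal]\p i^*(\tilde P)$ is a bounded complex in $\MHM(Y, \mathcal{T})$. Applying $i^*$ to the quasi-isomorphism $\tilde P \to \tilde M$ and combining with $\tilde \sigma: [\decal]i^*(\tilde M) \sira \tilde N$ and $\tilde \rho: \tilde Q \sira \tilde N$, one obtains an isomorphism $\tilde Q \sira [\decal]\p i^*(\tilde P)$ in $\dcb(\MHM(Y, \mathcal{T}))$ (using that the inclusion $\dcb(\MHM(Y, \mathcal{T})) \hra \dcb(\MHM(Y))$ is fully faithful onto its image, as $\MHM(Y, \mathcal{T}) \subset \MHM(Y)$ is a Serre subcategory). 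To descend this derived isomorphism to $\Ket(\MHM(Y, \mathcal{T}))$, represent it by a left roof $\tilde Q \xla{\alpha} \tilde X \xra{\beta} [\decal]\p i^*(\tilde P)$ with $\alpha$ a quasi-isomorphism in $\Ket(\MHM(Y, \mathcal{T}))$. Because $\tilde Q$ is perverse-projective and bounded above, Lemma \ref{l:qiso-hom} implies that $\cHom(\tilde Q, \alpha)$ is a quasi-isomorphism; taking $\Ho^0$ yields a bijection $\Hom_{\Ket(\MHM(Y, \mathcal{T}))}(\tilde Q, \tilde X) \sira \Hom_{\Ket(\MHM(Y, \mathcal{T}))}(\tilde Q, \tilde Q)$, so $\id_{\tilde Q}$ lifts to some $\gamma: \tilde Q \to \tilde X$. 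Then $\tilde h := \beta \comp \gamma$ is the desired morphism, and it is automatically a quasi-isomorphism.

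The one subtle point is this lifting step, where one must convert the perverse-projectivity of $\tilde Q$ into a homotopy-projectivity property inside $\Ket(\MHM(Y, \mathcal{T}))$; Lemma \ref{l:qiso-hom} is precisely the tool that makes this possible. Everything else is a formal consequence of the functoriality of $\cHom(\tilde Q, ?)$ and the exactness of $\gr^W_\DR$.
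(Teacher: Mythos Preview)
Your strategy is sound and, once completed, gives a slightly stronger statement than the paper's: you produce the quasi-isomorphism already at the level of complexes of mixed Hodge structures (a map $\tilde B \to \tilde V$), whereas the paper only builds a zigzag of quasi-isomorphisms of $\tilde B$-modules, passes to $\gr^W_\DR$, deduces $\Ho(\tilde S)\cong\Ho(\tilde W)$, and then simply \emph{chooses} an element $w\in\Cy(\tilde W)^{00}$ realizing this isomorphism to define $\tilde f$ by multiplication. The paper's route is cruder but avoids the lifting step you need.

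There are two points in your argument that need repair. First, the claim that $\dcb(\MHM(Y,\mathcal{T}))\hra\dcb(\MHM(Y))$ is fully faithful because $\MHM(Y,\mathcal{T})$ is a Serre subcategory is not valid in general; Serre subcategories do not automatically induce fully faithful functors on bounded derived categories. What you actually need is that the isomorphism $\tilde Q\sira[\decal]\p i^*(\tilde P)$ can be realized inside $\dcb(\MHM(Y,\mathcal{T}))$, and this is most cleanly seen by writing down the explicit zigzag in $\Ket^\bd(\MHM(Y,\mathcal{T}))$ via truncation: since $[\decal]\p i^*(\tilde P)$ has cohomology concentrated in degree $0$ and isomorphic to $\tilde N$, the chain $\tilde Q\xra{\tilde\rho}\tilde N\sila\Ho^0([\decal]\p i^*(\tilde P))\la\tau_{\leq 0}([\decal]\p i^*(\tilde P))\ra[\decal]\p i^*(\tilde P)$ lies entirely in $\Ket^\bd(\MHM(Y,\mathcal{T}))$ (this is exactly the zigzag the paper uses). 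Once you have this, your roof-and-lifting argument goes through.

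Second, a minor slip: taking $\Ho^0$ of $\cHom(\tilde Q,?)$ computes $\Hom_{\Hot}$, not $\Hom_{\Ket}$ (see subsection~\ref{sec:deriv-categ-dg}). So the bijection you obtain is $\Hom_{\Hot}(\tilde Q,\tilde X)\sira\Hom_{\Hot}(\tilde Q,\tilde Q)$, and the lift $\gamma$ satisfies $\alpha\gamma=\id_{\tilde Q}$ only up to homotopy. This is harmless: $\alpha\gamma$ is still a quasi-isomorphism, hence so is $\gamma$, and $\tilde h=\beta\gamma$ represents $\beta\alpha^{-1}$ in the derived category, which is the given isomorphism.
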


\begin{proof}
  By Proposition \ref{p:compute-tri-fun} (using Corollary \ref{c:i-perverse}) and assumption
  \eqref{eq:assumption} we have 
  isomorphisms
  \begin{equation*}
    [\decal]\p i^*(\tilde P) \sira [\decal]i^*(\tilde M) \sira \tilde N
  \end{equation*}
  in $\dcb(\MHM(Y))$; recall $\tilde N \in \MHM(Y,
  \mathcal{T})$. Hence $\Ho^j([\decal]\p i^*(\tilde P))$ 
  vanishes for $j \not= 0$ and we get a sequence 
  \begin{equation*}
    \tilde{Q} \ra \tilde{N} \sila 
    \Ho^0([\decal]\p i^*(\tilde P) 
    \la \tau_{\leq 0}([\decal]\p i^*(\tilde P)) 
    \ra [\decal]\p i^*(\tilde P) 
  \end{equation*}
  of quasi-isomorphisms
  in $\Ket^\bd(\MHM(X, \mathcal{S}))$, 
  where $\tau_{\leq 0}$ is the intelligent truncation functor as
  defined, for example, in \cite[1.3]{KS}. We apply $\cHom(\tilde{Q},?)$
  to this sequence and obtain, using Lemma \ref{l:qiso-hom}, a sequence
  of quasi-isomorphisms of 
  $\tilde B$-modules connecting
  \begin{equation*}
    \tilde B = \cHom(\tilde{Q},\tilde{Q}) \text{ and }
    \tilde V = \cHom(\tilde{Q},[\decal]\p i^*(\tilde P)).
  \end{equation*}
  Hence $\Ho(\tilde S)$ and $\Ho(\tilde W)$ are isomorphic as dgg
  $\Ho(\tilde S)$-modules. Choose $w \in \Cy(\tilde W)^{00}$
  such that $\Ho(\tilde S) \ra \Ho(\tilde W)$, $s \mapsto [w]s$, is an
  isomorphism. Then
  $\tilde f: \tilde S \ra \tilde{W}$, $s \mapsto ws$,
  is a quasi-isomorphism of dgg $\tilde S$-modules.
\end{proof}

Lemma \ref{l:the-three-qisos} shows that multiplication by $w:=\tilde{f}(1)\in
\Gamma(\tilde W)^{00}$ 
defines quasi-isomorphisms $\tilde S \ra \tilde W$, $\Gamma(\tilde S)
\ra \Gamma(\tilde W)$ and $\Ho(\tilde S) \ra \Ho(\tilde W)$ of dgg
modules.

We apply $\eta$ to the morphisms of dgg algebras and bimodules
\begin{equation*}
  {\mathovalbox{\tilde{R} \curvearrowright {\tilde{W}} \curvearrowleft \tilde{S}}}
  \supset 
  {\mathovalbox{{\Gamma(\tilde R)} \curvearrowright {{\Gamma(\tilde W)}}
      \curvearrowleft {\Gamma(\tilde S)}}}
\end{equation*}
and denote the resulting situation by
\begin{equation}
  \label{eq:bimod-diagram-dgg-void-gamma-application}
  {\mathovalbox{{R} \curvearrowright {{W}} \curvearrowleft {S}}}
  \supset 
  {\mathovalbox{{\Gamma(R)} \curvearrowright {{\Gamma(W)}}
      \curvearrowleft {\Gamma(S)}}}.
\end{equation}
Multiplication by $w\in \Cy(\Gamma(W))^0$ 
still defines quasi-isomorphisms $S \ra W$ and $\Gamma(S) \ra
\Gamma(W)$ of dg modules, hence we can apply 
Proposition \ref{p:dgprae-bimod} and obtain a natural isomorphism
\begin{equation}
  \label{eq:axrbys-prae-one}
  \xymatrix{
    {\dgPraeDer_{\Gamma(R)}(\Gamma(R))} \ar[r]^-{? \Lotimesover{\Gamma(R)}R} \ar[d]_-{? \Lotimesover{\Gamma(R)}\Gamma(W)}
    & {\dgPraeDer_R(R)} \ar[d]^-{? \Lotimesover{R}W} \\
    {\dgPraeDer_{\Gamma(S)}(\Gamma(S))} 
    \ar[r]_-{? \Lotimesover{\Gamma(S)}S} \ar@{=>}[ur]_\sim
    & {\dgPraeDer_S(S).}
  }
\end{equation}

Since the cohomologies $\Ho(\tilde{R})$, $\Ho(\tilde{S})$ 
and hence $\Ho(\tilde{W})$ are
pure of weight zero (as shown in the proof of Theorem
\ref{t:formality-of-endo-complex}), componentwise projection defines well-defined morphisms of
dgg algebras and modules
\begin{equation*}
  {\mathovalbox{{\Gamma(\tilde R)} \curvearrowright {{\Gamma(\tilde W)}}
      \curvearrowleft {\Gamma(\tilde S)}}}
  \ra
  {\mathovalbox{{\Ho(\tilde R)} \curvearrowright {{\Ho(\tilde W)}}
      \curvearrowleft {\Ho(\tilde S)}}}
\end{equation*}
with underlying morphisms of dg algebras and modules
\begin{equation*}
  {\mathovalbox{{\Gamma(R)} \curvearrowright {{\Gamma(W)}}
      \curvearrowleft {\Gamma(S)}}}
  \ra
  {\mathovalbox{{\Ho(R)} \curvearrowright {{\Ho(W)}}
      \curvearrowleft {\Ho(S)}}}.
\end{equation*}
Multiplication by $w\in \Cy(\Gamma(W))^0$ and its class $[w] \in
\Ho(W)^0$ defines quasi-iso\-mor\-phisms $\Gamma(S) \ra \Gamma(W)$ and
$\Ho(S) \ra \Ho(W)$ of dg modules, so application of
Proposition \ref{p:dgprae-bimod} yields
a natural isomorphism
\begin{equation}
  \label{eq:axrbys-prae-two}
  \xymatrix{
    {\dgPraeDer_{{\Gamma(R)}}({\Gamma(R)})} \ar[r]^-{? \Lotimesover{{\Gamma(R)}}{\Ho(R)}} \ar[d]_-{? \Lotimesover{{\Gamma(R)}}{\Gamma(W)}}
    & {\dgPraeDer_{\Ho(R)}({\Ho(R)})} \ar[d]^-{? \Lotimesover{{\Ho(R)}}{\Ho(W)}} \\
    {\dgPraeDer_{\Gamma(S)}({\Gamma(S)})} 
    \ar[r]_-{? \Lotimesover{{\Gamma(S)}}{\Ho(S)}} \ar@{=>}[ur]_{\sim}
    & {\dgPraeDer_{\Ho(S)}({\Ho(S)})}
  }
\end{equation}

Let 
\begin{equation*}
  {\mathovalbox{{A} \curvearrowright {{V}} \curvearrowleft {B}}}
  \supset 
  {\mathovalbox{{\Sub(A)} \curvearrowright {{\Sub(V)}}
      \curvearrowleft {\Sub(B)}}}
\end{equation*}
be the inverse image of \eqref{eq:bimod-diagram-dgg-void-gamma-application} 
under
the isomorphism $a$ in \eqref{eq:the-bimodules}
(cf.\ diagram \eqref{eq:sub-def} in the proof of Theorem
\ref{t:formality-of-endo-complex}).
Then diagrams \eqref{eq:axrbys-prae-one} and
\eqref{eq:axrbys-prae-two}
get transformed in the third and forth square in diagram
\eqref{eq:the-big}.

\subsection{Passage to Extension Algebras}
\label{sec:pass-geom-extens}

We prove now that the fifth square in diagram \eqref{eq:the-big}
commutes up to natural isomorphism. The setting is as in subsection
\ref{sec:pass-cohom-algebr}.
Recall that $M=\rat(\tilde M)$ and $\ul{M}=\real(M)=\rera(\tilde M)$
and similarly for $\tilde N$.

We define in the following isomorphisms of dg algebras and bimodules 
(with all differentials equal to zero)
\begin{align}
  \label{eq:hee}
  \notag
  {\mathovalbox{{\Ho(A)} \curvearrowright {{\Ho(V)}}
      \curvearrowleft {\Ho(B)}}} 
  \xra{(\phi, \chi, \psi)}
  & \; {\mathovalbox{{\Ext(M)} \curvearrowright {{\Ext(N)}}
      \curvearrowleft {\Ext(N)}}}\\
  \xra{(\real, \real, \real)}  
  & \; {\mathovalbox{{\Ext(\ul M)} \curvearrowright {{\Ext(\ul N)}}
      \curvearrowleft {\Ext(\ul N)}}},
\end{align}
where we omit some indices ${\Perv(X, \mathcal{S})}$ and ${\Perv(Y,
  \mathcal{T})}$ in the second box.
The right module structures on these bimodules and the isomorphisms
$\real$ 
are the obvious ones. 
For the definition of the left module structures and the morphisms
$\phi$, $\chi$ and $\psi$ we have to recall and
establish several isomorphisms.
(The left module structures on $\Ho(V)$ and $\Ext(\ul M)$ were already
defined, but we repeat the definition.)

There is an isomorphism
$\tilde\sigma:[\decal]i^*(\tilde{M}) \sira \tilde{N}$
in $\dcb(\MHM(Y))$ 
by assumption 
(see \eqref{eq:assumption}). Define $\ul{\sigma}:=\rera(\tilde\sigma)$ and let
$\tau$ be the natural isomorphism from Proposition
\ref{p:real-ius}. We obtain isomorphisms
\begin{equation*}
  [\decal]\ul{L\p i^*(M)} \xra{[\decal]\tau_M} [\decal]i^*(\ul M)
  \xra{\ul \sigma} \ul N.
\end{equation*}
The equivalence $\real:\dcb(\Perv(X, \mathcal{S})) \ra \dcb(X,
\mathcal{S})$ shows that there is a unique isomorphism
$\lambda: [\decal]{L\p i^*(M)} \sira N$
in $\dcb(\Perv(X, \mathcal{S}))$ such that $\real(\lambda)= \ul{\sigma}
\comp [\decal]\tau_M$.

We denote the perverse-projective resolutions $\tilde P
\ra \tilde M$ and $\tilde Q \ra \tilde N$ by $\tilde \pi$ and $\tilde
\rho$, and their underlying projective resolutions as $\pi:P \ra M$
and $\rho:Q \ra N$.
Since $\pi$ is a projective resolution, we may assume that 
$L\p i^*(P)$ and $L\p i^*(M)$ are identical to $\p i^*(P)$ and that 
$L\p i^*(\pi)$ is the identity. Hence we may consider $\lambda$
also as an isomorphism
\begin{equation*}
  \lambda:[\decal]\p i^*(P) \sira N.
\end{equation*}

Instead of
$\bigoplus \Hom^n_{\Hot(\Perv(X, \mathcal{S}))}$ and
$\bigoplus \Hom^n_{\Der(\Perv(X, \mathcal{S}))}$
we write $\Hom_{\Hot}$ and $\Hom_{\Der}$, and similarly for $(Y, \mathcal{T})$.
The above isomorphisms give rise to dga-mor\-phisms
\begin{gather*}
  \Ho(A)=\Hom_{\Hot}(P) 
  \xra{[\decal]\p i^*}
  \Hom_{\Hot}([\decal]\p i^*(P)), \\
  \Ext(M) 
  \xra{[\decal]L\p i^*}
  \Ext([\decal]L\p i^*(M)
  \xrightarrow[\sim]{\lambda}
  \Ext(N),\\
  \Ext(\ul M) \xra{[\decal]i^*} \Ext([\decal]i^*(\ul M))
  \xrightarrow[\sim]{\ul \sigma} \Ext(\ul N).
\end{gather*}
The first morphism comes from \eqref{eq:atildeaufV}, the last one coincides with \eqref{eq:extmn}.
These morphisms and the obvious left multiplications define the left
operations shown in \eqref{eq:hee}. 
The morphism $\phi$ is the composition 
\begin{equation*}
  \Ho(A)=\Hom_{\Hot}(P) 
  {=}
  \Hom_{\Der}(P) 
  =\Ext(P)
  \xrightarrow[\sim]{\pi}
  \Ext(M),
\end{equation*}
$\psi$ is defined analogously, and $\chi$ is given by
\begin{equation*}
  \Ho(V)=\Hom_{\Hot}(Q, [\decal]\p i^*P)
  = \Ext(Q, [\decal]\p i^*(P))
  \xrightarrow[\sim]{\rho, \lambda}
  \Ext(N).
\end{equation*}

It is easy to check that all morphisms in \eqref{eq:hee} are isomorphisms of dg algebras and dg
bimodules respectively. We apply Proposition \ref{p:dgprae-bimod} to the situation 
\eqref{eq:hee} and the element $n \in \Ho(V)$ corresponding to $\id
\in \Ext(\ul{N})$ and obtain 
the commutativity (up to natural isomorphism) of the fifth square in
diagram \eqref{eq:the-big}.

\section{Inverse Limits}
\label{sec:inverse-limits}

\subsection{Inverse Limits of Categories}
\label{sec:inverse-limits-categ}

We exhibit a 
definition of inverse limit of a sequence of categories that
will enable us to consider inverse limits of dg categories (subsection
\ref{sec:limits}) and to obtain a description of the equivariant
derived category (subsection \ref{sec:equiv-deriv-categ}).

Let $\mathcal{C}_0 \xla{F_0}
\mathcal{C}_1 
\la
\dots \la
\mathcal{C}_n \xla{F_n}
\mathcal{C}_{n+1} \la
\dots$
or in short $((\mathcal{C}_n), (F_n))$
be a sequence of categories (and functors).
We call the following category the inverse limit of this sequence and
denote it by $\invlim \mathcal{C}_n$:
\begin{itemize}
\item Objects are sequences 
  $((M_n)_{n \in \DN}, (\phi_n)_{n \in \DN})$
  of objects $M_n$ in $\mathcal{C}_n$ and isomorphisms
  $\phi_n:F_n(M_{n+1})\sira M_n$.
\item Morphisms $\alpha: ((M_n), (\phi_n)) \ra ((N_n), (\psi_n))$ are sequences $(\alpha_n)_{n \in
    \DN}$ of morphisms $\alpha_n:M_n \ra N_n$ such that
  $\psi_n \comp F_n(\alpha_{n+1}) = \alpha_n \comp \phi_n$, for all $n \in \DN$.
\end{itemize}

\begin{lemma}
  \label{l:limit-equiv}
  Let $N \in \DN$ and 
  assume that $F_n: \mathcal{C}_{n+1} \ra \mathcal{C}_n$ is an
  equivalence for all $n \geq N$. Then the obvious projection functor
  $    \pr_N: \invlim \mathcal{C}_n \ra \mathcal{C}_N$
  is an equivalence.
\end{lemma}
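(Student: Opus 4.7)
The plan is to construct an explicit quasi-inverse to $\pr_N$ using quasi-inverses of the individual equivalences $F_n$ for $n \geq N$, and to verify the two required natural isomorphisms componentwise.

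First I would choose, for each $n \geq N$, a quasi-inverse functor $G_n : \mathcal{C}_n \ra \mathcal{C}_{n+1}$ together with natural isomorphisms $\eta_n : \id_{\mathcal{C}_n} \sira F_n G_n$ and $\veps_n : G_n F_n \sira \id_{\mathcal{C}_{n+1}}$. Given an object $M_N \in \mathcal{C}_N$ I would define a sequence $((M_n), (\phi_n))$ in $\invlim \mathcal{C}_n$ as follows: for $n < N$, set recursively $M_n := F_n(M_{n+1})$ and $\phi_n := \id_{M_n}$; for $n \geq N$, set recursively $M_{n+1} := G_n(M_n)$ and take $\phi_n : F_n(M_{n+1}) = F_n G_n(M_n) \sira M_n$ to be $\eta_n^{-1}(M_n)$. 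On morphisms $\alpha : M_N \ra M_N'$ I extend in the obvious way by applying $F_n$ (for $n < N$) or $G_n$ (for $n \geq N$); the compatibility equation $\psi_n \comp F_n(\alpha_{n+1}) = \alpha_n \comp \phi_n$ holds by construction (it is $\id$ below $N$, and follows from naturality of $\eta_n^{-1}$ above $N$). This defines a functor $S : \mathcal{C}_N \ra \invlim \mathcal{C}_n$.

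Next I would check $\pr_N \comp S = \id_{\mathcal{C}_N}$, which is immediate from the construction. For the other direction, I would construct a natural isomorphism $\gamma : S \comp \pr_N \sira \id_{\invlim \mathcal{C}_n}$. Given an object $((M_n), (\phi_n))$, write $(S(M_N))_n =: M_n'$. For $n \leq N$, iterated application of $\phi_{N-1}, \phi_{N-2}, \ldots, \phi_n$ produces an isomorphism $\gamma_n : M_n' = F_n F_{n+1} \cdots F_{N-1}(M_N) \sira M_n$, with $\gamma_N = \id$. For $n > N$, one builds $\gamma_n : G_{n-1} \cdots G_N(M_N) \sira M_n$ inductively by composing $G_{n-1}(\gamma_{n-1})$ with the isomorphism $G_{n-1}(M_{n-1}) \sira M_n$ obtained from the adjunction isomorphism $\veps_{n-1}$ applied to $M_n$ combined with the given $\phi_{n-1} : F_{n-1}(M_n) \sira M_{n-1}$ (more precisely, the map $M_n \xla{\veps_{n-1}(M_n)} G_{n-1}F_{n-1}(M_n) \xra{G_{n-1}(\phi_{n-1})} G_{n-1}(M_{n-1})$ is invertible since its constituents are). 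Naturality in $((M_n), (\phi_n))$ follows from naturality of $\veps_n$, $\eta_n$ and of the $\phi_n$ themselves.

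Finally I would verify the coherence conditions required by the definition of the inverse limit — that the $\gamma_n$ assemble into a morphism in $\invlim \mathcal{C}_n$, i.e.\ that $\phi_n \comp F_n(\gamma_{n+1}) = \gamma_n \comp \phi_n'$ where $\phi_n'$ is the structure isomorphism of $S(M_N)$. Above and below $N$ this is a routine diagram chase using the triangle identities $F_n(\veps_n) \comp \eta_n(F_n) = \id$ and the naturality of $\eta_n$; at the transition index $n = N-1$ it is automatic since $\phi_{N-1}' = \id$. The only delicate point is bookkeeping the direction of the isomorphisms $\eta_n^{-1}$ versus $\veps_n$ and making sure the induction above $N$ is truly isomorphism-valued; this is the main (but entirely formal) obstacle, and no essentially new idea is required.
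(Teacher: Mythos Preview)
Your proposal is correct and is precisely the kind of explicit construction the paper has in mind; the paper's own proof consists of the single word ``Obvious.'' Your careful bookkeeping of the quasi-inverses $G_n$, the structure isomorphisms $\phi_n$, and the compatibility checks is exactly what underlies that word, so there is nothing to compare.
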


\begin{proof}
  Obvious.
\end{proof}

A morphism of sequences 
$((\mathcal{C}_n), (F_n))$ and 
$((\mathcal{D}_n), (G_n))$ of categories is a sequence
$\nu=(\nu_n)$ of functors $\nu_n: \mathcal{C}_n \ra \mathcal{D}_n$ such
that
$\nu_n \comp F_n$ and $G_n \comp \nu_{n+1}$ coincide (up to natural
isomorphism) for each $n \in \DN$.
Any such
morphism $\nu$ obviously defines a functor 
$\invlim \nu_n : \invlim \mathcal{C}_n \ra \invlim \mathcal{D}_n$.

In the following, we describe a setting in which this functor is an
equivalence.
Let $(\mathcal{I}, \leq)$ be a directed (i.\,e.\ for all $I$, $J
\in \mathcal{I}$ there is $K \in \mathcal{I}$ with $I\leq K$, $J \leq
K$) partially ordered set
(e.\ g.\ the set of segments in $\DZ$, partially ordered by inclusion).
An $\mathcal{I}$-filtered category
is a category $\mathcal{C}$ together with strict full subcategories
$(\mathcal{C}^I)_{I \in \mathcal{I}}$ such that 
$\mathcal{C}^I \subset \mathcal{C}^J$ for $I \leq J$.
We say that $\mathcal{C}$ is the union of the $\mathcal{C}^I$ if 
any object of $\mathcal{C}$ is contained in some
$\mathcal{C}^I$. 
A morphism 
$\mathcal{C} \ra \mathcal{D}$
of $\mathcal{I}$-filtered categories ($\mathcal{I}$-filtered
morphism) is a functor $F:\mathcal{C} 
\ra \mathcal{D}$ inducing functors $F^I: \mathcal{C}^I \ra
\mathcal{D}^I$ for all $I \in \mathcal{I}$.

If $((\mathcal{C}_n), (F_n))$ is a sequence of
$\mathcal{I}$-filtered categories (and $\mathcal{I}$-filtered
morphisms), the inverse limit $\invlim 
\mathcal{C}_n$ is filtered by the $\invlim \mathcal{C}_n^I$.
We will use the following conditions on a sequence of $\mathcal{I}$-filtered
categories $((\mathcal{C}_n), (F_n))$.
\begin{enumerate}[label={(F\arabic*)}]
\item
\label{enum:becomes-equiv}
 For each $I \in \mathcal{I}$ there is $N_I \in \DN$ such
  that, for all $n \geq N_I$, $F_n^I:\mathcal{C}_{n+1}^I \ra
  \mathcal{C}_n^I$ is an equivalence.
\item
\label{enum:is-union}
 $\invlim \mathcal{C}_n$ is the union of the 
  $\invlim \mathcal{C}_n^I$.
\end{enumerate}

Any morphism $(\nu_n): ((\mathcal{C}_n), (F_n))
\ra ((\mathcal{D}_n), (G_n))$ of 
sequences of $\mathcal{I}$-filtered categories induces an
$\mathcal{I}$-filtered morphism
$  \invlim \nu_n:
  \invlim \mathcal{C}_n
  \ra
  \invlim \mathcal{D}_n$. 

\begin{proposition}
  \label{p:limit-filtfun}
  Let $(\nu_n): ((\mathcal{C}_n), (F_n))
  \ra ((\mathcal{D}_n), (G_n))$ be a morphism of sequences of
  $\mathcal{I}$-filtered categories and 
  assume that both sequences satisfy 
  condition \ref{enum:is-union} and that $((\mathcal{C}_n), (F_n))$ satisfies
  condition \ref{enum:becomes-equiv}. 
  If for all $I \in \mathcal{I}$ 
  there is $N \in \DN$ such that, for
  all $n \geq N$, 
  $\nu_n^I: \mathcal{C}^I_n \ra \mathcal{D}^I_n$ is an
  equivalence, then $\invlim \nu_n:\invlim \mathcal{C}_n \ra
  \invlim\mathcal{D}_n$ is an equivalence and $((\mathcal{D}_n), (G_n))$
  also satisfies condition \ref{enum:becomes-equiv}.
\end{proposition}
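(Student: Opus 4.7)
The plan is to handle the two conclusions in turn, first extending condition \ref{enum:becomes-equiv} from $\mathcal{C}$ to $\mathcal{D}$, then establishing the equivalence $\invlim \nu_n$ via an $\mathcal{I}$-filtered reduction followed by an unfiltered upgrade.

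For the inheritance of \ref{enum:becomes-equiv} by $((\mathcal{D}_n),(G_n))$, fix $I \in \mathcal{I}$ and choose $N \in \DN$ large enough that $F_n^I$ is an equivalence for all $n \geq N$ (using \ref{enum:becomes-equiv} on $\mathcal{C}$) and simultaneously $\nu_n^I$ is an equivalence for all $n \geq N$ (by the hypothesis on $(\nu_n)$). The defining property of a morphism of sequences supplies a natural isomorphism $\nu_n^I \comp F_n^I \naturaliso G_n^I \comp \nu_{n+1}^I$. Three of the four functors in this relation are equivalences, so the fourth, $G_n^I$, is as well, which gives \ref{enum:becomes-equiv} for $\mathcal{D}$.

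For the main equivalence, first show that the $I$-restriction $\invlim \nu_n^I \colon \invlim_n \mathcal{C}_n^I \ra \invlim_n \mathcal{D}_n^I$ is an equivalence for each $I \in \mathcal{I}$. Now that both sequences satisfy \ref{enum:becomes-equiv}, Lemma \ref{l:limit-equiv} turns the projections $\pr_N^I$ on both sides into equivalences once $N$ is large; the tautological square they form together with $\nu_N^I$ commutes strictly, and $\nu_N^I$ is an equivalence by hypothesis, so two-out-of-three yields the claim. With this in hand, essential surjectivity of $\invlim \nu_n$ is immediate from \ref{enum:is-union} applied to $\mathcal{D}$: any object of $\invlim \mathcal{D}_n$ lies in some $\invlim \mathcal{D}_n^I$ and hence lifts through the $I$-restricted equivalence to an object of $\invlim \mathcal{C}_n^I \subset \invlim \mathcal{C}_n$.

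The only delicate point is fully faithfulness, where one must pass from the objectwise union \ref{enum:is-union} to a statement about morphism spaces. Given $X$, $X' \in \invlim \mathcal{C}_n$, one applies \ref{enum:is-union} to place each in some $\invlim \mathcal{C}_n^I$ and $\invlim \mathcal{C}_n^J$ respectively, and then uses directedness of $\mathcal{I}$ to choose $K \geq I, J$ so that both $X$ and $X'$ lie in $\invlim \mathcal{C}_n^K$; strictness of the filtration forces Hom-sets between these objects in $\invlim \mathcal{C}_n$ to coincide with those in $\invlim \mathcal{C}_n^K$ (and similarly on the $\mathcal{D}$-side), and the $K$-restricted equivalence supplies the required bijection. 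This last move is the main obstacle in the argument, and it is precisely the place where the directedness of $\mathcal{I}$ in the definition of $\mathcal{I}$-filtered category is essential.
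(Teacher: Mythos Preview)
Your proof is correct and follows the same approach as the paper, which dispatches the argument in three sentences: the paper simply asserts that $((\mathcal{D}_n),(G_n))$ ``obviously'' inherits \ref{enum:becomes-equiv}, then says that by \ref{enum:is-union} it suffices to show each $\invlim \nu_n^I$ is an equivalence, which follows from \ref{enum:becomes-equiv} and Lemma~\ref{l:limit-equiv}. You have unpacked precisely these steps, including the directedness argument for full faithfulness that the paper leaves implicit in the phrase ``it is sufficient''.
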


\begin{proof}
  Obviously $((\mathcal{D}_n), (G_n))$ also satisfies
  condition \ref{enum:becomes-equiv}.
  By condition \ref{enum:is-union} it is sufficient to show that each $\invlim
  \nu_n^I$ is an equivalence. 
  But this follows from condition \ref{enum:becomes-equiv} and Lemma
  \ref{l:limit-equiv}.
\end{proof}

Let 
$ \mathcal{T}_0 \xla{F_0}
  \mathcal{T}_1 \xla{F_1}
  \dots \xla{F_{n-1}}
  \mathcal{T}_n \xla{F_n}
  \dots$
be a sequence of triangulated categories and triangulated
functors.
Then $\invlim \mathcal{C}_n$ is obviously additive and the shift
functors of the various $\mathcal{T}_n$  
induce an obvious shift functor $[1]$ on $\invlim
\mathcal{T}_n$.
Assume that each $\mathcal{T}_n$ is an $\mathcal{I}$-filtered category
and that all functors $F_n$ are $\mathcal{I}$-filtered morphisms
(the $\mathcal{T}_n^I$ are not assumed to be stable under the
shift).

\begin{proposition}
  \label{p:limit-triangulated}
  Let $((\mathcal{T}_n), (F_n))$ as above satisfy conditions
  \ref{enum:becomes-equiv} and \ref{enum:is-union} and assume that each
  $\mathcal{T}_n^I$ is closed 
  under extensions in $\mathcal{T}_n$.
  Then there is a unique class $\mathcal{D}$ of triangles in $\invlim
  \mathcal{T}_n$ (considered as an additive category with
  shift functor $[1]$) such that $(\invlim \mathcal{T}_n, \mathcal{D})$ is
  a triangulated category and all projections 
  $\pr_i:\invlim \mathcal{T}_n \ra \mathcal{T}_i$ are
  triangulated ($i \in \DN$).
  A triangle $\Sigma$ is in $\mathcal{D}$ if and only if all 
  $\pr_i(\Sigma)$ are distinguished ($i \in \DN$). 
\end{proposition}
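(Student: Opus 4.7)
The plan is to take $\mathcal{D}$ to be the class described in the statement and verify directly the axioms TR1--TR4. Uniqueness is then automatic: in any triangulated structure on $\invlim \mathcal{T}_n$ with all projections $\pr_i$ triangulated, distinguished triangles must project to distinguished ones at each level, hence lie in $\mathcal{D}$; the reverse inclusion follows once existence of cones (TR2) is established, by the standard 5-lemma argument that any triangle sharing its first morphism with a cone is distinguished. TR1 transfers componentwise: the zero triangle, rotation, and closure under isomorphism are immediate, since the shift on $\invlim \mathcal{T}_n$ is levelwise.

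For TR2 the plan is: given $\alpha \colon M \ra N$, choose for each $n$ a distinguished triangle $T_n \colon M_n \xra{\alpha_n} N_n \ra C_n \ra [1]M_n$ in $\mathcal{T}_n$ and then produce coherent structure isomorphisms $\gamma_n \colon F_n(C_{n+1}) \sira C_n$. Since $F_n$ is triangulated, $F_n(T_{n+1})$ is distinguished, and the pair $(\phi_n, \psi_n)$ forms the first two components of a morphism of triangles $F_n(T_{n+1}) \to T_n$ (this is exactly the compatibility making $\alpha$ a morphism in $\invlim \mathcal{T}_n$). Completing via TR3 inside $\mathcal{T}_n$ and invoking the triangulated 5-lemma on the third component yields $\gamma_n$ as an isomorphism. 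Then $C := ((C_n), (\gamma_n))$ is an object of $\invlim \mathcal{T}_n$, and the assembled triangle lies in $\mathcal{D}$ by construction.

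The delicate axioms are TR3 and TR4, where conditions \ref{enum:becomes-equiv} and \ref{enum:is-union} enter. Given a partial morphism (or an octahedron) of triangles in $\mathcal{D}$, I would first use \ref{enum:is-union} to choose $I \in \mathcal{I}$ such that every relevant object, its needed shifts, and a chosen cone all lie in $\invlim \mathcal{T}_n^I$; repeated application of \ref{enum:is-union} to absorb shifts and extension-closedness of $\mathcal{T}_n^I$ for the middle terms of the cone triangles make such a choice possible. Condition \ref{enum:becomes-equiv} then supplies $N_I$ beyond which $F_n^I$ is an equivalence. For TR3 I would pick an extension $\gamma_{N_I}$ by TR3 inside $\mathcal{T}_{N_I}$, propagate to $n > N_I$ via quasi-inverses of $F_n^I$, and define $\gamma_n$ for $n < N_I$ inductively so that the structure-isomorphism squares for $C$ and $C'$ commute; triangulatedness of $F_n$ then forces compatibility with the given $\alpha_n$ and $\beta_n$. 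TR4 is treated by the same scheme: build the octahedral third triangle at the stable level $N_I$ and transport to all $n$.

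The main obstacle will be precisely this coherence step: TR3 inside any single $\mathcal{T}_n$ furnishes only non-canonical extensions, so a naive levelwise construction need not assemble into a morphism of $\invlim \mathcal{T}_n$. The combination of \ref{enum:becomes-equiv} (eventual equivalences) and \ref{enum:is-union} (every object lives in some filtered piece) reduces each such construction to a finite initial segment plus an equivalence-controlled tail, which is the essential mechanism by which the inverse limit inherits a triangulated structure.
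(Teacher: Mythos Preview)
Your proof is correct and follows essentially the same strategy as the paper, though the paper packages the argument more uniformly. Rather than treating TR2 by a direct levelwise construction and TR3/TR4 by the propagate-from-$N_I$ scheme, the paper observes once and for all that every axiom involves only finitely many objects, chooses $I$ so that these objects (and their shifts by $\pm 1$) lie in $\invlim \mathcal{T}_n^I$, and then invokes Lemma~\ref{l:limit-equiv} to identify this full subcategory with $\mathcal{T}_{N_I}^I$; extension-closedness of $\mathcal{T}_{N_I}^I$ in the triangulated category $\mathcal{T}_{N_I}$ guarantees that cones and octahedral fillers stay inside. Your hands-on propagation for TR3/TR4 is exactly what this equivalence encodes, so the two arguments have the same content. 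One small advantage of your route is that your TR2 construction (levelwise cones glued via TR3 and the 5-lemma) does not use the filtration at all and works for an arbitrary $\DN$-indexed tower of triangulated categories; the paper's uniform reduction trades this mild extra generality for brevity. The uniqueness argument is identical in both.
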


\begin{proof} (Cf.\ {\cite[2.5.2]{BL}}.)
  We denote by $\mathcal{E}$ the class of triangles $\Sigma$ in $\invlim
  \mathcal{T}_n$ such that all $\pr_i(\Sigma)$ are distinguished and
  prove that $(\invlim \mathcal{T}_n, \mathcal{E})$ is a 
  triangulated category.
  In all axioms of a
  triangulated category (\cite{verdier-these}), only a finite set
  $F$
  of
  objects is involved, and the existence of some objects and morphisms
  is asserted. So we may check these axioms in a suitable full
  subcategory $\invlim \mathcal{T}_n^I$ of $\invlim \mathcal{T}_n$
  containing all $[k]X$, for $X \in F$ and $k = -1, 0, 1$.
  But this subcategory 
  is equivalent to $\mathcal{T}_{N_I}^I$ by Lemma \ref{l:limit-equiv}.
  (The condition that $\mathcal{T}_{N_I}^I$ is closed under extensions
  is used for constructing a distinguished triangle with a given base.)

  If a class $\mathcal{D}$ of triangles satisfies the conditions of
  the proposition, then obviously $\mathcal{D} \subset \mathcal{E}$.
  If $\Sigma: X \xra{f} Y \ra Z \ra [1]X$ is in $\mathcal{E}$, there is a
  triangle $\Sigma': X\xra{f} Y \ra Z' \ra [1]X$ in $\mathcal{D}$. 
  All objects are in some $\invlim \mathcal{T}_n^I$. Since $\Sigma$
  and $\Sigma'$ become isomorphic under $\pr_{N_I}: \invlim
  \mathcal{T}_n^I \sira \mathcal{T}_{N_I}^I$, they are isomorphic in
  $\invlim \mathcal{T}_n$ and hence $\Sigma \in \mathcal{D}$.
\end{proof}

\begin{remark}
  \label{rem:generalize}
  We omit the 
  obvious generalization of Proposition \ref{p:limit-filtfun} 
  to $\mathcal{I}$-filtered
  triangulated categories.
\end{remark}

\subsection{Filtered DG Modules}
\label{sec:filtered-dg-modules}

Let $\mathcal{A}$ be a dg algebra satisfying the conditions
\ref{enum:form-pg}-\ref{enum:form-sdga}. 
We recall the definition of a certain equivalent subcategory of
$\dgPerDer(\mathcal{A})$. This subcategory will enable us to prove the
concise statement of Proposition \ref{p:equi-on-dgprae-I}.

Recall the $\mathcal{A}$-modules $\{\hat L_x\}_{x \in W}$ from
subsection \ref{sec:perfect-dg-modules}.
We consider the following full subcategory $\dgFiltDer(\mathcal{A})$
of $\dgPerDer(\mathcal{A})$:
Its objects are $\mathcal{A}$-modules
$M$ admitting a finite filtration 
$0 = F_0(M) \subset F_1(M) \subset \dots \subset
F_n(M) =M$ by dg submodules with subquotients
$F_{i}(M)/F_{i-1}(M) \cong \{l_i\}\hat L_{x_i}$ in
$\dgMod(\mathcal{A})$
for suitable $l_1 \geq l_2 \geq \dots \geq l_n$ and $x_i \in W$.

\begin{theorem}[\cite{OSdiss-perfect-dg-arXiv}]
  \label{t:form-filt-iso-perfect}
  If $\mathcal{A}$ is a dg algebra satisfying \ref{enum:form-pg}-\ref{enum:form-sdga},
  then the inclusion $\dgFiltDer(\mathcal{A}) \subset
  \dgPerDer(\mathcal{A})$ is an equivalence of categories.
  Any object of $\dgFiltDer(\mathcal{A})$ is homotopically projective.
  An object $M$ of $\dgFiltDer(\mathcal{A})$
  lies in
  $\dgPerDer^{\leq n}$ (in $\dgPerDer^{\geq n}$) if and only if $M$ is
  generated in degrees $\leq n$ (in degrees $\geq n$)
  as a graded $A$-module.
\end{theorem}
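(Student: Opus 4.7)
The plan is to prove the three parts of Theorem \ref{t:form-filt-iso-perfect} in succession.

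For part (2), since $A^0$ is semisimple by \ref{enum:form-ss}, each $L_x$ is a direct summand of $A^0$, so $\hat{L}_x = L_x \otimesover{\mathcal{A}^0}\mathcal{A}$ is a direct summand of $\mathcal{A}$. In particular each $\hat{L}_x$ is homotopically projective; this property is stable under shifts $\{n\}$ and preserved under extensions in $\dgMod(\mathcal{A})$ (immediate from the vanishing-on-acyclics characterization of homotopic projectivity). An induction on the length of an $\hat{L}_x$-flag then proves part (2). Combined with the fact that each $\hat{L}_x$ belongs to $\dgPerDer(\mathcal{A})$ as a summand of $\mathcal{A}$ and that short exact sequences of homotopically projective dg modules give rise to distinguished triangles in $\dgDer(\mathcal{A})$, the same induction shows $\dgFiltDer(\mathcal{A}) \subseteq \dgPerDer(\mathcal{A})$; fullness of this inclusion is automatic.

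Essential surjectivity is the main step. Given $M \in \dgPerDer(\mathcal{A})$, the bounded t-structure of Theorem \ref{t:t-structure-auf-perf} provides finitely many nonzero t-cohomologies $H_t^n(M) \in \dgFilMod(\mathcal{A})$, $p \leq n \leq q$, each of which admits an $\hat{L}_x$-flag by definition of $\dgFilMod$. I would construct a model $M' \in \dgFiltDer(\mathcal{A})$ with $M' \cong M$ in $\dgDer(\mathcal{A})$ by iterated extension, processing the truncations $\tau_{\leq p}M \subset \dots \subset \tau_{\leq q}M = M$ from bottom to top and refining each layer $\{-n\}H_t^n(M)$ via its $\hat{L}_x$-flag. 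The standard t-triangles $\tau_{\leq n-1}M \ra \tau_{\leq n}M \ra \{-n\}H_t^n(M) \ra \{1\}\tau_{\leq n-1}M$ can be realized at the dg-module level thanks to the homotopic projectivity of the modules already built in previous steps (so that morphisms in $\dgDer$ lift to honest morphisms in $\dgMod$ and cones represent cones). The resulting filtration has subquotients $\{l_i\}\hat{L}_{x_i}$ with $l_i = -n$ whenever the $i$-th piece comes from $H_t^n(M)$; since smaller $n$ is processed first, the sequence $l_i$ is weakly decreasing as required.

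For part (3), let $M \in \dgFiltDer(\mathcal{A})$ with flag subquotients $\{l_i\}\hat{L}_{x_i}$ satisfying $l_1 \geq \dots \geq l_n$. Homotopic projectivity gives $M \Lotimesover{\mathcal{A}}\mathcal{A}^0 \cong M \otimesover{A} A^0$, and since $\hat{L}_x$ is a summand of $\mathcal{A}$ (hence flat as a graded $A$-module) with $\hat{L}_x \otimesover{A} A^0 = L_x$, tensoring the flag gives a flag of $M \otimesover{A} A^0$ with subquotients $\{l_i\}L_{x_i}$. As a graded vector space this is concentrated in degrees $\{-l_i\}$. I claim the induced differential is zero: any nonzero component of $d$ from the $j$-th subquotient (in degree $-l_j$) lands in $F_j \otimesover{A} A^0$ in degree $-l_j + 1$, and such a contribution could only come from some $\{l_i\}L_{x_i}$ with $i \leq j$ and $l_i = l_j - 1$, contradicting $l_i \geq l_j$ (which holds for $i \leq j$). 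Therefore $H^j(M \Lotimesover{\mathcal{A}}\mathcal{A}^0) \cong \bigoplus_{i : l_i = -j} L_{x_i}$, whence $M \in \dgPerDer^{\leq n}$ iff $-l_i \leq n$ for all $i$, and similarly for $\dgPerDer^{\geq n}$. On the other hand, $\hat{L}_x = L_x \otimes_{A^0} A$ is generated as a graded $A$-module in degree $0$, so $\{l\}\hat{L}_x$ is generated in degree $-l$, and the flag exhibits $M$ as generated as a graded $A$-module by elements in degrees $\{-l_1, \dots, -l_n\}$. Hence generation in degrees $\leq n$ amounts to $-l_i \leq n$ for all $i$, matching the t-structure condition.

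The main obstacle is the essential-surjectivity argument: while the t-structure combinatorially determines the shifts $\{l_i\}$ and simple constituents $\hat{L}_{x_i}$ appearing in the desired flag, promoting the abstract iterated cone to a filtered dg module with the subquotients in the prescribed order requires lifting derived-category morphisms and taking cones at the level of $\dgMod(\mathcal{A})$, which is feasible precisely because the intermediate building blocks are homotopically projective thanks to part (2).
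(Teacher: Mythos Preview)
The paper does not contain its own proof of this theorem: both Theorem~\ref{t:t-structure-auf-perf} and Theorem~\ref{t:form-filt-iso-perfect} are quoted without proof from the companion reference \cite{OSdiss-perfect-dg-arXiv}, so there is nothing in the present text to compare your argument against.

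Your arguments for homotopic projectivity and for the characterisation of $\dgPerDer^{\leq n}$, $\dgPerDer^{\geq n}$ via $M\otimes_A A^0$ are correct. For the latter, note that the vanishing of the differential on $M\otimes_A A^0$ really uses both the positivity \ref{enum:form-pg} of $A$ and the ordering $l_1\ge\dots\ge l_n$, exactly as you indicate; and the implication ``generated in degrees $\le n$ $\Rightarrow$ all $-l_i\le n$'' is best seen through $M\otimes_A A^0$ (a direct argument with generators of $M$ alone does not immediately give a contradiction, since $A$ has elements of positive degree).

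For essential surjectivity your cone-based construction of a filtered model is the right mechanism, but be aware of a potential circularity: you invoke the bounded t-structure of Theorem~\ref{t:t-structure-auf-perf}, which in \cite{OSdiss-perfect-dg-arXiv} is developed jointly with Theorem~\ref{t:form-filt-iso-perfect} and most likely relies on it (the existence of truncation triangles typically needs precisely the filtered models you are trying to construct). A self-contained route runs in the opposite direction: one shows directly, by induction on the number of shifted $\hat L_x$ involved, that every object of $\dgPraeDer(\mathcal{A})=\dgPerDer(\mathcal{A})$ is isomorphic to an object of $\dgFiltDer(\mathcal{A})$, using that a morphism $\{l\}\hat L_x\to M$ in $\dgDer(\mathcal{A})$ with $M\in\dgFiltDer(\mathcal{A})$ lifts to $\dgMod(\mathcal{A})$ by homotopic projectivity, and that the cone of such a lift again lies in $\dgFiltDer(\mathcal{A})$ provided $l$ is chosen large enough relative to the $l_i$ already present. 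The t-structure is then deduced afterwards from the filtered description.
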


\subsection{Inverse Limits of Categories of DG Modules}
\label{sec:limits}
Let $\mathcal{A}_{\infty}$ be a positively graded dg algebra with
differential zero and $A_\infty^0$ isomorphic to a finite product of
division rings. If $\mathcal{A}_\infty$ is the inverse limit
of a sequence of dg algebras 
$(\mathcal{A}_n)_{n \in \DN}$ of the same type that stabilizes in
each degree,
we show that 
$\dgPerDer(\mathcal{A}_\infty)$ is the inverse limit of the categories
$\dgPerDer(\mathcal{A}_i)$. We first study the special case where only
two dg algebras are involved, and generalize afterwards.

\subsubsection{Special Case}
\label{sec:special-case}

Let $\mathcal{A}=(A=\bigoplus_{i \geq 0} A^i, d=0)$ be a positively
graded dg algebra with differential zero and $A^0=\prod_{x \in W}
e_xA^0$ a finite product
of division rings (here $e_x$ is the unit element of $e_xA^0$).
In particular, $\mathcal{A}$ satisfies the conditions
\ref{enum:form-pg}-\ref{enum:form-sdga}.
The $e_xA^0$ are up to isomorphism the simple $A^0$-modules, so
$\dgPerDer(\mathcal{A}) =\dgPraeDer_{\mathcal{A}}(\{e_x
\mathcal{A}\}_{x \in W})$ 
thanks to Theorem~\ref{t:t-structure-auf-perf}.
Let $\mathcal{B}$ be a dg algebra of the same type and 
$\phi: \mathcal{A} \ra \mathcal{B}$ a dga-morphism.
We assume that $\phi^0:A^0 \ra B^0$ is an isomorphism.
Hence $B^0=\prod e_xB^0$, where we write $e_x$ instead of $\phi(e_x)$.
The extension of scalars functor 
induces a triangulated functor 
\begin{equation*}
  \pro_{\mathcal{A}}^\mathcal{B}=(?\Lotimes_{\mathcal{A}}\mathcal{B}) :
  \dgPerDer(\mathcal{A}) \ra \dgPerDer(\mathcal{B}). 
\end{equation*}
Since every object of $\dgFiltDer(\mathcal{A})$ is homotopically
projective (Theorem \ref{t:form-filt-iso-perfect}) (and hence
``homotopically flat'') we can and will
assume in the following that this functor is given by
$M \mapsto M \otimes_{\mathcal{A}}\mathcal{B}$ on $\dgFiltDer(\mathcal{A})$.
Using Theorem \ref{t:form-filt-iso-perfect} it is then easy to see that
this functor is t-exact with respect to the
t-structures from Theorem \ref{t:t-structure-auf-perf}.

By a segment we mean in the following a non-empty bounded interval in $\DZ$.
If $I=[a,b]$ is a segment ($a$, $b \in \DZ$, $a \leq b$),
we define $|I|=b-a$ and  
$\dgPerDer^I=\dgPerDer^{\geq a}\cap \dgPerDer^{\leq b}$. 
If $\mathcal{I}$ is the poset of all segments in $\DZ$, partially ordered by
inclusion, 
then $\dgPerDer$ is an $\mathcal{I}$-filtered category and the union of the $\dgPerDer^I$.
The functor $\pro_{\mathcal{A}}^{\mathcal{B}}$ is 
a morphism of $\mathcal{I}$-filtered categories
(as defined in subsection \ref{sec:inverse-limits-categ}).

\begin{lemma}
  \label{l:I-iff-I}
  Let $\mathcal{X}$ be in $\dgPerDer(\mathcal{A})$. If $I$ is any
  segment, then
  $\mathcal{X}$ is in $\dgPerDer^I(\mathcal{A})$ if and only if
  $\pro_{\mathcal{A}}^{\mathcal{B}}(\mathcal{X})$ is in $\dgPerDer^I(\mathcal{B})$.
\end{lemma}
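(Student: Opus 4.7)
The plan is to reduce everything to the concrete description of the $t$-structure on $\dgPerDer$ via filtered representatives afforded by Theorem~\ref{t:form-filt-iso-perfect}. The $``$only if$"$ direction is of course immediate from the $t$-exactness of $\pro_{\mathcal{A}}^{\mathcal{B}}$ noted just above the lemma, but it will be just as convenient to prove both directions simultaneously.

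Concretely, I would represent $\mathcal{X}$ by an object $M\in\dgFiltDer(\mathcal{A})$, with filtration
\begin{equation*}
0=F_0(M)\subset F_1(M)\subset\dots\subset F_n(M)=M
\end{equation*}
and subquotients $F_i(M)/F_{i-1}(M)\cong\{l_i\}\hat L_{x_i}$ in $\dgMod(\mathcal{A})$. Since $M$ is homotopically projective by Theorem~\ref{t:form-filt-iso-perfect}, the assumption of the lemma setup lets us compute $\pro(\mathcal{X})$ as the underived tensor product $M\otimes_{\mathcal{A}}\mathcal{B}$, and homotopical projectivity (hence flatness) ensures that $\bigl(F_i(M)\otimes_{\mathcal{A}}\mathcal{B}\bigr)_{i}$ is still a filtration by dg submodules with subquotients $\{l_i\}\hat L_{x_i}\otimes_{\mathcal{A}}\mathcal{B}$.

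The next point is to identify these subquotients. Because $\phi^0:A^0\sira B^0$ is an isomorphism, the simple $A^0$-modules and $B^0$-modules are identified, and for each $x\in W$ we have canonically
\begin{equation*}
\hat L_x\otimes_{\mathcal{A}}\mathcal{B}=L_x\otimes_{\mathcal{A}^0}\mathcal{B}=L_x\otimes_{\mathcal{B}^0}\mathcal{B},
\end{equation*}
so that $\{l_i\}\hat L_{x_i}\otimes_{\mathcal{A}}\mathcal{B}$ is precisely the corresponding object $\{l_i\}\hat L_{x_i}$ in $\dgMod(\mathcal{B})$. Thus $\pro(M)$ lies in $\dgFiltDer(\mathcal{B})$, with a filtration using the \emph{same} sequence of shifts $l_i$ and labels $x_i$.

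To finish, I invoke the last assertion of Theorem~\ref{t:form-filt-iso-perfect}: for a filtered dg module the condition of lying in $\dgPerDer^{\leq n}$ (respectively $\dgPerDer^{\geq n}$) is equivalent to being generated in degrees $\leq n$ (respectively $\geq n$) as a graded module over the underlying algebra, and these generators occur exactly in the degrees $-l_i$ determined by the shifts appearing in the filtration. Since the shifts are the same for $M$ over $\mathcal{A}$ and for $\pro(M)$ over $\mathcal{B}$, these two generation conditions are literally the same statement. Writing $I=[a,b]$ and combining the $\leq b$ and $\geq a$ halves, we conclude $M\in\dgPerDer^I(\mathcal{A})$ iff $\pro(M)\in\dgPerDer^I(\mathcal{B})$, which proves the lemma. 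The only non-routine check in this plan is the compatibility of tensor product with the filtration and the identification of the subquotients; this is the place where both hypotheses (homotopical projectivity in $\dgFiltDer$ and the iso $\phi^0$) are essential, but each is immediate from the setup.
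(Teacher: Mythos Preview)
Your proposal is correct and follows exactly the approach of the paper, which simply cites Theorem~\ref{t:form-filt-iso-perfect} together with the isomorphism $\phi^0:A^0\sira B^0$; you have merely unpacked what that one-line citation means. One small imprecision: the filtration survives tensoring not because $M$ is homotopically projective, but because each subquotient $\{l_i\}\hat L_{x_i}=\{l_i\}e_{x_i}A$ is a projective graded $A$-module (so the filtration splits as graded $A$-modules and $\Tor_1^A$ vanishes); otherwise the argument is clean.
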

\begin{proof}
  This is a direct consequence of Theorem \ref{t:form-filt-iso-perfect} 
  and the assumption that $\phi^0:A^0 \ra B^0$ is
  an isomorphism.
\end{proof}

\begin{proposition}
  \label{p:equi-on-dgprae-I}
  Let
  $\phi:\mathcal{A} \ra \mathcal{B}$ be a morphism of dg algebras as
  above.  If $I$ is a segment and $\phi$ an isomorphism up to degree
  $|I|+2$
  (i.\,e.\ $\phi^i:A^i \ra B^i$ is an isomorphism for all $i \leq |I|+2$), then
  $\pro_\mathcal{A}^\mathcal{B}:
    \dgPerDer^I(\mathcal{A}) \ra \dgPerDer^I(\mathcal{B})$
  is an equivalence of categories.
\end{proposition}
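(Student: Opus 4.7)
The plan is to use Theorem \ref{t:form-filt-iso-perfect} to replace both $\dgPerDer(\mathcal A)$ and $\dgPerDer(\mathcal B)$ by their equivalent full subcategories $\dgFiltDer(\mathcal A)$ and $\dgFiltDer(\mathcal B)$ of filtered dg modules with $\hat L$-flags. Since any object of $\dgFiltDer(\mathcal A)$ is homotopically projective, $\pro_{\mathcal A}^{\mathcal B}$ is computed there by the ordinary tensor product; because $\phi^0:A^0\sira B^0$, this carries an $\hat L^{\mathcal A}$-flag with shifts $(l_i)$ to an $\hat L^{\mathcal B}$-flag with the same shifts. Together with Lemma~\ref{l:I-iff-I} this shows $\pro$ restricts to a functor $\dgFiltDer^I(\mathcal A)\ra\dgFiltDer^I(\mathcal B)$, and I reduce to proving this restriction is an equivalence.

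Next I will compute morphisms and $\Ext^1$'s between objects of $\dgFiltDer^I(\mathcal A)$ explicitly. Theorem~\ref{t:form-filt-iso-perfect} and the shift convention $\{l\}$ imply that for $M,N\in\dgFiltDer^I(\mathcal A)$ the shifts $(l_i),(m_j)$ in the flags all lie in a fixed interval of length $|I|$. Because the subquotients $\{l_i\}\hat L_{x_i}$ are graded free $A$-modules, the flag splits as graded $A$-modules, giving
\begin{equation*}
\cHom_{\mathcal A}^n(M,N)\;\cong\;\bigoplus_{i,j}e_{y_j}A^{\,n+m_j-l_i}e_{x_i}.
\end{equation*}
This $A$-module is concentrated in degrees between $n-|I|$ and $n+|I|$, and the differential $\delta_N\!\circ(-)\pm(-)\!\circ\delta_M$ is multiplication by elements of $A$ of degree $\le|I|+1$. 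Consequently the computation of $H^0$ uses only $A$-components in degrees $\le|I|+1$, and the computation of $H^1$ uses only $A$-components in degrees $\le|I|+2$. The entirely analogous statement holds over $\mathcal B$, and $\phi$ induces a morphism of $\complexHom$-complexes which, by the hypothesis on $\phi$, is an isomorphism in the relevant range. This yields natural isomorphisms
\begin{equation*}
\Hom_{\dgDer(\mathcal A)}(M,N)\sira\Hom_{\dgDer(\mathcal B)}(\pro M,\pro N),\qquad
\Ext^1_{\dgDer(\mathcal A)}(M,N)\sira\Ext^1_{\dgDer(\mathcal B)}(\pro M,\pro N)
\end{equation*}
for all $M,N\in\dgFiltDer^I(\mathcal A)$.

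The first isomorphism is fully faithfulness. For essential surjectivity I proceed by induction on the length of the $\hat L^{\mathcal B}$-flag of $N'\in\dgFiltDer^I(\mathcal B)$. Suppose $M_{i-1}\in\dgFiltDer^I(\mathcal A)$ with $\pro M_{i-1}\cong F_{i-1}(N')$ has been constructed. The extension $F_{i-1}(N')\hra F_i(N')\sra\{l_i\}\hat L^{\mathcal B}_{x_i}$ is classified by a class in $\Ext^1_{\mathcal B}(\{l_i\}\hat L^{\mathcal B}_{x_i},F_{i-1}(N'))$. The $\Ext^1$-isomorphism above, applied to the pair $\{l_i\}\hat L^{\mathcal A}_{x_i}$ and $M_{i-1}$ (both in $\dgFiltDer^I(\mathcal A)$ because $l_i$ lies in the same range as the earlier shifts), lifts this class to an extension $M_i$ of $\{l_i\}\hat L^{\mathcal A}_{x_i}$ by $M_{i-1}$; since $l_i\le l_{i-1}$, the object $M_i$ lies in $\dgFiltDer^I(\mathcal A)$, and exactness of $\pro$ plus naturality of $\Ext^1$ give $\pro M_i\cong F_i(N')$. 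After finitely many steps I obtain $M=M_n$ with $\pro M\cong N'$.

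The main technical obstacle will be the second paragraph: pinning down the sharp degree bound $|I|+2$ requires careful bookkeeping with the shift convention $\{l\}$ (which shifts generators into degree $-l$, so that the shifts $l_i$ in a flag for $M\in\dgPerDer^I(\mathcal A)$ lie in an interval of length $|I|$) and with how far across the filtration the differentials $\delta_M,\delta_N$ move, from which the bound $|I|+1$ for their $A$-degrees follows. Everything else is a straightforward transport along the equivalence of Theorem~\ref{t:form-filt-iso-perfect}.
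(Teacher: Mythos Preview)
Your argument is correct. The reduction to $\dgFiltDer$ via Theorem~\ref{t:form-filt-iso-perfect} and the degree bookkeeping for $\cHom^n$ are exactly what the paper does, and your bounds $|I|+1$ for $H^0$ and $|I|+2$ for $H^1$ are right (the differentials $d_M,d_N$ contribute entries of $A$-degree at most $|I|+1$, pushing a degree-$0$ morphism into degree $\le |I|+1$ and a degree-$1$ cocycle-check into degree $\le |I|+2$). Fully faithfulness is thus established by the same computation as in the paper, phrased in slightly more cohomological language.

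Where you diverge is essential surjectivity. The paper lifts the object in one stroke: given $\widetilde{\mathcal X}\in\dgFiltDer^I(\mathcal B)$ with underlying graded module $\bigoplus\{l_i\}e_{v_i}B$ and differential matrix $\tilde x$ (entries of degree $\le |I|+1$), it lifts $\tilde x$ entrywise to a matrix $x$ over $A$ and checks $x^2=0$, an identity living in degrees $\le |I|+2$. Your approach instead builds $M$ inductively along the flag, using the $\Ext^1$-isomorphism at each step to lift the extension class and taking the cone of the representing map $\{l_i\}\hat L^{\mathcal A}_{x_i}\to\{1\}M_{i-1}$. This is a legitimate alternative: since $\{l_i\}\hat L$ is homotopically projective the class is represented in $\dgMod$, the cone carries the required flag because $l_i\le l_{i-1}$, and $\pro$ preserves the extension class by naturality. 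The paper's argument is more elementary and makes transparent exactly where the bound $|I|+2$ enters (the single relation $x^2=0$); your argument is more categorical and has the small advantage of reusing the $\cHom$-analysis already done for fully faithfulness, at the cost of an induction. Either way the sharp bound $|I|+2$ comes from the same source: a degree-$1$ datum (the differential, resp.\ the $\Ext^1$-class) whose consistency condition is one degree higher.
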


\begin{proof}
  We may assume that $I=[0,b]$ for some $b \in \DN$.
  The statement of the proposition is true since
  morphisms, homotopies and differentials are defined and can be
  defined in small degrees. 
  But let us give the details.
  
  \textbf{$\pro_\mathcal{A}^\mathcal{B}$ is fully faithful:}
  Let $\mathcal{X}$, $\mathcal{Y}$ be in
  $\dgPerDer^I(\mathcal{A})$. We have to show that
  \begin{equation*}
    \pro_\mathcal{A}^\mathcal{B}: \Hom_{\dgDer(\mathcal{A})}(\mathcal{X}, \mathcal{Y})
    \ra \Hom_{\dgDer(\mathcal{B})}(\phi(\mathcal{X}), \phi(\mathcal{Y})) 
  \end{equation*}
  is an isomorphism,
  where we abbreviate $\phi(\mathcal{X})=
  \pro_\mathcal{A}^\mathcal{B}(\mathcal{X})$ and 
  $\phi(\mathcal{Y})=\pro_\mathcal{A}^\mathcal{B}(\mathcal{Y})$.
  We may assume 
  that $\mathcal{X}$ and $\mathcal{Y}$ 
  are in $\dgFiltDer$ (Theorem \ref{t:form-filt-iso-perfect}) and that
  \begin{align}
    \label{eq:form-x-y}
    X & = \{l_1\}e_{v_1}A  \oplus \{l_2\}e_{v_2}A  \oplus \dots
    \oplus \{l_s\}e_{v_s}A,\\
    Y & = \{m_1\}e_{w_1}A  \oplus \{m_2\}e_{w_2}A  \oplus \dots
    \oplus \{m_t\}e_{w_t}A 
    \notag
  \end{align}
  as graded $A$-modules, with $v_i$, $w_i \in W$, 
  $0 \leq -l_i \leq b$ and 
  $0 \leq -m_i \leq b$.
  Both $\phi(\mathcal{X})=\mathcal{X}\otimes_{\mathcal{A}}\mathcal{B}$ and $\phi(\mathcal{Y})$ are given by the
  right hand side of \eqref{eq:form-x-y}, if we replace $A$ by
  $B$ there. Since objects of $\dgFiltDer$ are homotopically
  projective (Theorem \ref{t:form-filt-iso-perfect}) it is sufficient
  to show that
  \begin{equation}
    \label{eq:pro-ff}
    \pro_\mathcal{A}^\mathcal{B}: \Hom_{\dgHot(\mathcal{A})}(\mathcal{X}, \mathcal{Y})
    \ra \Hom_{\dgHot(\mathcal{B})}(\phi(\mathcal{X}), \phi(\mathcal{Y})) 
  \end{equation}
  is an isomorphism.

  We have 
  \begin{equation*}
    \Hom_{\gMod(A)}(\{l\}e_vA, \{m\}e_wA)
    =e_wA^{m-l}e_v
  \end{equation*}
  for $v$, $w \in W$, $l$, $m \in \DZ$; here $\gMod(A)$ is the
  category of graded $A$-modules. 
  Since the differentials $d_X:X \ra \{1\}X$, $d_Y:Y \ra \{1\}Y$ are
  morphisms of graded $A$-modules (the differential of $\mathcal{A}$
  is zero), they are given by matrices $x$ and $y$ with entries in $A$.
  Similarly, morphisms $f \in \Hom_{\dgMod}(\mathcal{X},\mathcal{Y})$
  are matrices satisfying $yf=fx$, and homotopies $h:X \ra \{-1\}Y$
  are matrices.  
  Each entry of these matrices is homogeneous, more precisely, we have
  $x_{ij} \in e_{v_i}A^{l_i+1-l_j}e_{v_j}$,
  $y_{ij} \in e_{w_i}A^{m_i+1-m_j}e_{w_j}$,
  $f_{ij} \in e_{w_i}A^{m_i-l_j}e_{v_j}$, and
  $h_{ij} \in e_{w_i}A^{m_i-1-l_j}e_{v_j}$.
  The differential of $\phi(\mathcal{X})$ is given by the matrix
  $\phi(x)$.
  The functor $\pro_\mathcal{A}^\mathcal{B}:\dgMod(\mathcal{A}) \ra
  \dgMod(\mathcal{B})$ maps the matrix 
  $f=(f_{ij})$ to $\phi(f)=(\phi(f_{ij}))$,
  and similarly for homotopies. 

  Surjectivity of \eqref{eq:pro-ff}: Let $\tilde{f}$ be in
  $\Hom_{\dgMod}(\phi(\mathcal{X}), 
  \phi(\mathcal{Y}))$. 
  Since all entries of $\tilde{f}$ are of degree
  $\leq b \leq |I|+2$, there is a unique matrix $f$ such that
  $\phi(f)=\tilde{f}$ and
  $\deg(f_{ij})=\deg(\tilde{f}_{ij})$ for all $i$, $j$ (and $f_{ij}=0$ if $\tilde f_{ij}=0$).
  This $f$ defines an element of 
  $\Hom_{\dgMod}(\mathcal{X}, \mathcal{Y})$ if and only if the matrix
  equation $yf=fx$ holds. All summands in every entry of this
  equation have degree $\leq b+1 \leq |I|+2$, and $\phi$ is an isomorphism
  up to degree $|I|+2$. So it is enough to show that
  $\phi(y)\tilde{f}=\tilde{f}\phi(x)$. But this is true by assumption
  on $\tilde{f}$. 
  
  Injectivity of \eqref{eq:pro-ff}: Assume that $f$ in
  $\Hom_{\dgMod}(\mathcal{X}, \mathcal{Y})$ is 
  mapped to $\phi(f)=0$ in 
  $\Hom_{\dgHot(\mathcal{B})}(\phi(\mathcal{X}), \phi(\mathcal{Y}))$. 
  Then there is a homotopy $\tilde{h}:\phi(X)\ra\{-1\}\phi(Y)$ alias
  a matrix with entries in $B$, such that
  $\phi(f)=\tilde{h}\phi(x)+\phi(y)\tilde{h}$.
  Since all entries of $\tilde{h}$ are homogeneous of degree $\leq
  b-1\leq |I|+2$, there is a unique matrix $h$ with
  $\phi(h)=\tilde{h}$ and $\deg(h_{ij})=\deg(\tilde{h}_{ij})$.
  This matrix $h$ defines a homotopy between $f$ and $0$, because
  $\phi$ is an isomorphism up to degree $|I|+2$.

  \textbf{$\pro_\mathcal{A}^\mathcal{B}$ is dense:}
  Let $\widetilde{\mathcal{X}}$ be an object of $\dgPerDer^I(\mathcal{B})$.
  We may assume that $\widetilde{\mathcal{X}}$ is in $\dgFiltDer$ and has, as
  a graded $B$-module, the form
  \begin{equation*}
    \widetilde{X} = \{l_1\}e_{v_1} B \oplus \{l_2\}e_{v_2} B \oplus \dots
    \oplus \{l_s\}e_{v_s}B,
  \end{equation*}
  with $0 \leq -l_1 \leq -l_2 \leq \dots \leq -l_s \leq b$.
  The differential $d_{\widetilde{X}}$ is a matrix $\tilde{x}$, with
  all entries in $B$ of
  degree $\leq b+1\leq |I|+2$. Let $x$ be the unique matrix with entries
  in $A$ such that $\phi(x)=\tilde{x}$ and
  $\deg(x_{ij})=\deg(\tilde{x}_{ij})$ for all $i$, $j$.
  Define 
  \begin{equation*}
    X = \{l_1\}e_{v_1} A \oplus \{l_2\}e_{v_2} A \oplus \dots
    \oplus \{l_s\}e_{v_s}A.
  \end{equation*}
  The matrix $x$ defines a differential on $X$ if and only if $x^2=0$. 
  In this matrix equation, all summands have degree $\leq b+2 \leq |I|+2$.
  But $\phi(x^2)=\tilde{x}^2=0$ holds, and $\phi$ is an isomorphism in
  degrees $\leq |I|+2$. Hence $(X,x)$ is in $\dgFiltDer$ and the $\mathcal{A}$-module we are
  searching for.
\end{proof}

\subsubsection{General Case}
\label{sec:general-case}

Let 
$\mathcal{A}_0 \xla{\phi_0} \mathcal{A}_1 \la \dots
  \la \mathcal{A}_n \xla{\phi_n} \mathcal{A}_{n+1} \la \dots$
be a sequence of dg
algebras and dga-morphisms. Assume that 
\begin{enumerate}[label={(S\arabic*)}]
\item 
  \label{enum:pgdz}
  Each $\mathcal{A}_n=(A_n=\bigoplus_{i \geq 0} A_n^i, d=0)$ is a positively
  graded dg algebra with differential zero. 
\item 
  \label{enum:pdivr}
  $A^0_0=\prod_{x \in W} e_xA^0_0$ is a finite product of division rings.
\item 
  \label{enum:higher-iso}
  There is an increasing sequence $0 \leq r_0 \leq r_1 \leq \dots$ of
  non-negative integers $(r_n)$ with $r_n \ra \infty$ for $n \ra \infty$, such
  that each $\phi_{n}$ is an isomorphism up to degree $r_n$.
  (In particular, all $\phi_{n}^0:A_{n+1}^0 \ra A_n^0$ are isomorphisms.)
\end{enumerate}

The morphisms $\phi_{n}$ induce extension of scalars functors
$\phi_{n}^*:=\pro_{\mathcal{A}_{n+1}}^{\mathcal{A}_n}$ and
we obtain a sequence
$((\dgPerDer(\mathcal{A}_n)), ({\phi_{n}^*}))$
of categories or even of $\mathcal{I}$-filtered categories, where
$\mathcal{I}$ is the poset of segments in $\DZ$. 

\begin{proposition}
  \label{p:limit-dgprae-triang}
  Under the above assumptions, $\invlim \dgPerDer(\mathcal{A}_n)$ has a
  natural structure of triangulated category with the following
  class of distinguished triangles:  
  A triangle 
  $\Sigma$ is distinguished if and only if all 
  $\pr_i(\Sigma)$ are distinguished ($i \in \DN$). 
\end{proposition}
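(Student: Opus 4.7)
The plan is to apply Proposition~\ref{p:limit-triangulated} directly to the sequence $((\dgPerDer(\mathcal{A}_n)), (\phi_n^*))$, which is a sequence of triangulated categories and triangulated functors by the remarks preceding Lemma~\ref{l:I-iff-I}. We take $\mathcal{I}$ to be the poset of segments in $\DZ$ ordered by inclusion, and filter each $\dgPerDer(\mathcal{A}_n)$ by the strict full subcategories $\dgPerDer^I(\mathcal{A}_n)$. Lemma~\ref{l:I-iff-I} applied to each $\phi_n$ shows that $\phi_n^*$ is an $\mathcal{I}$-filtered morphism (it sends $\dgPerDer^I(\mathcal{A}_{n+1})$ into $\dgPerDer^I(\mathcal{A}_n)$), so $((\dgPerDer(\mathcal{A}_n)), (\phi_n^*))$ is indeed a sequence of $\mathcal{I}$-filtered triangulated categories. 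Moreover, each $\dgPerDer^I(\mathcal{A}_n) = \dgPerDer^{\geq a}\cap \dgPerDer^{\leq b}$ is closed under extensions in $\dgPerDer(\mathcal{A}_n)$, because both halves of a t-structure are closed under extensions.

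Next I verify condition~\ref{enum:becomes-equiv}. Given a segment $I$, assumption~\ref{enum:higher-iso} provides $N_I \in \DN$ such that $r_n \geq |I|+2$ for all $n \geq N_I$; that is, each $\phi_n$ with $n\geq N_I$ is an isomorphism up to degree $|I|+2$. Now Proposition~\ref{p:equi-on-dgprae-I} (whose hypotheses on $\mathcal{A}_n$ and $\mathcal{A}_{n+1}$ are guaranteed by \ref{enum:pgdz}, \ref{enum:pdivr} together with the isomorphism $\phi_n^0$ from \ref{enum:higher-iso}) yields that $\phi_n^*\colon \dgPerDer^I(\mathcal{A}_{n+1}) \ra \dgPerDer^I(\mathcal{A}_n)$ is an equivalence for $n \geq N_I$. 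This is exactly~\ref{enum:becomes-equiv}.

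For condition~\ref{enum:is-union}, let $((M_n), (\psi_n))$ be an object of $\invlim \dgPerDer(\mathcal{A}_n)$. The object $M_0 \in \dgPerDer(\mathcal{A}_0)$ lies in $\dgPerDer^I(\mathcal{A}_0)$ for some segment $I$, by the boundedness part of Theorem~\ref{t:t-structure-auf-perf}. Using the isomorphisms $\psi_n\colon \phi_n^*(M_{n+1}) \sira M_n$ together with the ``if and only if'' in Lemma~\ref{l:I-iff-I}, an induction on $n$ shows that $M_n \in \dgPerDer^I(\mathcal{A}_n)$ for every $n$. Hence $((M_n),(\psi_n))$ belongs to $\invlim \dgPerDer^I(\mathcal{A}_n)$, proving~\ref{enum:is-union}.

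With all hypotheses of Proposition~\ref{p:limit-triangulated} (or rather its obvious $\mathcal{I}$-filtered version mentioned in Remark~\ref{rem:generalize}) verified, we obtain a unique triangulated structure on $\invlim \dgPerDer(\mathcal{A}_n)$ such that each projection $\pr_i$ is triangulated, and the class of distinguished triangles is exactly the one described in the statement. Nothing in the argument is really an obstacle once one matches the abstract machinery with the concrete situation; the only subtlety worth flagging is that one must use the ``if and only if'' direction of Lemma~\ref{l:I-iff-I} to propagate the bound $I$ along the whole tower, which is what allows condition~\ref{enum:is-union} to be verified at all.
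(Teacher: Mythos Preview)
Your proof is correct and follows essentially the same approach as the paper: apply Proposition~\ref{p:limit-triangulated} with the $\mathcal{I}$-filtration by segments, verify~\ref{enum:becomes-equiv} via~\ref{enum:higher-iso} and Proposition~\ref{p:equi-on-dgprae-I}, and verify~\ref{enum:is-union} via Lemma~\ref{l:I-iff-I}. Your write-up is simply more explicit about the details (in particular the induction propagating the bound $I$ along the tower), which the paper leaves to the reader.
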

\begin{proof}
  We want to deduce this from Proposition \ref{p:limit-triangulated}.
  It follows from \ref{enum:higher-iso} and
  Proposition \ref{p:equi-on-dgprae-I} that our sequence
  $((\dgPerDer(\mathcal{A}_n)), ({\phi_{n}^*}))$
  satisfies condition \ref{enum:becomes-equiv}.
  Condition \ref{enum:is-union} is fulfilled by Lemma \ref{l:I-iff-I}.
  It is clear that each $\dgPerDer^I$ is closed under extensions in
  $\dgPerDer$.
\end{proof}

Let  
$\mathcal{A}_\infty$
be a dg algebra with dga-morphism $\nu_n:\mathcal{A}_\infty \ra
\mathcal{A}_n$ ($n \in \DN$) such that
$\nu_n = \phi_n \comp \nu_{n+1}$ for all $n \in \DN$.
Assume that 
  there is an increasing sequence $0 \leq s_0 \leq s_1 \leq \dots$ of
  non-negative integers $(s_n)$ with $s_n \ra \infty$ for $n \ra \infty$, such
  that each $\nu_{n}$ is an isomorphism up to degree $s_n$.
Equivalently, we could say that $\mathcal{A}_\infty$ is the inverse
limit of our sequence 
$(\mathcal{A}_n)_{n \in \DN}$
of dg algebras, i.\,e.\
$\mathcal{A}_\infty = \invlim \mathcal{A}_n$.

\begin{proposition}
  \label{p:limit-commutes}
  Under the above assumptions, the obvious functor
  $\dgPerDer(\mathcal{A}_\infty) \ra
    \invlim \dgPerDer(\mathcal{A}_n)$
  is a triangulated equivalence.
\end{proposition}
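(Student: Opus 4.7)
The plan is to reduce the claim to Proposition~\ref{p:equi-on-dgprae-I} by working with the $\mathcal{I}$-filtration by the subcategories $\dgPerDer^I$, where $\mathcal{I}$ is the poset of segments in~$\DZ$. First I would observe that the obvious functor $F\colon \dgPerDer(\mathcal{A}_\infty) \ra \invlim \dgPerDer(\mathcal{A}_n)$ sending $M$ to the coherent sequence $(\nu_n^*(M))_{n \in \DN}$ is well-defined and triangulated, since each extension of scalars functor $\nu_n^*$ is triangulated and $\phi_n^* \comp \nu_{n+1}^* \naturaliso \nu_n^*$ canonically. By the same argument as in Lemma~\ref{l:I-iff-I} (using $\nu_n^0$ an isomorphism), $F$ restricts for every segment $I$ to a functor $F^I\colon \dgPerDer^I(\mathcal{A}_\infty) \ra \invlim \dgPerDer^I(\mathcal{A}_n)$, and on both sides the union over all~$I$ is the whole category.

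Next I would show that $F^I$ is an equivalence for each fixed segment~$I$. By assumption $s_n \ra \infty$, so we may choose $N=N_I \in \DN$ with $s_N \geq |I|+2$; for all $n \geq N$ we then also have $r_n \geq |I|+2$. Proposition~\ref{p:equi-on-dgprae-I} applied to $\nu_n\colon \mathcal{A}_\infty \ra \mathcal{A}_n$ and to each $\phi_n\colon \mathcal{A}_{n+1} \ra \mathcal{A}_n$ with $n \geq N$ gives that the functors
\begin{equation*}
  \nu_n^*\colon \dgPerDer^I(\mathcal{A}_\infty) \sira \dgPerDer^I(\mathcal{A}_n),
  \quad
  \phi_n^*\colon \dgPerDer^I(\mathcal{A}_{n+1}) \sira \dgPerDer^I(\mathcal{A}_n)
\end{equation*}
are equivalences. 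In particular the sequence $((\dgPerDer^I(\mathcal{A}_n)), (\phi_n^*))$ stabilizes beyond~$N$, and Lemma~\ref{l:limit-equiv} says that $\pr_N\colon \invlim \dgPerDer^I(\mathcal{A}_n) \sira \dgPerDer^I(\mathcal{A}_N)$ is an equivalence. The composition $\pr_N \comp F^I$ is (canonically isomorphic to) $\nu_N^*$, which is an equivalence, so $F^I$ is an equivalence as well.

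Combining these pieces, $F$ is fully faithful and essentially surjective: any two objects of $\dgPerDer(\mathcal{A}_\infty)$ lie in some common $\dgPerDer^I(\mathcal{A}_\infty)$, and any object of the inverse limit lies in some $\invlim \dgPerDer^I(\mathcal{A}_n)$; on these filtered pieces $F^I$ is already an equivalence. Finally, to see that $F$ is an equivalence of triangulated categories, I would invoke the description of distinguished triangles from Proposition~\ref{p:limit-dgprae-triang}: a triangle on the right is distinguished iff all its projections $\pr_i$ are distinguished. Since a triangle on the left is distinguished iff it is so in $\dgPerDer(\mathcal{A}_\infty)$, and its image under $F$ is distinguished iff each $\nu_n^*$ of it is (which holds automatically because $\nu_n^*$ is triangulated), an equivalence of the underlying additive categories together with compatibility with shifts and triangles upgrades to a triangulated equivalence.

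I expect the only genuinely delicate point to be the verification that $F^I$ lands in $\invlim \dgPerDer^I(\mathcal{A}_n)$ and is essentially surjective onto it, i.e.\ the filtered version of Lemma~\ref{l:I-iff-I} for $\nu_n$; once this is secured, Proposition~\ref{p:equi-on-dgprae-I} does all the real work, and the triangulated upgrade is formal from Proposition~\ref{p:limit-dgprae-triang}.
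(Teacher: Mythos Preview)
Your proof is correct and follows essentially the same approach as the paper: filter by the segments $I$, use Proposition~\ref{p:equi-on-dgprae-I} to get equivalences on the filtered pieces for large $n$, then pass to the inverse limit via Lemma~\ref{l:limit-equiv}. The paper packages this by observing that $\dgPerDer(\mathcal{A}_\infty)$ is the inverse limit of the constant sequence $((\dgPerDer(\mathcal{A}_\infty)),(\id))$ and invoking Proposition~\ref{p:limit-filtfun} (and Remark~\ref{rem:generalize}), whose proof is precisely the argument you wrote out by hand. One minor point: your claim that $s_N \geq |I|+2$ forces $r_n \geq |I|+2$ for $n \geq N$ should be justified by the factorization $\nu_n = \phi_n \comp \nu_{n+1}$, which shows $\phi_n$ is an isomorphism up to degree $\min(s_n,s_{n+1})=s_n$; alternatively just choose $N$ large enough for both sequences at once.
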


\begin{proof}
 This is a consequence of 
 Proposition \ref{p:limit-filtfun}
 and Remark
 \ref{rem:generalize} since
 $\dgPerDer(\mathcal{A}_\infty)$ is equivalent to the inverse limit of 
 the constant sequence $((\dgPerDer(\mathcal{A}_\infty)),(\id))$.
\end{proof}

\section{Formality of Equivariant Derived Categories}
\label{cha:form-equiv-deriv}

\subsection{Equivariant Derived Categories of Topological Spaces}
\label{sec:equiv-deriv-categ}

We introduce the equivariant derived category, following \cite{BL}.

If $Y$ is a topological space, we denote by $\Sh(Y)$ the category of
sheaves of real vector spaces on $Y$ and by $\dc^+(Y)$ and $\dcb(Y)$
its bounded below and bounded derived category.

Let $f: X \ra Y$ be a continuous map
and $n \in \DNinfty$. Given a  base change $\tilde Y
\ra Y$ we denote the induced map $X \times_{Y} \tilde{Y} \ra
\tilde{Y}$ by $\tilde f$. We say that $f$ is \define{\acyclic{n}} if
for any base change $\tilde Y \ra Y$ and any sheaf $B \in \Sh(\tilde
Y)$ the truncated adjunction morphism
${B} \ra \tau_{\leq n}\tilde{f}_*\tilde{f}^* {B}$ is an isomorphism.
Here $\tau_{\leq n}$ is the truncation functor for $n \in \DN$ and $\tau_{\leq \infty}= \id$.
The composition of \acyclic{n} maps is \acyclic{n}.
A topological space $X$ is called \define{\acyclic{n}} if the constant map
$X \ra \point$ is \acyclic{n}.

Let $G$ be a topological group. A $G$-\define{space} is a topological
space $X$ with a continuous $G$-action $G\times X \ra X$. A $G$-\define{map}
of $G$-spaces is a continuous $G$-equivariant map.
A $G$-space $X$ is \define{free} if it has a covering
by open $G$-stable subspaces $G$-isomorphic to $G$-spaces
of the form 
$G \times Y$ (for a suitable topological space $Y$) with 
$G$-action $g.(h,y)=(gh,y)$. 

A \define{resolution} of a $G$-space $X$ is a $G$-map from a free
$G$-space to $X$. Morphisms of resolutions are $G$-maps over $X$.
Let $p: P \ra X$ be a resolution of $X$ and
$q: P \ra G\bl P$ the
quotient map. The category $\dbc_G(X, P)$ is defined as follows:
\begin{itemize}
\item Objects $M$ are triples $(M_X, \ol M, \mu)$
  where $M_X \in \dcb(X)$, $\ol M$ in $\dcb(G\bl P)$ and
  $\mu: p^*(M_X) \sira q^*(\ol M)$
  is an isomorphism in $\dcb(P)$.
\item Morphisms $\alpha:M \ra N$ (where $M=(M_X, \ol M, \mu)$ and
  $N=(N_X, \ol N, \nu)$) are pairs $(\alpha_X, \ol \alpha)$ where 
  $\alpha:M_X \ra N_X$ and $\ol \alpha: \ol M \ra \ol N$ are morphisms
  in $\dcb(X)$ and $\dcb(G\bl P)$ respectively such that
  $\nu \comp p^*(\alpha_X) = q^*(\ol \alpha) \comp \mu$.
\end{itemize}
We have two obvious functors: The forgetful functor
$\Forget: \dcb_G(X, P) \ra \dcb(X)$, $M \mapsto M_X$, and the functor
$\gamma: \dcb_G(X, P) \ra \dcb(G \bl P)$, $M \mapsto \ol{M}$.
If $I$ is a segment in $\DZ$, we let $\dc^I_G(X, P)$ be the
full subcategory of $\dcb_G(X,P)$ consisting of objects $M$ 
with $\Forget(M)$ in $\dc^I(X)$.
If $p$ is surjective, this is equivalent to the condition $\gamma(M) \in
\dc^I(G\bl P)$.

A resolution $p:P \ra X$ is \define{\acyclic{n}} if the continuous map
$p$ is \acyclic{n}. Note that any \acyclic{n} map is surjective.

\begin{proposition}
  \label{p:nustarI}
  Let $\nu: P \ra R$ be a morphism of \acyclic{n} resolutions
  $p:P \ra X$ and $r:R \ra X$, where $n \in \DNinfty$. If $I$ is a
  segment with $n > |I|$, the 
  obvious functor $\nu^*: \dcb_G(X, R) \ra \dcb_G(X, P)$
  restricts to an equivalence
  $\nu^*: \dc^I_G(X, R) \ra \dc^I_G(X, P)$.
\end{proposition}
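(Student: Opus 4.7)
The plan is standard for resolution-independence results: identify the categories $\dc^I_G(X, R)$ and $\dc^I_G(X, P)$ with full subcategories of $\dc^I(G\bl R)$ and $\dc^I(G\bl P)$, respectively, and then exhibit a quasi-inverse to the restriction of $\ol\nu^*$ using smooth base change along the Cartesian square induced by $\nu$ and the quotients. Write $I = [a, b]$ so that $|I| = b - a < n$.

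The first ingredient is the following technical lemma: for an $n$-acyclic continuous map $f: Y \ra Z$ with $|I| < n$, the pullback $f^*: \dc^I(Z) \ra \dc^I(Y)$ is fully faithful. The proof uses the spectral sequence $E_2^{p,q} = R^p f_* f^* H^q(B) \Rightarrow H^{p+q}(Rf_* f^* B)$: $n$-acyclicity forces $R^p f_* f^* F = 0$ for $1 \leq p \leq n$ and any sheaf $F$, so for $B \in \dc^{[a,b]}(Z)$ the cone of the adjunction unit $B \ra Rf_* f^* B$ lies in $\dc^{\geq a+n+1}(Z)$, which is right-orthogonal to $\dc^{[a,b]}(Z)$. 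Applied to $p$ and $r$, this gives full faithfulness of $p^*$ and $r^*$ on $\dc^I$.

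It then follows that the forgetful functor $\gamma: (M_X, \ol M, \mu) \mapsto \ol M$ defines a fully faithful embedding $\dc^I_G(X, P) \hookrightarrow \dc^I(G\bl P)$ whose essential image is the full subcategory of $\ol M$ with $q_P^* \ol M \cong p^* M_X$ for some $M_X \in \dc^I(X)$, and analogously for $R$. Indeed, any $g: \ol M \ra \ol M'$ in $\dc^I(G\bl P)$ determines, via full faithfulness of $p^*$, a unique compatible $f: M_X \ra M_X'$. Under these identifications $\nu^*$ becomes the restriction of $\ol\nu^*$.

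The square with corners $P$, $R$, $G\bl P$, $G\bl R$ is Cartesian (because $P$, $R$ are free $G$-spaces and $\nu$ is $G$-equivariant), and $q_P, q_R$ are locally trivial principal $G$-bundles, so smooth base change yields $q_R^* \comp R\ol\nu_* \cong R\nu_* \comp q_P^*$. The candidate quasi-inverse of the restricted $\ol\nu^*$ is $\ol M \mapsto \ol N := \tau_{\leq b}(R\ol\nu_* \ol M)$; verifying $q_R^* \ol N \cong r^* M_X$ and $\ol\nu^* \ol N \cong \ol M$ reduces via base change to showing that the adjunction unit $r^* M_X \ra R\nu_* \nu^* r^* M_X$ on $R$ becomes an isomorphism after $\tau_{\leq b}$. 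The main obstacle is this step, since $\nu$ itself need not be $n$-acyclic: one combines the factorization $Rp_* = Rr_* \comp R\nu_*$ with the $n$-acyclicities of both $p$ and $r$ to force $Rr_*$ of the cone to vanish through degree $a+n$, and then concludes vanishing of the cone itself in the required range by reducing locally via a trivialization of the principal $G$-bundle $q_P$, where the Cartesian square trivializes and $\nu$ is identified with a base change of $p$, for which $n$-acyclicity applies directly.
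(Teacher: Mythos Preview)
Your reduction of $\dc^I_G(X, P)$ to a full subcategory of $\dc^I(G\bl P)$ via $n$-acyclicity of $p$ is sound and matches \cite[2.3.2]{BL}. The trouble is the candidate quasi-inverse $\ol M \mapsto \tau_{\leq b} R\ol\nu_* \ol M$: it fails already in the simplest case. Take $G$ trivial, $X = \point$, let $P$ and $R$ be any two $n$-acyclic spaces with $R$ not a point, and let $\nu: P \ra R$ be a constant map to some $r_0 \in R$. This is a legitimate morphism of resolutions. For $\ol M = \ul\DR_P$ (coming from $M_X = \DR$) your construction gives $\tau_{\leq 0} R\nu_* \ul\DR_P$, a skyscraper at $r_0$, not the constant sheaf $\ul\DR_R$. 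Correspondingly the claim that the unit $r^* M_X \ra R\nu_* \nu^* (r^* M_X)$ becomes an isomorphism after $\tau_{\leq b}$ is false here. Your final sentence, asserting that locally ``$\nu$ is identified with a base change of $p$'', is unjustified and does not hold in this example: there is no Cartesian square with $\nu$ and $p$ as parallel sides, and $\nu$ need not be $n$-acyclic even locally.

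The paper bypasses the issue entirely by introducing the fiber product $S = P \times_X R$. Its projections $\pi_P: S \ra P$ and $\pi_R: S \ra R$ \emph{are} base changes of $r$ and $p$ respectively, hence genuinely $n$-acyclic, so \cite[2.2.2]{BL} gives equivalences $\pi_P^*: \dc^I_G(X, P) \sira \dc^I_G(X, S)$ and $\pi_R^*: \dc^I_G(X, R) \sira \dc^I_G(X, S)$. One then factors $\nu = \pi_R \comp (\id_P, \nu)$ through the graph; since $(\id_P, \nu)$ is a section of $\pi_P$, the functor $(\id_P, \nu)^*$ is inverse to the equivalence $\pi_P^*$, and $\nu^* = (\id_P, \nu)^* \comp \pi_R^*$ is a composition of equivalences. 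The idea you were missing is exactly this passage through the fiber product, which replaces the uncontrolled map $\nu$ by maps that are honestly $n$-acyclic.
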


\begin{proof}
  Let $S= P \times_X R$ be the fiber product of $P$ and $R$ over $X$
  with projections $\pi_P:S \ra P$ and $\pi_R:S \ra R$.
  Then $\pi_P^*: \dc^I_G(X,P) \ra \dc^I_G(X,S)$
  and $\pi_R^*: \dc^I_G(X,R) \ra \dc^I_G(X,S)$
  are equivalences of categories by \cite[2.2.2]{BL} (but with $n>|I|$).
  Let $(\id_P, \nu):P \ra S=P\times_X R$ be the unique morphism of
  resolutions with $\pi_P \comp (\id_P, \nu) = \id_P$ and $\pi_R \comp
  (\id_P, \nu) = \nu$. Then $(\id_P, \nu)^*$
  is inverse to $\pi_P^*$ and $\nu^*=(\id_P, \nu)^* \comp
  \pi_R^*$ is an equivalence on $\dc^I_G$.
\end{proof}

If $P\ra X$ and $R\ra X$ are \acyclic{\infty} resolutions 
there is an
equivalence of $\dcb_G(X,P)$ and $\dcb_G(X, R)$ that is defined up to a
canonical natural isomorphism.
The $G$-equivariant derived
category $\dcb_G(X)$ of $X$ is defined to be $\dcb_G(X,P)$, for $p:P\ra X$ an
\acyclic{\infty} resolution (\cite[2.7.2]{BL}). It is a triangulated
category (cf.\ \cite[2.5.2]{BL}).

We give a description of the equivariant derived
category as an inverse limit.
Let 
$  P_0 
\ra
  \dots
  \ra
  P_n \xra{f_n}
  P_{n+1} \ra
  \dots$
be a sequence of morphisms of resolutions $p_n:P_n \ra X$. It gives
rise to a sequence of categories and functors 
$((\dcb_G(X, P_n)), (f_n^*))$. 
We consider the inverse limit $\invlim \dcb_G(X, P_n)$ 
of these categories as defined
in subsection \ref{sec:inverse-limits-categ}. It is an additive
category and has an obvious shift functor.
We denote by $\gamma_i$ the composition
\begin{equation*}
  \invlim \dcb_G(X, P_n) \xra{\pr_i} \dcb_G(X, P_i) \xra{\gamma} \dcb(G\bl P_i).
\end{equation*}

\begin{proposition}
  \label{p:equider-als-limit}
  Keep the above assumptions and assume that $p_n:P_n \ra X$ is
  \acyclic{n}, for each $n \in \DN$. Then $\invlim \dcb_G(X, P_n)$
  carries a natural structure of 
  triangulated category
  with the following
  class of distinguished triangles:  
  A triangle 
  $\Sigma$ is distinguished if and only if all 
  $\gamma_i(\Sigma)$ are distinguished ($i \in \DN$). 
  Moreover, the categories 
  $\dcb_G(X)$ and $\invlim \dcb_G(X, P_n)$ are equivalent as
  triangulated categories.
\end{proposition}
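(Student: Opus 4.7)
The plan is to apply the machinery of Subsection \ref{sec:inverse-limits-categ} with $\mathcal{I}$ the poset of segments in $\DZ$, ordered by inclusion, filtering each $\dcb_G(X, P_n)$ by the strictly full subcategories $\dc^I_G(X, P_n)$. Since the forgetful functor $\Forget$ commutes with pullback along morphisms of resolutions, the transition functors $f_n^\ast$ preserve this filtration, so $((\dcb_G(X, P_n)), (f_n^\ast))$ becomes a sequence of $\mathcal{I}$-filtered triangulated categories with $\mathcal{I}$-filtered triangulated transition functors.

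First I would produce the triangulated structure on $\invlim \dcb_G(X, P_n)$ and the stated description of its distinguished triangles by invoking Proposition \ref{p:limit-triangulated}. Condition \ref{enum:becomes-equiv} follows from Proposition \ref{p:nustarI}: as soon as $n > |I|$, both $p_n$ and $p_{n+1}$ are $n$-acyclic (since $p_{n+1}$ is even $(n{+}1)$-acyclic), so $f_n^\ast : \dc^I_G(X, P_{n+1}) \to \dc^I_G(X, P_n)$ is an equivalence. Condition \ref{enum:is-union} holds because for any object $((M_n), (\phi_n)) \in \invlim \dcb_G(X, P_n)$, the images under $\Forget$ of the isomorphisms $\phi_n$ form a compatible system of isomorphisms $\Forget(M_{n+1}) \sira \Forget(M_n)$ in $\dcb(X)$, so all underlying sheaves lie in a single $\dc^I(X)$. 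Closure of $\dc^I_G(X, P_n)$ under extensions in $\dcb_G(X, P_n)$ is immediate from the long exact cohomology sequence, since $I$ is a segment.

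To identify this inverse limit with $\dcb_G(X)$, I would fix an $\infty$-acyclic resolution $q : Q \to X$ (so $\dcb_G(X) = \dcb_G(X, Q)$) and form the fiber products $R_n := Q \times_X P_n$ with diagonal $G$-action. These are free $G$-spaces (freeness of either factor survives the fiber product), and the structure map $r_n : R_n \to X$ factors as $R_n \to P_n \to X$, where $R_n \to P_n$ is the pullback of $q$ and hence $\infty$-acyclic, so $r_n$ is $n$-acyclic. The maps $\tilde f_n := \id_Q \times_X f_n : R_n \to R_{n+1}$ assemble $(R_n)$ into a sequence of resolutions, and the projections $R_n \to P_n$ and $R_n \to Q$ commute strictly with $f_n$, $\tilde f_n$ and with $\id_Q$, $\tilde f_n$ respectively. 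Pullback along them therefore yields two morphisms of $\mathcal{I}$-filtered sequences of triangulated categories:
\begin{equation*}
 (\dcb_G(X, P_n)) \lra (\dcb_G(X, R_n)) \quad\text{and}\quad (\dcb_G(X, Q))_{\mathrm{const}} \lra (\dcb_G(X, R_n)).
\end{equation*}

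Finally I would apply the triangulated form of Proposition \ref{p:limit-filtfun} (Remark \ref{rem:generalize}) to both morphisms. A further use of Proposition \ref{p:nustarI}, applied to $R_n \to P_n$ and to $R_n \to Q$ (in each case both spaces are $n$-acyclic resolutions of $X$), shows that once $n > |I|$ the component functors are equivalences on $\dc^I$; condition \ref{enum:is-union} for the target sequence $(\dcb_G(X, R_n))$ follows by the same forgetful-functor argument as above. Combining the resulting triangulated equivalences
\begin{equation*}
 \dcb_G(X) = \dcb_G(X, Q) \sira \invlim \dcb_G(X, R_n) \xsila{} \invlim \dcb_G(X, P_n)
\end{equation*}
gives the required equivalence, which by construction sends distinguished triangles to distinguished triangles. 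The main technical obstacle is bookkeeping: verifying that the $R_n$ really are resolutions, that the projections truly assemble into morphisms of sequences in the sense of Subsection \ref{sec:inverse-limits-categ}, and that the acyclicity estimates line up with the segment $I$; once these points are settled, the theorem follows formally from the results of Section \ref{sec:inverse-limits}.
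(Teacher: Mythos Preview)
Your approach is correct and amounts to an explicit reconstruction of what the paper relegates to \cite[Ch.~2]{BL} and \cite[5.2]{OSdiss}; indeed Proposition~\ref{p:limit-triangulated} itself is modeled on \cite[2.5.2]{BL}, so you are invoking the same machinery.  Two points deserve a word, however.

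First, Proposition~\ref{p:limit-triangulated} is stated for a sequence of \emph{triangulated} categories, but the paper (following \cite{BL}) only asserts that $\dcb_G(X,P)$ is triangulated when $P$ is $\infty$-acyclic.  For an arbitrary resolution one must still declare a triangle distinguished when both $\Forget$ and $\gamma$ send it to a distinguished triangle, and then check the axioms by completing cones separately in $\dcb(X)$ and $\dcb(G\backslash P_n)$ and gluing in $\dcb(P_n)$ via TR3.  This is routine, but it is not among the ``bookkeeping'' items you list, and without it Proposition~\ref{p:limit-triangulated} does not literally apply.  (Alternatively you can reorder: first establish the equivalence $\dcb_G(X,Q)\sira\invlim\dcb_G(X,P_n)$ as additive categories with shift via your $R_n$, transport the triangulated structure from $\dcb_G(X,Q)$, and only then verify the description of distinguished triangles.)

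Second, Proposition~\ref{p:limit-triangulated} characterizes distinguished triangles by the condition that every $\pr_i(\Sigma)$ be distinguished in $\dcb_G(X,P_i)$, whereas the statement to be proved uses $\gamma_i$.  These agree: if every $\gamma_i(\Sigma)$ is distinguished then $q_i^*\gamma_i(\Sigma)\cong p_i^*\Forget(\pr_i(\Sigma))$ is distinguished in $\dcb(P_i)$, and since each $p_i$ is surjective (being $0$-acyclic), $p_i^*$ is conservative and hence reflects distinguished triangles.  You should say this explicitly.
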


\begin{proof}
  This follows from \cite[2]{BL} (more details in \cite[5.2]{OSdiss}).
\end{proof}

\subsection{Equivariant Derived Categories of Varieties}
\label{sec:equiv-deriv-categ-1}

Let $G$ be an affine algebraic group (defined over $\DC$, as all
varieties in the following). The definitions of a $G$-variety and of a
$G$-morphism between $G$-varieties are the obvious generalizations
from the topological category. 

A \define{Zariski principal fiber bundle (Zpfb) with structure group $G$}
(or \define{$G$-Zpfb}) is a surjective $G$-morphism $q:E \ra B$ between
$G$-varieties with the following property:
For every point in $B$, 
there is a Zariski open neighborhood $U$ in $B$ and a $G$-isomorphism
$\tau: G \times U \sira q\inv(U)$ 
(here 
the $G$-action on $G \times U$ is given by $g.(h,u)= (gh, u)$)
such that 
$q \comp \tau$ is equal to the 
projection $\pr_U: G \times U \ra U$.
A map $\tau$ as above 
is called a 
\define{local trivialization} over $U$.
By abuse of notation we often say that $E$ is a Zpfb.
A morphism $f:E\ra E'$ of $G$-Zpfbs is a $G$-morphism $f:E\ra E'$.
It induces a morphism $f: B \ra B'$ on quotient spaces.

Let $X$ be a $G$-variety. A 
\define{Zariski resolution} 
of $X$ is a datum $(B \xla{q} E \xra{p} X)$ consisting of a Zpfb $q:E
\ra B$ together with a $G$-morphism $p:E \ra X$. 
We often omit $q:E \ra B$ from the notation and say that $p:E \ra X$
is a Zariski resolution or even that $E$ is a Zariski resolution of
$X$. 
Morphisms $E \ra E'$ of Zariski resolutions of $X$ are $G$-morphisms over $X$.

Let $n \in \DNinfty$. A variety (morphism of varieties) is called
\define{\acyclic{n}},  
if it is \acyclic{n} with respect to the classical topology.
A Zariski resolution $(B \xla{q} E \xra{p} X)$ is \define{\acyclic{n}} if $p$
is \acyclic{n}.

Let $E_0 \xra{f_0} E_1 \ra \dots \ra E_n \xra{f_n} E_{n+1} \ra
\dots$ be a sequence of Zariski resolutions of a $G$-variety
$X$. If $p_n:E_n \ra X$ is \acyclic{n},
Proposition
\ref{p:equider-als-limit}
provides a
triangulated equivalence
\begin{equation}
  \label{eq:edc-resoinvlim}
  \dcb_G(X) \cong \invlim \dcb_G(X, E_n).
\end{equation}

Let $(X, \mathcal{S})$ be a
stratified $G$-variety (we do not assume that the strata are
$G$-stable).
We denote the full subcategory of $\dcb_G(X)$ consisting of objects
$M$ with $\Forget(M) \in \dcb(X, \mathcal{S})$ by $\dcb_G(X,
\mathcal{S})$.
Similarly, we define $\dcb_G(X,E, \mathcal{S}) \subset \dcb_G(X, E)$,
if $E$ is a Zariski resolution of $X$. 
If $I$ is a segment, we use the obvious definitions for
$\dc^I_G(X,\mathcal{S})$ etc..
Equivalence \eqref{eq:edc-resoinvlim} restricts to a triangulated
equivalence 
\begin{equation}
  \label{eq:edc-resoinvlim-s}
  \dcb_G(X, \mathcal{S}) \cong \invlim \dcb_G(X, E_n, \mathcal{S}).
\end{equation}

If $X$ is a $G$-variety, we denote by $\dcb_{G,c}(X)$ the full
subcategory of $\dcb_G(X)$ consisting of objects $F$ such that
$\Forget(F)$ is constructible for some stratification of $X$.

\begin{proposition}
  \label{p:orbit-constructible}
  Let $(X, \mathcal{S})$ be a stratified $G$-variety. If each stratum
  is a $G$-orbit, then $\dcb_G(X, \mathcal{S})= \dcb_{G,c}(X)$.
\end{proposition}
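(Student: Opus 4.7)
The inclusion $\dcb_G(X,\mathcal{S}) \subseteq \dcb_{G,c}(X)$ is immediate from the definitions, so the task is to prove the reverse inclusion. Given $F \in \dcb_{G,c}(X)$, we must show $\Forget(F)$ is $\mathcal{S}$-constructible. The plan is to argue cohomology sheaf by cohomology sheaf: since a bounded complex is $\mathcal{S}$-constructible iff each $\mathcal{H}^i$ is an $\mathcal{S}$-constructible sheaf, it suffices to prove that every $G$-equivariant constructible sheaf $\mathcal{F}$ on $X$ is $\mathcal{S}$-constructible.

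First I would extract the $G$-equivariance of $\mathcal{H}^i(\Forget(F))$ from the triple $(F_X,\ol F,\mu)$ presenting $F$ over some \acyclic{\infty} resolution $p\colon E \to X$, $q\colon E \to B$. Indeed, $\mu$ induces isomorphisms $p^*\mathcal{H}^i(F_X) \cong q^*\mathcal{H}^i(\ol F)$, and since the $G$-orbits of $E$ are the fibers of $q$, this isomorphism encodes exactly the $G$-equivariance datum on the sheaf $\mathcal{H}^i(F_X)$ on $X$. Also $\mathcal{H}^i(F_X)$ is constructible by assumption.

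Second I would show: if $\mathcal{F}$ is a constructible $G$-equivariant sheaf on $X$, then $l_S^*\mathcal{F}$ is a local system on each orbit $S \in \mathcal{S}$. Fix a base point $s_0 \in S$ with stabilizer $G_{s_0}$, so that $S = G/G_{s_0}$. The orbit map $\pi\colon G \to S$, $g \mapsto g\cdot s_0$, is a principal $G_{s_0}$-bundle in the classical topology, hence a surjective submersion admitting local sections. The equivariance isomorphism $a^*\mathcal{F} \sira \pr_2^*\mathcal{F}$ on $G \times S$ restricted along $G \cong G \times \{s_0\} \hookrightarrow G \times S$ identifies $\pi^*\mathcal{F}$ with the constant sheaf $\underline{\mathcal{F}_{s_0}}$ on $G$. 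Choosing a local section $\sigma\colon U \to G$ of $\pi$ over an open $U \subset S$, we obtain $\mathcal{F}|_U = \sigma^*\pi^*\mathcal{F} = \sigma^*\underline{\mathcal{F}_{s_0}} = \underline{\mathcal{F}_{s_0}}|_U$, so $\mathcal{F}$ is locally constant on $S$. The stalk $\mathcal{F}_{s_0}$ is finite dimensional because $\mathcal{F}$ is constructible (for some stratification of $X$), so $l_S^*\mathcal{F}$ is a genuine local system.

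The main (and essentially only) conceptual point is the second step, which rests on the fact that algebraic orbit maps are locally trivial in the classical topology; the rest is essentially bookkeeping. Combining the two steps yields that each $\mathcal{H}^i(\Forget(F))$ belongs to $\Sh(X,\mathcal{S})$, hence $\Forget(F) \in \dcb(X,\mathcal{S})$, which is the required inclusion.
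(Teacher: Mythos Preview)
Your proof is correct and follows essentially the same route as the paper: reduce to showing that the cohomology sheaves restrict to local systems on each orbit, and use the $G$-equivariance of these cohomology sheaves. The paper compresses this into two lines by working in $\dcb_{G,c}(S)$ and citing \cite[2.5.3]{BL} for the identification of the heart with $G$-equivariant constructible sheaves (leaving the final step---that such a sheaf on a single orbit is a local system---implicit), whereas you unpack that step explicitly via the orbit map and local sections.
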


\begin{proof}
  Let $M$ be in $\dcb_{G,c}(X)$ and $l:S \ra X$ the inclusion of an
  orbit. We have to prove that
  $\Ho^i(l^*(\Forget(M)))=\Forget(\Ho^i(l^*(M)))$ 
  is a local system on $S$.   
  But $\Ho^i(l^*(M))$ is in the heart of the standard t-structure on $\dcb_{G,c}(S)$
  and hence a $G$-equivariant constructible sheaf on the orbit $S$ (\cite[2.5.3]{BL}).
\end{proof}

A $G$-\define{stratification} of a $G$-variety is a stratification by $G$-stable
strata. A $G$-\define{stratified} variety is a $G$-variety $X$ with a
$G$-stratification.

Let $(X, \mathcal{S})$ be a $G$-stratified variety.
An \define{\asystem{}} $(E, f)$ of $(X, \mathcal{S})$ is a sequence
$E_0 
\ra
\dots \ra E_n \xra{f_n} E_{n+1} \ra \dots$
of Zariski resolutions $(B_n \xla{q_n} E_n \xra{p_n} X)$ of $X$. 
An \asystem{} $(E, f)$ is called an \define{A-\asystem} if the following
conditions hold.
\begin{enumerate}[label={(A\arabic*)}, ref={A\arabic*}]
\item 
  \label{enum:acyclic}
  Each $(B_n \xla{q_n} E_n \xra{p_n} X)$ is \acyclic{n}.
\item 
  \label{enum:Sn-strati}
  $\mathcal{S}_n := \{q_n(p_n\inv(S)) \mid S \in
  \mathcal{S}\}$
  is a stratification of $B_n$,
  for all $n \in \DN$. (In particular, each stratum of $\mathcal{S}_n$
  is irreducible.)
\item 
  \label{enum:simply-conn}
  Every stratum in $\mathcal{S}_n$ is simply connected.
\end{enumerate}

Any A-\asystem{} $E_0 \xra{f_0} E_1 \xra{f_1} \dots$ yields
a sequence $B_0 \xra{f_0} B_1 \xra{f_1} \dots$ of varieties, and
a sequence $((\dcb(B_n, \mathcal{S}_n)), (f_n^*))$ of triangulated
categories. 

\begin{proposition}
  \label{p:go-to-limit}
  Let $(E,f)=(\dots \ra E_n \xra{f_n} E_{n+1} \ra \dots)$ be an A-\asystem{} of a $G$-stratified 
  variety $(X, \mathcal{S})$.
  Then there is a canonical triangulated equivalence
  \begin{equation}
    \label{eq:limes}
    \invlim \dcb_G(X, E_n, \mathcal{S}) 
    \sira \invlim \dcb(B_n, \mathcal{S}_n) 
  \end{equation}
\end{proposition}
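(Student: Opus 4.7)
The plan is to apply Proposition~\ref{p:limit-filtfun} (in its triangulated formulation, cf.\ Remark~\ref{rem:generalize}) to the morphism of sequences
\begin{equation*}
(\gamma_n):((\dcb_G(X,E_n,\mathcal{S})),(f_n^*))\ra ((\dcb(B_n,\mathcal{S}_n)),(\ol{f}_n^*)),
\end{equation*}
where both sequences are $\mathcal{I}$-filtered by the subcategories $\dc^I$, for $\mathcal{I}$ the poset of segments of $\DZ$ ordered by inclusion. The functor $\gamma_n:(M_X,\ol M,\mu)\mapsto \ol M$ is well defined onto the $\mathcal{S}_n$-constructible subcategory by~\ref{enum:Sn-strati}, and, since $q_n^*$ is t-exact and conservative (as $q_n$ is a Zariski principal bundle), the equivalence $M\in \dc^I_G \Leftrightarrow \ol M\in\dc^I$ holds.

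I would first verify conditions~\ref{enum:becomes-equiv} and~\ref{enum:is-union} for the source. For~\ref{enum:becomes-equiv}: by~\ref{enum:acyclic} both $p_n$ and $p_{n+1}$ are \acyclic{n}, so Proposition~\ref{p:nustarI} yields that $f_n^*:\dc^I_G(X,E_{n+1},\mathcal{S})\sira \dc^I_G(X,E_n,\mathcal{S})$ is an equivalence for $n>|I|$. For~\ref{enum:is-union}: the functor $\Forget$ commutes with the transitions $f_n^*$, so any object $(M_n,\psi_n)$ of the source inverse limit has all $\Forget(M_n)$ canonically isomorphic to a fixed $M_X\in\dcb(X,\mathcal{S})$, whose cohomology lies in some fixed segment.

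The main step is to prove that for every segment $I$ and every $n>|I|$, the functor $\gamma_n^I:\dc^I_G(X,E_n,\mathcal{S})\ra \dc^I(B_n,\mathcal{S}_n)$ is an equivalence. For full faithfulness: a morphism $\ol\alpha:\ol M\ra \ol N$ pulls back to $q_n^*\ol\alpha$, which via $\mu,\nu$ corresponds to a morphism $p_n^*M_X\ra p_n^*N_X$; since $p_n$ is \acyclic{n} with $n>|I|$, iterated use of the defining truncated adjunction makes $p_n^*:\dc^I(X)\ra \dc^I(E_n)$ fully faithful and yields a unique $\alpha_X:M_X\ra N_X$ producing the lift $(\alpha_X,\ol\alpha)$. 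For essential surjectivity: given $\ol M\in\dc^I(B_n,\mathcal{S}_n)$ with $I=[a,b]$, set $F:=q_n^*\ol M$ and define $M_X:=\tau_{\leq b}\tau_{\geq a}(p_n)_*F$; \acyclic{n}-ity of $p_n$ together with the adjunction yields a canonical isomorphism $p_n^*M_X\sira F$, while $\mathcal{S}$-constructibility of $M_X$ follows from~\ref{enum:Sn-strati} combined with~\ref{enum:simply-conn}, which ensures that the $\mathcal{S}_n$-constructible structure on $F$ descends via the $G$-equivariant map $p_n$ to an $\mathcal{S}$-constructible sheaf complex on $X$ without monodromy obstructions.

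Condition~\ref{enum:is-union} for the target is then verified via the equivalences $\gamma_n^{I_n}$: any object $((\ol M_n),(\phi_n))\in\invlim\dcb(B_n,\mathcal{S}_n)$ admits, for $n$ sufficiently large with respect to the cohomology segment $I_n$ of $\ol M_n$, unique lifts $M_n\in\dc^{I_n}_G(X,E_n,\mathcal{S})$ with $\gamma_n(M_n)\cong\ol M_n$; the $\phi_n$'s lift to transitions $\psi_n$ (extending to small $n$ by iterated pullback), producing an object $(M_n,\psi_n)\in\invlim\dcb_G(X,E_n,\mathcal{S})$, whose~\ref{enum:is-union}-bound $I$ from the source uniformly bounds all $I_n$. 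Proposition~\ref{p:limit-filtfun} now gives that $\invlim\gamma_n$ is an equivalence, and Proposition~\ref{p:limit-triangulated} endows both inverse limits with compatible triangulated structures. The main obstacle is the essential surjectivity of $\gamma_n^I$: the sheaf-theoretic descent of $q_n^*\ol M$ along the \acyclic{n} map $p_n$ to an $\mathcal{S}$-constructible object of $\dcb(X,\mathcal{S})$ requires the combined use of~\ref{enum:acyclic},~\ref{enum:Sn-strati} and~\ref{enum:simply-conn} to control both the cohomological descent and the monodromy of local systems on the strata.
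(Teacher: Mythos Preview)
Your overall plan coincides with the paper's: set up the $\mathcal{I}$-filtration by segments, verify \ref{enum:becomes-equiv} for the source via Proposition~\ref{p:nustarI}, show each $\gamma_n^I$ is an equivalence for $n>|I|$, and invoke Propositions~\ref{p:limit-filtfun} and~\ref{p:limit-triangulated}. Your full-faithfulness argument for $\gamma_n^I$ is essentially the content of \cite[2.3.2]{BL}, which the paper cites directly.

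The gap is in your essential surjectivity. You set $M_X:=\tau_{\leq b}\tau_{\geq a}(p_n)_*F$ and assert that $n$-acyclicity of $p_n$ gives $p_n^*M_X\sira F$. But $n$-acyclicity controls the \emph{unit} $B\ra\tau_{\leq n}(p_n)_*p_n^*B$ for objects $B$ on $X$; it says nothing about the \emph{counit} $p_n^*(p_n)_*F\ra F$ for an arbitrary $F$ on $E_n$. Equivalently, $n$-acyclicity makes $p_n^*$ fully faithful on $\dc^I$ but does not describe its essential image. Your appeal to \ref{enum:simply-conn} for ``monodromy obstructions'' does not address this: simple connectedness of $S_n$ constrains local systems on $B_n$, not the descent along $p_n$. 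The paper avoids this difficulty by a generator argument: \cite[2.3.2]{BL} also shows that the essential image of $\gamma_n^I$ is closed under extensions; one then constructs by hand an equivariant lift of each $l_{S_n!}\ul{S_n}$ (via proper base change from $l_{S!}\ul{S}$), and Lemma~\ref{p:erzeuger}---which is precisely where \ref{enum:simply-conn} enters---shows that every object of $\dc^I(B_n,\mathcal{S}_n)$ is an iterated extension of shifts of these.

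A minor point: your verification of \ref{enum:is-union} for the target is roundabout and risks circularity (it presupposes the levelwise lifts whose existence depends on the very surjectivity in question, with the bound ``$n$ large relative to $I_n$'' when $I_n$ is not yet known to be constant). A direct argument is simpler: ordinary sheaf pullback $\ol f_n^*$ is t-exact for the standard t-structure, so all $\ol M_n$ in a given object of $\invlim\dcb(B_n,\mathcal{S}_n)$ share the same cohomological amplitude.
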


\begin{proof}
  The functor $\dcb_G(X, E_n) \ra
  \dcb(B_n)$, $(M_X, \ol M, \mu) \mapsto \ol M$, restricts to a functor
  $\nu_n:\dcb_G(X, E_n, \mathcal{S}) \ra \dcb(B_n, \mathcal{S}_n)$,
  since $q_n$ is locally trivial. The inverse limit $\invlim
  \nu_n$ is the functor in \eqref{eq:limes}.

  All categories $\dcb_G(X, E_n, \mathcal{S})$ are $\mathcal{I}$-filtered by the
  $\dc^I_G(X, E_n, \mathcal{S})$ 
  (where $\mathcal{I}$ is the poset of segments in $\DZ$), 
  and are the union of these
  subcategories. Similarly for $\dcb(B_n, \mathcal{S}_n)$.
  For $n > |I|$, restriction 
  $f_n^{*}: \dc^I_G(X, E_{n+1}, \mathcal{S})
    \ra \dc^I_G(X, E_n, \mathcal{S})$
  is an equivalence (Proposition
  \ref{p:nustarI}).
  
  We claim that 
  $\nu_n^I:\dc^I_G(X, E_n, \mathcal{S}) \ra \dc^I(B_n, \mathcal{S}_n)$
  is an equivalence for $n > |I|$.
  By \cite[Lemma 2.3.2]{BL} (but with $n>|I|$),
  $\nu_n^I$ is fully faithful and its essential 
  image is closed under extensions in $\dcb(B_n, \mathcal{S}_n)$. 
  Let $S \in \mathcal{S}$ and $S_n := q_n(p_n\inv(S))$. 
  The inclusions $l_S$ and $l_{S_n}$ and proper base change give rise
  to an object ``extension by zero of the constant sheaf on $S$'' in 
  $\dc^{[0,0]}_G(X, E_n)$ that is mapped to $l_{S_n!}\ul{S_n}$ under
  $\nu_n: \dcb_G(X, E_n, \mathcal{S}) \ra \dcb(B_n, \mathcal{S}_n)$.
  It follows from Lemma \ref{p:erzeuger} below that $\nu_n^I$ is
  dense.

  Proposition \ref{p:limit-filtfun} shows that \eqref{eq:limes} is
  an equivalence and that the $\mathcal{I}$-filtered category $\dcb(B_n,
  \mathcal{S}_n)$ satisfies condition \ref{enum:becomes-equiv}. 
  Proposition \ref{p:limit-triangulated} equips 
  $\invlim \dcb(B_n, \mathcal{S}_n)$ with the structure of a
  triangulated category, and it is obvious that \eqref{eq:limes} is triangulated.
\end{proof}

\begin{corollary}
  \label{c:go-to-limit}
  Let $(E, f)$ be an A-\asystem{}, $I$ a segment and $N \in \DN$.
  If $N > |I|$, the obvious functor 
  $\invlim \dc^I(B_n, \mathcal{S}_n) \ra \dc^I(B_N)$
  is fully faithful.
\end{corollary}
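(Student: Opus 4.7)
The plan is to reduce the statement to an equivalence via Lemma \ref{l:limit-equiv} together with a tautological full-subcategory inclusion, using the key input already established in (the proof of) Proposition \ref{p:go-to-limit}.

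First I would identify the obvious functor: it is the composition of the projection $\pr_N \colon \invlim \dc^I(B_n, \mathcal{S}_n) \ra \dc^I(B_N, \mathcal{S}_N)$ with the inclusion $\dc^I(B_N, \mathcal{S}_N) \hookrightarrow \dc^I(B_N)$. Since $\dc^I(B_N, \mathcal{S}_N)$ is by definition a full subcategory of $\dc^I(B_N)$, the second arrow is automatically fully faithful. Hence it suffices to show that $\pr_N$ is an equivalence under the hypothesis $N > |I|$.

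The proof of Proposition \ref{p:go-to-limit} established that the sequence of $\mathcal{I}$-filtered triangulated categories $((\dcb(B_n, \mathcal{S}_n)), (f_n^*))$ satisfies condition \ref{enum:becomes-equiv}: more precisely, for every segment $I$ and every $n > |I|$, the transition functor $f_n^* \colon \dc^I(B_{n+1}, \mathcal{S}_{n+1}) \ra \dc^I(B_n, \mathcal{S}_n)$ is an equivalence. (This was obtained by transferring Proposition \ref{p:nustarI} from the equivariant side to the quotient side via the equivalence $\nu_n$.) Therefore, if $N > |I|$, then $f_n^*$ is an equivalence for all $n \geq N$.

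Finally I would invoke Lemma \ref{l:limit-equiv} applied to the sequence $((\dc^I(B_n, \mathcal{S}_n)), (f_n^*))$: once the transition functors are equivalences from index $N$ onwards, the projection
\[
\pr_N \colon \invlim \dc^I(B_n, \mathcal{S}_n) \sira \dc^I(B_N, \mathcal{S}_N)
\]
is an equivalence. Composing with the fully faithful inclusion $\dc^I(B_N, \mathcal{S}_N) \hookrightarrow \dc^I(B_N)$ yields the desired fully faithful functor. There is no real obstacle here; the entire content has been packaged into Proposition \ref{p:go-to-limit} and its proof, so this corollary is essentially a bookkeeping consequence.
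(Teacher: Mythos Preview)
Your proof is correct and follows essentially the same approach as the paper: both invoke the fact (established in the proof of Proposition \ref{p:go-to-limit}) that $f_n^* \colon \dc^I(B_{n+1}, \mathcal{S}_{n+1}) \ra \dc^I(B_n, \mathcal{S}_n)$ is an equivalence for $n > |I|$, and then apply Lemma \ref{l:limit-equiv} to conclude that $\pr_N$ is an equivalence. You make explicit the final step of composing with the full inclusion $\dc^I(B_N, \mathcal{S}_N) \hookrightarrow \dc^I(B_N)$, which the paper leaves implicit.
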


\begin{proof}
  The proof of Proposition \ref{p:go-to-limit}
  shows that
  $f_n^{*}: \dc^I(B_{n+1}, \mathcal{S}_{n+1}) \ra \dc^I(B_n, \mathcal{S}_n)$
  is an equivalence for $n>|I|$. Hence Lemma \ref{l:limit-equiv} shows that
  $\pr_N:\invlim \dc^I(B_n, \mathcal{S}_n) \ra \dc^I(B_N, \mathcal{S}_N)$
  is an equivalence for $N > |I|$.  
\end{proof}

\begin{lemma}
  \label{p:erzeuger}
  Let $(X, \mathcal{S})$ be a stratified variety with simply connected
  strata and $I$ a segment in $\DZ$. Then every $A \in \dc^I(X,
  \mathcal{S})$ is an iterated extension of objects $l_{S!} \ul{S}[-i]$, for $S
  \in \mathcal{S}$ and $i \in I$.
\end{lemma}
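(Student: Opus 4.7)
The plan is to prove the lemma by a double dévissage: induction on the length $|I|$ of the segment, with the base case $|I|=0$ handled by induction on the number of strata of $\mathcal{S}$.

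For the reduction on $|I|$, suppose $I = [a, b]$ with $a < b$ and $A \in \dc^I(X, \mathcal{S})$. Truncation gives a distinguished triangle
\begin{equation*}
  \tau_{\leq b-1}A \ra A \ra \tau_{\geq b}A \ra [1]\tau_{\leq b-1}A,
\end{equation*}
in which $\tau_{\leq b-1}A$ lies in $\dc^{[a,b-1]}(X, \mathcal{S})$ and $\tau_{\geq b}A = [-b]\Ho^b(A)$ is a shift of an $\mathcal{S}$-constructible sheaf. By induction on $|I|$ both outer terms are iterated extensions of objects $l_{S!}\ul{S}[-i]$ with $i\in I$, hence so is $A$. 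This reduces everything to the base case, namely that a single $\mathcal{S}$-constructible sheaf $F$ (put in degree $i$) is an iterated extension of the sheaves $l_{S!}\ul{S}$.

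For the base case, induct on $|\mathcal{S}|$. Choose a stratum $S \in \mathcal{S}$ that is open in $X$ (such a stratum exists: any stratum minimal for the closure order $S\leq T \Leftrightarrow S\subset \ol T$ is open since no other stratum contains it in its closure). Let $j=l_S:S\hra X$ be the open inclusion and $i:Z\hra X$ the closed embedding of the complement $Z=X\setminus S$, stratified by $\mathcal{S}_Z := \mathcal{S}\setminus \{S\}$. The adjunction sequence is a short exact sequence of sheaves
\begin{equation*}
  0 \ra j_!j^*F \ra F \ra i_*i^*F \ra 0,
\end{equation*}
so it suffices to show both flanking terms are iterated extensions of objects of the required form. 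For the left term, $j^*F$ is a local system on the simply connected stratum $S$ and hence isomorphic to $\ul{S}^n$ for some $n\geq 0$; thus $j_!j^*F \cong (l_{S!}\ul{S})^n$, which is an iterated self-extension of $l_{S!}\ul{S}$. For the right term, $i^*F$ is $\mathcal{S}_Z$-constructible on $Z$, and the stratification $\mathcal{S}_Z$ has fewer strata, all still simply connected; the inductive hypothesis yields an iterated extension of objects $l_T^Z{}_! \ul{T}$ with $T\in \mathcal{S}_Z$, and applying the exact functor $i_*=i_!$ preserves the iterated extension while transforming $i_*l_T^Z{}_!\ul{T} = (i \comp l_T^Z)_!\ul{T} = l_{T!}\ul{T}$.

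The only real delicacy is the minor bookkeeping that $i_*$ preserves iterated extensions (which is clear since $i_*=i_!$ is exact on sheaves) and that the identification $i_*l_T^Z{}_!=l_{T!}$ holds; both steps are standard. No obstacle is anticipated beyond this routine chase.
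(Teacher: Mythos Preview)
Your proof is correct and follows essentially the same approach as the paper: the paper also uses truncation to reduce to the case of a single $\mathcal{S}$-constructible sheaf and then does induction on the number of strata via the open--closed triangle $(j_!j^*A, A, i_*i^*A)$ for an open stratum, noting that $j^*A$ is a finite direct sum of constant sheaves. Your write-up is simply a more explicit version of the same argument.
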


\begin{proof}
  The shift $[1]$ and the truncation functors for the standard t-structure
  allow us to assume that $A \in \Sh(X, \mathcal{S})$.
  If $j$ is the inclusion of an open stratum $U$ and $i$ the inclusion of
  its closed complement, we get a distinguished triangle $(j_!j^*(A),
  A, i_*i^*(A))$. Since $j^*(A)$ is a finite direct sum of constant
  sheaves $\ul U$, an induction on the number of strata finishes the
  proof.
\end{proof}

\subsection{Equivariant Intersection Cohomology Complexes}
\label{sec:equiv-inters-cohom}

Let $G$ be an affine algebraic group of complex dimension $d_G$ and
$(X, \mathcal{S})$ a $G$-stratified variety.
On $\dcb_G(X,
\mathcal{S})$, there is the perverse t-structure whose heart is the category
$\Perv_G(X, \mathcal{S})$ of equivariant perverse sheaves (smooth along
$\mathcal{S}$) (see \cite[5]{BL}).
If $\mathcal{L}$ is a $G$-equivariant local system on $S$, we have the
equivariant intersection cohomology complex $\IC_G(\ol S,
\mathcal{L})=l_{S!*}([d_S]\mathcal{L})$ in $\Perv_G(X, \mathcal{S})$. 
We are mainly interested in the case of the constant $G$-equivariant
local system $\ul S_G$ on $S$ and define 
$\IC_G(S):= \IC_G(\ol S, \ul S_G)$.
We will describe this object precisely using the following type of \asystem{}.

An \define{AB-\asystem} is an A-\asystem{} $(E, f)$ of $(X,
\mathcal{S})$ such that the following conditions are satisfied. 
\begin{enumerate}[label={(B\arabic*)}, ref={B\arabic*}]
\item 
  \label{enum:p-smooth}
  Each morphism $p_n:E_n \ra X$ is smooth of relative complex dimension $d_{p_n}$.
\item 
  \label{enum:closed-embedding}
  Each $f_n:B_n \ra B_{n+1}$ is a
  closed embedding of varieties.
\item
  \label{enum:stratified-nn} 
  For all $n \in \DN$ and $S \in \mathcal{S}$, 
  $f_n: \ol S_n \ra \ol S_{n+1}$ is a
  normally nonsingular inclusion of codimension $c_n$, where
  $S_n:=q_n(p_n\inv(S))$; here $c_n$ only depends on $n$. 
\end{enumerate}

Let $(E, f)$ be an AB-\asystem{} of 
$(X, \mathcal{S})$. 
If we consider $\IC_G(S)$ as an object of $\invlim \dcb(B_n, \mathcal{S}_n)$, 
using equivalences \eqref{eq:edc-resoinvlim-s} and
\eqref{eq:limes}, we have
(cf.\ \cite[5.1]{BL}, using \ref{enum:p-smooth})
\begin{equation*}
  \pr_n(\IC_G(S)) = [d_G-d_{p_n}]\IC(S_n) \in \dcb(B_n, \mathcal{S}_n).
\end{equation*}
In order to avoid to many shifts, we replace $\invlim \dcb(B_n,
\mathcal{S}_n)$ by an equivalent category as follows.
Consider the following morphism of sequences of triangulated categories
\begin{equation}
  \label{eq:shift-sequence}
  \xymatrix{
    {\dcb(B_0, \mathcal{S}_0)}                 \ar[d]^{[d_{p_0}-d_G]} &
    {\dcb(B_1, \mathcal{S}_1)} \ar[l]_-{f_0^*} \ar[d]^{[d_{p_1}-d_G]} &
    {\dcb(B_2, \mathcal{S}_2)} \ar[l]_-{f_1^*} \ar[d]^{[d_{p_2}-d_G]} &
    {\dots}  \ar[l]_-{f_2^*}\\
    {\dcb(B_0, \mathcal{S}_0)} &
    {\dcb(B_1, \mathcal{S}_1)} \ar[l]_-{[-c_0]f_0^*} &
    {\dcb(B_2, \mathcal{S}_2)} \ar[l]_-{[-c_1]f_1^*} &
    {\dots.} \ar[l]_-{[-c_2]f_2^*}
  }
\end{equation}
If we denote the inverse limit of the second row by $\invlim \dcb[B_n,
\mathcal{S}_n]$, this morphism induces a triangulated equivalence
\begin{equation}
  \label{eq:rund-eckig}
  \invlim \dcb(B_n, \mathcal{S}_n) \sira
  \invlim \dcb[B_n, \mathcal{S}_n].
\end{equation}

Conditions 
\ref{enum:Sn-strati},
\ref{enum:closed-embedding}
and
\ref{enum:stratified-nn} show 
that each $f_n:B_n \ra B_{n+1}$ meets the assumptions 
made before 
\eqref{eq:nnpervstrat} and \eqref{eq:nnmhmstrat} in subsection
\ref{sec:form-norm-nons}.
So we obtain
isomorphisms
\begin{align}
  \label{eq:nnICperv}
  \iota_{S,n}:[-c_n]f_n^*(\IC(S_{n+1})) & \sira \IC(S_n) && \text{in
    $\Perv(B_n, \mathcal{S}_n)$ and}\\
  \notag
  \tilde{\iota}_{S,n}:[-c_n]f_n^*(\tilde{\IC}(S_{n+1})) & \sira \tilde{\IC}(S_n)
  && \text{in $\MHM(B_n, \mathcal{S}_n)$.}
\end{align}
As an object of $\invlim \dcb[B_n, \mathcal{S}_n]$,
the equivariant intersection cohomology complex $\IC_G(S)$ is 
$((\IC(S_n)), (\iota_{S,n}))$.
Note that 
$\tilde{\IC}_G(S) = ((\tilde{\IC}(S_n)), (\tilde{\iota}_{S,n}))$
is a natural ``Hodge lift'' of $\IC_G(S)$.
The same argument as in the proof of Theorem 
\ref{t:formality-ic-closed-embedding} 
shows that all functors $[-c_n]f_n^*$ in
\eqref{eq:shift-sequence}
are t-exact with respect to the perverse t-structures.

\subsection{Better Approximations and Formality}
\label{sec:better-approximations-formality}

Let $(E, f)$ be an \asystem{} of a $G$-stratified variety $(X,
\mathcal{S})$ and assume that we are given
stratifications $\mathcal{T}_n$ of $B_n$, for each $n \in \DN$. 
The triple $(E, f, \mathcal{T}=(\mathcal{T}_n))$ is called an
\define{ABC-\asystem} if $(E,f)$ is an AB-\asystem{} and the following
conditions hold.
\begin{enumerate}[label={(C\arabic*)}, ref={C\arabic*}]
\item
  \label{enum:finer-strat}
  Each $\mathcal{T}_n$ is a cell-stratification of $B_n$ that is finer
  than the stratification $\mathcal{S}_n$.
\item 
  \label{enum:closed-strat-embedding}
  Each $f_n:(B_n, \mathcal{T}_n) \ra (B_{n+1}, \mathcal{T}_{n+1})$ is a
  closed embedding of (cell-)stratified varieties.
\item 
  \label{enum:T-pure}
  The Hodge sheaf $\tilde{\IC}(S_n)$ is
  $\mathcal{T}_n$-pure of weight $d_{S_n}$,
  for all $n \in \DN$ and $S \in \mathcal{S}$.
  (By Remark \ref{rem:T-pure} we can equivalently require 
  $\mathcal{T}_n$-$*$-purity of weight $d_{S_n}$.)
\end{enumerate}
In subsection 
\ref{sec:exist-exampl-abc}
we show how to construct ABC-\asystem{}s.

For $M$, $N$ in $\dcb_G(X)$, define
$\Ext^n(M,N) :=\Hom_{\dcb_G(X)}(M,[n]N)$ and 
\begin{equation*}
  \Ext(M, N) := \bigoplus_{n \in \DZ}\Ext^n(M, N).  
\end{equation*}
The (equivariant) extension algebra of $M$ is 
$\Ext(M) := \Ext(M,M)$.

\begin{theorem}
  \label{t:das-ziel}
 Let $G$ be an affine algebraic group and $(X, \mathcal{S})$ a
  $G$-stratified variety. If $(X, \mathcal{S})$ has an ABC-\asystem,
  there is a triangulated equivalence
  \begin{equation}
    \label{eq:das-ziel}
    \dcb_G(X, \mathcal{S}) \cong 
    \dgPerDer(\Ext(\IC_G(\mathcal{S}))),
  \end{equation}
  where $\IC_G(\mathcal{S})$ is the direct sum of the 
  $(\IC_G(S))_{S \in \mathcal{S}}$.
  This equivalence is t-exact with respect to the perverse t-structure
  on $\dcb_G(X, \mathcal{S})$ and the t-structure from 
  Theorem~\ref{t:t-structure-auf-perf} on
  $\dgPerDer(\Ext(\IC_G(\mathcal{S})))$. 
  By restriction to the heart, it induces an equivalence of abelian categories
  \begin{equation}
    \label{eq:das-ziel-herz}
    \Perv_G(X, \mathcal{S}) \cong \dgFilMod(\Ext(\IC_G(\mathcal{S}))).
  \end{equation}
\end{theorem}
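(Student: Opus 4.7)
The plan is to combine the three levels of formality already established (non-equivariant formality on each quotient $B_n$, compatibility with closed embeddings, and passage to the inverse limit for dg algebras) and apply them to the sequence of quotients arising from an ABC-approximation. Concretely, I would proceed in three stages. First, by Proposition~\ref{p:equider-als-limit} and the discussion around \eqref{eq:edc-resoinvlim-s} together with Proposition~\ref{p:go-to-limit}, the ABC-approximation produces a canonical triangulated equivalence
\begin{equation*}
  \dcb_G(X,\mathcal{S}) \cong \invlim \dcb_G(X,E_n,\mathcal{S}) \cong \invlim \dcb(B_n,\mathcal{S}_n),
\end{equation*}
and after the renormalization \eqref{eq:shift-sequence}-\eqref{eq:rund-eckig} we may take the transition functors to be the shifted pullbacks $[-c_n]f_n^*$, under which the equivariant intersection cohomology complexes are represented by the compatible families $((\IC(S_n)),(\iota_{S,n}))$.

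Second, for each fixed $n$, conditions~\ref{enum:simply-conn}, \ref{enum:finer-strat} and~\ref{enum:T-pure} put us exactly in the setting of Theorem~\ref{t:formality-ic}, giving a t-exact triangulated equivalence $\dcb(B_n,\mathcal{S}_n)\sira\dgPerDer(\Ext(\IC(\mathcal{S}_n)))$. Conditions~\ref{enum:closed-strat-embedding} and~\ref{enum:stratified-nn} ensure that the closed embedding $f_n\colon(B_n,\mathcal{T}_n)\to(B_{n+1},\mathcal{T}_{n+1})$ satisfies the hypotheses of Theorem~\ref{t:formality-ic-closed-embedding}, so each transition square commutes up to natural isomorphism with the extension-of-scalars functor along the dga-morphism $\Ext(\IC(\mathcal{S}_{n+1}))\to\Ext(\IC(\mathcal{S}_n))$ induced by the isomorphisms $\iota_{S,n}$ of \eqref{eq:nnICperv}. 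Passing to inverse limits yields a triangulated equivalence
\begin{equation*}
  \invlim \dcb(B_n,\mathcal{S}_n) \;\sira\; \invlim \dgPerDer(\Ext(\IC(\mathcal{S}_n))).
\end{equation*}

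Third, I would identify the right-hand side with $\dgPerDer(\Ext(\IC_G(\mathcal{S})))$ via Proposition~\ref{p:limit-commutes} applied to the dga-morphisms $\nu_n\colon\Ext(\IC_G(\mathcal{S}))\to\Ext(\IC(\mathcal{S}_n))$ induced by $\pr_n$. Conditions \ref{enum:pgdz} and \ref{enum:pdivr} hold because the relevant extension algebras are positively graded with zero differential and $\End(\IC_G(S))=\DR$ for each $G$-orbit. The main obstacle is the verification of condition~\ref{enum:higher-iso}: one must show that both the transition maps $\Ext(\IC(\mathcal{S}_{n+1}))\to\Ext(\IC(\mathcal{S}_n))$ and the comparison maps $\nu_n$ are isomorphisms in a range of degrees that tends to infinity with $n$. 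For the transition maps this should follow from the fact that $[-c_n]f_n^*$ is an equivalence on $\dc^I(B_{n+1},\mathcal{S}_{n+1})\to\dc^I(B_n,\mathcal{S}_n)$ for $n$ larger than $|I|$ (as in the proof of Proposition~\ref{p:go-to-limit} and Corollary~\ref{c:go-to-limit}), applied to $\Hom(\IC(\mathcal{S}_{n+1}),[i]\IC(\mathcal{S}_{n+1}))$; for the maps $\nu_n$ it follows from the $n$-acyclicity of $p_n$ in condition~\ref{enum:acyclic}, which forces $\Ext^i_G$ and $\Ext^i$ on $B_n$ to agree for $i\ll n$.

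Finally, the t-exactness statement and the equivalence of hearts \eqref{eq:das-ziel-herz} follow by assembling the t-exactness assertions already available at each finite level: Theorem~\ref{t:formality-ic} provides t-exactness for each $n$, Theorem~\ref{t:formality-ic-closed-embedding} ensures the transition functors on both sides are t-exact, and the inverse-limit t-structure is inherited componentwise; the statement about simple objects in $\dgFilMod$ corresponding to $\IC_G(S)$ then comes from Theorem~\ref{t:t-structure-auf-perf}(4). The chief technical work, to repeat, is the degree-wise isomorphism estimate needed to invoke Proposition~\ref{p:limit-commutes}; everything else is an assembly of previously proved results.
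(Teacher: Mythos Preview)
Your proposal is correct and follows essentially the same route as the paper: reduce to the inverse limit of the $\dcb(B_n,\mathcal{S}_n)$ via \eqref{eq:edc-resoinvlim-s}, \eqref{eq:limes}, \eqref{eq:rund-eckig}, apply Theorem~\ref{t:formality-ic} levelwise and Theorem~\ref{t:formality-ic-closed-embedding} for the transition squares, then invoke Proposition~\ref{p:limit-commutes}. The paper handles the degree estimate for condition~\ref{enum:higher-iso} exactly as you anticipate, by fixing a segment $J$ with $\IC_G(\mathcal{S})\in\dc^J_G(X)$ and using Corollary~\ref{c:go-to-limit} to conclude that $\nu_n$ is an isomorphism up to degree $n-|J|-1$ (after which the estimate for the $\phi_n$ follows formally from $\nu_n=\phi_n\circ\nu_{n+1}$).
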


\begin{proof}
  Let $(E, f, \mathcal{T})$ be an ABC-\asystem{} of $(X, \mathcal{S})$. 
  By \eqref{eq:edc-resoinvlim-s}, 
  \eqref{eq:limes}
  and
  \eqref{eq:rund-eckig}
  we have equivalences
  \begin{equation}
    \label{eq:limes-combined}
    \dcb_G(X, \mathcal{S}) \cong 
    \invlim \dcb_G(X, E_n, \mathcal{S}) 
    \sira \invlim \dcb(B_n, \mathcal{S}_n) 
    \sira \invlim \dcb[B_n, \mathcal{S}_n].
  \end{equation}
  of triangulated categories.

  Properties \ref{enum:simply-conn}, \ref{enum:finer-strat} and
  \ref{enum:T-pure} allow to apply 
  Theorem \ref{t:formality-ic} (cf.\ Remark
  \ref{rem:formy-simplyconn}), and we obtain equivalences 
  \begin{equation*}
    \Formy_n:= \Formy_{\tilde{P}_n \ra
      \tilde{\IC}(\mathcal{S}_n)}^{\mathcal{T}_n}: \dcb(B_n,
    \mathcal{S}_n) \sira \dgPerDer(\mathcal{E}_n) 
  \end{equation*}
  of triangulated categories, where we fixed perverse-projective
  resolutions $\tilde{P}_{n,S_n}\ra \tilde{\IC}(S_n)$ and where
  $\mathcal{E}_n:=\Ext(\IC(\mathcal{S}_n))$.
  The isomorphisms \eqref{eq:nnICperv} induce dga-mor\-phisms
  $\phi_n: \mathcal{E}_{n+1} \ra \mathcal{E}_n$.
  Properties 
  \ref{enum:Sn-strati},
  \ref{enum:simply-conn},
  \ref{enum:closed-embedding}, 
  \ref{enum:stratified-nn}, 
  \ref{enum:finer-strat},
  \ref{enum:closed-strat-embedding},
  \ref{enum:T-pure} and
  Theorem \ref{t:formality-ic-closed-embedding} 
  yield the following commutative (up to natural isomorphism)
  diagram with triangulated and t-exact functors:
  \begin{equation*}
    \xymatrix{
      {\dbc(B_{n+1}, \mathcal{S}_{n+1})}
      \ar[rr]^{[-c_n]f_n^*}
      \ar[d]_{\Formy_{n+1}}^\sim
      &&
      {\dbc(B_n, \mathcal{S}_n)}
      \ar[d]_{\Formy_n}^\sim
      \\
      {\dgPerDer(\mathcal{E}_{n+1})}
      \ar[rr]^-{? \Lotimesover{\mathcal{E}_{n+1}} \mathcal{E}_n}
      &&
      {\dgPerDer(\mathcal{E}_n).}
    }
  \end{equation*}
  Hence the sequence $(\Formy_n)_{n \in \DN}$ defines a morphism
  between sequences of triangulated categories. Its inverse limit
  establishes an equivalence
  \begin{equation}
    \label{eq:form-limit}
    \invlim \dcb[B_n, \mathcal{S}_n]
    \sira 
    \invlim \dgPerDer(\mathcal{E}_n).
  \end{equation}
 
  Define 
  $\mathcal{E}_\infty:=\Ext(\IC_G(\mathcal{S}))$.
  This
  is a positively graded
  dg algebra with differential zero and has as degree zero part the
  product of $|\mathcal{S}|$ copies of $\DR$.
  From 
  \eqref{eq:limes-combined} we obtain dga-morphisms
  $\nu_n:\mathcal{E}_\infty \ra \mathcal{E}_n$ such that $\nu_n =
  \phi_n \comp \nu_{n+1}$.
  Let $J$ be a segment such that $\IC_G(\mathcal{S}) \in \dc^J_G(X)$.
  We deduce from Corollary \ref{c:go-to-limit} that $\nu_n$ is an
  isomorphism up to degree $n-|J|-1$. 
  So our sequence of dg algebras $((\mathcal{E}_n), (\phi_n))$
  satisfies (if we forget the first $|J|+1$ members and renumerate,
  which is harmless for the following)
  the conditions \ref{enum:pgdz}-\ref{enum:higher-iso} considered
  in subsection \ref{sec:general-case}, and $\mathcal{E}_\infty$ is
  the inverse limit of this sequence.
  Proposition \ref{p:limit-dgprae-triang} 
  shows 
  that $\invlim \dgPerDer(\mathcal{E}_n)$
  carries a natural structure of 
  triangulated category.
  Since all functors $\Formy_n$ are triangulated, it follows from
  Proposition \ref{p:equider-als-limit} that equivalence
  \eqref{eq:form-limit} is triangulated.
  Finally, Proposition \ref{p:limit-commutes} provides a triangulated
  equivalence
  $\dgPerDer(\mathcal{E}_\infty) \sira \invlim \dgPerDer(\mathcal{E}_n)$.
  This establishes \eqref{eq:das-ziel}.
  Since $\IC_G(S)$ is mapped to
  $e_S \mathcal{E}_{\infty}$, equivalence \eqref{eq:das-ziel} is
  t-exact, and we obtain \eqref{eq:das-ziel-herz}.
\end{proof}

\subsection{Existence of ABC-Approximations}
\label{sec:exist-exampl-abc}

Let $G$ be an affine algebraic group. An \define{ABCD-\asystem} of a
$G$-stratified variety $(X, \mathcal{S})$ is an ABC-\asystem{} 
$(E, f, \mathcal{T})$ of $(X, \mathcal{S})$ satisfying the following
condition. 
\begin{enumerate}[label={(D)}, ref={D}]
\item 
  \label{enum:local-around}
  For each $n \in \DN$, the Zpfb $q_n:E_n \ra B_n$ can be
  trivialized around each stratum $T \in \mathcal{T}_n$ (this means
  that there is an open subvariety $U$ of $B_n$ containing $T$ such
  that $q_n$ has a local trivialization over $U$). 
\end{enumerate}
An \define{ABCD-\asystem}
for $G$ is an ABCD-\asystem{} of the $G$-stratified variety 
$(\point, \{\point\})$. 

\begin{proposition}\label{p:T-B-approxi}
  Every torus and any connected solvable affine algebraic group has an ABCD-\asystem{}. 
\end{proposition}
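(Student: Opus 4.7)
The plan is to construct explicit ABCD-\asystem{}s in the spirit of Totaro's algebraic models of classifying spaces, first for tori and then extending by the unipotent radical.

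For a torus $T = \Gm^r$, I would take $E_n := (\DC^{n+1} \setminus \{0\})^r$ and $B_n := (\DP^n)^r$, with $q_n: E_n \ra B_n$ the product of the $r$ standard $\Gm$-bundles, $p_n$ the constant map to $\point$, and $f_n: B_n \hra B_{n+1}$ the product of the standard linear embeddings $\DP^n \hra \DP^{n+1}$, lifted equivariantly to $E_n \hra E_{n+1}$. Let $\mathcal{T}_n$ be the product of the Schubert cell stratifications of the $\DP^n$ factors. Since $\DC^{n+1}\setminus\{0\}$ is homotopy equivalent to $S^{2n+1}$, the Künneth formula shows that $E_n$ is $n$-acyclic, giving \ref{enum:acyclic}; conditions \ref{enum:Sn-strati} (with $\mathcal{S}_n = \{B_n\}$), \ref{enum:simply-conn}, \ref{enum:p-smooth}, \ref{enum:closed-embedding}, \ref{enum:stratified-nn} (with constant codimension $c_n = r$), \ref{enum:finer-strat} and \ref{enum:closed-strat-embedding} are then immediate from the model. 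The Hodge sheaf $\tilde{\IC}(B_n)$ is a shift of the constant Hodge sheaf on the smooth projective variety $B_n$, and its $*$-restriction to each Schubert cell is constant, hence pure; together with Remark~\ref{rem:T-pure} this gives \ref{enum:T-pure}. Each product of Schubert cells in $(\DP^n)^r$ is contained in a product of standard affine charts $\{x_i \neq 0\}$, over which the $\Gm$-bundles trivialize, which yields \ref{enum:local-around}.

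For a connected solvable affine algebraic group $G$, I would invoke the structure theorem $G = T \ltimes U$ with $T$ a maximal torus and $U$ the unipotent radical (isomorphic as a variety to some $\DA^d$). Using the torus \asystem{} $(E_n^T, f_n^T, \mathcal{T}_n^T)$ constructed above, I would define $E_n^G := E_n^T \times U$ with the $G$-action
\[(t, u_0) \cdot (e, v) \; := \; (te, \; u_0 \cdot t v t^{-1}),\]
together with $f_n^G := f_n^T \times \id_U$, $\mathcal{T}_n^G := \mathcal{T}_n^T$, and the same codimensions $c_n = r$. A direct calculation shows this is a free algebraic $G$-action whose orbit map factors through the first projection, so $B_n^G = B_n^T$; any local trivialization of $q_n^T$ over an affine open $V$ induces a local trivialization $(q_n^G)^{-1}(V) \cong G \times V$, so $q_n^G$ is a $G$-Zpfb trivializing on exactly the same opens as $q_n^T$. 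Since $U \cong \DA^d$ is contractible, $E_n^G$ has the same connectivity as $E_n^T$ and is $n$-acyclic; every remaining axiom is inherited directly from the torus case.

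The substantive tension, should one arise, would be between \ref{enum:T-pure} and \ref{enum:local-around}: one wants the cell-stratification fine enough for Hodge-theoretic purity on the nose, but coarse enough that the relevant Zpfb trivializes on opens containing each cell. Both requirements are handled uniformly by the product-of-projective-spaces model, where the Schubert cells already lie in the standard affine opens that trivialize the tautological $\Gm$-bundles, and where purity reduces to the trivial statement that the constant Hodge sheaf on a smooth variety is pure. The semidirect-product bookkeeping for the solvable case is elementary; no serious algebraic-geometric input is needed beyond the structure theorem for solvable groups.
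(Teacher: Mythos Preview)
Your proposal is correct and follows essentially the same approach as the paper: both use the product of tautological bundles $(\DC^{n+1}\setminus\{0\})^r \to (\DP^n)^r$ with Schubert cells for the torus, and then pass to the solvable case via the unipotent radical. Your explicit model $E_n^T \times U$ with the twisted $G$-action is exactly the induced space $G \times_T E_n^T$ that the paper uses, written out in coordinates under the variety isomorphism $U \times T \sira G$; so the two constructions coincide, and all the ABCD conditions transfer for the same reasons.
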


Let us remark that ABCD-\asystem{}s also exist for $\GL_n(\DC)$ and
parabolic subgroups of $\GL_n(\DC)$ (see \cite[5.6]{OSdiss}).

\begin{proof}
  Let $q_i:E_i:=\DC^{i+1}\setminus \{0\} \ra B_i:=\DP^i(\DC)$ be the
  obvious \mbox{$\DC^*$-Zpfb}. 
  The standard closed embeddings
  $\DC^{i+1}\hra \DC^{i+2}$, $x \mapsto (x,0)$ induce morphisms of
  Zpfbs $f_i:E_i \ra E_{i+1}$.
  Let $\mathcal{T}_i$ be the standard cell-stratification of
  $B_i=\DP^i(\DC)$, the strata being the orbits of the standard Borel subgroup of $\GL_{i+1}(\DC)$
  under the natural action.
  Note that $E_i$ is \acyclic{2i} (cf.\ \cite[3.1]{BL}).
  Thus $(B \xla{q} E, f)$ is an ABCD-\asystem{} for $\DC^*$
  (for details see \cite[5.6]{OSdiss}). 
  Taking the obvious product of this construction shows that
  any torus has an ABCD-\asystem.

  Now let $G$ be a connected solvable group. Choose a maximal torus 
  $T \subset G$ and let $U \subset G$ be the unipotent radical.
  Let $(B \xla{q^T} E^T,\mathcal{T}, f^T)$ be an ABCD-\asystem{} for
  $T$.
  Define $E_i^G := \ind_T^G E_i^T = G \times_T E_i^T$, and let $f_i^G
  := \ind_T^G f_i^T$. The morphisms
  $G \times E_i^T \ra B_i$, $(p,e) \mapsto q_i^T(e)$ induce
  $G$-Zpfbs $q_i^G:E_i^G \ra B_i$.
  Since multiplication $U\times T \sira G$ is an isomorphism we get an isomorphism of
  varieties 
  $E_i^G = G \times_T E_i^T \sila U \times
  E_i^T$. Since $U$ is \acyclic{\infty}
  and $E_i^T$ is \acyclic{i},
  $E_i^G$ is \acyclic{i}.
  So $(B \xla{q^G} E^G,\mathcal{T}, f^G)$ is an ABCD-\asystem{} for $G$.
\end{proof}

\begin{proposition}\label{p:approx-for-variety}
  Let $G$ be an affine algebraic group and $(X, \mathcal{S})$ a
  $G$-stratified variety.
  Assume that 
  \begin{enumerate}[label={(R\arabic*)}]
  \item $\mathcal{S}$ is a $G$-stratification into cells,
  \item 
    \label{enum:XSrein}
    $\tilde{\IC}(S)$ is $\mathcal{S}$-pure, for every $S \in \mathcal{S}$,
  \item $G$ has an ABCD-\asystem{},
  \item 
    \label{enum:G-closed-embedding}
    there is a $G$-stratified variety $(Y, \hat{\mathcal{S}})$ together with
    \begin{enumerate}[label={(\alph*)}]
    \item
      \label{enum:lokalabgeschlosssen}
      a $G$-equivariant locally closed embedding $v:X \ra Y$ satisfying
      $v(\mathcal{S}):= \{v(S) \mid S \in \mathcal{S}\} \subset
      \hat{\mathcal{S}}$, and
    \item 
      \label{enum:abgeschlossenemanifold}
      a $G$-equivariant closed embedding of $Y$ in a smooth
      $G$-manifold $M$.
    \end{enumerate}
  \end{enumerate}
  Then $(X, \mathcal{S})$ has an ABCD-\asystem{}.
\end{proposition}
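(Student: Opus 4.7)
The plan is to construct the desired ABCD-\asystem{} of $(X, \mathcal{S})$ by a twisted product construction. Starting from an ABCD-\asystem{} $(B^G \xla{q^G} E^G, \mathcal{T}^G, f^G)$ for $G$, I set $E_n^X := E_n^G \times X$ with the diagonal $G$-action, let $p_n^X: E_n^X \ra X$ be the second projection and $q_n^X: E_n^X \ra B_n^X := G\bl E_n^X$ the quotient map (which is a $G$-Zpfb since $G$ acts freely on $E_n^G$), and let $f_n^X: E_n^X \ra E_{n+1}^X$ be induced by $f_n^G \times \id_X$. Each stratum $S_n^X := q_n^X(p_n^{X,\,-1}(S))$ is then the associated fiber bundle $E_n^G \times_G S$, Zariski locally trivial over $B_n^G$ with fiber $S$; this local product structure, afforded by condition (D) for $G$, is the key to checking each of the conditions on the \asystem{}.

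For conditions (A1)--(A3): $p_n^X$ is \acyclic{n} because $p_n^G$ is; the sets $\{S_n^X\}_{S \in \mathcal{S}}$ form a stratification $\mathcal{S}_n^X$ of $B_n^X$; and each $S_n^X$ is homotopy equivalent to $B_n^G$ via the contractible fiber $S \cong \DC^{d_S}$, and $B_n^G$ is simply connected because it admits a cell decomposition by the strata $\mathcal{T}_n^G$ of even real dimension. For (B1)--(B2), $p_n^X$ is smooth as a projection and $f_n^X$ is a closed embedding as the quotient of the free $G$-equivariant closed embedding $f_n^G \times \id_X$. The cell-refinement $\mathcal{T}_n^X$ is defined by pulling back the cells $T' \times S$ ($T' \in \mathcal{T}_n^G$, $S \in \mathcal{S}$) through the local trivializations of $q_n^G$ around each $T$; consistency on overlaps follows from $G$-equivariance of the trivializations, and compatibility of $f_n^X$ with these refinements (condition C2) is inherited from that of $f_n^G$.

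The main obstacle is condition (B3), and this is precisely where hypothesis (R4) is used. Embedding $X \hra Y \hra M$ $G$-equivariantly into a smooth $G$-manifold $M$ gives a $G$-equivariant closed embedding $E_n^G \times X \hra E_n^G \times M$, and since $G$ acts freely on $E_n^G$ the quotient $E_n^G \times_G M$ is a smooth variety into which $B_n^X$ embeds as a closed subvariety (via $Y$). The $G$-equivariant normally nonsingular structure of $f_n^G: E_n^G \hra E_{n+1}^G$ in these smooth ambient $G$-manifolds then induces a normally nonsingular structure on $E_n^G \times_G M \hra E_{n+1}^G \times_G M$ of codimension $c_n^G$ independent of $S$, and I plan to show that this restricts to a normally nonsingular inclusion $\ol{S_n^X} \hra \ol{S_{n+1}^X}$ by identifying the product neighborhood via the twisted product.

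Conditions (C3) and (D) are then more routine. For (C3), over a trivialization domain $U \supset T$ one has $\tilde{\IC}_{S_n^X}|_{U \times X} \cong \tilde{\IC}_T|_U \boxtimes \tilde{\IC}_S$ up to shift; this is $(\mathcal{T}_n^G|_U \times \mathcal{S})$-pure as the external tensor product of pure Hodge modules (using (R2) for the second factor and (C3) for $G$ for the first), and pasting trivializations yields $\mathcal{T}_n^X$-purity of $\tilde{\IC}_{S_n^X}$. For (D), a coordinate change on a trivialization $G \times U \sira q^{G,-1}(U)$, namely $(h,u,x) \mapsto (h,u,h^{-1}x)$ on $G \times U \times X$, converts the diagonal $G$-action to the left action on the first factor only, exhibiting a local trivialization of $q_n^X$ over $U \times X$ which covers every stratum $T' \times S \in \mathcal{T}_n^X$ with $T' \subset U$.
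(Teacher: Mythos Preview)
Your proposal is correct and follows essentially the same route as the paper: form the twisted product $E_n^G \times X \to E_n^G \times_G X$ from an ABCD-\asystem{} for $G$, take the product cell-refinement $\{q_n^{G\,-1}(T)\times_G S\}$, and verify each axiom through local trivializations, with (R4) entering only to embed everything into the smooth ambient $E_{n+1}^G\times_G M$ for the transversality argument behind (B3). Two small corrections: the refinement $\mathcal{T}_n^X$ is best defined globally as $\{q_n^{G\,-1}(T)\times_G S\}$ (the local trivializations are used only to see that these pieces are cells, so no overlap consistency is needed), and in your (C3) computation the restriction should read $\tilde{\IC}_{S_n^X}|_{U\times X}\cong \tilde{\IC}(U)\boxtimes \tilde{\IC}_S$ (equivalently $\tilde{\IC}(B_n^G)|_U\boxtimes\tilde{\IC}_S$), not $\tilde{\IC}_T|_U\boxtimes\tilde{\IC}_S$ --- it is the $\mathcal{T}_n^G$-purity of $\tilde{\IC}(B_n^G)$, guaranteed by (C3) for $G$, that combines with (R2) to give the required purity.
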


\begin{remark}
  Condition \ref{enum:G-closed-embedding} will be used for the proof
  of \ref{enum:stratified-nn}. Possibly it is redundant.
  It is satisfied for Schubert varieties and more generally for
  unions of Borel-orbits that are locally closed in the flag variety
  (take as $Y$ and $M$ the flag variety).
  For a normal projective $G$-variety $Y$ and connected $G$, 
  \ref{enum:G-closed-embedding} 
  \ref{enum:abgeschlossenemanifold}
  is satisfied by \cite{sumihiro} or \cite{mumford-git}.
\end{remark}

\begin{proof}
  If $(B \xla{r} E, f, \mathcal{T})$ is an ABCD-\asystem{} for $G$,
  its $i$-th subdatum $(B_i \xla{r_i} E_i, \mathcal{T}_i)$ satisfies an
  obvious subset of the conditions
  \ref{enum:acyclic}-\ref{enum:local-around}.
  
  So let $(B \xla{r} E, \mathcal{T})$ be the $i$-th subdatum of an
  ABCD-\asystem{} for $G$. 
  Consider
  the commutative diagram
  \begin{equation}
    \label{eq:ithpart}
    \xymatrix{
      {E \times_G X} \ar[d]^{\ol{\pi}} 
      & {E \times X}
      \ar[l]_-{q} \ar[r]^-{p} \ar[d]^{\pi} 
      & X \ar[d] \\ 
      B  & E \ar[l]_-{r} \ar[r]^c & {\point.}
    }
  \end{equation}
  Here $q$ is the quotient map for the
  diagonal action of $G$ on $E \times X$, $p$ is the second
  projection, $\pi$ the first projection and $\ol \pi$ the induced
  map on quotient spaces. 
  We claim that the upper row of diagram \eqref{eq:ithpart} together
  with
  \begin{equation*}
    r\inv(\mathcal{T})\times_G \mathcal{S} 
    := \{r\inv(T)\times_G S \mid T \in \mathcal{T}, S \in \mathcal{S} \}
  \end{equation*}
  satisfies the conditions imposed on the $i$-th subdatum of an
  ABCD-\asystem{} of $(X, \mathcal{S})$.

  The square on the left in \eqref{eq:ithpart} is cartesian, $q$ is a Zpfb and $\ol{\pi}$ is
  a (Zariski locally trivial) fiber bundle with fiber $X$. 
  These statements are also true in the classical topology.
  They can be deduced from local trivializations of $r$.
  If $\tau: G \times U \sira r\inv(U)$ is such a trivialization 
  (cf.\ subsection \ref{sec:equiv-deriv-categ-1}) 
  over an open subvariety $U$ of $B$, diagram
  \eqref{eq:ithpart} restricts to
  \begin{equation*}
    \xymatrix{
      {U \times X} \ar[d]^{\pr_U} 
      &&& {G \times U \times X}
      \ar[lll]_-{(u,g\inv x) \mapsfrom(g,u,x)}
      \ar[r]^-{p} \ar[d]^{\pr_{G\times U}} 
      & X \ar[d] \\ 
      U  &&& G \times U \ar[lll]_-{\pr_U} \ar[r]^c & {\point.}
    }
  \end{equation*}
  Here we consider $U\times X$ as an open subvariety of $E \times_G X$, the
  inclusion given by $(u,x) \mapsto [\tau(1,u), x]$.

  Since $c$ is \acyclic{i} and smooth, the same holds
  for $p$ (\ref{enum:acyclic}, \ref{enum:p-smooth}).
  If $S \in \mathcal{S}$ is a stratum, the intersection of 
  $E \times_G S = q(p\inv(S))$ with $U \times X \subset E \times_G X$
  is $U \times S$. Hence 
  $E \times_G \mathcal{S} := \{E \times_G S \mid S \in \mathcal{S}\}$
  is a stratification of $E \times_G X$ (\ref{enum:Sn-strati}) 
  (the irreducibility of the strata will be established below). 
  The long exact sequence of homotopy groups for the
  fiber bundle $\ol{\pi}:E\times_G S \ra B$ with fiber $S$ shows that $E
  \times_G S$ is simply connected (\ref{enum:simply-conn}), since $S$
  is a cell and $B$ is simply connected by assumption.
  
  Let $S \in \mathcal{S}$ and $T \in \mathcal{T}$ be strata. 
  The intersection of $r\inv(T) \times_G S$ with $U \times X$ is
  $(T\cap U) \times S$, so
  $r\inv(\mathcal{T}) \times_G \mathcal{S}$ is a stratification of
  $E\times_G X$. 
  By \ref{enum:local-around}, we find a local trivialization $\tau$ of $r$ as
  above with $T \subset U$. Then $\tau \times \id_X$ is a local
  trivialization of $q$ over $U \times X$, and $r\inv(T) \times_G S =
  T \times S \subset U \times X$ is a cell (\ref{enum:local-around}, \ref{enum:finer-strat}).
  
  By \ref{enum:Sn-strati}, $B$ is irreducible. Let $T \in \mathcal{T}$
  be dense in $B$. Then $r\inv(T)$ is dense in $E$ and the cell $r\inv(T) \times_G S$ is dense in $E \times_G S$, for all 
  $S \in \mathcal{S}$, showing 
  the irreducibility of each stratum $E \times_G S$ (\ref{enum:Sn-strati}).
 
  Let $S \in \mathcal{S}$. We prove \ref{enum:T-pure}. By Remark
  \ref{rem:T-pure} is is sufficient to show that $\tilde{\IC}(E\times_G S)$ is
  $(r\inv(\mathcal{T}) \times_G \mathcal{S})$-$*$-pure of weight
  $d_B+d_S$. Let $R \in \mathcal{S}$, $T \in \mathcal{T}$. 
  We choose $U \subset B$ open as above containing $T$. Then the inclusion
  $r\inv(T)\times_G R \os{l}{\hra} E \times_G X$ looks like
  $T \times R \os{t\times l_R}{\hra} U \times X \os{j}{\hra} E \times_G
  X$. Since $j$ is an open embedding, $j^*$ preserves weights and we obtain
  \begin{equation}
    \label{eq:IC-pure-ST}
    j^*(\tilde{\IC}(E\times_G S)) \cong \tilde{\IC}(U \times S) \cong \tilde{\IC}(U)\boxtimes
    \tilde{\IC}(S)
  \end{equation}
  Let $l_T:T\hra B$ be the inclusion and $\tilde{\IC}(B)$ the Hodge
  intersection cohomology sheaf on $B$. Then 
  $ l_T^*(\tilde{\IC}(B)) \cong t^*(\tilde{\IC}(U))$, and restriction of \eqref{eq:IC-pure-ST}
  yields
  \begin{equation*}
    l^*(\tilde{\IC}(E\times_G S)) 
    \cong t^*(\tilde{\IC}(U))\boxtimes l_R^*(\tilde{\IC}(S))
    \cong l_T^*(\tilde{\IC}(B)) \boxtimes l_R^*(\tilde{\IC}(S))
  \end{equation*}
  which is pure of weight $d_B + d_S$ by assumptions \ref{enum:T-pure} and \ref{enum:XSrein}.

  Let $(B \xla{r} E, f, \mathcal{T})$ be an ABCD-\asystem{} for $G$.
  Let 
  \begin{equation*}
    (E_i \times_G  X \xla{q_i} E_i \times X \xra{p_i} X,
    r_i\inv(\mathcal{T}_i) \times_G \mathcal{S})
  \end{equation*}
  be the datum constructed from its $i$-th subdatum by the above method. We claim
  that the sequence of these data together with the sequence of
  morphisms 
  $f_i \times \id_X: E_i \times X \ra E_{i+1} \times X$
  defines an
  ABCD-\asystem{} of $(X, \mathcal{S})$. Conditions
  \ref{enum:closed-embedding} and \ref{enum:closed-strat-embedding}
  are obviously satisfied. 
  A slight modification of the above arguments 
  shows that $(E_{i}\times_G Y, E_i \times_G
  \hat{\mathcal{S}})$ is a stratified variety. Consider the diagram
  \begin{equation*}
    \xymatrix{
      {E_{i+1} \times_G X}
      \ar[rr]^{\id \times_G v} 
      &&
      {E_{i+1} \times_G Y}
      \ar[r] &
      {E_{i+1} \times_G M}
      \\
      {E_{i} \times_G X}
      \ar[rr]^{\id \times_G v} 
      \ar[u]_{f_i \times_G \id_X} 
      &&
      {E_{i} \times_G Y}
      \ar[r] 
      \ar[u]_{f_i \times_G \id_Y} 
      &
      {E_{i} \times_G M,}
      \ar[u]_{f_i \times_G \id_M} 
    }
  \end{equation*}
  where both squares are cartesian.
  In the smooth manifold $E_{i+1} \times_G M$, the smooth submanifold
  $E_i \times_G M$ is transverse to each stratum of the closed stratified
  variety $(E_{i+1} \times_G Y, E_{i+1} \times_G \hat{\mathcal{S}})$. It follows from
  \cite[I.1.11]{gormac-stratified} that condition
  \ref{enum:stratified-nn} is satisfied.
  (For each $S \in \mathcal{S}$, 
  $f_i\times \id_{\ol S}$ obviously is a normally nonsingular
  inclusion of the same codimension as $f_i$. If this implies the
  same statement on quotient spaces, we can do without
  \ref{enum:G-closed-embedding}.)
\end{proof}

\subsection{Formality of Equivariant Flag Varieties}
\label{sec:form-equiv-flag}

Let $G \supset P \supset B$ be respectively a complex connected
reductive affine algebraic group, a parabolic and a Borel
subgroup.

\begin{theorem}
  \label{t:formality-equivariant-partial-flag-variety}
  If $\mathcal{S}$ is the stratification of $G/P$ into
  $B$-orbits,  
  there is a t-exact equivalence of triangulated categories
  \begin{equation*}
    \dcb_{B,c}(G/P)
    =
    \dcb_B(G/P, \mathcal{S}) 
    \cong 
    \dgPerDer(\Ext(\IC_B(\mathcal{S}))).
  \end{equation*}
  Restriction to the hearts induces an equivalence 
  \begin{equation*}
    \Perv_{B}(G/P)
    =
    \Perv_B(G/P, \mathcal{S}) 
    \cong 
    \dgFilMod(\Ext(\IC_B(\mathcal{S}))).
  \end{equation*}
\end{theorem}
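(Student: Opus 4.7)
The plan is to reduce this to Theorem~\ref{t:das-ziel} via Proposition~\ref{p:approx-for-variety} by verifying that the stratified $B$-variety $(G/P, \mathcal{S})$ admits an ABC-\asystem. Once that is done the theorem follows directly, since $\dcb_{B,c}(G/P) = \dcb_B(G/P, \mathcal{S})$ by Proposition~\ref{p:orbit-constructible} (the strata are precisely the $B$-orbits).

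First I would check the hypotheses of Proposition~\ref{p:approx-for-variety}. The Bruhat decomposition exhibits $\mathcal{S}$ as a $B$-stratification of $G/P$ into affine cells, so the first hypothesis holds. The purity hypothesis $\tilde\IC(S)$ is $\mathcal{S}$-pure for all $S \in \mathcal{S}$ is exactly the content of Theorem~\ref{t:bptgp} applied to $F = \tilde\IC(S)$: this object is pure of weight $d_S$ in $\dcb(\MHM(G/P))$ and smooth along the stratification by $B$-orbits, so all its $*$- and $!$-restrictions to strata are pure of the same weight (Remark~\ref{rem:T-pure} already reduces to the $*$-purity half). For the third hypothesis, the Borel subgroup $B$ is connected solvable, so Proposition~\ref{p:T-B-approxi} furnishes an ABCD-\asystem{} for $B$.

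The fourth hypothesis requires a $B$-equivariant locally closed embedding of $G/P$ into a $B$-stratified variety $(Y, \hat{\mathcal{S}})$ compatible with $\mathcal{S}$, together with a $B$-equivariant closed embedding of $Y$ into a smooth $B$-manifold. Here one simply takes $Y = M = G/P$ with $\hat{\mathcal{S}} = \mathcal{S}$ and $v = \id$; since $G/P$ is smooth projective, all conditions are satisfied trivially. Thus Proposition~\ref{p:approx-for-variety} applies and produces an ABCD-\asystem{} of $(G/P, \mathcal{S})$, which in particular is an ABC-\asystem.

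With this in hand, Theorem~\ref{t:das-ziel} yields the t-exact triangulated equivalence
\begin{equation*}
\dcb_B(G/P, \mathcal{S}) \cong \dgPerDer(\Ext(\IC_B(\mathcal{S}))),
\end{equation*}
and the induced equivalence on hearts $\Perv_B(G/P, \mathcal{S}) \cong \dgFilMod(\Ext(\IC_B(\mathcal{S})))$. There is no real obstacle left: the entire proof is a verification exercise, with all the genuine work (formality via mixed Hodge modules, closed-embedding compatibility, passage to the inverse limit over the approximations, and parabolic purity of equivariant $\IC$-sheaves) already encapsulated in Theorems \ref{t:das-ziel}, \ref{t:bptgp} and Propositions \ref{p:T-B-approxi}, \ref{p:approx-for-variety}.
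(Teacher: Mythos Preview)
Your proof is correct and follows exactly the paper's own argument: verify the hypotheses of Proposition~\ref{p:approx-for-variety} via Theorem~\ref{t:bptgp} and Proposition~\ref{p:T-B-approxi} (with $Y=M=G/P$), then invoke Theorem~\ref{t:das-ziel} and Proposition~\ref{p:orbit-constructible}. The only difference is that you spell out the verification of the four hypotheses in more detail than the paper does.
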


\begin{proof}
  The $B$-stratified variety $(G/P, \mathcal{S})$ has an ABCD-\asystem{}
  by Theorem \ref{t:bptgp} and Propositions \ref{p:T-B-approxi} and
  \ref{p:approx-for-variety}.
  Hence we can apply Theorem \ref{t:das-ziel}. 
  The equalities follow from Proposition \ref{p:orbit-constructible}.
\end{proof}

\begin{remark}
  \label{rem:equivariant-complex}
  The strategy from subsection
  \ref{sec:complex-coefficients} also shows that the complexified
  version of diagram \eqref{eq:the-goal} commutes.
  This implies that Theorem
  \ref{t:formality-equivariant-partial-flag-variety} is also true for
  complex coefficients. 
\end{remark}

\begin{remark}
    \label{rem:formality-equivariant-partial-flag-variety}
    We claim that the diagram 
    \begin{equation}
      \label{eq:forgetful-compatible}
      \xymatrix{
        {\dcb_B(G/P, \mathcal{S})} \ar[r]^-\sim
        \ar[d]^{\Forget}
        &
        {\dgPerDer(\Ext(\IC_B(\mathcal{S})))}
        \ar[d]^{?\Lotimesover{\Ext(\IC_B(\mathcal{S}))} \Ext(\IC(\mathcal{S})) }
        \\
        {\dcb(G/P, \mathcal{S})} \ar[r]^-\sim
        &
        {\dgPerDer(\Ext(\IC(\mathcal{S})))}
      }
    \end{equation}
    is commutative (up to natural isomorphism).
    The horizontal equivalences are those from 
    Theorems \ref{t:formality-equivariant-partial-flag-variety} 
    and \ref{t:formality-partial-flag-variety}, the vertical functors
    are the forgetful functor and the induced extension of scalars
    functor.
    
    The upper horizontal equivalence was established as the inverse
    limit of a sequence of equivalences $(\Formy_n)_{n \in \DN}$ of categories
    (cf.\ proof of Theorem \ref{t:das-ziel}). The lower
    horizontal equivalence can be chosen equal to $\Formy_0$. (If we use our method for constructing the
    ABCD-\asystem{} for $(G/P, \mathcal{S})$ we have 
    $E_0=B \times_T T \times G/P = B \times G/P$ for $T\subset B$ a
    maximal torus, and $B\bl E_0= G/P$ has the stratification
    $\mathcal{S}_0=\mathcal{S}$.) Hence diagram
    \eqref{eq:forgetful-compatible} is commutative.

    Since all functors in diagram 
    \eqref{eq:forgetful-compatible} are triangulated and t-exact, 
    we obtain by restriction a commutative diagram relating the hearts.
\end{remark}

\bibliographystyle{alpha}

\begin{thebibliography}{DMOS82}

\bibitem[BBD82]{BBD}
A.~A. Be{\u\i}linson, J.~Bernstein, and P.~Deligne.
\newblock Faisceaux pervers.
\newblock In {\em Analysis and topology on singular spaces, I (Luminy, 1981)},
  volume 100 of {\em Ast{\'e}risque}, pages 5--171. Soc. Math. France, Paris,
  1982.

\bibitem[Be{\u\i}87]{Beilinson}
A.~A. Be{\u\i}linson.
\newblock On the derived category of perverse sheaves.
\newblock In {\em $K$-theory, arithmetic and geometry (Moscow, 1984--1986)},
  volume 1289 of {\em Lecture Notes in Math.}, pages 27--41. Springer, Berlin,
  1987.

\bibitem[BGS96]{BGS}
Alexander Beilinson, Victor Ginzburg, and Wolfgang Soergel.
\newblock Koszul duality patterns in representation theory.
\newblock {\em J. Amer. Math. Soc.}, 9(2):473--527, 1996.

\bibitem[BL94]{BL}
Joseph Bernstein and Valery Lunts.
\newblock {\em Equivariant sheaves and functors}, volume 1578 of {\em Lecture
  Notes in Mathematics}.
\newblock Springer-Verlag, Berlin, 1994.

\bibitem[BM01]{braden-mcpherson-moment-graphs}
Tom Braden and Robert MacPherson.
\newblock From moment graphs to intersection cohomology.
\newblock {\em Math. Ann.}, 321(3):533--551, 2001.

\bibitem[Bor91]{borel-lag}
Armand Borel.
\newblock {\em Linear algebraic groups}, volume 126 of {\em Graduate Texts in
  Mathematics}.
\newblock Springer-Verlag, New York, second edition, 1991.

\bibitem[CMSP03]{CMP}
James Carlson, Stefan M{\"u}ller-Stach, and Chris Peters.
\newblock {\em Period mappings and period domains}, volume~85 of {\em Cambridge
  Studies in Advanced Mathematics}.
\newblock Cambridge University Press, Cambridge, 2003.

\bibitem[Del71]{HodgeII}
Pierre Deligne.
\newblock Th{\'e}orie de {H}odge. {II}.
\newblock {\em Inst. Hautes {\'E}tudes Sci. Publ. Math.}, (40):5--57, 1971.

\bibitem[Del77]{SGA4einhalb}
P.~Deligne.
\newblock {\em Cohomologie {\'e}tale}.
\newblock Springer-Verlag, Berlin, 1977.
\newblock SGA 4${\frac12}$, Lecture Notes in Mathematics, Vol. 569.

\bibitem[Del94]{DeligneSHMR}
Pierre Deligne.
\newblock Structures de {H}odge mixtes r{\'e}elles.
\newblock In {\em Motives (Seattle, WA, 1991)}, volume~55 of {\em Proc. Sympos.
  Pure Math.}, pages 509--514. Amer. Math. Soc., Providence, RI, 1994.

\bibitem[DMOS82]{DeligneMilneTannakian}
Pierre Deligne, James~S. Milne, Arthur Ogus, and Kuang-yen Shih.
\newblock {\em Hodge cycles, motives, and {S}himura varieties}, volume 900 of
  {\em Lecture Notes in Mathematics}.
\newblock Springer-Verlag, Berlin, 1982.

\bibitem[GM83]{gormac-IH-ii}
Mark Goresky and Robert\weg{IH-ii} MacPherson.
\newblock Intersection homology. {II}.
\newblock {\em Invent. Math.}, 72(1):77--129, 1983.

\bibitem[GM88]{gormac-stratified}
Mark Goresky and Robert\weg{stratified} MacPherson.
\newblock {\em Stratified {M}orse theory}, volume~14 of {\em Ergebnisse der
  Mathematik und ihrer Grenzgebiete (3)}.
\newblock Springer-Verlag, Berlin, 1988.

\bibitem[Gui05]{guillermou}
St{\'e}phane Guillermou.
\newblock Equivariant derived category of a complete symmetric variety.
\newblock {\em Represent. Theory}, 9:526--577 (electronic), 2005.

\bibitem[Kal05]{kaloshin-whitney}
V.~Yu. Kaloshin.
\newblock A geometric proof of the existence of {W}hitney stratifications.
\newblock {\em Mosc. Math. J.}, 5(1):125--133, 2005.

\bibitem[Kel94]{Keller-deriving-dg-cat}
Bernhard\weg{kdg} Keller.
\newblock Deriving {DG} categories.
\newblock {\em Ann. Sci. {\'E}cole Norm. Sup. (4)}, 27(1):63--102, 1994.

\bibitem[Kel98]{Keller-construction-of-triangle-equiv}
Bernhard\weg{kcte} Keller.
\newblock On the construction of triangle equivalences.
\newblock In {\em Derived equivalences for group rings}, volume 1685 of {\em
  Lecture Notes in Math.}, pages 155--176. Springer, Berlin, 1998.

\bibitem[KS94]{KS}
Masaki Kashiwara and Pierre Schapira.
\newblock {\em Sheaves on manifolds}, volume 292 of {\em Grundlehren der
  Mathematischen Wissenschaften}.
\newblock Springer-Verlag, Berlin, 1994.

\bibitem[Lan02]{Lang-Algebra}
Serge Lang.
\newblock {\em Algebra}, volume 211 of {\em Graduate Texts in Mathematics}.
\newblock Springer-Verlag, New York, third edition, 2002.

\bibitem[Lun95]{Luntstoric}
Valery Lunts.
\newblock Equivariant sheaves on toric varieties.
\newblock {\em Compositio Math.}, 96(1):63--83, 1995.

\bibitem[Mum65]{mumford-git}
David Mumford.
\newblock {\em Geometric invariant theory}.
\newblock Ergebnisse der Mathematik und ihrer Grenzgebiete, Neue Folge, Band
  34. Springer-Verlag, Berlin, 1965.

\bibitem[Sai89]{SaitoIntrotoMHMs}
Morihiko\weg{imhm} Saito.
\newblock Introduction to mixed {H}odge modules.
\newblock {\em Ast{\'e}risque}, (179-180):10, 145--162, 1989.
\newblock Actes du Colloque de Th{\'e}orie de Hodge (Luminy, 1987).

\bibitem[Sai90]{SaitoExtensionofMHMs}
Morihiko\weg{emhm} Saito.
\newblock Extension of mixed {H}odge modules.
\newblock {\em Compositio Math.}, 74(2):209--234, 1990.

\bibitem[Sai94]{SaitoOntheTheoryofMHMs}
Morihiko\weg{otm} Saito.
\newblock On the theory of mixed {H}odge modules.
\newblock In {\em Selected papers on number theory, algebraic geometry, and
  differential geometry}, volume 160 of {\em Amer. Math. Soc. Transl. Ser. 2},
  pages 47--61. Amer. Math. Soc., Providence, RI, 1994.

\bibitem[Sch73]{Schmid}
Wilfried Schmid.
\newblock Variation of {H}odge structure: the singularities of the period
  mapping.
\newblock {\em Invent. Math.}, 22:211--319, 1973.

\bibitem[Sch07]{OSdiss}
Olaf~M. Schn{\"u}rer.
\newblock {E}quivariant {S}heaves on {F}lag {V}arieties, {D}{G} {M}odules and
  {F}ormality.
\newblock {\em Doktorarbeit, Universit{\"a}t Freiburg}, 2007.
\newblock \rule{1mm}{0mm}\\
  \href{http://www.freidok.uni-freiburg.de/volltexte/4662/}{http://www.freidok%
.uni-freiburg.de/volltexte/4662/}.

\bibitem[Sch08]{OSdiss-perfect-dg-arXiv}
Olaf~M. Schn{\"u}rer.
\newblock {P}erfect {D}erived {C}ategories of {P}ositively {G}raded {D}{G}
  {A}lgebras.
\newblock {\em preprint}, 2008.
\newblock \href{http://arxiv.org/abs/0809.4782}{http://arxiv.org/abs/0809.4782}

\bibitem[Soe89]{Soepurity}
Wolfgang\weg{purity} Soergel.
\newblock {$\mathfrak{n}$}-cohomology of simple highest weight modules on walls
  and purity.
\newblock {\em Invent. Math.}, 98(3):565--580, 1989.

\bibitem[Soe90]{Soe-Kat}
Wolfgang\weg{KatO} Soergel.
\newblock Kategorie {$\mathcal O$}, perverse {G}arben und {M}oduln {\"u}ber den
  {K}oinvarianten zur {W}eylgruppe.
\newblock {\em J. Amer. Math. Soc.}, 3(2):421--445, 1990.

\bibitem[Soe92]{Soergel-combi-HC}
Wolfgang\weg{combi-HC} Soergel.
\newblock The combinatorics of {H}arish-{C}handra bimodules.
\newblock {\em J. Reine Angew. Math.}, 429:49--74, 1992.

\bibitem[Soe01]{Soergel-langlands}
Wolfgang\weg{langlands} Soergel.
\newblock Langlands' philosophy and {K}oszul duality.
\newblock In {\em Algebra--re\-pre\-sen\-ta\-tion theory (Constanta, 2000)},
  volume~28 of {\em NATO Sci. Ser. II Math. Phys. Chem.}, pages 379--414.
  Kluwer Acad. Publ., Dordrecht, 2001.

\bibitem[Sum74]{sumihiro}
Hideyasu Sumihiro.
\newblock Equivariant completion.
\newblock {\em J. Math. Kyoto Univ.}, 14:1--28, 1974.

\bibitem[Ver96]{verdier-these}
Jean-Louis Verdier.
\newblock Des cat{\'e}gories d{\'e}riv{\'e}es des cat{\'e}gories ab{\'e}liennes.
\newblock {\em Ast{\'e}risque}, (239):xii+253 pp., 1996.

\end{thebibliography}
\def\weg#1{} \def\cprime{$'$} \def\cprime{$'$} \def\cprime{$'$}

\end{document}